\newcommand{\R}{\mathbb{R}}
\newcommand{\C}{\mathbb{C}} 
\newcommand{\N}{\mathbb{N}}
\newcommand{\Z}{{\mathbb Z}}
\newcommand{\Q}{{\mathbb Q}}
\renewcommand{\phi}{\varphi}
\newcommand{\gpd}{\mathcal{G}}
\newcommand{\hpd}{\mathcal{H}}
\theoremstyle{plain}
    \newtheorem{theorem}{Theorem}[section]
    \newtheorem{lemma}[theorem]{Lemma}
    \newtheorem{corollary}[theorem]{Corollary}
    \newtheorem{proposition}[theorem]{Proposition}
        \newtheorem*{proposition*}{Proposition}
    \newtheorem{conjecture}[theorem]{Conjecture}
\theoremstyle{definition}
    \newtheorem{definition}[theorem]{Definition}
    \newtheorem{example}[theorem]{Example}
    \newtheorem{remark}[theorem]{Remark}
    \newtheorem{question}[theorem]{Question}
\theoremstyle{remark}
\newcommand{\Manoa}{M\=anoa}
\newcommand{\Hawaii}{Hawai\kern.05em`\kern.05em\relax i}
\begin{document}

\title[The homology of groupoid models]{A trace pairing and Elliott invariant for groupoid homology}
\author{Robin J. Deeley}
\address{Robin J. Deeley,   Department of Mathematics,
University of Colorado Boulder
Campus Box 395,
Boulder, CO 80309-0395, USA }
\email{robin.deeley@colorado.edu}
\author{Rufus Willett}
\address{Rufus Willett, Department of Mathematics,
University of Hawaii Manoa
2565 McCarthy Mall, Keller 401A
Honolulu, HI 96822, USA}
\email{rufus@math.hawaii.edu}
\subjclass[2010]{46L80, 22A22}
\keywords{The HK-conjecture, groupoids, $K$-theory, homology}
\thanks{RJD was partially supported by NSF Grant DMS 2247424 and Simons Foundation Gift MP-TSM-00002896. RW was partially supported by NSF Grant DMS 2247968 and Simons Foundation Gift MP-TSM-00002363}

\begin{abstract}
For an \'{e}tale groupoid, we define a pairing between the Crainic-Moerdijk groupoid homology and the simplex of invariant Borel probability measures on the base space.  The main novelty here is that the groupoid need not have totally disconnected base space, and thus the pairing can give more refined information than the measures of clopen subsets of the base space.

Our principal motivation is $C^*$-algebra theory.  The Elliott invariant of a $C^*$-algebra is defined in terms of $K$-theory and traces; it is fundamental in the long-running program to classify simple $C^*$-algebras (satisfying additional necessary conditions).  We use our pairing to define a groupoid Elliott invariant, and show that for many interesting groupoids it agrees with the $C^*$-algebraic Elliott invariant of the groupoid $C^*$-algebra: this includes irrational rotation algebras and the $C^*$-algebras arising from orbit breaking constructions studied by the first listed author, Putnam, and Strung.  These results can be thought of as establishing a refinement of Matui's HK conjecture for the relevant groupoids.
\end{abstract}

\maketitle

\tableofcontents

\section*{Introduction}
The construction of a $C^*$-algebra from a topological groupoid is now classical \cite{Renault:1980fk}. It has become an active area of research to study which $C^*$-algebras can be obtained from groupoids. The groupoid often has more structure, particularly dynamical structure, that allows one to understand $C^*$-algebraic properties: for example, one can relate simplicity of a groupoid $C^*$-algebra to minimality of the underlying groupoid.

One central question in this area is the relationship between groupoid homology in the sense of Crainic and Moerdijk \cite{Crainic:2000aa} and $C^*$-algebraic K-theory. The HK-conjecture \cite{Matui2016aa, Matui:2017hq} predicts a strong relationship between these two invariants. The precise statement of the HK-conjecture is as follows.
\begin{conjecture}
Suppose that $\mathcal{G}$ is a locally compact, Hausdorff, second countable, \'etale, essentially principal, minimal, ample groupoid. Then
\[ K_*(C^*_r(\mathcal{G})) \cong H_{**}(\mathcal{G}).\footnote{Here and throughout, the subscript ``$_{**}$'' refers to the $\Z/2$-graded homology theory associated to a $\Z$-graded homology theory defined by summing all the even groups, and all the odd groups separately.} \] 
\end{conjecture}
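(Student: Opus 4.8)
The plan is to obtain the conjectured isomorphism as the degeneration of a homological spectral sequence converging to $K_*(C^*_r(\G))$, with $E^2$-page given by the groupoid homology.

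First I would pass from $\G$ to a homological-algebra model: the Crainic--Moerdijk homology is computed from a bar-type complex on the spaces of composable strings $\G^{(n)}$, and this is the same simplicial data that governs a skeletal filtration of the reduced groupoid $C^*$-algebra. Provided $\G$ satisfies the Baum--Connes conjecture with coefficients --- which holds for amenable, and more generally a-T-menable, groupoids by Tu's work, but which is not automatic under the stated hypotheses and would have to be supplied as an extra input --- the topological side of the assembly map agrees with $K_*(C^*_r(\G))$. Filtering by skeleta then produces a spectral sequence
\[ E^2_{p,q} = H_p(\G; K_q(\C)) \Longrightarrow K_{p+q}(C^*_r(\G)). \]
Because $K_q(\C)$ is $\Z$ for $q$ even and $0$ for $q$ odd, the $E^2$-page is concentrated in even columns, where it reads $E^2_{p,q} = H_p(\G)$. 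Summing over $q$ of each fixed parity then identifies $\bigoplus_q E^2_{*,q}$ with the $\Z/2$-graded homology $H_{**}(\G)$.

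If this spectral sequence degenerated at $E^2$ and the resulting extension problems split, the induced filtration of $K_*(C^*_r(\G))$ would give the desired isomorphism. There are thus two genuine inputs: (i) vanishing of the higher differentials $d_r$ for $r \geq 2$, and (ii) splitting of the extensions relating $E^\infty$ to the associated graded of $K_*(C^*_r(\G))$. I expect (i) to be the main obstacle. The differentials $d_2, d_3, \dots$ are honest obstructions with no reason to vanish at this level of generality, and this is precisely where the conjecture breaks down: known counterexamples exhibit groupoids whose higher homology ``leaks'' across nonzero differentials. I would therefore expect a proof to succeed only after imposing structure that forces collapse. The cleanest such hypothesis is a bound on homological dimension: if $H_p(\G) = 0$ for all $p \geq 2$, then $K_0$ and $K_1$ each receive a contribution from a single homology group ($H_0$ and $H_1$ respectively), so every $d_r$ has trivial source or target, the sequence collapses, and no extension problem arises --- yielding $K_0(C^*_r(\G)) \cong H_0(\G)$ and $K_1(C^*_r(\G)) \cong H_1(\G)$. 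This is exactly the regime of the classical positive results, for instance principal groupoids of Cantor minimal systems. Beyond homological dimension one, genuinely new input --- a multiplicative or Adams-type structure on the spectral sequence, or a geometric splitting of the skeletal filtration --- would be required to control the higher differentials, and in its absence the isomorphism should not be expected to hold.
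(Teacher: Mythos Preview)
The statement is labeled a \emph{conjecture} in the paper, and the paper does not attempt to prove it. On the contrary, the very next sentence records that the HK-conjecture is false in general, with counterexamples due to Scarparo \cite{Scarparo:2020aa} and to the first author \cite{Deeley2023aa}, the latter even in the principal case. There is therefore no proof in the paper to compare your proposal against.

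That said, your analysis is sound and arrives at the correct conclusion. The spectral-sequence framework you describe is essentially the one developed in the literature (Proietti--Yamashita and others), and you correctly isolate the two genuine obstructions: vanishing of the higher differentials $d_r$ for $r\geq 2$, and splitting of the filtration extensions. Your observation that homological dimension at most one forces collapse and eliminates the extension problem is exactly the mechanism behind the known positive results (compare \cite[Corollary 4.20]{Bonicke:2021aa}), and your final assessment that ``the isomorphism should not be expected to hold'' beyond that regime is borne out by the counterexamples. You are also right to flag that the Baum--Connes input is not automatic under the stated hypotheses: minimality, ampleness, and essential principality do not by themselves imply amenability. In short, you have not proved the conjecture --- nor could you have --- but you have correctly diagnosed why it fails.
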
 
The HK-conjecture is however false in general \cite{Scarparo:2020aa}, and is even false in the principal case \cite{Deeley2023aa,Chaiser:2025aa}. 

Based on these counterexamples, we reframe the conclusion of the HK-conjecture as a desirable property for a groupoid to have; this desirable property is interesting to investigate for a much wider class of groupoids than that covered by the original statement of the HK-conjecture above. This idea has been explored (for example) in \cite{Bonicke:2021aa, Bruce:2024aa, Miller:2024aa, ProiettiYasashita2023aa}. 

In the present paper, we refine this previous work by including the relevant pairings between traces and K-theory on the one hand, and between the space of invariant probability measures and groupoid homology on the other.  The first basic result of the present paper is as follows; it was previously known in the ample case, i.e.\ when the base space is zero-dimensional.

\begin{proposition}\label{intro pair}
Suppose that $\mathcal{G}$ is a locally compact, Hausdorff, \'etale groupoid with compact base space.  There is a canonical pairing 
$$
\rho_H:T(\gpd)\to \text{Hom}(H_0(\gpd),\R)
$$
between the simplex $T(\gpd)$ of invariant Borel probability measures on the base space, and the zeroth Crainic-Moerdijk homology group $H_0(\gpd,\R)$ of the groupoid. 
\end{proposition}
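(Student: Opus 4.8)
The plan is to realize $H_0(\gpd,\R)$ concretely as a cokernel and then to obtain the pairing by integration. Recall that in the Crainic--Moerdijk picture the homology is that of the chain complex whose degree-$n$ term is $C_c(\gpd^{(n)},\R)$, the compactly supported continuous functions on the space of composable $n$-strings, with differential the alternating sum of pushforwards along the face maps. Because $\gpd$ is \'etale, the source and range maps $s,r\colon\gpd\to X:=\gpd^{(0)}$ are local homeomorphisms, so pushforward of compactly supported functions is defined by fiberwise summation, $s_*f(x)=\sum_{s(g)=x}f(g)$ and likewise for $r$. A partition-of-unity argument over a finite cover of $\supp(f)$ by open bisections shows these sums are finite and that $s_*f,\,r_*f\in C_c(X,\R)=C(X,\R)$, where I use compactness of $X$. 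In degree zero this identifies $H_0(\gpd,\R)$ with the cokernel of $\d_1=r_*-s_*\colon C_c(\gpd,\R)\to C(X,\R)$ (the sign convention is immaterial, since the two candidate differentials have the same image).

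Next I would record the measure-theoretic reformulation of invariance. For a Borel probability measure $\mu$ on $X$, let $\phi_\mu\colon C(X,\R)\to\R$ be integration, $\phi_\mu(h)=\int_X h\,d\mu$. The defining property of $\mu\in T(\gpd)$, namely $\mu(s(U))=\mu(r(U))$ for every open bisection $U$, is equivalent---by decomposing an arbitrary $f\in C_c(\gpd,\R)$ over a finite cover of its support by bisections---to the single identity $\int_X s_*f\,d\mu=\int_X r_*f\,d\mu$ for all such $f$. Equivalently, $\phi_\mu\circ\d_1=0$, i.e.\ $\phi_\mu$ annihilates $\operatorname{im}(\d_1)$.

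It then follows immediately that $\phi_\mu$ factors through the cokernel, producing a well-defined element $\rho_H(\mu)\in\Hom(H_0(\gpd),\R)$ characterized by $\rho_H(\mu)([h])=\int_X h\,d\mu$ for a class represented by $h\in C(X,\R)$. The assignment $\mu\mapsto\rho_H(\mu)$ involves no choices and is affine---indeed it is the restriction to $T(\gpd)$ of a linear map defined on the real span of invariant measures---so it is canonical, which is exactly the assertion of the proposition.

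The conceptual content is light once the cokernel description is in hand; the only genuine work lies in verifying that pushforward along the \'etale maps $s$ and $r$ is well behaved on $C_c$, and that the bisection-wise invariance condition is truly equivalent to the integral identity $\int_X s_*f\,d\mu=\int_X r_*f\,d\mu$. This equivalence is where the \'etale hypothesis and the compactness of $X$ enter essentially, and aligning the precise face-map and sign conventions for $\d_1$ (so that ``invariant'' corresponds exactly to ``kills $\operatorname{im}(\d_1)$'') is the step most likely to demand care.
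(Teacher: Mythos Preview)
Your verification that an invariant measure annihilates the image of $\delta=s_*-r_*$ is correct and is exactly the computation the paper performs. The gap is in your identification of $H_0(\gpd)$: the bar complex $\big(C_c(\gpd^{(n)},\R),\delta\big)$ computes Crainic--Moerdijk homology with coefficients in the sheaf $\mathcal{R}$ of \emph{continuous} real-valued functions, because $\mathcal{R}$ is $c$-soft and so serves as its own resolution. But the paper's $H_0(\gpd)$ is defined with coefficients in the sheaf $\mathcal{Z}$ of locally constant \emph{integer}-valued functions (Definition~\ref{z coeff}), and when $\gpd^{(0)}$ is not totally disconnected $\mathcal{Z}$ is not $c$-soft. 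One must resolve $0\to\mathcal{Z}\to\mathcal{S}^0\to\mathcal{S}^1\to\cdots$ by $c$-soft $\gpd$-sheaves and take the homology of the resulting \emph{double} complex (Definition~\ref{gpd hom}). A $0$-cycle is then a tuple $(a_p)_{p\geq0}$ with $a_p\in\Gamma_c(\gpd^{(p)};\mathcal{S}^p_p)$, and the pairing is defined by integrating the component $a_0$; one must check both that this descends to homology and that it is independent of the resolution, which the paper handles via the notion of a Borel resolution (Definition~\ref{top res}) and a comparison through injective $\gpd$-sheaves.

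The irrational rotation groupoid $\Z\ltimes S^1$ (Section~\ref{Sec:Irrational}) shows the two groups genuinely differ. There $H_0(\gpd)\cong\Z\oplus\Z$, and one generator pairs with Lebesgue measure to give the rotation angle $\theta$; its representing $0$-cycle has $a_0=\chi_{[0,\theta)}$, a discontinuous function that only makes sense after resolving $\mathcal{Z}$, together with a nonzero component $a_1$ on $\gpd^{(1)}$. Your cokernel $C(S^1,\R)/\operatorname{im}(r_*-s_*)$ cannot produce this class. The whole novelty the paper advertises---getting refined information beyond measures of clopen sets when the base space is not zero-dimensional---lives precisely in the part of the construction your argument elides.
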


This allows us to define the \emph{Elliott invariant} of a locally compact, Hausdorff, \'etale groupoid with compact base space to be the quadruple $(H_{**}(\gpd), T(\gpd),\rho_H,[1])$, where $[1]\in H_0(\gpd)$ is the class of the unit.  This is modeled after the Elliott invariant of a unital $C^*$-algebra $A$, which we take in the form\footnote{This is not the original definition, but it is equivalent in the most important cases: see Definition \ref{c* ell} below and the following discussion.} $(A,T(A),\rho_K,[1])$, where $T(A)$ is the tracial state space of $A$, $\rho_K:T(A)\to \text{Hom}(K_0(A),\R)$ is the canonical pairing, and $[1]\in K_0(A)$ is the class of the unit.  We then say that a groupoid $\gpd$ is \emph{HK-good} if there is an isomorphism between these Elliott invariants (see Definition \ref{hk good} for a precise version).  Thus a groupoid being HK-good is a refinement of it satisfying the property in the HK-conjecture: one asks that $K$-theory and homology are isomorphic as in the HK conjecture, and also that there is a homeomorphism between the space of invariant measures and the tracial state space, and that these isomorphisms are compatible with the pairings and the class of the unit.  Note that the question of whether a given groupoid is HK-good makes sense for groupoids that are not minimal, ample, essentially principal or second countable, and indeed there are interesting examples that do not have these properties.  

These constructions and definitions are carried out in Section \ref{Sec:pair}.  In Section \ref{Sec:Irrational}, we compute the pairing directly from the definitions for the transformation groupoids arising from irrational rotation actions on the circle; it gives the answer one expects by analogy with the $C^*$-algebra case.  This computation is actually a special case of more general theorems, proved using more sophisticated machinery later in the paper, but we hope that giving a direct computation builds intuition.

\begin{remark}
(This remark was inspired by comments of Ian Putnam).  If $\gpd$ is a \emph{smooth} \'{e}tale groupoid, one could also define a pairing of $H_*(\gpd)$ with invariant measures on $\gpd^{(0)}$ as follows.  First, use that invariant measures define classes in the zero dimensional cyclic cohomology groups $HC^0(C_c^\infty(\gpd))$ (i.e.\ traces) of the smooth convolution algebra $C_c^\infty(\gpd)$.  Next use the isomorphism of groupoid homology with (periodic) cyclic homology of the smooth convolution algebra  $HP_*(C_c^\infty(\gpd))$ established by Crainic and Moerdijk \cite[Proposition 6.10]{Crainic:2000aa}, and the usual pairing between cyclic homology and cyclic cohomology.  It seems interesting to compare this with our pairing, and also to consider higher-dimensional cyclic cocycles, and maybe and other smooth subalgebras, from this point of view.
\end{remark}

Having established the basic machinery above, the rest of the paper is principally motivated by the following question.

\begin{question}\label{main q}
Which $C^*$-algebras have HK-good groupoid models?
\end{question} 

Of course, if the given $C^*$-algebra does not admit any \'etale groupoid model, then it cannot admit an HK-good model.  $C^*$-algebras with no \'{e}tale groupoid model do exist, see \cite[Section 4]{Li:2017aa}.  Li has shown that all classifiable $C^*$-algebras (i.e.\ those classified by their Elliott invariant as discussed above) admit a twisted groupoid model \cite{Li:2018yv}. However, it is not known if the twist can be removed. We optimistically conjecture that the twist can be removed and / or that a suitable variant of groupoid homology can be defined in the twisted case, and that the groupoid model can be chosen to be HK-good.

\begin{conjecture}\label{main con}
Any unital classifiable $C^*$-algebra admits an HK-good groupoid model.
\end{conjecture}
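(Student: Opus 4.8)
The plan is to attack the conjecture through the \emph{range of the Elliott invariant}. Since the $C^*$-algebras in question are by hypothesis classified by the quadruple $(K_*(A),T(A),\rho_K,[1])$, it suffices to produce, for each isomorphism class of such invariants, a single untwisted locally compact, Hausdorff, \'etale groupoid $\gpd$ with compact base space such that (i) $C^*_r(\gpd)$ is again unital and classifiable, (ii) the $C^*$-algebraic Elliott invariant of $C^*_r(\gpd)$ equals the prescribed one, and (iii) $\gpd$ is HK-good, i.e.\ the groupoid invariant $(H_{**}(\gpd),T(\gpd),\rho_H,[1])$ built from Proposition~\ref{intro pair} matches the $C^*$-algebraic invariant of $C^*_r(\gpd)$. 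Granting the abstract classification theorem and the known description of the range of admissible invariants, the conjecture reduces entirely to this realization-with-HK-goodness statement.

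For the construction in (i)--(ii) there are two natural inputs. The first is Li's theorem that every classifiable $A$ admits a twisted \'etale groupoid model; here the task is to show the twist can be removed, or trivialized after replacing the model, producing an honest \'etale groupoid with the same $C^*$-algebra. The second, which I would pursue in parallel, is to build models by hand from minimal dynamics: transformation groupoids of minimal actions together with the orbit-breaking subgroupoids of the first listed author, Putnam, and Strung, whose $C^*$-algebras are already known to be classifiable and whose HK-goodness the present paper establishes in special cases. The strategy here is to promote those special computations to a \emph{range} statement --- to show that as the dynamical data (base space, action, and orbit-breaking set) vary, the resulting HK-good groupoids realize every admissible classifiable invariant.

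For (iii) the verification proceeds in three stages, using the more sophisticated machinery developed later in the paper. First, identify the trace simplex: $T(\gpd)$, the invariant probability measures on the base, should be shown homeomorphic to $T(C^*_r(\gpd))$ --- for the dynamical models this is the standard correspondence between invariant measures and traces. Second, compute $H_{**}(\gpd)$ and compare it to $K_*(C^*_r(\gpd))$; for the transformation and orbit-breaking groupoids this runs through the spectral sequence for groupoid homology and a Pimsner--Voiculescu type comparison. Third, and most delicate at the level of the pairing, check that the chosen isomorphism $H_0(\gpd)\cong K_0(C^*_r(\gpd))$ intertwines $\rho_H$ with $\rho_K$ and sends $[1]$ to $[1]$; Proposition~\ref{intro pair} guarantees the groupoid pairing exists and is canonical, so the content is its \emph{compatibility} with the trace--$K$-theory pairing under the identifications already made.

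The main obstacle is the one the authors flag explicitly: removing the twist from Li's model. Absent a general untwisting theorem, one is thrown back on the dynamical route, and there the difficulty concentrates in two places. The first is genuinely realizing the full range of invariants --- arbitrary $K_1$ and arbitrary ordered $K_0$ together with a compatible trace pairing --- by untwisted \'etale groupoids, which is not known in the generality required. The second is controlling higher homology: the failure of the HK-conjecture in general comes precisely from contributions of $H_n(\gpd)$ for $n\geq 2$ and from potential degree shifts in the comparison with $K$-theory, so the models must be engineered so that the homological spectral sequence degenerates in the way needed for $H_{**}(\gpd)\cong K_*(C^*_r(\gpd))$ to hold on the nose, together with the matching of the pairing and the unit. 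It is the simultaneous control of all four pieces of the Elliott invariant, across the entire classifiable range, by a single geometric model that makes the conjecture hard.
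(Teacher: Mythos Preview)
This statement is a \emph{conjecture}, not a theorem: the paper does not prove it, and explicitly presents it as open (``We optimistically conjecture\ldots''). There is therefore no ``paper's own proof'' to compare against.

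Your proposal is not a proof either, and you say as much: you correctly identify that the conjecture reduces to a realization-with-HK-goodness problem, you outline the two natural avenues (untwisting Li's models, or extending the dynamical constructions to cover the full range of invariants), and then you state the obstacles --- removing the twist is open, and realizing arbitrary invariants by untwisted \'etale groupoids with controlled homology is not known. This is an accurate summary of the state of play and aligns with the paper's own discussion surrounding the conjecture, but it is a research plan, not an argument. The genuine gap is simply that neither route is currently known to go through: the untwisting step has no proof, and the dynamical route (even with the orbit-breaking machinery the paper develops) does not presently reach every admissible Elliott invariant. Your proposal names these gaps honestly rather than closing them.
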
 

Although not formulated this way, there are already partial positive results in the literature. For example, \cite[Theorem 4.10]{Matui:2017hq} essentially establishes Conjecture \ref{main con} for AF groupoids, and \cite[Theorem 4.14]{Matui:2017hq} essentially establishes Conjecture \ref{main con} for $C^*$-algebras associated to shifts of finite type.  Another class of examples comes from \cite[Corollary 4.20]{Bonicke:2021aa}: this shows that if $\gpd$ is an ample, second countable, principal \'{e}tale groupoid with compact base space and dynamic asymptotic dimension at most one, then it is an HK-good model for its groupoid $C^*$-algebra\footnote{\label{bad bbgw}The statement of \cite[Corollary 4.20]{Bonicke:2021aa} also claims that the isomorphism between groupoid homology and $K$-theory ``clearly'' restricts to a bijection on positive elements.  This is not clear to us, or to the (other) authors of \cite{Bonicke:2021aa}: the statement about positive cones seems likely to be true, but needs additional arguments.}. As an application of this result, Reardon \cite{Reardon:PhDthesis} has shown that the groupoids constructed by Putnam \cite{Putnam:2018aa} have dynamic asymptotic dimension one and hence are HK-good. Furthermore, any results showing that the HK-conjecture holds for a groupoid whose reduced $C^*$-algebra is purely infinite implies that the given groupoid is HK-good (there are no traces/measures/pairings to consider in this case) up to checking that the isomorphism between $K$-theory and homology preserve the class of the unit, see for example \cite{Ortega:2020aa}.  Finally, we note that the results of Guo-Proietti-Wang \cite{Guo:2024ab} on mapping tori associate an Elliott invariant to actions of $\Z^d$ on spaces in a way that is closely related to ours, although their groupoid Elliott invariant is defined in more index-theoretic terms.

While our broad theme is Question \ref{main q} above, the immediate goal of the present paper is to study specific examples coming from actions of the integers, and from orbit breaking constructions applied to actions of the integers.  Here are some sample theorems.

\begin{theorem}
Let $X$ be a compact Hausdorff space equipped with an action of the integers.  Assume that $X$ is a $d$-sphere, a $d$-torus, or has covering dimension at most three.  Then the transformation groupoid $\Z\ltimes X$ is HK-good.
\end{theorem}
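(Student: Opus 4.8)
The plan is to verify the four ingredients of HK-goodness—an isomorphism $K_*(C^*_r(\Z\ltimes X))\cong H_{**}(\Z\ltimes X)$, a homeomorphism $T(C^*_r(\Z\ltimes X))\cong T(\Z\ltimes X)$, compatibility of these with the pairings $\rho_K$ and $\rho_H$, and preservation of $[1]$—by computing the two invariants through parallel exact sequences and then matching the pairings. Writing $\alpha$ for the homeomorphism generating the action and $A=C^*_r(\Z\ltimes X)=C(X)\rtimes\Z$, the homeomorphism of trace simplices is classical: every tracial state of $A$ restricts through the canonical conditional expectation onto $C(X)$ to an $\alpha$-invariant Borel probability measure, and $\mu\mapsto \mu\circ E$ inverts this, giving a canonical bijection $T(A)\cong T(\Z\ltimes X)$ that is manifestly affine and continuous. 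So the real work is to produce an isomorphism of the $K$-theoretic and homological data compatible with this identification.

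For the homological side I would compute the Crainic--Moerdijk homology of $\Z\ltimes X$ using the spectral sequence $E^2_{p,q}=H_p\big(\Z;\check H^q(X)\big)$ arising from the bar resolution in the group direction and sheaf (\v{C}ech) cohomology in the space direction. Since $\Z$ has cohomological dimension one, only the columns $p=0,1$ survive, with $H_0(\Z;M)=M_\Z$ and $H_1(\Z;M)=M^\Z$ the coinvariants and invariants under $\alpha^*$; the sequence collapses at $E^2$, and for the spaces under consideration the resulting filtration splits. Collecting even and odd total degrees gives
\[
H_{\mathrm{even}}(\Z\ltimes X)\cong\bigoplus_{q\ \mathrm{even}}\check H^q(X)_\Z\ \oplus\ \bigoplus_{q\ \mathrm{odd}}\check H^q(X)^\Z,
\]
and symmetrically for $H_{\mathrm{odd}}$ with the roles of coinvariants and invariants exchanged.

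On the $C^*$-side the Pimsner--Voiculescu sequence for $C(X)\rtimes\Z$ breaks, for each parity $i$, into a short exact sequence $0\to\operatorname{coker}(1-\alpha_*)|_{K^i(X)}\to K_i(A)\to\ker(1-\alpha_*)|_{K^{i-1}(X)}\to 0$. Here the three hypotheses on $X$ enter: for a $d$-sphere, a $d$-torus, or a space of covering dimension at most three the Atiyah--Hirzebruch spectral sequence collapses—by direct computation for spheres and tori, and because the first potentially nonzero differential $d_3$ has no room to act when $\dim X\le 3$—and the relevant filtration quotients $\check H^0$ and $\check H^1$ are free, so there are no extension obstructions. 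Thus the Chern character is an \emph{integral} isomorphism $K^0(X)\cong\check H^{\mathrm{even}}(X)$, $K^1(X)\cong\check H^{\mathrm{odd}}(X)$ intertwining $1-\alpha_*$ with $1-\alpha^*$. Comparing with the homology computation yields $K_*(A)\cong H_{**}(\Z\ltimes X)$, and one checks directly that $[1]\in K_0(A)$, which lies in the image of $K^0(X)$ and hits the generator of $\check H^0(X)_\Z$, is carried to the unit class in $H_0(\Z\ltimes X)$.

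The main obstacle is the compatibility of the pairings. A trace from an invariant measure $\mu$ pairs with the $C(X)$-part of $K_0(A)$ by integration of functions against $\mu$, corresponding on the homological side to $\mu$ evaluated on the $\check H^0(X)_\Z$-summand; but $K_0(A)$ also contains classes produced by the Pimsner--Voiculescu boundary map—the analogues of the Rieffel projection in the irrational rotation case—coming from $\ker(1-\alpha_*)\subset K^1(X)$, and these pair with $\mu$ through an index-type formula. I expect to show that this second pairing is exactly the Schwartzman asymptotic-cycle pairing of $\mu$ with the $\alpha$-invariant odd cohomology, which is precisely how $\rho_H$ evaluates on the $\check H^{\mathrm{odd}}(X)^\Z$-summand of $H_0$. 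The crux is therefore a clean identification of the $C^*$-index pairing with the groupoid-homological pairing, both realized as integration of invariant (co)cycles against $\mu$; the direct computation for irrational rotations in Section~\ref{Sec:Irrational} is the prototype, and the general argument must show that the Chern-character/Pimsner--Voiculescu isomorphism constructed above can be arranged to intertwine $\rho_K$ and $\rho_H$ while simultaneously respecting the unit class.
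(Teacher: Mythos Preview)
Your outline for producing the group isomorphism $K_*(A)\cong H_{**}(\Z\ltimes X)$ is broadly in line with the paper's---both sides sit in parallel Pimsner--Voiculescu type short exact sequences (Proposition~\ref{free gp ses} for homology, the classical PV sequence for $K$-theory), and an integral Chern isomorphism for $X$ induces matching maps on invariants and coinvariants, whence the five lemma gives the middle isomorphism. However, your claim that ``the resulting filtration splits'' for spaces of covering dimension at most three is not justified: $K^*(X)$ can have torsion, so $K^{*+1}(X)^\Z$ need not be free and the PV extension need not split. The paper sidesteps this by working not with $X$ but with the mapping torus $M_\alpha$ (Corollary~\ref{int cor}(i)): one shows that the Chern class map $c$ is a homomorphism on $M_\alpha$ (using that cup products vanish on the relevant classes, Lemma~\ref{chb hom}) and then applies the five lemma to the commutative ladder of PV sequences over $M_\alpha$, never needing a splitting.

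The genuine gap is the pairing. You correctly identify this as ``the main obstacle'' and say you ``expect to show'' that the $C^*$-index pairing on the PV boundary classes agrees with a Schwartzman-type pairing on $\check H^{od}(X)^\Z$---but you give no argument, and this is exactly where the substance lies. The paper's route is to pass through the mapping torus: the Connes--Thom isomorphism identifies $K_*(C(X)\rtimes\Z)$ with $K^{*+1}(M_\alpha)$ (Proposition~\ref{mc basics}), and Connes' computation shows that under this identification the trace $\tau$ becomes the de la Harpe--Skandalis determinant functional $\Lambda_\tau[u]=\frac{1}{2\pi i}\int_0^1\tau(u'u^*)\,dt$ on $K^1(M_\alpha)$ (Proposition~\ref{trace agree}). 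An explicit cycle-level isomorphism $H_0(\Z\ltimes X)\to H^1(M_\alpha)$ (Lemma~\ref{exp low}) then matches $\rho_H$ with the same determinant formula (Lemma~\ref{trace agree 2}), and compatibility with the classical Chern character on $M_\alpha$ in degree one closes the diagram (Theorem~\ref{main seq}). Without something playing the role of this mapping-torus bridge, the pairing step in your proposal is a statement of what must be proved rather than a proof.
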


See Sections \ref{sec:HomZAct}, \ref{Sec:int pair}, and \ref{Sec:crossed} for more details: we actually go a bit further than the spaces above, in order to cover the crossed products constructed by the first listed author, Putnam, and Strung in \cite{Deeley:2024aa}.  

We also cover the orbit-breaking systems from \cite{Deeley:2024aa} as in the next result: see Sections \ref{Sec:point}, and \ref{Sec:Cantor} for details.

\begin{theorem}
The groupoids obtained by orbit breaking from point-like systems and from Floyd-type systems (both as in \cite{Deeley:2024aa}) are HK-good, as long as the subspace $Y$ one breaks along has covering dimension at most three.
\end{theorem}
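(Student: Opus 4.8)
The plan is to bootstrap from the transformation-groupoid case established in the previous theorem. The orbit-breaking groupoid $\gpd_Y$ has the same base space $X$ as the transformation groupoid $\Z\ltimes X$, and is obtained from it by deleting precisely those arrows whose orbit segment crosses $Y$; at the level of convolution algebras this realizes $C^*_r(\gpd_Y)$ as a large subalgebra of $C(X)\rtimes\Z$. The strategy is to compare $\gpd_Y$ with $\Z\ltimes X$ through two parallel long exact sequences, one in Crainic--Moerdijk homology and one in $K$-theory, whose ``correction terms'' are built only from the broken subspace $Y$, and then to invoke the hypothesis $\dim Y\le 3$ to match these correction terms.

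First I would set up the homological side. Using the Crainic--Moerdijk spectral sequence for the $\Z$-action (which collapses to the coinvariants/invariants presentation $0\to H_n(X)_{\Z}\to H_n(\Z\ltimes X)\to H_{n-1}(X)^{\Z}\to 0$ governed by $1-\varphi_*$), I would produce the analogous presentation for $\gpd_Y$ and organize the difference between the two into a long exact sequence whose correction term is $H_*(Y)$. On the analytic side I would write down the matching six-term sequence for $K_*(C^*_r(\gpd_Y))$: this is the orbit-breaking refinement of the Pimsner--Voiculescu sequence, with $K^*(Y)$ appearing as the correction term in place of $H_*(Y)$. The heart of the argument is then a diagram-chase: the Chern character provides a ladder between the homology sequence and the $K$-theory sequence, the columns over $\Z\ltimes X$ and $C(X)\rtimes\Z$ being isomorphisms by the already-established HK-goodness of $\Z\ltimes X$, and the columns over $Y$ being isomorphisms precisely because $\dim Y\le 3$ forces the Atiyah--Hirzebruch spectral sequence of $Y$ to degenerate without extension problems (the same mechanism that powered the transformation-groupoid theorem). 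The five lemma then delivers an isomorphism $H_{**}(\gpd_Y)\cong K_*(C^*_r(\gpd_Y))$.

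Next I would install the tracial data. Because orbit breaking only removes arrows and $Y$ is null for every invariant measure of a minimal system, every $\varphi$-invariant probability measure is $\gpd_Y$-invariant and conversely, giving $T(\gpd_Y)=T(\Z\ltimes X)$; since $C^*_r(\gpd_Y)$ is a large subalgebra of $C(X)\rtimes\Z$, restriction is a bijection $T(C(X)\rtimes\Z)\to T(C^*_r(\gpd_Y))$, so the tracial simplices are canonically identified on both sides. It then remains to check that the isomorphism constructed above intertwines the pairings $\rho_H$ and $\rho_K$ and sends $[1]$ to $[1]$; for the pairing I would use naturality of the construction in Proposition \ref{intro pair} together with the compatibility already known for $\Z\ltimes X$, tracking the measure pairing through the degree-zero part of the homology sequence, and for the unit I would note that the class $[1]\in H_0$ is pulled back from $\Z\ltimes X$ along the inclusion. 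The two families of examples from \cite{Deeley:2024aa} are then handled uniformly, the only difference being the geometry of $Y$ (point-like versus Floyd-type), which enters only through the dimension bound.

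The step I expect to be the main obstacle is the compatibility of the two exact sequences under the Chern character, together with the extension problems hidden in them. An abstract isomorphism of groups does not suffice: one must verify that the connecting homomorphisms correspond, that the comparison map is genuinely natural rather than chosen term-by-term, and that the class of the unit and the ordered structure survive the diagram chase. This last point is exactly the gap flagged in the footnote to \cite{Bonicke:2021aa}, and handling the positive cone and the order-unit carefully---so that one obtains an isomorphism of Elliott invariants and not merely of the underlying groups---is where the real work will lie.
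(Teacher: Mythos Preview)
Your overall architecture---two parallel exact sequences, one in groupoid homology and one in $K$-theory, with correction terms coming from $Y$---matches the paper. The paper's Sections~7--8 build exactly the homology long exact sequence you describe (a Matui-type sequence for the open inclusion $\mathcal{R}_Y\subseteq\mathcal{R}_\varphi$, with relative term identified with $H^*(Y)$), and on the $K$-theory side it uses Putnam's orbit-breaking six-term sequence with correction term $K^*(Y)$. The $\dim Y\le 3$ hypothesis is used, as you predict, to invoke an integral Chern isomorphism $K^*(Y)\cong H^{**}(Y)$.

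The substantive difference is how the middle terms are compared. You propose a five-lemma argument through a Chern-character ladder, but the five lemma requires a map $K_*(C^*_r(\mathcal{R}_Y))\to H_{**}(\mathcal{R}_Y)$ compatible with both sequences, and the paper does \emph{not} construct one; indeed the introduction flags building such Chern characters as an open direction. Instead the paper exploits the fact that in both families the base space $X$ is cohomologically almost trivial ($H^i(X)=0$ for $i\ge 1$ and $K^0(X)=C(X,\Z)=H^0(X)$, $K^1(X)=0$). This forces the long exact sequences to degenerate completely: one gets $H_0(\mathcal{R}_Y)\cong H_0(\mathcal{R}_\varphi)$ and $H_i(\mathcal{R}_Y)\cong H^{-i}(Y)$ for $i<0$, while on the $K$-side one gets a short exact sequence $0\to\widetilde K^0(Y)\to K_0(C^*_r(\mathcal{R}_Y))\to K_0(C^*_r(\mathcal{R}_\varphi))\to 0$ together with $K_1(C^*_r(\mathcal{R}_Y))\cong K^1(Y)$. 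The paper then needs the $K_0$ sequence to \emph{split}---this is an explicit hypothesis in Proposition~\ref{orbit break main}, automatic in the point-like case and supplied by a theorem of Putnam in the Floyd-type case---after which both sides are computed outright and matched term-by-term using HK-goodness of $\mathcal{R}_\varphi$ (via Corollary~\ref{int cor}) and the integral Chern isomorphism for $Y$. Your plan does not mention these splittings, and without either them or a genuine comparison map the extension problems in your ladder are not resolved.

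Two smaller points. Your tracial argument is essentially the paper's: both groupoids are principal with the same base space, so $T(\mathcal{R}_Y)\cong T(\mathcal{R}_\varphi)$, and the pairings are compared by pushing through to $\mathcal{R}_\varphi$ where HK-goodness is known; the paper cites \cite{Giordano:2018aa} for the trace bijection rather than arguing via null sets. Finally, your worry about the positive cone is misplaced here: the paper's Definition~\ref{hk good} of HK-good does not include order structure, only the groups, the unit class, the trace simplex, and the pairing, so the footnote~\ref{bad bbgw} issue does not arise.
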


To obtain these results, we also explore the relationship between the pairings, ensure that groupoid homology works as expected for groupoids with infinite dimensional base space, and develop machinery for computing homology via long exact sequences, spectral sequences, and a relation to group hyperhomology. These results should be useful in other applications and build on previous work in \cite{Crainic:2000aa, Deaconu:2021aa, Matui:2017hq, Miller:2025aa, Miller:2024aa, Ortega:2020aa, Pimsner:1985aa}.  

More specifically, for actions by the integers, we proceed via a general construction.  First, in Section \ref{sec:HomZAct} we give a general computation of the groupoid homology; this is a close analogue of the Pimsner-Voiculescu sequence from $K$-theory (compare with \cite{Deaconu:2021aa, Ortega:2020aa}). In Section \ref{Sec:int pair} we show moreover that groupoid homology for an integer action is isomorphic to the cohomology of the mapping torus; the corresponding result for $K$-theory on the other hand is well-known, so we are able to construct a `Chern character' from the $K$-theory of the crossed product to the (rational) homology of the groupoid using the classical Chern character for the mapping torus.  We show that this Chern character is compatible with the pairings with invariant measures using ideas of Connes \cite{Connes:1981ab}.  We show moreover that it is an integral isomorphism in many cases (but not always), roughly corresponding to actions on spaces where the classical Chern character is an integral isomorphism.  This establishes that many such groupoids are HK-good: we discuss explicit examples in Section \ref{Sec:crossed}.

For the orbit breaking examples, one must use spaces of low dimension and the fact that the Chern character is an isomorphism from the $K$-theory of the space to its cohomology. In this case, the relevant spaces of low dimension are where the orbit breaking is done. The key tool is a long exact sequence in homology (which is based on work of Matui in the ample case \cite{Matui:2022aa}) that is analogous to a long exact sequence in $K$-theory due to Putnam, \cite{Putnam:1989hi}, \cite[ Example 2.6]{Putnam:1998aa}, and \cite{Putnam:2021aa}.  The extension of Matui's work to the non-ample case is carried out in Section \ref{incl sec}.  This is then specialized to orbit breaking examples in Section \ref{obsec}, and further specialized to point-like systems and Cantor-like systems in Sections \ref{Sec:point} and \ref{Sec:Cantor} respectively.

As mentioned above, we have dropped the requirement that the groupoid is ample. As such, there are many examples of groupoids that are not HK-good; this occurs essentially whenever one replaces the low dimensional space in our constructions with one where the Chern character fails to be an integral isomorphism. This phenomenon is discussed in Example \ref{Ex:CrossNotGood} and Remarks \ref{rem:HKbad} and \ref{Rem:NotHKgoodCantor}.   

On the other hand, our results indicate that for a given $C^*$-algebra there can be many different HK-good groupoid models. As an explicit example, we discuss three HK-good models for the irrational rotation $C^*$-algebra (it will admit other HK-good models as well): see Example \ref{Ex:ManyHKgood} for further details.  Slightly surprisingly, we show these three HK-good models for the irrational rotation algebra have different groupoid homology: the groupoid homology only becomes the same after reducing from a $\Z$-graded theory to a $\Z/2$-graded theory in order to match $K$-theory.

Finally, it is also worth mentioning that a given $C^*$-algebra can have both HK-good models and others that are not HK-good. The reader can see Example \ref{Ex:CrossNotGood} and \ref{Ex:CrossedGoodSpecialCase} for an example where this situation occurs in the context of minimal integer actions. 


Let us make some brief comments on the methods we use and the background needed to read this paper.  Crainic-Moerdijk homology for non-ample groupoids necessitates some amount of algebraic topology and homological algebra.  Indeed, the basic definitions are in terms of double complexes built from sections of equivariant sheaves; thus we use sheaf theory, and the presence of double complexes naturally leads us to use spectral sequences.  On the other hand, we avoid triangulated and derived categories: in general the methods in this paper do not really require any substantial machinery from sheaf theory or homological algebra that was developed after the 1950s.  We have tried to summarize the relevant background in Section \ref{secPrelim} below, and have also included some more material on homological algebra, group hyperhomology, and Chern characters in three appendices.

Let us finally remark that we believe our construction of a Chern character for integer actions is a shadow of something much more general.  Indeed, we have constructed a Chern character for a large class of transformation groupoids in \cite{Deeley:2024aa}, and very interesting recent results of Proietti-Yamashita \cite{Proietti:2025aa} construct a Chern character for many ample groupoids.  The `best' way to establish that a groupoid is HK-good should be to construct an appropriate Chern character, and show it is compatible with the relevant pairings.  We hope to see further progress on these issues: a general Chern character could open up the computation of the Elliott invariant of groupoid $C^*$-algebras to homological methods, and these are typically more powerful than proceeding directly through $K$-theory. 

\section*{Acknowledgments}
Both authors thank the University of Colorado Boulder and the University of \Hawaii\ at \Manoa\ for facilitating this collaboration. The first listed author thanks Alistair Miller and Karen Strung for some discussions that in part led to these results. These discussions took place while the first listed author was visiting the Fields Institute for the Thematic Program on Operator Algebras and Applications in the fall of 2023.  Both authors thank Valerio Proietti and Ian Putnam for useful comments on a preliminary draft.  We also thank Mikael R\o{}rdam for pointing out the issue in footnote \ref{bad bbgw} to the second author.



\section{Notation and Preliminaries} \label{secPrelim}

We start by introducing notational conventions for \'{e}tale groupoids: see for example \cite{Sims:2017aa} for background on \'{e}tale groupoids and their $C^*$-algebras. For a groupoid $\gpd$ we write $\gpd^{(0)}$ for the base space (also called the unit space) and $r,s:\gpd\to \gpd^{(0)}$ for the range and source maps.  An ordered pair $(g, h) \in\gpd\times \gpd$ is \emph{composable} if $s(g) = r(h)$ and its composition is denoted by $gh$.  The inverse of $g \in \gpd$ is denoted $g^{-1}$.  All groupoids considered in the present paper will be topological groupoids: we always assume the topologies are locally compact and Hausdorff.  A \emph{bisection} is a subset $B$ of $\gpd$ such that $r|_B:B\to r(B)$ and $s|_B:B\to s(B)$ are homeomorphisms.  We will also always assume that $\gpd$ \emph{\'etale}, meaning that there is a basis for the topology consisting of open bisections. Under these assumptions, $\gpd^{(0)}$ is a closed and open subset of $\mathcal{G}$ and there is a canonical Haar system given by counting measures. 

To a groupoid $\mathcal{G}$ satisfying the assumptions above one can associate its \emph{reduced groupoid $C^*$-algebra} using the method in \cite{Renault:1980fk}; see also \cite{Sims:2017aa} for an exposition focused on the (much simpler) special case of \'{e}tale groupoids.  The resulting $C^*$-algebra is denoted by $C^*_r(\mathcal{G})$.  

A groupoid $\mathcal{G}$ is \emph{ample} if its base space is totally disconnected (e.g., the Cantor set).  It  is \emph{principal} if for each $x \in \mathcal{G}^{(0)}$, $s^{-1}(x)\cap r^{-1}(x)=\{x\}$.  A subset $F$ of $\gpd^{(0)}$ is \emph{invariant} if $s(r^{-1}(F))=F$; in this case, we write $\gpd|_F$ for $r^{-1}(F)$, which is a subgroupoid of $\gpd$ with the induced operations.  Note that $\gpd|_F$ need not be locally compact or \'{e}tale when equipped with the subspace topology; it does, however, have these properties if $F$ is open.

A \emph{$\gpd$-space} is a locally compact Hausdorff topological space $X$ equipped with a continuous open map $\rho:X\to G^{(0)}$ called the \emph{anchor map} and a continuous \emph{action map}
$$
\gpd_s\!\times_\rho X\to X
$$
denoted $(g,x)\mapsto gx$ that satisfies $(gh)x=g(hx)$ where defined; here $\gpd_s\!\times_\rho X$ denotes the fibred product $\{(g,x)\in \gpd\times X\mid s(g)=\rho(x)\}$ equipped with the topology it inherits from $\gpd\times X$.

Many of the groupoids considered in this paper are constructed from group actions. Let $X$ be a locally compact Hausdorff space equipped with an action of a discrete group $G$ ($G$ will almost always be $\Z$ in this paper) by homeomorphisms. The associated \emph{transformation groupoid} is denoted $G\ltimes X$.  It is defined to be $G \times X $ as a topological space, with associated algebraic operations determined by:
\begin{enumerate}[(i)]
\item the base space is $\{e\}\times X\subseteq \gpd$ where $e$ is the identity element of $\gpd$ (usually, we just identify the base space with $X$);
\item the range and source of $(\gamma,x)$ are $\gamma x$ and $x$ respectively;
\item the composition $(\gamma, \alpha x) \cdot (\alpha, y)$ is equal to $(\gamma \alpha, y)$;
\item the inverse of $(\gamma, x)$ is $(\gamma^{-1}, \gamma x)$.
\end{enumerate}

We now recall the \emph{Crainic-Moerdijk homology} of a groupoid $\mathcal{G}$ following \cite[Section 3]{Crainic:2000aa}.  We will need to work in terms of sheaves: see for example \cite{Bredon:1997aa} or \cite{Kashiwara:1990aa} for general background on sheaves and sheaf cohomology.  We recall that a sheaf $\mathcal{S}$ over a space $X$ can be specified by either: 
\begin{enumerate}[(i)]
\item for each open set, one specifies an abelian group of sections $\Gamma(U;\mathcal{S})$ satisfying appropriate conditions as the open sets vary (see for example \cite[pages 6-7]{Bredon:1997aa} or \cite[Definition 2.2.1]{Kashiwara:1990aa});
\item an \emph{\'{e}tale space}, which is a (possibly non-Hausdorff) topological space $\mathcal{S}$ equipped with a local homeomorphism $\pi:\mathcal{S}\to X$ satisfying appropriate conditions (see for example \cite[Definition 1.2]{Bredon:1997aa}).
\end{enumerate}
In particular, for us a sheaf will always mean a sheaf of abelian groups.  For a sheaf $\mathcal{S}$ on a locally compact Hausdorff space $X$ and an open subset $U$ of $X$, we write $\Gamma_c(U;\mathcal{S})$ for the compactly supported sections of $\mathcal{S}$ over $U$: see for example \cite[pages 21-22]{Bredon:1997aa} or \cite[Line (2.5.2) on page 103]{Kashiwara:1990aa}.  

The most important example of a sheaf for us will be the sheaf $\mathcal{Z}$ whose sections $\Gamma(U;\mathcal{Z})$ are locally constant functions from $U$ to $\Z$.  Another useful example to bear in mind is the sheaf $\mathcal{R}$ whose sections $\Gamma(U;\mathcal{R})$ of continuous real-valued functions on $X$.

We recall some standard definitions.

\begin{definition}\label{csoft}(Compare for example \cite[Section II.9]{Bredon:1997aa} or \cite[Section 2.5]{Kashiwara:1990aa}.)
A sheaf $\mathcal{S}$ over a locally compact Hausdorff space $X$ is \emph{$c$-soft} if for any compact $K\subseteq X$, open set $U\supseteq K$, and section $s\in \Gamma(U;\mathcal{S})$ there is an open set $V$ with $K\subseteq V\subseteq U$, and a section $t\in \Gamma(X;\mathcal{S})$ such that the restrictions of $s$ and $t$ to $V$ are the same. 
\end{definition}

For example, the sheaf $\mathcal{R}$ discussed above is $c$-soft.  Indeed, given $K\subseteq U$ and $s\in \Gamma(U;\mathcal{R})$, choose open sets $V,W$ with $K\subseteq V\subseteq \overline{V}\subseteq W$ and $\overline{W}$ compact.  Use Urysohn's lemma to choose a continuous function $g:X\to [0,1]$ that is equal to one on $V$, and zero outside $W$ (we use here that $\overline{W}$ is normal, even though $X$ might not be).  Then define $t$ to be equal to $gs$ on $U$, and zero outside $U$.  On the other hand, the sheaf $\mathcal{Z}$ is $c$-soft if $X$ is totally disconnected, but usually not for more general spaces.





We now specialize to equivariant sheaves on groupoids.  

\begin{definition}\label{gsheaf}(Compare \cite[2.1]{Crainic:2000aa}).
Let $\gpd$ be a locally compact, Hausdorff, \'{e}tale groupoid.  
A \emph{$\mathcal{G}$-sheaf} is a sheaf $\mathcal{S}$ on $\mathcal{G}^{(0)}$ such that the \'{e}tale space of $\mathcal{S}$ is a $\mathcal{G}$-space in such a way that the the anchor map $\rho:\mathcal{S}\to \mathcal{G}^{(0)}$ and the \'{e}tale space structure map $\pi:\mathcal{S}\to \gpd^{(0)}$ are the same.
\end{definition}

For example, the sheaves $\mathcal{Z}$ and $\mathcal{R}$ defined above are always $\gpd$-sheaves on $\gpd^{(0)}$.  Note also that if $\gpd=G\ltimes X$ is a transformation groupoid associated to an action of a group $G$ on a locally compact space $X$, then a $\gpd$-sheaf on $\gpd^{(0)}=X$ is the same thing as a \emph{$G$-sheaf} (also called an \emph{equivariant sheaf}) on $X$ in the sense of \cite[Section 5]{Grothendieck:1957aa}.

Following \cite[1.7]{Crainic:2000aa}, for each $n\in \N$ write $\mathcal{G}^{(n)}$ for the collection of `composable strings'
$$
x_0 \stackrel{g_1}{\leftarrow} x_1  \stackrel{g_2}{\leftarrow} \cdots  \stackrel{g_n}{\leftarrow} x_n
$$
where $x_i\in \mathcal{G}^{(0)}$ and each $g_i\in \mathcal{G}$ satisfies $r(g_i)=x_{i-1}$ and $s(g_i)=x_i$.  Note that $\mathcal{G}^{(0)}$ has the usual meaning, and $\gpd^{(1)}=\gpd$.  We equip $\gpd^{(n)}$ with the topology it inherits as a subspace of $\mathcal{G}^n$ for $n\geq 1$, and the usual topology if $n=0$.    For each $n$, define 
\begin{equation}\label{taun map}
\tau_n:\mathcal{G}^{(n)}\to \mathcal{G}^{(0)},\quad (x_0 \stackrel{g_1}{\leftarrow} \cdots  \stackrel{g_n}{\leftarrow} x_n)\mapsto x_0
\end{equation}
and for a $\gpd$-sheaf $\mathcal{S}$ on $\mathcal{G}^{(0)}$, define 
\begin{equation}\label{s sub n}
\mathcal{S}_n:=\tau_n^*\mathcal{S}
\end{equation} 
to be the pullback sheaf (also called the inverse image sheaf: see for example \cite[page 12]{Bredon:1997aa} or \cite[Definition 2.3.1]{Kashiwara:1990aa}). 


For the next definition, we work in terms of double complexes: for definiteness, we adopt the conventions on double complexes and associated total complexes in \cite[pages 7-9]{Weibel:1995ty}.

\begin{definition}(Crainic and Moerdijk, \cite[3.4]{Crainic:2000aa}) \label{gpd hom}
Let $\mathcal{A}$ be a $\mathcal{G}$-sheaf on $\mathcal{G}^{(0)}$.  Following Crainic and Moerdijk \cite[3.3]{Crainic:2000aa} (or use Lemma \ref{inj ginj}) there exists a resolution
\begin{equation}\label{reso}
0\to \mathcal{A}\to \mathcal{S}^0\to \mathcal{S}^1\to \cdots 
\end{equation}
of $\mathcal{A}$ by $c$-soft $\mathcal{G}$-sheaves; in other words, the sequence in line \eqref{reso} is exact in the category of $\gpd$-sheaves, and each $\mathcal{S}^i$ is $c$-soft.  

The \emph{groupoid homology of $\mathcal{G}$ with coefficients in $\mathcal{A}$}, denoted $H^*(\gpd;\mathcal{A})$, is the homology of the (direct sum total complex associated to the) double complex 
\begin{equation}\label{tot com}
\xymatrix{ \Gamma_c(\mathcal{G}^{(0)};\mathcal{S}^0_0) \ar[d] & \Gamma_c(\mathcal{G}^{(0)};\mathcal{S}^0_1) \ar[d] \ar[l]^\delta & \Gamma_c(\mathcal{G}^{(2)};\mathcal{S}^0_2) \ar[d] \ar[l]^\delta & \cdots \ar[l]^-\delta \\
\Gamma_c(\mathcal{G}^{(0)};\mathcal{S}^1_0) \ar[d] & \Gamma_c(\mathcal{G}^{(0)};\mathcal{S}^1_1) \ar[d] \ar[l]^\delta & \Gamma_c(\mathcal{G}^{(2)};\mathcal{S}^1_2) \ar[d] \ar[l]^\delta & \cdots \ar[l]^-\delta \\
\vdots & \vdots & \vdots & & \\
}
\end{equation}
where the vertical arrows are functorially induced from the resolution in line \eqref{reso}, and the horizontal arrows are as in \cite[1.7]{Crainic:2000aa}\footnote{We will not need the general definition, but discuss some special cases in Remark \ref{horiz arrows} below.}.
\end{definition}

Let us give some remarks on the definition.

\begin{remark}
Crainic and Moerdijk defined groupoid homology $H_*(\gpd;\mathcal{A})$ under a finite-dimensionality assumption on the base space that guarantees the existence of a finite length resolution of $\mathcal{A}$ by $c$-soft $\gpd$-sheaves as in line \eqref{reso}; they then showed that it does not depend on the choice of finite-length resolution.  Note that under their finite-dimensionality assumption there is no difference between the direct sum and direct product total complex, so Crainic and Moerdijk do not make explicit which they are using.  We show that groupoid homology does not depend on the choice of resolution as in line \eqref{reso} for locally compact, Hausdorff, \'{e}tale groupoids with possibly infinite-dimensional base space in Proposition \ref{cm id wd}.  
\end{remark}

\begin{remark}
To be specific about conventions: the complex in line \eqref{tot com} is a fourth quadrant complex with the columns indexed by $0,1,2,...$ and rows indexed by $0,-1,-2,...$.  In particular, for $n\in \Z$, the $n^{\text{th}}$ chain group of the total complex is given by 
$$
\bigoplus_{p-q=n} \Gamma_c(\gpd^{(p)};\mathcal{S}_p^q)
$$
with total differential defined as the sum of the vertical and horizontal differentials as on \cite[page 8]{Weibel:1995ty}.  Note that $H_n(\gpd;\mathcal{A})$ will typically be non-zero for both positive and negative values of $n$. 
\end{remark}

\begin{remark}\label{horiz arrows}
We will not need to describe the horizontal arrows in line \eqref{tot com} in full generality, but let us mention a special case for relatively simple sheaves.  Assume for simplicity that $\mathcal{S}$ is a sheaf of $\C$-valued functions (maybe satisfying some additional properties) on $\gpd^{(0)}$, whence each $\mathcal{S}^i$ also has sections given by $\C$-valued functions.  For $n\geq 1$ and $i\in \{0,...,n\}$, following \cite[1.7]{Crainic:2000aa}, we define $d_i:\mathcal{G}^{(n)}\to \mathcal{G}^{(n-1)}$ by 
$$
d_i(x_0 \stackrel{g_1}{\leftarrow} \cdots  \stackrel{g_n}{\leftarrow} x_n):=\left\{\begin{array}{ll} x_1 \stackrel{g_2}{\leftarrow} \cdots  \stackrel{g_n}{\leftarrow} x_n~, & i=0 \\
x_0 \stackrel{g_1}{\leftarrow} \cdots x_{i-1} \stackrel{g_ig_{i+1}}{\longleftarrow} x_{i+1}\cdots   \stackrel{g_n}{\leftarrow} x_n~, & 1\leq i<n \\
x_0 \stackrel{g_1}{\leftarrow} \cdots    \stackrel{g_{n-1}}{\leftarrow} x_{n-1}~, & i=n\end{array}\right..
$$
For $t\in \Gamma_c(\gpd^{(n)};\mathcal{S}_n^i)$ we define 
$$
(\delta_i t)(\underline{h}):=\sum_{\underline{g}\in d_i^{-1}(\underline{h})} t(\underline{g})
$$
(this makes sense as $d_i$ is a local homeomorphism and $t$ is compactly supported), and $\delta:=\sum_{i=0}^n (-1)^i\delta_i$.  Note in particular that for $t\in \Gamma_c(\gpd^{(1)};\mathcal{S}_1^i)$ and $x\in \gpd^{(0)}$ we have
\begin{equation}\label{first bound}
(\delta t)(x)=\sum_{g\in s^{-1}(x)}t(g) - \sum_{g\in r^{-1}(x)}t(g).
\end{equation}
\end{remark}

Going back to generalities, we make the following notational conventions.

\begin{definition}\label{z coeff}
Define $H_*(\mathcal{G}):=H_*(\gpd;\mathcal{Z})$.  Write $H_{**}(\gpd)$ for the $\Z/2$-graded homology theory with even and odd groups defined by
\begin{equation}\label{z2 grade}
H_{ev}(\gpd):=\bigoplus_{m\in \Z} H_{2m}(\gpd) \quad \text{and}\quad H_{od}(\gpd):= \bigoplus_{m\in \Z} H_{2m+1}(\gpd).
\end{equation}

\end{definition}

We will also need to discuss compactly supported sheaf cohomology of a locally compact Hausdorff space $X$: see for example \cite{Bredon:1997aa} or \cite{Kashiwara:1990aa} for background.  The sheaf cohomology of $X$ can be defined by taking a resolution 
$$
0\to \mathcal{Z}\to \mathcal{S}^0\to \mathcal{S}^1\to \cdots 
$$ 
of the sheaf $\mathcal{Z}$ of locally constant $\Z$-valued functions on $X$ by $c$-soft $\gpd$-sheaves, and defining $H_c^*(X)$ to be the homology of the associated complex
$$
\Gamma_c(X;\mathcal{S}^0)\to\Gamma_c(X;\mathcal{S}^1)\to \cdots 
$$
of compactly supported sections.  We will also use the notation $H^{**}_c(X)=H_c^{ev}(X)\oplus H^{od}_c(X)$ for the corresponding $\Z/2$ graded theory and its even and odd parts.  If $X$ is compact, we will generally drop the subscript ``$_c$'' and just write $H^*(X)$.   For `reasonable' spaces $X$, sheaf cohomology agrees with any other standard definition of cohomology.  For more exotic spaces, sheaf cohomology generally behaves better than other standard theories for our purposes: the interested reader might compare the sheaf cohomology of the Cantor space $X$ (for which $H^0(X)$ identifies with the group of continuous $\Z$-valued functions $C(X,\Z)$) with its singular cohomology (for which $H^0(X)$ identifies with the group of all functions from $X$ to $\Z$).

We conclude this section with two basic examples of groupoid homology to help orient the reader.  In both cases, the claimed identifications are true essentially by definition.

\begin{example}\label{spex}
Let $X$ be a locally compact Hausdorff space considered as a groupoid.  Then for any $n\in \Z$, $H_n(X)\cong H^{-n}_c(X)$, where the left hand side is Crainic-Moerdijk groupoid homology, and the right hand side is sheaf cohomology.  Thus groupoid homology of spaces is the same as compactly supported sheaf cohomology, but `reflected' to negative degrees.  As the notation on the left hand side looks like the homology of the space, this might cause confusion in some contexts; however, in this paper we will have no need to consider homology of spaces, Crainic-Moerdijk homology of trivial groupoids, or groupoid cohomology at all, so this should not lead to confusion.  
\end{example}

\begin{example}\label{gpex}
Let $\gpd=G$ be a discrete group.  Then $H_*(\gpd)$ agrees with the usual group homology $H_*(G)$, and this time the grading degrees match: this is essentially true by definition, or see Corollary \ref{cm hh} below for a proof of something more general.  
\end{example}

Combining these Examples \ref{spex} and \ref{gpex}, it is useful for intuition to think of groupoid homology as being built from `cohomology of spaces in negative degrees, and cohomology of groups in positive degrees'.

Let us finally mention some notational conventions on abstract (co)chain complexes in an abelian category (typically, the category of modules over a ring, or of $\gpd$-sheaves for some groupoid).  We will typically write an abstract chain complex as $C_\bullet$, with notation like ``$C_{\bullet+1}$'' meaning the chain complex whose $i^\text{th}$ object is $C_{i+1}$, and similarly for ``$C_{-\bullet}$'' with $i^\text{th}$ object $C_{-i}$ and so on.  We will also write $C^\bullet$ for cochain complexes.  Our (co)chain complexes will always be indexed by $\Z$, but we will occasionally write something like ``$(C_q)_{q\geq 0}$'' for a chain complex: this should be taken to mean a chain complex indexed by $\Z$, where $C_q=0$ for $q<0$.  

A map $f:C_\bullet\to D_\bullet$ between chain complexes is a sequence of maps $f_q:C_q\to D_q$ that is compatible with the boundary maps.  Such a map $f$ induces a map on homology, and $f$ is a \emph{quasi-isomorphism} if the induced map on homology is an isomorphism.

\section{A trace pairing and Elliott invariant for groupoids}\label{Sec:pair}

Our main goal in this section is to define a pairing between the space of invariant probability measures on the base space of a groupoid and its zeroth homology group.  From there we define an `Elliott invariant' for groupoids modeled on the usual Elliott invariant for $C^*$-algebras.  

To motivate this, let us first recall a convenient form of the Elliott invariant for $C^*$-algebras.  This is purely motivation, and will not be used in the rest of the paper.

\begin{definition}\label{c* ell}
Let $A$ be a unital $C^*$-algebra.  Then its \emph{(weak) Elliot invariant} is the quadruple 
$$
(K_*(A),[1]_K,T(A),\rho_K)
$$
where $K_*(A)=K_0(A)\oplus K_1(A)$ is the $\Z/2$-graded $K$-group of $A$, $T(A)$ is its tracial state space equipped with the weak-$*$ topology and affine structure it inherits from the dual $A^*$, $\rho_K:T(A)\to \text{Hom}(K_0(A),\R)$ is the pairing between traces and $K$-theory, and $[1]_K\in K_0(A)$ is the class of the unit.
\end{definition}

This is essentially\footnote{It is not actually the same, as those authors use the space $\text{Aff}(T(A))$ of continuous affine real-valued functions on $T(A)$ in place of $T(A)$; either of the versions from Definition \ref{c* ell} above or \cite[Definition 2.3]{Carrion:2020aa} is recoverable from the other, however.} the same thing as the invariant denoted $KT_u$ in \cite[Definition 2.3]{Carrion:2020aa}.  It is not the classical Elliott invariant, which also takes the order on $K$-theory into account. However, it agrees with the classical version for the so-called `classifiable' $C^*$-algebras as defined below: see for example \cite[Discussion around Definitions 2.2 and 2.3]{Carrion:2020aa}. 

\begin{definition}\label{class class}
A unital $C^*$-algebra is \emph{classifiable} if it is simple, separable, nuclear, satisfies the UCT, and is Jiang-Su stable.
\end{definition}

The precise meanings of the list of adjectives appearing above is not important for this paper: readers who are not experts in $C^*$-algebra classification theory can just read ``classifiable'' as ``nice and well-studied''.  If one is only looking at classifiable $C^*$-algebras then it is reasonable to drop the adjective ``weak'' in ``(weak) Elliott invariant'', which is why we have included it in parentheses; we will typically drop it.

The name ``classifiable'' comes from the fact that these are exactly those $C^*$-algebras that are classified by their (weak) Elliott invariant, i.e.\ the $C^*$-algebras are isomorphic if and only if their Elliott invariants are isomorphic.  This is the culmination of the 40-year program outlined in \cite{Elliott:1995dq}.  It is difficult to summarize the huge amount of work that went into this, but let us say that the program was completed through a combination of \cite{Kirchberg-ICM,Phillips-documenta} (in the purely infinite case) and \cite{Elliott:2015fb,Tikuisis:2015kx} (in the stably finite case).    The preprint \cite{Carrion:2020aa} contains a historical summary, as well as a different approach in the stably finite case.

We now turn back to groupoids, and our main goal of pairing invariant measures with the zeroth homology group.  Throughout the rest of this section $\mathcal{G}$ is a locally compact, Hausdorff, \'{e}tale groupoid.

The following definition is based on \cite[Definition 2.3.8 and Exercise 2.3.9]{Renault:2009zr}.

\begin{definition}\label{inv meas}
For a Borel measure $\mu$ on $\mathcal{G}^{(0)}$, define a Borel measure $r^*\mu$ on $\mathcal{G}$ by setting 
$$
\int_{\gpd}  f d(r^*\mu):=\int_{\gpd^{(0)}} \sum_{g\in r^{-1}(x)}f(g)d\mu(x)
$$
for all $f\in C_c(\gpd)$.  Define $s^*\mu$ analogously.  A Borel measure $\mu$ on $\mathcal{G}^{(0)}$ is \emph{invariant} if $r^*\mu=s^*\mu$.
\end{definition}

Using that there is a basis for the topology consisting of open bisections, one checks that $\mu$ is invariant if and only if for any open bisection $B$, $\mu(s(B))=\mu(r(B))$.

\begin{definition}\label{tg}
Assume that $\mathcal{G}^{(0)}$ is compact.\footnote{In the non-compact case, it is probably more reasonable to consider measures with possibly infinite mass.  We do not address this in the current paper.}  Define $T(\mathcal{G})$ to be the space of invariant Borel probability measures on $\mathcal{G}^{(0)}$ equipped with the weak-$*$ topology and affine structure it inherits as a subset of the dual of $C(\mathcal{G}^{(0)})$.
\end{definition}

Let now $\mathcal{B}$ denote the $\mathcal{G}$-sheaf on $\mathcal{G}^{(0)}$ whose sections over an open set $U\subseteq \mathcal{G}^{(0)}$ consist of all Borel functions $f:U\to \C$ whose restriction to every compact subset of $U$ is bounded\footnote{It might seem more natural to define sections to be bounded Borel functions on $U$, but this does not define a sheaf.}.  

\begin{definition}\label{top res}
A resolution 
\begin{equation}\label{top res seq}
0\to \mathcal{Z}\to \mathcal{S}^0\to \mathcal{S}^1 \to \cdots 
\end{equation}
of the sheaf $\mathcal{Z}$ of locally constant $\Z$-valued functions on $\mathcal{G}^{(0)}$ by $c$-soft $\gpd$-sheaves is said to be \emph{Borel} if $\mathcal{S}^0$ is a subsheaf of $\mathcal{B}$ that contains $\mathcal{Z}$, and the map $\mathcal{Z}\to \mathcal{S}^0$ appearing in the resolution is the inclusion.  
\end{definition}

Given a Borel resolution of $\mathcal{Z}$ as above, let 
$$
C_0(\mathcal{G}):=\bigoplus_{p=0}^\infty \Gamma_c(\mathcal{G}^{(p)};\mathcal{S}_p^p)
$$
denote the zeroth chain group arising from the double complex as in line \eqref{tot com} above.  We define a pairing 
\begin{equation}\label{pre pair}
T(\mathcal{G})\times C_0(\mathcal{G})\to \C,\quad (\mu,(a_p)_{p=0}^\infty )\mapsto \int_{\mathcal{G}^{(0)}} a_0d\mu.
\end{equation}

\begin{proposition}\label{pair good}
Let $\mathcal{G}$ be a locally compact, Hausdorff, \'{e}tale groupoid with compact base space.  Then the pairing defined above descends to a well-defined pairing 
$$
T(\mathcal{G})\times H_0(\mathcal{G})\to \C.
$$
(This includes the statement that Borel resolutions always exist, and that the pairing does not depend on the choice of Borel resolution).
\end{proposition}

\begin{proof}
We first show that given a Borel resolution as in line \eqref{top res seq}, the map in line \eqref{pre pair} descends to $H_0(\gpd)$.  Indeed, it suffices for this to show that if $t\in \Gamma_c(\gpd^{(1)};\mathcal{S}_1^0)$ is a section and $\delta:\Gamma_c(\gpd^{(1)};\mathcal{S}_1^0)\to \Gamma_c(\gpd^{(0)};\mathcal{S}_0^0)$ is the relevant horizontal boundary map from line \eqref{tot com}, then for any $\mu \in T(\gpd)$ we have $\mu(\delta t)=0$.  From the formula for $\delta$ in line \eqref{first bound}, we have 
$$
(\delta t)(x)=\sum_{g\in s^{-1}(x)}t(g) - \sum_{h\in r^{-1}(x)}t(h).
$$
Hence with notation as in Definition \ref{inv meas}
\begin{align*}
\mu(\delta t) & =\int_{\gpd^{(0)}} (\delta t)(x)d\mu(x) \\ & =\int_{\gpd^{(0)}}\sum_{r\in s^{-1}(x)}t(g)d\mu(x) - \int_{\gpd^{(0)}}\sum_{h\in r^{-1}(x)}t(h)d\mu(x) \\ & =\int_{\gpd} t d(s^*\mu)-\int_{\gpd}td(r^*\mu).
\end{align*}
As $\mu$ is invariant, $r^*\mu=s^*\mu$, so this is zero.  

We now show that Borel resolutions exist, and in fact construct a canonical Borel resolution.  Let $0\to \mathcal{Z}\to \mathcal{B}$ be the canonical inclusion.  Lemma \ref{inj ginj} gives an exact sequence of $\gpd$-sheaves 
\begin{equation}\label{can top res}
0\to \mathcal{Z}\to \mathcal{B}\to \mathcal{I}^1\to \mathcal{I}^2\to \cdots 
\end{equation}
where each $\mathcal{I}^i$ is injective as a $\gpd$-sheaf, and also $c$-soft.  Note also that $\mathcal{B}$ is $c$-soft, whence the resolution in line \eqref{can top res} is a Borel resolution as claimed.

Finally, consider any Borel resolution as in line \eqref{top res seq}.  Then we have a commutative diagram
$$
\xymatrix{ 0\ar[r] &  \mathcal{Z} \ar@{=}[d] \ar[r] & \mathcal{S}^0 \ar[r] \ar[d] & \mathcal{S}^1 \ar[r] \ar@{-->}[d] & \mathcal{S}^1 \ar[r] \ar@{-->}[d] & \cdots  \\
0\ar[r]& \mathcal{Z}\ar[r]& \mathcal{B}\ar[r]& \mathcal{I}^1\ar[r]&  \mathcal{I}^2 \ar[r] &\cdots  }
$$
where the first vertical arrow $\mathcal{S}^0\to \mathcal{B}$ is the inclusion from the definition of ``Borel resolution'' and the dashed vertical arrows are filled in using injectivity of each $\mathcal{I}^i$ as a $\gpd$-sheaf \footnote{This is a standard argument from homological algebra: compare for example \cite[Comparison Theorems 2.2.6 and 2.3.7]{Weibel:1995ty}.}.  As both sequences are exact, these arrows constitute a quasi-isomorphism of complexes, so induce an isomorphism between the corresponding groupoid homologies by Proposition \ref{cm id wd}.  This quasi-isomorphism is compatible with the pairings with $T(\gpd)$ (from the formula for the pairing, and as $\mathcal{S}^0\to \mathcal{B}$ is just inclusion), and so the independence of the pairing from the choice of Borel resolution follows.
\end{proof}

\begin{definition}\label{gpd ell}
Let $\gpd$ be a locally compact, Hausdorff, \'{e}tale groupoid with compact base space.  The \emph{groupoid Elliott invariant} consists of the quadruple
$$
(H_{**}(\gpd),[1]_H,T(\gpd),\rho_H)
$$
where: $H_{**}(\gpd)=H_{ev}(\gpd)\oplus H_{od}(\gpd)$ is the $\Z/2$-graded Crainic-Moerdijk homology as in line \eqref{z2 grade}; $T(\gpd)$ is the space of invariant probability measures as in Definition \ref{tg}; $\rho_H:T(\gpd)\to \text{Hom}(H_{**}(\gpd),\R)$ is the pairing between $T(\gpd)$ and $H_{**}(\gpd)$ that agrees with the pairing from Proposition \ref{pair good} on $H_0(\gpd)$ and is zero on the other summands; and $[1]_H\in H_{even}(\gpd)$ is the class of the constant function with value one in $H_0(\gpd)$.
\end{definition}

We will want to compare the groupoid Elliott invariant above to the Elliott invariant of the reduced groupoid $C^*$-algebra.   The next lemma looks at the spaces of invariant measures and traces; it is well-known.  For the statement, let $E:C^*_r(\gpd)\to C_0(\gpd^{(0)})$ denote the canonical conditional expectation as in \cite[Propoisition 4.2.6]{Sims:2017aa}.

\begin{lemma}\label{meas to trace}
Let $\gpd$ be a locally compact, Hausdorff, \'{e}tale groupoid with compact base space.  For $\mu \in T(\gpd)$, define a map 
$$
\tau_\mu:C^*_r(\mathcal{G})\to \C
$$
as the composition of the canonical conditional expectation $E:C^*_r(\mathcal{G})\to C(\mathcal{G}^{(0)})$ and integration against $\mu$.  Then the map
$$
T(\mathcal{G})\to T(C^*_r(\mathcal{G})),\quad \mu \mapsto \tau_\mu
$$
is well-defined, injective, affine, and continuous.  Moreover, if $\mathcal{G}$ is principal then $\tau$ is an affine homeomorphism.
\end{lemma}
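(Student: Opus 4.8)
The plan is to separate the two assertions: first that $\mu\mapsto\tau_\mu$ is a well-defined continuous affine injection into $T(C^*_r(\gpd))$, and then the surjectivity needed for the homeomorphism statement in the principal case. For the first part, positivity and unitality of $\tau_\mu$ are immediate since it is the composition of the positive unital map $E$ with integration against a probability measure, so the only substantive point is the trace identity $\tau_\mu(ab)=\tau_\mu(ba)$. Since $\tau_\mu$ is norm-continuous and $C_c(\gpd)$ is dense in $C^*_r(\gpd)$, it suffices to check this for $a=f,\ b=g\in C_c(\gpd)$. I would compute the convolutions on the unit space: for $x\in\gpd^{(0)}$ one has $(f*g)(x)=\sum_{r(\alpha)=x}f(\alpha)g(\alpha^{-1})$ and $(g*f)(x)=\sum_{r(\beta)=x}g(\beta)f(\beta^{-1})$. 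Integrating against $\mu$ and using the definition of $r^*\mu$ from Definition \ref{inv meas} rewrites $\tau_\mu(f*g)=\int_\gpd f(\alpha)g(\alpha^{-1})\,d(r^*\mu)(\alpha)$ and $\tau_\mu(g*f)=\int_\gpd g(\beta)f(\beta^{-1})\,d(r^*\mu)(\beta)$; applying the inversion homeomorphism $\gamma\mapsto\gamma^{-1}$, which carries $r^*\mu$ to $s^*\mu$, to the second integral turns it into $\int_\gpd f(\alpha)g(\alpha^{-1})\,d(s^*\mu)(\alpha)$. Invariance of $\mu$, i.e.\ $r^*\mu=s^*\mu$, then gives equality. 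The remaining properties are formal: affineness and weak-$*$ continuity of $\mu\mapsto\tau_\mu$ hold because $\tau_\mu(a)=\int_{\gpd^{(0)}}E(a)\,d\mu$ depends linearly and weak-$*$ continuously on $\mu$ for each fixed $a$; and injectivity follows by restricting to $C(\gpd^{(0)})\subseteq C^*_r(\gpd)$, on which $E$ is the identity, so that $\tau_\mu=\tau_\nu$ forces $\int_{\gpd^{(0)}}h\,d\mu=\int_{\gpd^{(0)}}h\,d\nu$ for all $h\in C(\gpd^{(0)})$, whence $\mu=\nu$ by the Riesz representation theorem.

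For the principal case, I would first reduce the homeomorphism claim to surjectivity. Both $T(\gpd)$ and $T(C^*_r(\gpd))$ are weak-$*$ compact and Hausdorff: $T(\gpd)$ is a weak-$*$ closed subset of the state space of $C(\gpd^{(0)})$, cut out by vanishing of the weak-$*$ continuous functionals $\mu\mapsto\int_\gpd f\,d(r^*\mu)-\int_\gpd f\,d(s^*\mu)$, and $T(C^*_r(\gpd))$ is a weak-$*$ closed subset of the state space of the unital algebra $C^*_r(\gpd)$. Since $\tau$ is already a continuous affine injection, it will be an affine homeomorphism onto its image as soon as that image is all of $T(C^*_r(\gpd))$, because a continuous bijection from a compact space to a Hausdorff space is automatically a homeomorphism.

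It remains to prove surjectivity, and this is where principality is essential and is the main obstacle. Given a tracial state $\tau$ on $C^*_r(\gpd)$, I would restrict it to $C(\gpd^{(0)})$ and let $\mu$ be the probability measure given by the Riesz representation theorem; the key claim is that $\tau$ vanishes on ``off-diagonal'' elements, so that $\tau(f)=\tau(E(f))=\tau_\mu(f)$ for every $f\in C_c(\gpd)$. Using that $\gpd^{(0)}$ is clopen, I would write $f=E(f)+(f-E(f))$ with $f-E(f)$ supported in the open set $\gpd\setminus\gpd^{(0)}$, and then a partition of unity subordinate to a bisection cover reduces the claim to showing $\tau(f)=0$ when $f$ is supported in a single open bisection $B$ disjoint from $\gpd^{(0)}$. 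Principality enters twice: it guarantees $r(\gamma)\ne s(\gamma)$ for $\gamma\in B$, and then Hausdorffness of $\gpd^{(0)}$ lets me shrink $B$ (again by a finite cover and partition of unity) so that $r(B)\cap s(B)=\emptyset$. For such $B$, choose via Urysohn's lemma a function $h\in C(\gpd^{(0)})$ equal to $1$ on $r(B)$ and $0$ on $s(B)$; viewing $h$ as supported on the unit space gives $h*f=f$ and $f*h=0$, so the trace property yields $\tau(f)=\tau(h*f)=\tau(f*h)=0$. Thus $\tau=\tau_\mu$ on $C_c(\gpd)$ and hence on $C^*_r(\gpd)$, and finally $\mu\in T(\gpd)$ because $\tau_\mu=\tau$ being a trace forces $r^*\mu=s^*\mu$ by the computation of the first paragraph, the functions $\alpha\mapsto f(\alpha)g(\alpha^{-1})$ exhausting a spanning set of $C_c(\gpd)$. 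This establishes surjectivity and completes the proof.
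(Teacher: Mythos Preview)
Your proof is correct and follows the same overall strategy as the paper's, but you carry out in detail what the paper either leaves to the reader or outsources to a reference. In particular, the paper checks the trace identity only on pairs $a,b$ supported in bisections (and leaves the computation implicit), while you give the global computation via $r^*\mu=s^*\mu$ and the inversion map; and for surjectivity in the principal case the paper simply cites \cite[Lemma 4.3]{Li:2017aa}, whereas you supply the standard Urysohn argument showing that any trace kills functions supported on bisections with $r(B)\cap s(B)=\emptyset$. Your reduction to that situation via principality, a finite bisection cover, and a partition of unity is exactly the content of the cited lemma, so nothing is lost and your write-up is self-contained.

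One small point to tighten: in the final sentence you assert that functions of the form $\alpha\mapsto f(\alpha)g(\alpha^{-1})$ span $C_c(\gpd)$ in order to deduce $r^*\mu=s^*\mu$. This is true but deserves a word of justification (e.g.\ for $h$ supported on a bisection $B$, take $f=h$ and $g\in C_c(B^{-1})$ identically $1$ on $(\operatorname{supp} h)^{-1}$). Alternatively, and perhaps more cleanly, you can argue that for $h\in C_c(B)$ one has $h^*h,\,hh^*\in C_c(\gpd^{(0)})$ with $\tau_\mu(h^*h)=\int|h|^2\,d(s^*\mu)$ and $\tau_\mu(hh^*)=\int|h|^2\,d(r^*\mu)$, so the trace property of $\tau=\tau_\mu$ immediately gives $\mu(s(B))=\mu(r(B))$ for every open bisection $B$, which is equivalent to invariance.
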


\begin{proof}
Direct checks based on the properties of $E$ show that the map $\mu\mapsto \tau_\mu$ is continuous, injective, and affine map and takes image in the state space $C^*_r(\gpd)$; we leave this to the reader.  To check that the image consists of traces, it suffices to check that if $\mu$ is invariant and $a,b\in C_c(\gpd)$ are supported on bisections then $\tau_\mu(ab)=\tau_\mu(ba)$; this follows from invariance, and we again leave the direct computation to the reader.  If $\gpd$ is principal, then $\tau$ is well-known to be surjective: see for example \cite[Lemma 4.3]{Li:2017aa}; in this case it is thus a homeomorphism as $T(\gpd)$ is compact and $T(C^*_r(\gpd))$ is Hausdorff.
\end{proof}

We will typically elide the difference between an invariant measure on $\gpd^{(0)}$ and the trace it defines on $C^*_r(\gpd))$, and write $\tau$ for both.

\begin{definition}\label{hk good}
Let $\gpd$ be a groupoid satisfying the conditions of Definition \ref{gpd ell}.  Then $\gpd$ is \emph{HK-good} if the Elliott invariants
$$
(K_*(C^*_r(\gpd)),[1]_K,T(C^*_r(\gpd)),\rho_K) \quad \text{and}\quad (H_{**}(\gpd),[1]_H,T(\gpd),\rho_H)
$$
are isomorphic in the following sense:
\begin{enumerate}[(i)]
\item there are algebraic isomorphisms $\phi_0:K_0(C^*_r(\gpd))\to H_{ev}(\gpd)$ and  $\phi_1:K_1(C^*_r(\gpd))\to H_{od}(\gpd)$;
\item $\phi_0([1_K])=[1_H]$;
\item the map $\tau:T(\gpd)\to T(C^*_r(\gpd))$ of Lemma \ref{meas to trace} is an affine homeomorphism such that the diagram
$$
\xymatrix{ T(\gpd) \ar[d]^{\tau} \ar[r]^-{\rho_H} & \text{Hom}(H_{ev}(\gpd),\R) \ar[d]^{\text{Hom}(\phi_0,\R)} \\
T(C^*_r(\gpd)) \ar[r]^-{\rho_K} & \text{Hom}(K_{0}(C^*_r(\gpd)),\R) }
$$
commutes.
\end{enumerate}
\end{definition}

We make the following conjecture.

\begin{conjecture}
Any unital classifiable $C^*$-algebra admits an HK-good groupoid model.
\end{conjecture}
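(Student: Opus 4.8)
The plan is to reduce the conjecture to a \emph{realization} problem via the classification theorem and then to attack that problem using the explicit groupoid constructions of this paper. Since classifiable $C^*$-algebras are classified by their weak Elliott invariant (Definition \ref{c* ell} and the surrounding discussion), it suffices to produce, for each achievable invariant, \emph{some} locally compact, Hausdorff, \'etale groupoid $\gpd$ with compact base space that is HK-good and whose reduced $C^*$-algebra has that invariant; classification then identifies $C^*_r(\gpd)$ with the prescribed algebra $A$. It is convenient to split according to the dichotomy for simple, unital, Jiang--Su stable algebras: $A$ is either purely infinite or stably finite.

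In the purely infinite case $T(A)=\emptyset$, so $T(\gpd)$ is forced to be empty and $\rho_H$ is trivial; HK-goodness then reduces to the plain HK-conjecture for $\gpd$ together with the condition $\phi_0([1]_K)=[1]_H$ on the class of the unit (compare the discussion of \cite{Ortega:2020aa} in the introduction). Thus here the task is to exhibit, for every Kirchberg algebra in the UCT class, an ample groupoid model whose homology and $K$-theory agree compatibly with the unit --- a substantial but essentially twist-free and trace-free problem. The genuinely new content of the conjecture is the stably finite case, where one must in addition match $T(\gpd)$ with the tracial simplex and $\rho_H$ with $\rho_K$.

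For the stably finite case I would proceed dynamically, building $\gpd$ from a minimal integer action $\Z\ltimes X$ on a compact metric space, or from an orbit-breaking subgroupoid as in \cite{Deeley:2024aa}, chosen so that $C^*_r(\gpd)$ is classifiable (this requires minimality together with a regularity hypothesis on the action, such as finite dynamic asymptotic dimension or mean dimension zero, ensuring Jiang--Su stability). On such a groupoid I would compute $H_{**}(\gpd)$ and $\rho_H$ using the machinery assembled here: the Pimsner--Voiculescu-type sequence of Section \ref{sec:HomZAct}, the identification of groupoid homology with mapping-torus cohomology of Section \ref{Sec:int pair}, and the orbit-breaking long exact sequence of Section \ref{incl sec}. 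The decisive step is the construction of a Chern character from $K_*(C^*_r(\gpd))$ to $H_{**}(\gpd)$ that is an \emph{integral} isomorphism and is compatible with the pairings against $T(\gpd)$ (the latter via the argument of Connes, as in Section \ref{Sec:int pair}); granting such a Chern character, HK-goodness follows from Definition \ref{hk good} together with Lemma \ref{meas to trace}. To reach invariants of unbounded complexity one would combine elementary HK-good models under direct sums, suspensions, orbit breaking, and inductive limits, which requires a \emph{permanence lemma}: HK-goodness, with all the data of Definition \ref{hk good}, is preserved under these operations and passes to inductive limits by continuity of homology, $K$-theory, and the pairings.

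The hard part is realizing the \emph{full} range of the Elliott invariant while remaining in the regime where the Chern character is an integral isomorphism. The examples treated in the body of the paper --- spheres, tori, and spaces of covering dimension at most three --- are precisely those for which the classical Chern character is integral, and Example \ref{Ex:CrossNotGood} and Remark \ref{rem:HKbad} show that on higher-dimensional base spaces HK-goodness can genuinely fail. The central obstacle is therefore to show that every achievable tuple (arbitrary countable $K$-groups, an arbitrary metrizable Choquet simplex of traces, and an arbitrary pairing) can be realized by systems assembled entirely from such low-dimensional, Chern-integral pieces, without forcing the cohomology of any single base space to become too large. On the alternative route through Li's twisted groupoid model \cite{Li:2018yv} the obstacle instead takes the form of the well-known open problem of removing the twist and transporting the pairing across the de-twisting. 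Either way, a genuinely general \emph{construction}, going beyond the case-by-case arguments of the present paper, appears to be required, which is why the statement is recorded as a conjecture rather than a theorem.
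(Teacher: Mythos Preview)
The statement is a \emph{conjecture} in the paper, not a theorem; the paper offers no proof and explicitly leaves it open. What the paper does provide is evidence: partial cases in the ample setting (citing \cite{Matui:2017hq}, \cite{Bonicke:2021aa}, \cite{Reardon:PhDthesis}) and the new non-ample examples of Sections \ref{Sec:crossed}, \ref{Sec:point}, and \ref{Sec:Cantor}. Your proposal is therefore not to be compared against a proof in the paper, because there is none.

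That said, your write-up is not a proof either, and you correctly acknowledge this in the final paragraph. What you have produced is a reasonable strategic outline that matches the paper's own viewpoint: reduce via classification to a realization problem, split into purely infinite and stably finite cases, and in the stably finite case try to build HK-good models from integer actions and orbit-breaking using the Chern-character machinery of Section \ref{Sec:int pair}. Your identification of the obstacles --- the need for an integral Chern isomorphism, the failure illustrated by Example \ref{Ex:CrossNotGood}, and the twist in Li's construction \cite{Li:2018yv} --- is accurate and mirrors the discussion in the introduction and in the closing remarks of the paper. One point to flag: your proposed ``permanence lemma'' for HK-goodness under direct sums, suspensions, and inductive limits is not established anywhere in the paper and is itself nontrivial (groupoid homology is not obviously continuous under the relevant limits, and the pairing must be tracked through each operation); this would need genuine work before it could be invoked. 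In short, your discussion is a fair summary of why the conjecture is plausible and hard, but it is not --- and does not claim to be --- a proof.
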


The conjecture is known to hold for many classifiable $C^*$-algebras that arise as the $C^*$-algebra of an \emph{ample} groupoid.  For example, \cite[Theorem 4.10]{Matui:2017hq} essentially establishes it for AF groupoids, and \cite[Theorem 4.14]{Matui:2017hq} essentially establishes it for $C^*$-algebras associated to shifts of finite type.  Another class of examples partly generalizing those above comes from \cite[Corollary 4.20]{Bonicke:2021aa}: this shows that if $\gpd$ is an ample, second countable, principal \'{e}tale groupoid with compact base space and dynamic asymptotic dimension at most one, then it is an HK-good model for its groupoid $C^*$-algebra\footnote{See footnote \ref{bad bbgw}.}. As an application of the result of the previous sentence, Reardon \cite{Reardon:PhDthesis} has shown that the groupoids constructed by Putnam \cite{Putnam:2018aa} have dynamic asymptotic dimension one and hence are HK-good.    

It also seems interesting to study HK-good groupoids that give rise to non-simple $C^*$-algebras.  Again, examples are known in the ample case, such as some of the coarse groupoids studied in \cite[Example 5.8]{Bonicke:2021aa}.

The main goal of the rest of this paper is to give some examples of HK-good groupoids that go beyond the ample case.

\section{An example: irrational rotation groupoids} \label{Sec:Irrational}

In this section, we give our first non-trivial computation of a pairing between groupoid homology and invariant measures: the groupoids $\Z\ltimes S^1$ coming from an irrational rotation action on $S^1$, and the (unique) invariant measure coming from Lebesgue measure.  It will follow from general machinery we develop below (see Example \ref{tori ex}) that these examples are HK-good, but we thought it was worth doing one explicit computation from first principles to give intuition for the pairing, and also because of the importance of irrational rotation groupoids in $C^*$-algebra theory and noncommutative geometry.  (The computation can be done even more simply using group hyperhomology as in Appendix \ref{hyp hom sec}: we leave this to the interested reader).

Let $S^1$ be the circle, which we identify with the interval $[0,1]$ modulo the gluing $0\sim 1$.  Let $\theta\in [0,1)$ be a fixed irrational number, and let $\alpha:S^1\to S^1$ be the action defined by the addition of $\theta$ modulo $\Z$.  Let $\gpd:=\Z\ltimes S^1$ denote the corresponding transformation groupoid, which we call an \emph{irrational rotation groupoid}.  As $\theta$ is irrational, there is a unique invariant probability measure on $S^1$ coming from the Lebesgue measure on $[0,1]$, which we denote $\tau$.

In order to compute the pairing of $H_0(\gpd)$ with $\tau$, it will be useful to have a geometrically natural resolution of the sheaf $\mathcal{Z}$ of locally constant $\Z$-valued functions on $S^1$.  Let $\mathcal{S}^0$ be the sheaf whose sections over an open set $U\subseteq S^1$ are those functions $s:U\to \Z$ such that:
\begin{enumerate}[(i)]
\item $s$ is continuous other than at a discrete set of points in $U$;
\item at each point $x\in U$, ${\displaystyle \lim_{y\to x^-}s(y)}$ exists, and ${\displaystyle \lim_{y\to x^+}s(y) }$ exists and equals $s(x)$.
\end{enumerate}
Note that $\mathcal{S}^0$ contains $\mathcal{Z}$ as a subsheaf, so we may define $\mathcal{S}^1:=\mathcal{S}^0/\mathcal{Z}$ to be the quotient sheaf with associated quotient map $\partial:\mathcal{S}^0\to \mathcal{S}^1$.  Concretely, one checks that sections of $\mathcal{S}^1$ over an open subset $U\subseteq S^1$ identify with functions $s:U\to \Z$ such that the set of values where $s$ is non-zero is discrete.  Moreover, one can check that for an open set $U\subseteq S^1$, the  map $\partial_U:\Gamma(U;\mathcal{S}^0)\to\Gamma(U;\mathcal{S}^1)$ induced by the quotient identifies with the map 
\begin{equation}\label{bound form}
s\mapsto \overline{s},\quad \overline{s}(x):=s(x) - \lim_{y\to x^-}s(y).
\end{equation}
Direct checks show that $\mathcal{S}^0$ and $\mathcal{S}^1$ are indeed $c$-soft $\gpd$-sheaves.  In summary then, we have a resolution 
\begin{equation}\label{res}
0\to \mathcal{Z}\to \mathcal{S}^0\stackrel{\partial}{\to} \mathcal{S}^1\to 0
\end{equation}
of $\mathcal{Z}$ by $c$-soft $\gpd$-sheaves.

The groupoid homology of $\gpd=\Z\ltimes S^1$ is then by definition the homology of the double complex
$$
\xymatrix{ \Gamma_c(\gpd^{(0)};\mathcal{S}^0_0) \ar[d]^\partial & \Gamma_c(\gpd^{(1)};\mathcal{S}^0_1) \ar[l]^{\delta} \ar[d]^\partial & \Gamma_c(\gpd^{(2)};\mathcal{S}^0_2) \ar[l]^{\delta}   \ar[d]^\partial & \cdots \ar[l]^-{\delta} \\
\Gamma_c(\gpd^{(0)};\mathcal{S}^1_0)  & \Gamma_c(\gpd^{(1)};\mathcal{S}^1_1) \ar[l]^{\delta} & \Gamma_c(\gpd^{(2)};\mathcal{S}^1_2) \ar[l]^{\delta} & \cdots \ar[l]^-{\delta} },
$$
as in line \eqref{tot com} on page \pageref{tot com}.  

Now, by definition of the homology of a double complex (and noting the conventions on row positions), a $0$-cycle representing a class in $H_0(\gpd)$ consists of a pair 
\begin{equation}\label{cyc}
(a,b) \in \Gamma_c(\gpd^{(0)};\mathcal{S}^0_0)\oplus  \Gamma_c(\gpd^{(1)};\mathcal{S}^1_1)
\end{equation}
such that $\partial(a)=-\delta(b)$.  We also use the resolution in line \eqref{res} to compute the cohomology groups of $S^1$, and then define maps from $H^*(S^1)$ into $H_0(\gpd)$ as follows.  

First, an element of $H^0(S^1)$ is represented by a constant function $a:S^1\to \Z$.  We define a map
\begin{equation}\label{h0 map}
\phi_0:H^0(S^1)\to H_0(\gpd),\quad a\mapsto (a,0);
\end{equation}
it is straightforward to see that this is well-defined.  On the other hand, for a cycle $b\in \Gamma_c(S^1;\mathcal{S}^1)$ representing a class in $H^1(S^1)$, we define an element of $\Gamma_c(\gpd^{(1)};\mathcal{S}^1_1)$ by 
$$
b_1(m,x):=\left\{\begin{array}{ll} b(x) & m=1 \\ 0 &  \text{otherwise}\end{array}\right.  
$$
Note that if $\alpha:\Gamma_c(S^1;\mathcal{S}^1)\to \Gamma_c(S^1;\mathcal{S}^1)$ is the map induced by the action, then 
$$
\delta(b_1)=\alpha(b)-b
$$
(compare line \eqref{first bound} on page \pageref{first bound} for the definition of $\delta$).  As the map $\alpha(b)-b$ represents the trivial class in $H^1(S^1)$ (the action is homotopic to the identity, so induces the identity map on cohomology) there must be $a_b\in \Gamma_c(S^1;\mathcal{S}^0)$ such that 
$$
\alpha(b)-b=-\partial(a_b).
$$
Adjusting $a_b$ by a constant, we assume for definiteness that $a_b(0)=b(0)$; $a_b$ is then uniquely determined by $\delta(b)$.  We may therefore define a map 
\begin{equation}\label{h1 map}
\phi_1:H^1(S^1)\to H_0(\gpd),\quad b\mapsto (a_b,b_1);
\end{equation}
one checks directly this is well-defined.  

\begin{proposition}\label{hom lem}
Let $\gpd=\Z\rtimes S^1$ be an irrational rotation groupoid with associated angle $\theta$, and $\tau$ the unique invariant probability measure coming from Lebesgue measure.  

Then the maps from lines \eqref{h0 map} and \eqref{h1 map} induce an isomorphism 
$$
\phi_0\oplus \phi_1:H^0(S^1)\oplus H^1(S^1)\to H_0(\gpd),
$$
so in particular $H_0(\gpd)\cong \Z\oplus \Z$.  Let moreover $a$ be the generator of $H^0(S^1)$ given by the constant function with value $1$, and let $b$ be the generator of $H^1(S^1)$ given by the function that takes value $1$ at $0\in S^1$, and zero elsewhere.  Then we have the formulas
$$
\tau(\phi_0(a))=1 \quad \text{and}\quad \tau(\phi_1(b))=\theta.
$$
\end{proposition}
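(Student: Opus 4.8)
The plan is to treat the statement in two parts: first the computation of $H_0(\gpd)$ together with the identification of $\phi_0\oplus\phi_1$ as an isomorphism, and then the two explicit pairing values. For the isomorphism I would run the spectral sequence of the double complex \eqref{tot com} attached to the length-one resolution \eqref{res}, filtering so as to take vertical ($\partial$) homology first. Since $\mathcal{S}^\bullet_p=\tau_p^*\mathcal{S}^\bullet$ is a $c$-soft resolution of $\mathcal{Z}$ on $\gpd^{(p)}$, the $p$-th column computes the compactly supported sheaf cohomology of $\gpd^{(p)}\cong\Z^p\times S^1$, which (as $S^1$ is compact) is $\bigoplus_{\Z^p}H^q(S^1)$. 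The remaining (horizontal) differential is $\delta$, i.e.\ the bar differential of $\Z$, so
\[ E^2_{p,q}\cong H_p(\Z;H^q(S^1)). \]
Because the rotation is homotopic to the identity it acts trivially on $H^*(S^1)$ (the same fact invoked just before \eqref{h1 map}), hence $E^2_{p,q}\cong H_p(\Z;\Z)\otimes H^q(S^1)$, which is $\Z$ for $(p,q)\in\{0,1\}^2$ and $0$ otherwise. Only the columns $p=0,1$ survive, so all higher differentials vanish and $E^2=E^\infty$. In total degree $n=p-q=0$ the surviving terms are $(0,0)$ and $(1,1)$, giving a short exact sequence
\[ 0\to H^0(S^1)\to H_0(\gpd)\to H^1(S^1)\to 0, \]
so in particular $H_0(\gpd)\cong\Z\oplus\Z$.

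It then remains to match this extension with the maps $\phi_0,\phi_1$ (whose well-definedness has already been recorded before the statement). The class $\phi_0(a)=(a,0)$ is supported in the column $p=0$, hence lies in the bottom filtration level $F_0H_0(\gpd)$, and under $F_0H_0(\gpd)\cong E^\infty_{0,0}=H^0(S^1)$ it is the generator; thus $\phi_0$ is the left-hand injection. The class $\phi_1(b)=(a_b,b_1)$ has nonzero $(1,1)$-component $b_1$, whose image in $\mathrm{gr}_1H_0(\gpd)=E^\infty_{1,1}=H^1(S^1)$ is the class of $b$; thus $\phi_1$ splits the right-hand surjection. The five lemma then shows $\phi_0\oplus\phi_1$ is an isomorphism.

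Finally I would compute the pairings directly from \eqref{pre pair}, which extracts the $\Gamma_c(\gpd^{(0)};\mathcal{S}^0_0)$-component of a cycle and integrates it against $\tau$. For $\phi_0(a)=(a,0)$ with $a\equiv 1$ this gives $\tau(\phi_0(a))=\int_{S^1}1\,d\tau=1$. For $\phi_1(b)=(a_b,b_1)$ it gives $\tau(\phi_1(b))=\int_{S^1}a_b\,d\tau$, so the task reduces to finding $a_b$ explicitly. Using \eqref{first bound} and the transformation-groupoid structure one computes $\delta(b_1)=\alpha(b)-b$, which as a section of $\mathcal{S}^1$ is the difference of the unit jumps at $\theta$ and at $0$. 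Solving $\partial(a_b)=-\delta(b_1)$, i.e.\ asking for an integer-valued, right-continuous $a_b$ with a jump of $+1$ at $0$ and $-1$ at $\theta$, and imposing the normalization $a_b(0)=b(0)=1$, forces $a_b=\mathbf{1}_{[0,\theta)}$, the indicator of the arc swept from $0$ to $\theta$. Hence $\tau(\phi_1(b))=\tau([0,\theta))=\theta$.

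The main obstacle I expect is bookkeeping rather than conceptual. One must pin down the orientation and sign conventions carefully enough — the direction of the $\gpd$-action on $\mathcal{S}^1$, the sign in \eqref{first bound}, and the normalization of $a_b$ — that the primitive comes out as $\mathbf{1}_{[0,\theta)}$ rather than its complement, since this is exactly what distinguishes the answer $\theta$ from $1-\theta$. A secondary point needing care is the explicit identification of the spectral-sequence edge maps with $\phi_0$ and $\phi_1$; this is routine but genuinely uses the chain-level descriptions in \eqref{h0 map} and \eqref{h1 map}.
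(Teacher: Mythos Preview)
Your proposal is correct, but takes a genuinely different route from the paper for the homology computation. The paper works entirely at the chain level: it takes an arbitrary cycle $(a,b)$ for $H_0(\gpd)$ and then normalizes it by explicit boundary adjustments --- first using $\delta$-boundaries from $\Gamma_c(\gpd^{(2)};\mathcal{S}^1_2)$ to move the support of $b$ onto $\{1\}\times S^1$, then $\partial$-boundaries from $\Gamma_c(\gpd^{(1)};\mathcal{S}^0_1)$ to concentrate $b$ at the single point $(1,0)$ --- and finally reads off that the resulting representative $(n+m\chi_{[0,\theta)},\,m\chi_{\{(1,0)\}})$ is unique. This is exactly the sort of direct ``first principles'' calculation the paper announces it wants in this section, and it avoids any spectral sequence or hyperhomology machinery. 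Your argument, by contrast, runs the column-filtration spectral sequence of the double complex to obtain the short exact sequence $0\to H^0(S^1)\to H_0(\gpd)\to H^1(S^1)\to 0$ and then identifies $\phi_0,\phi_1$ with the edge maps. This is cleaner and generalizes immediately (indeed it is essentially the argument behind Proposition~\ref{free gp ses}, and is what the paper alludes to when it says the computation can be done more simply via hyperhomology), but it imports more machinery than the paper intended for this illustrative example. For the pairing values, your computation of $a_b=\chi_{[0,\theta)}$ and the resulting integrals is the same as the paper's.
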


The formulas above are consistent with the known $K$-theory of $C^*_r(\gpd)$, and its pairing with the trace: see for example \cite{Pimsner:1980aa} for the original reference, \cite[Theorem VI.5.2]{Davidson:1996jq} for a textbook exposition, and \cite[2.5]{Cuntz:1981ac} and \cite[Proposition 6] {Pimsner:1985aa} for elegant alternative arguments.

\begin{proof}[Proof of Proposition \ref{hom lem}]
Let $(a,b)\in \Gamma_c(\gpd^{(0)};\mathcal{S}^0_0)\oplus  \Gamma_c(\gpd^{(1)};\mathcal{S}^1_1)$ be a cycle for $H_0(\gpd)$.  We first claim that we can adjust $(a,b)$ by boundaries of the form $\delta(c)$ with $c\in \Gamma_c(\gpd^{(2)};\mathcal{S}^1_2)$ so that $b$ is supported on $\{1\}\times S^1$.

Indeed, we may regard elements of $\Gamma_c(\gpd^{(1)};\mathcal{S}^1_1)$ as functions from $\Z\times S^1$ to $\Z$ with finite support.  Let $m\in \Z$ be maximal with the property that the restriction of $b$ to $\{m\}\times S^1$ is non-zero.  We may regard elements of $\Gamma_c(\gpd^{(2)};\mathcal{S}^1_2)$ as functions from $ \Z\times \Z\times S^1$ to $\Z$ with finite support.  If $m\geq 1$, define $c:\Z\times \Z\times S^1\to \Z$ by stipulating $c$ is supported on $\{(-1,m)\}\times S^1$, and satisfies $c(-1,m,x)=b(m,x)$.  Then one checks that subtracting the boundary of $c$ from $b$ `cancels off' the part of $b$ supported on $\{m\}\times S^1$, and does not otherwise change $b$ outside of $(\{-1\}\times S^1)\cup (\{m-1\}\times S^1)$.  Continuing in this way, we may keep adjusting by boundaries so that $b$ is supported on $\{(n,x)\in \Z\times S^1\mid n\leq 0\}$.  Applying a similar procedure to the negative part of the support, we may assume that $b$ is supported on $(\{0\}\times S^1)\cup (\{1\}\times S^1)$.  Finally, subtracting off $\delta(c)$ where $c$ is supported on $\{(0,0)\}\times S^1$ and satisfies $c(0,0,x)=b(0,x)$, we may assume that $b$ is supported on $\{1\}\times S^1$ as claimed.

We next claim that further adjusting $(a,b)$ by classes of the form $\partial(e)$ with $e\in \Gamma_c(\gpd^{(1)};\mathcal{S}^0_1)$, we can assume that $b$ is supported on the singleton $\{(1,0)\}\subseteq  \Z\times S^1$ (at the price of also changing $a$).  Indeed, let $m:=\sum_{x\in S^1}b(1,x)\in \Z$ (this makes sense as $b$ is finitely supported).  Define 
$$
e:=\sum_{\{x\in S^1\mid b(1,x)\neq 0\}} b(1,x)\chi_{[0,x)}.
$$
Then $\partial(e)=m\chi_{\{(1,0)\}}-b$ (compare line \eqref{bound form}), establishing the claim.

Now, if $b(0,1)=m\in \Z$, we have by line \eqref{first bound} that 
$$
\delta(b)(x)=\left\{\begin{array}{ll} m & x=\theta \\ -m & x=0 \\ 0 & \text{otherwise}\end{array}\right..
$$
As $\delta(b)=-\partial(a)$, we necessarily have $a=n+m\chi_{[0,\theta)}$ for some uniquely determined $n\in \Z$.  One checks that with these conditions, a representative of this form is unique.  Using very similar (and simpler) arguments, one can represent any class for $H^0(S^1)$ by a uniquely determined constant function $S^1\to \Z$, and one can represent any class for $H^1(S^1)$ by a uniquely determined function supported on the singleton $\{0\}\subseteq S^1$.  These observations complete the computation of homology.

We now look at the parings.  The formula $\tau(\phi_0(a))=1$ is clear.   To compute $\phi_1(b)$, we note that $\alpha(b)-b$ is the function that is $-1$ at $\theta$, $1$ at $0$, and zero elsewhere.  This is equal to $-\partial(\chi_{[0,\theta)})$; noting that $\partial_{[0,\theta)}(0)=b(0)$, we therefore have that $\phi_1(b)=(\chi_{[0,\theta)},b_1)$.  Hence $\tau(\phi_1(b))=\int_0^1\chi_{[0,\theta)}=\theta$ as claimed.
\end{proof}

\section{Homology of crossed products by the integers} \label{sec:HomZAct}

In this section, we compute the groupoid homology of transformation groupoids $\Z\ltimes X$ associated to an action of the integers on a locally compact Hausdorff space.  We will write $G$ for $\Z$: this is mainly to avoid having to use the notation ``$\Z[\Z]$'' for the integral group ring of the integers, so we can instead write ``$\Z G$''.

For a $G$-module $M$, write $\alpha:M\to M$ for the action of the usual generator $1\in G=\Z$; in particular, we use this notation when $M=\Z G$ is the integral group ring acting on itself.  There is then a free resolution\footnote{Topologically, it comes from considering the circle with one $0$-cell and $1$-cell as a CW complex model for the classifying space $BG$: compare for example \cite[Section I.4]{Brow:1982rt}.} 
\begin{equation}\label{fm res}
0\leftarrow \Z \leftarrow \Z G \stackrel{\delta}{\leftarrow} \Z G \leftarrow 0
\end{equation}
of the trivial $\Z G$-module $\Z$ where the map $\delta$ is given by
\begin{equation}\label{delta def}
\delta:x\mapsto x-\alpha(x).
\end{equation}
Note moreover that if $M$ is any $G$-module, then we may define $\delta:M\to M$ by the same formula as in line \eqref{delta def}.  If we do this, we have canonical identifications 
\begin{equation}\label{free gp hom}
M^G=\text{Kernel}(\delta) \quad \text{and}\quad M_G=\text{Cokernel}(\delta)
\end{equation}
where $M^G$ and $M_G$ are respectively the invariant submodule, and coinvariant quotient of $M$ for the $G$-action (see for example \cite[pages 27 and 34]{Brow:1982rt}).

Here is the homology computation we need (see \cite[Theorem 3.9]{Deaconu:2021aa} and \cite[Lemma 1.3]{Ortega:2020aa} for related results in the case of ample base space).  

\begin{proposition}\label{free gp ses}
Let $X$ be a locally compact Hausdorff space equipped with an action of $\Z$. Then for each $n\in \Z$ there is a canonical short exact sequence
$$
0\to H^{-n}_c(X)_\Z\to H_{n}(\Z\ltimes X)\to H_c^{1-n}(X)^\Z \to 0
$$
(this includes the case $n=1$, where the sequence degenerates to an isomorphism $H_{1}(\Z\ltimes X)\cong H_c^{0}(X)^\Z$, and $n>1$, where it degenerates to $H_n(\Z\ltimes X)=0$).  
\end{proposition}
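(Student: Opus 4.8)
The plan is to run the spectral sequence of the defining double complex \eqref{tot com} for $\gpd=\Z\ltimes X$, filtered by the ``groupoid'' (column) degree $p$. First I would record the structure of the columns. Since $\Z$ is discrete, the space of composable strings is the disjoint union $\gpd^{(p)}=\bigsqcup_{(\gamma_1,\dots,\gamma_p)\in\Z^p}X$, and the map $\tau_p$ of \eqref{taun map} sends the string labelled $(\gamma_1,\dots,\gamma_p)$ by $x_p\mapsto \gamma_1\cdots\gamma_p x_p$, hence restricts to a homeomorphism $X\to X$ on each copy. Consequently $\tau_p^*\mathcal{S}^\bullet$ is a $c$-soft resolution of the constant sheaf $\mathcal{Z}$ on $\gpd^{(p)}$ (pullback along a local homeomorphism preserves exactness and $c$-softness, and $\tau_p^*\mathcal{Z}=\mathcal{Z}$), so the vertical homology of the $p$-th column computes compactly supported sheaf cohomology, and since compactly supported sections over a disjoint union are a direct sum,
$$
E^1_{p,q}=H^q_c(\gpd^{(p)})=\bigoplus_{\Z^p}H^q_c(X).
$$

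Next I would identify the $d^1$-differential. Under the above identification the horizontal maps $\delta=\sum_i(-1)^i\delta_i$ of Remark \ref{horiz arrows} become, at each fixed cohomological level $q$, exactly the inhomogeneous bar differentials of group homology for the $\Z G$-module $H^q_c(X)$ (the face map $\delta_0$ being the one that feeds in the $\Z$-action $\alpha_*$). This is the coefficient version of the identification in Example \ref{gpex} / Corollary \ref{cm hh}, and it gives
$$
E^2_{p,q}=H_p\big(\Z;\,H^q_c(X)\big).
$$
Because $\Z$ admits the length-one free resolution \eqref{fm res}, these groups vanish for $p\geq 2$, and by \eqref{free gp hom} the two surviving columns are the coinvariants and invariants:
$$
E^2_{0,q}=H^q_c(X)_\Z,\qquad E^2_{1,q}=H^q_c(X)^\Z.
$$

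Since $E^2$ is concentrated in the columns $p\in\{0,1\}$, every $d^r$ with $r\geq 2$ vanishes and the sequence degenerates at $E^2$. This same two-column concentration is what controls convergence: although the anti-diagonals $p-q=n$ of the double complex are infinite (so a priori the direct-sum total complex of \eqref{tot com} is delicate), the column filtration on each total-degree group $H_n(\gpd)$ now has at most two nonzero graded pieces, hence is finite, and the spectral sequence converges to the homology of the direct-sum total complex, i.e.\ to $H_*(\gpd)$ (invoking Proposition \ref{cm id wd} to pass to our chosen resolution). Reading off total degree $n=p-q$, the column $p=0$ contributes $E^\infty_{0,-n}=H^{-n}_c(X)_\Z$ as a subobject (lowest filtration step) and the column $p=1$ contributes $E^\infty_{1,1-n}=H^{1-n}_c(X)^\Z$ as the quotient, which is precisely the asserted short exact sequence. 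The degenerate cases are immediate: for $n=1$ the sub vanishes since $H^{-1}_c(X)=0$, giving $H_1(\Z\ltimes X)\cong H^0_c(X)^\Z$, and for $n>1$ both cohomology groups vanish.

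The step I expect to be the main obstacle is the convergence bookkeeping just described: the total complex is not first-quadrant in its \emph{total} degree, so convergence is not automatic and genuinely relies on the collapse to two columns together with the direct-sum convention built into Definition \ref{gpd hom}. A secondary technical point is verifying that the identification $E^1_{p,q}=\bigoplus_{\Z^p}H^q_c(X)$ is natural in $p$ and carries the groupoid face maps to the bar differentials with the correct $\Z$-action, so that $E^2$ really is group homology of $\Z$; one must also keep careful track of the degree shift (coinvariants in cohomological degree $-n$, invariants in degree $1-n$) so that the sub and quotient land as stated. An alternative, and perhaps cleaner, route would phrase the whole argument as the hyperhomology $H_*(\Z;\,\Gamma_c(X;\mathcal{S}^{\bullet}))$ computed directly against the two-term resolution \eqref{fm res}, which sidesteps the infinite bar complex entirely and produces the same short exact sequence as the hyperhomology spectral sequence of a two-column double complex.
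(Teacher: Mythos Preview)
Your argument is correct. The paper takes a different but closely related route: it first invokes Corollary~\ref{cm hh} to identify $H_*(\Z\ltimes X)$ with the group hyperhomology $\mathbb{H}_*(\Z;X)$, and then computes the latter directly against the two-term free resolution~\eqref{fm res}, obtaining a two-column double complex from the outset. The associated short exact sequence of chain complexes (left column as subcomplex, right column as quotient) gives a long exact sequence whose connecting maps are $\delta=\id-\alpha_*$, and splitting along kernels and cokernels of $\delta$ via~\eqref{free gp hom} yields the stated short exact sequences. In other words, the paper follows exactly the ``alternative'' you sketch in your final paragraph, and remarks explicitly that the spectral-sequence route would work equally well. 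Your approach has the merit of working directly from the defining double complex without the hyperhomology identification, at the cost of handling the infinite bar complex and a spectral-sequence framework (the $d^1$ identification you need is essentially Lemma~\ref{iso}); the paper's approach trades the spectral sequence for an elementary long exact sequence but requires Corollary~\ref{cm hh} up front. On your convergence concern: the column spectral sequence of a fourth-quadrant double complex with direct-sum totalization converges outright, independent of any $E^2$ collapse---the paper uses precisely this fact in the proof of Lemma~\ref{c soft no dep 2}, citing \cite[page 142]{Weibel:1995ty}---so you need not rely on the two-column degeneration to justify that step.
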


One could maybe do the computation a little more quickly using the hyperhomology spectral sequence as in \cite[Section 5.7]{Weibel:1995ty}, but we give a more direct approach (which in any case really just amounts to unpacking the spectral sequence argument).

\begin{proof}[Proof of Proposition \ref{free gp ses}]
Let 
\begin{equation}\label{sheaf res}
0\to \mathcal{Z}\to \mathcal{S}^0\to \mathcal{S}^1\to \cdots 
\end{equation}
be a resolution of the sheaf $\mathcal{Z}$ of locally constant $\Z$-valued functions on $X$ by $c$-soft $\gpd$-sheaves.  As in Corollary \ref{cm hh}, we may treat $H_{n}(G\ltimes X)$ as the group hyperhomology $\mathbb{H}_*(G;X)$ of Definitions \ref{hyper1} and \ref{hyper3}.  Doing this, $H_*(G\ltimes X)$ is then the homology of the double complex
$$
\xymatrix{ \Z G\otimes_{\Z G} \Gamma_c(X;\mathcal{S}^{0})  \ar[d] & \Z G\otimes_{\Z G} \Gamma_c(X;\mathcal{S}^{0}) \ar[d] \ar[l]^-{\delta} \\
 \Z G\otimes_{\Z G} \Gamma_c(X;\mathcal{S}^{1})  \ar[d] & \Z G \otimes_{\Z G} \Gamma_c(X;\mathcal{S}^{1}) \ar[d] \ar[l]^-{\delta} \\
 \Z G\otimes_{\Z G} \Gamma_c(X;\mathcal{S}^{2})  \ar[d] & \Z G \otimes_{\Z G} \Gamma_c(X;\mathcal{S}^{2}) \ar[d] \ar[l]^-{\delta} \\
&  }
$$
formed as the tensor product of the complex of line \eqref{fm res} and the complex $\Gamma_c(X;\mathcal{S}^{-\bullet})$ of compactly supported sections.  For notational simplicity, write $M_q$ for the $\Z G$-module $\Gamma_c(X;\mathcal{S}^{-q})$, so the above becomes  
$$
\xymatrix{ M_0 \ar[d] & M_0 \ar[d] \ar[l]^-{\delta} \\
M_{-1} \ar[d] & M_{-1} \ar[d] \ar[l]^-{\delta} \\
M_{-2} \ar[d] & M_{-2} \ar[d] \ar[l]^-{\delta} \\
&  }.
$$
Write $T_\bullet$ for the total complex associated to this, and note that the first column defines a subcomplex of $T_\bullet$ with quotient the second column, and thus we get a short exact sequence of chain complexes
$$
0\to M_\bullet \to T_\bullet \to M_{\bullet-1} \to 0
$$
(the degree shift on the second copy of $M_{\bullet}$ comes about due to the different ways the two columns appear as summands in the double complex: compare \cite[1.2.6]{Weibel:1995ty}).
This in turn gives rise to a long exact sequence of homology groups
$$
\xymatrix{ 
0 \ar[r] & H_1(T_\bullet) \ar[r] & H_0(M_\bullet) \ar[r]^-\delta & H_0(M_\bullet) \ar[r] & H_0(T_\bullet) \ar[dlll] \\
&  H_{-1}(M_\bullet) \ar[r]_-\delta & H_{-1}(M_\bullet) \ar[r] &  H_{-1}(T_\bullet) \ar[r] & \cdots }
$$
with connecting maps $H_i(M_\bullet) \to H_i(M_\bullet)$ given by $\delta$ (this can be computed for example from the proof of \cite[Theorem 1.3.1]{Weibel:1995ty}).  On the other hand, $H_*(M_\bullet)$ computes the compactly supported sheaf homology $H_c^{-*}(X)$, and therefore the long exact sequence above becomes
$$
\xymatrix{ 0\ar[r] &  H_1(G\ltimes X) \ar[r] &  H^0_c(X) \ar[r]^-\delta &  H^0_c(X) \ar[r] &  H_0(G\ltimes X) \ar[dlll] \\
&  H^{1}_c(X)\ar[r]_-\delta & H_c^1(X) \ar[r] &  H_{-1}(G\ltimes X) \ar[r] &  \cdots}.
$$
Splitting this up into short exact sequences along the kernels and cokernels of the maps $\delta$ and applying line \eqref{free gp hom} gives the result.
\end{proof}

\begin{remark}\label{h0 explicit rem}
As it will be useful later, let us give an explicit description of cycles in $H_0(\Z\ltimes X)$ based on the proof of Proposition \ref{free gp ses}.  Write $\gpd$ for $\Z\ltimes X$.

Let $\mathcal{Z}$ be the sheaf of locally constant $\Z$-valued functions on $X$, and consider a resolution by equivariant $c$-soft $\gpd$-sheaves that starts
\begin{equation}\label{exp res}
0\to \mathcal{Z}\stackrel{\partial}{\to} \mathcal{R}\stackrel{\partial}{\to} \mathcal{S}_B^1\stackrel{\partial}{\to} \cdots
\end{equation}
where $\mathcal{R}$ is the sheaf of continuous $\R$-valued functions on $X$, and $\mathcal{S}_B^1$ is the sheaf of Borel $S^1$-valued functions on $X$.  The map from $\mathcal{Z}$ to $\mathcal{R}$ is the canonical inclusion, and the map $\mathcal{R}\to \mathcal{S}_B^1$ is induced by the exponential map $x\mapsto e^{2\pi i x}$; it is not too difficult to see that this is exact, and that $\mathcal{R}$ and $\mathcal{S}_B^1$ are $c$-soft $\gpd$-sheaves; moreover, the image of $\mathcal{R}$ in $\mathcal{S}^1_B$ is the subsheaf $\mathcal{S}^1$ consisting of continuous $S^1$-valued functions on $X$.  

Now, a cycle for $H_0(\Z\ltimes X)$ is a pair $(f,v)$ where $f:X\to \R$ is a continuous compactly supported function, $v:X\to S^1$ is a compactly supported (i.e.\ equal to $1$ outside a compact set) bounded Borel function on $X$, and these functions satisfy $\partial(v)=0$ and $\delta(v)=-\partial(f)$.  The fact that $\partial(v)=0$ implies that $v$ is continuous (as it is locally in the image of the exponential map from $\mathcal{R}$).  On the other hand, the formula $\delta(v)=-\partial(f)$ says explicitly that $\exp(2\pi if)=\alpha(v)v^*$.
\end{remark}

\section{The pairing and Chern character for actions of the integers}\label{Sec:int pair}

Let $\alpha$ denote a homeomorphism of a locally compact space $X$, and also the associated actions of $\Z$ on $X$ and on $C_0(X)$; we fix this notation throughout the remainder of this section.  Our aim in this section is to construct a Chern character map 
\begin{equation}\label{chernc}
\text{ch}_\alpha:K_*(C_0(X)\rtimes_\alpha\Z)\to H_{**}(\Z\ltimes X;\Q)
\end{equation}
that becomes an isomorphism on tensoring the left hand side by $\Q$, is compatible with the pairings with invariant probability measures, and that moreover can be made sense of as an integral isomorphism in some cases: for example, this happens if $X$ has only low-dimensional homology as in Corollary \ref{int cor} below.  

We should note that the existence of a Chern character rational isomorphism follows from our earlier results in \cite{Deeley:2024aa}.  However, compatibility with the pairings and the fact that it is sometimes compatible with an integral isomorphism is not so obvious for the Chern character in \cite{Deeley:2024aa} (which is essentially due to Raven \cite{Raven:2004aa}), so we give a different approach here.

We first recall the approach to the Pimsner-Voiculescu sequence based on mapping tori: the original reference for this is \cite[pages 48-49]{Connes:1981ab}, or see for example \cite[Sections 10.3-10.4]{Blackadar:1998yq} for a textbook exposition; the original reference for the Pimsner-Voiculescu sequence is \cite{Pimsner:1980aa}.

\begin{definition}\label{map torus}
The \emph{mapping torus} of $\alpha$, denoted $M_\alpha$, is the maximal ideal space\footnote{We could also directly define $M_\alpha$ to be the quotient of $X\times [0,1]$ by the relation $(x,0)\sim (\alpha^{-1}(x),1)$, but it is more useful for us to have a description of the continuous functions on the space.} of the $C^*$-algebra
$$
C_0(M_\alpha):=\{f\in C_b(\R,C_0(X))\mid f(t+1)=\alpha(f(t)) \text{ for all } t\in \R\}.
$$
\end{definition}

Note that there is a short exact sequence 
\begin{equation}\label{mc ses}
0\to C_0(SX)\to C_0(M_\alpha) \to C_0(X)\to 0
\end{equation}
where $SX:=(0,1)\times X$ is the usual $C^*$-algebraic suspension of $X$, the left hand map is inclusion as $\Z$-equivariant functions from $\R$ to $C_0(X)$ that vanish at the integers, and the right hand map is evaluation at zero.  This gives rise to a six-term exact sequence
\begin{equation}\label{mc seq}
\xymatrix{ K^0(SX)\ar[r] & K^0(M_\alpha) \ar[r] & K^0(X) \ar[d] \\ K^1(X) \ar[u] & K^1(M_\alpha) \ar[l] & K^1(SX) \ar[l] }.
\end{equation}
Moreover, if we use the periodicity isomorphisms to make identifications $K^i(SX)\cong K^{i-1}(X)$, then the boundary maps in this sequence identify with $\text{id}-\alpha_*$ (see for example \cite[Proposition 10.4.1]{Blackadar:1998yq}).

The following result records the key $C^*$-algebraic facts we will need; this seems likely to be known to experts, but we provide proofs where we could not find appropriate references.   For $i\in \{0,1\}$ and an action $\beta$ of $\R$ on a $C^*$-algebra $B$, we let $\phi_{\beta}^i:K_i(B)\to K_{i+1}(B\rtimes_{\beta} \R)$ denote the Connes-Thom isomorphism as defined in \cite[Section II]{Connes:1981ab}. 

\begin{proposition}\label{mc basics}
Let $X$ be a locally compact Hausdorff space, and let $\alpha:X\to X$ be a homeomorphism inducing an action of $\Z$ on $X$.  Let $\beta$ denote the action of $\R$ on $C_0(X)\rtimes_\alpha\Z$\footnote{We can equivalently define $\beta$ to be the composition of the dual action of $\widehat{\Z}=\R/\Z$ on $C_0(X)\rtimes_\alpha\Z$, and the quotient map $\R\to \R/\Z$.} determined by
$$
\beta_t(f u^n)=e^{2\pi i tn} f u^n,
$$
where $f\in C_0(X)$, $u\in M(C_0(X)\rtimes \Z)$ is the canonical unitary multiplier, $n\in \Z$, and $t\in \R$.  Then the following hold:
\begin{enumerate}[(i)]
\item \label{mc me}There is a distinguished class of isomorphisms 
$$
m_\alpha: (C_0(X)\rtimes_\alpha \Z)\rtimes_{\beta}\R\to C_0(M_\alpha)\otimes \mathcal{K}(\ell^2(\Z)),
$$
all of which induce the same map on $K$-theory.
\item \label{ct me} the isomorphism 
$$
\psi_\alpha^i:=(m_\alpha)_*\circ \phi_\beta^i:K_i(C_0(X)\rtimes_\alpha \Z)\to K^{i+1}(M_\alpha)
$$
fits into a commutative diagram
$$
\xymatrix{ K^{1}(SX)  \ar[r] & K^{1}(M_\alpha) \\ 
K^0(X)\ar[r] \ar[u] & K_0(C_0(X)\rtimes_\alpha\Z) \ar[u]^-{\psi^0_\alpha}}
$$
where the left hand vertical map is Bott periodicity, and the horizontal maps are induced from the canonical inclusions $C_0(SX)\to C_0(M_\alpha)$ (see line \eqref{mc seq}) and $C_0(X)\to C_0(X)\rtimes_\alpha \Z$.
\item \label{pv ct} The six term exact sequence in line \eqref{mc seq}, the commutative diagram from part \eqref{ct me}, and the Pimsner-Voiculescu exact sequence as constructed in \cite[Sections 10.3-4]{Blackadar:1998yq} induce a commutative diagram of short exact sequences
$$
\xymatrix{ 0 \ar[r] & K^1(SX)_\Z  \ar[r] & K^1(M_\alpha)  \ar[r] & K^1(X)^\Z \ar@{=}[d] \ar[r] & 0 \\
0 \ar[r] & K^0(X)_\Z \ar[r] \ar[u] & K_0(C_0(X)\rtimes_\alpha \Z) \ar[u]^-{\psi_\alpha^0} \ar[r] & K^1(X)^\Z \ar[r] & 0 }
$$
with all vertical maps isomorphisms.
\end{enumerate}
\end{proposition}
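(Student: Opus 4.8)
The overall plan is to deduce all three parts from Connes' mapping-torus picture of the Pimsner--Voiculescu sequence together with naturality of the Connes--Thom isomorphism, so that the three parts build on one another. For part (i): by construction $\beta$ is inflated along $\R \to \R/\Z = \T$ from the dual $\T$-action $\hat\alpha$ on $B := C_0(X)\rtimes_\alpha \Z$, so in particular the subgroup $\Z \subseteq \R$ acts trivially. I would identify $B \rtimes_\beta \R$ with $C_0(M_\alpha)\otimes \mathcal{K}(\ell^2(\Z))$ via Connes' mapping-torus computation (\cite[pp.~48--49]{Connes:1981ab}, \cite[Sections 10.3--10.4]{Blackadar:1998yq}): concretely, $C_0(M_\alpha)$ is the induced algebra $\mathrm{Ind}_\Z^\R C_0(X)$, and the identification comes from the imprimitivity theorem together with Takai duality for the pair $\Z \subseteq \R$ (as a sanity check, for $X$ a point and $\alpha = \mathrm{id}$ both sides collapse to $C(\T)\otimes \mathcal{K}$). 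The resulting isomorphism depends on a choice of imprimitivity bimodule and of an identification $L^2(\T)\cong \ell^2(\Z)$, but any two such bimodules are isomorphic, so all the resulting isomorphisms agree on $K$-theory: this is the meaning of the phrase ``distinguished class of isomorphisms inducing the same $K$-theory map''.

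\emph{Part (ii).} The key observation is that the inclusion $C_0(X) \hookrightarrow B$ is $\beta$-equivariant, with $\beta$ restricting to the \emph{trivial} $\R$-action on $C_0(X)$ (since $\beta_t$ fixes $C_0(X) = C_0(X)u^0$). Naturality of the Connes--Thom isomorphism then produces a commuting square relating $\phi^0_{\mathrm{triv}}$ on $K_0(C_0(X))$ to $\phi^0_\beta$ on $K_0(B)$. For the trivial action one has $C_0(X)\rtimes_{\mathrm{triv}}\R \cong C_0(X)\otimes C_0(\R)\cong C_0(SX)$, and Connes normalises his isomorphism so that $\phi^0_{\mathrm{triv}}$ is precisely Bott periodicity $K^0(X)\to K^1(SX)$. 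Composing the naturality square with $(m_\alpha)_*$ then yields the asserted diagram, \emph{provided} one checks that $m_\alpha$ intertwines the suspension inclusion $C_0(SX)\hookrightarrow C_0(M_\alpha)$ from line \eqref{mc seq} with the inclusion $C_0(X)\hookrightarrow B$.

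\emph{Part (iii).} Both rows arise by cutting a six-term sequence into short exact sequences along the kernels and cokernels of the connecting maps, which in each case are $\mathrm{id}-\alpha_*$ (the kernel being the invariants $(\cdot)^\Z$ and the cokernel the coinvariants $(\cdot)_\Z$). For the top row this uses the mapping-torus sequence \eqref{mc seq} and the identification of its boundary maps in \cite[Proposition 10.4.1]{Blackadar:1998yq}; for the bottom row it uses the Pimsner--Voiculescu sequence \cite{Pimsner:1980aa}. The right-hand vertical is the identity on $K^1(X)^\Z$, the middle vertical is $\psi^0_\alpha$ (an isomorphism by (i) and (ii)), and the left vertical is Bott periodicity passed to coinvariants, hence an isomorphism. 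The left square is exactly the coinvariant quotient of the square in part (ii), so it commutes; the right square commutes because the Pimsner--Voiculescu sequence is, in Blackadar's treatment, obtained from the mapping-torus sequence by transport along $\psi_\alpha$, so the bottom row is literally the top row carried across. The five lemma then re-confirms the middle vertical is an isomorphism.

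\emph{Main obstacle.} I expect part (ii) to be the crux. Parts (i) and (iii) are essentially bookkeeping around known results, but (ii) carries the real content: one must simultaneously pin down Connes' normalisation of $\phi^0_{\mathrm{triv}}$ as Bott periodicity, the suspension identification $C_0(X)\rtimes_{\mathrm{triv}}\R\cong C_0(SX)$, and --- most delicately --- the compatibility of the non-canonical isomorphism $m_\alpha$ with the inclusion of the suspension ideal into the mapping torus. Making these naturality statements line up with consistent conventions is where the genuine work lies.
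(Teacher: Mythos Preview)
Your proposal is correct and follows essentially the same route as the paper. The only notable difference is in part~(i): where you invoke the imprimitivity theorem and Takai duality abstractly, the paper gives an explicit construction via a Fourier transform conjugating $(C_0(X)\rtimes_\alpha\Z)\rtimes_\beta\R$ to $C_0(\R\times X)\rtimes_{\lambda\times\alpha}\Z$, followed by an untwisting along a path of automorphisms of $\mathcal{K}(\ell^2(\Z))$ connecting conjugation-by-the-bilateral-shift to the identity; the ambiguity in $m_\alpha$ is then precisely the choice of this path, and any two such paths are homotopic. Your identification of part~(ii) as the crux, and your treatment of parts~(ii) and~(iii) via naturality of Connes--Thom and the Blackadar construction of Pimsner--Voiculescu, match the paper's argument almost verbatim.
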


\begin{proof}
For part \eqref{mc me}, let $\pi_X:C_0(X)\to \mathcal{B}(H_X)$ be a faithful nondegenerate representation.  Let
$$
\pi: C_0(X)\rtimes_\alpha \Z\rtimes_\beta \R\to \mathcal{B}(H_X\otimes \ell^2(\Z)\otimes L^2(\R))
$$
be the regular representation, i.e.\ if we identify $H_X\otimes \ell^2(\Z)\otimes L^2(\R)=L^2(\Z\times \R,H_X)$, then $\pi$ is induced by the covariant triple $(\widetilde{\pi_X},u,v)$ where 
$$
((\widetilde{\pi_X}(f))\xi)(n,t):=\pi_X(\alpha_{-n}(f))\xi(n,t)
$$
and 
$$
(u_m\xi)(n,t):=e^{2\pi i mt} \xi(n-m,t),\quad (v_s\xi)(n,t):=\xi(n,t-s).
$$

Let $F:L^2(\R) \to L^2(\R)$ denote the unitary isomorphism induced by the Fourier transform, and let $\lambda$ denote the translation action of $\Z$ on $\R$.  Then one computes that $1\otimes 1 \otimes F$ conjugates (the integrated form of) $(\widetilde{\pi_X},u,v)$ to the regular representation of $C_0(\R\times X)\rtimes_{\lambda\times \alpha}\Z$ on $H_X\otimes \ell^2(\Z)\otimes L^2(\R)=\ell^2(\Z,L^2(\R)\otimes H_X)$, and thus conjugates the image of $C_0(X)\rtimes_\alpha \Z\rtimes_\beta \R$ to $C_0(\R\times X)\rtimes_{\lambda\otimes \alpha}\Z$.  

Let $\sigma$ denote the automorphism of $\mathcal{K}(\ell^2(\Z))$ induced by conjugation by the bilateral shift.  Then more direct computations show that $C_0(\R\times X)\rtimes_{\lambda\otimes \alpha}\Z$ canonically identifies with  
$$
C_b(\R\times X,\mathcal{K}(\ell^2(\Z)))^\Z=\{f\in C_b(\R\times X,\mathcal{K}(\ell^2(\Z)))\mid f(t+1,\alpha(x))=\sigma(f(t,x))\}.
$$
Choose now any path $(\sigma_t)_{t\in [0,1]}$ of automorphisms of $\mathcal{K}(\ell^2(\Z))$ connecting $\sigma$ to the identity that is induced by a norm continuous part of unitaries connecting the bilateral shift to the identity.  Use this path to `untwist' $C_0(\R\times X)\rtimes_{\lambda\otimes \alpha}\Z$, showing that it is isomorphic to 
\begin{align*}
\{f\in C_b(\R\times X,\mathcal{K}(\ell^2(\Z)))\mid f(t+1,\alpha(x)) =f(t,x)\} & = C_0((\R\times X)/\Z,\mathcal{K}(\ell^2(\Z))) \\ & 
= C_0(M_\alpha)\otimes \mathcal{K}(\ell^2(\Z)).
\end{align*}
This isomorphism of $C_0(\R\times X)\rtimes_{\lambda\otimes \alpha}\Z$ with $C_0(M_\alpha)\otimes \mathcal{K}(\ell^2(\Z))$ depends on the choice of the path $(\sigma_t)$, but any two such paths are homotopic, so the map induced on $K$-theory is well-defined.

For part \eqref{ct me}, note that the canonical inclusion $C_0(X)\to C_0(X)\rtimes_\alpha \Z$ is $\R$-equivariant for the trivial $\R$ action $\text{tv}$ on $C_0(X)$, and the action $\beta$ on $C_0(X)\rtimes_\alpha \Z$.  Hence by naturality of the Connes-Thom isomorphism we get a commutative diagram
$$
\xymatrix{ K_1(C_0(X)\rtimes_{\text{tv}}\R)  \ar[r] & K^{1}(C_0(X)\rtimes_\alpha\Z\rtimes_\beta \R) \\ 
K^0(X)\ar[r] \ar[u]^-{\phi^0_{\text{tv}}} & K_0(C_0(X)\rtimes_\alpha\Z) \ar[u]^-{\phi^0_\beta}}.
$$
Up to applying a Fourier transform, $K_1(C_0(X)\rtimes_{\text{tr}}\R)$ identifies with $C_0(\R\times X)$ and the Connes-Thom map $\phi^0_{\text{tv}}$ identifies with the Bott periodicity isomorphism, so using also the identifications established in the proof of part \eqref{mc me} we get a commutative diagram
$$
\xymatrix{ K^1((0,1)\times X) \ar[r] & K^1(\R\times X)  \ar[r] & K^{1}(C_b(\R\times X,\mathcal{K}(\ell^2(\Z)))^\Z) \\ 
K^0(X)\ar@{=}[r] \ar[u] & K^0(X)\ar[r] \ar[u]^-{\phi^0_{\text{tv}}} & K_0(C_0(X)\rtimes \Z) \ar[u]^-{\phi^0_\beta}}.
$$
where the top-left horizontal map is induced by inclusion (and is an isomorphism) and the left hand vertical map is Bott periodicity.  Up to composing with the isomorphism $m_\alpha$, the outer rectangle is the diagram in the statement.

Part \eqref{pv ct} follows as the Pimsner-Voiculescu exact sequence in \cite[Section 10.3-4]{Blackadar:1998yq} is exactly defined by forcing the given diagram to commute.
\end{proof}

The next result we need is a `dual version' of a result of Connes: compare \cite[Section II, Theorem 3]{Connes:1981ab}.  It seems to be well-known, but we could not find the precise statement we wanted in the literature so sketch a proof.

\begin{proposition}\label{trace agree}
Let $X$ be a compact Hausdorff space equipped an action $\alpha$ of $\Z$ by homeomorphisms, and let  $\tau:C(X)\to \C$ be an invariant probability measure.  

Any class in $K^1(M_\alpha)$ can be represented by a continuous function $u:[0,1]\to \mathcal{U}(M_n(C(X)))$ that is continuously differentiable on $(0,1)$, and with the property that $u(1)=\alpha(u(0))$.   Having chosen such a representative for each class, the formula 
\begin{equation}\label{lt}
\Lambda_\tau:K^1(M^\alpha)\to \R,\quad [u]\mapsto \frac{1}{2\pi i} \int_0^1 \tau (u'(t)u(t)^*) dt,
\end{equation}
(where ``$u'$'' denotes the derivative of $u$ in the $t$-variable) gives a well-defined homomorphism that does not depend on the choice of representatives.

Moreover, with notation as in Proposition \ref{mc basics}, the diagram 
\begin{equation}\label{tlt}
\xymatrix{ K^1(M_\alpha) \ar[r]^-{\Lambda_\tau} & \R \ar@{=}[d] \\ K_0(C_0(X)\rtimes_\alpha\Z) \ar[u]^-{\psi_\alpha^0} \ar[r]^-\tau & \R}
\end{equation}
commutes.
\end{proposition}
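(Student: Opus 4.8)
The plan is to establish the three assertions in turn, the commutativity of \eqref{tlt} being the substantive one. Since $X$ is compact, $M_\alpha$ is a compact space and a unitary in $M_n(C(M_\alpha))$ is exactly a continuous $u:\R\to\mathcal{U}(M_n(C(X)))$ with $u(t+1)=\alpha(u(t))$, i.e.\ a path $u:[0,1]\to\mathcal{U}(M_n(C(X)))$ with $u(1)=\alpha(u(0))$; every class in $K^1(M_\alpha)=K_1(C(M_\alpha))$ is so represented. To arrange that $u$ be $C^1$ on $(0,1)$ I would mollify $u$ in the $t$-variable on the interior and project back onto the unitaries by functional calculus, staying in the same path component (hence not changing the class) while preserving the endpoint relation. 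For $\Lambda_\tau$ itself, differentiating $uu^*=1$ shows $u'u^*$ is skew-adjoint, so $\tau(u'u^*)$ is purely imaginary and $\Lambda_\tau$ is real-valued; and $(uv)'(uv)^*=u'u^*+u(v'v^*)u^*$ together with the trace property of $\tau$ on $M_n(C(X))$ gives $\Lambda_\tau(uv)=\Lambda_\tau(u)+\Lambda_\tau(v)$, so $\Lambda_\tau$ is a homomorphism.

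The crucial point is homotopy invariance, and this is where $\alpha$-invariance of $\tau$ enters. Given a smooth homotopy $u_s(t)$ through mapping-torus unitaries, write $a=\dot u u^*$ and $b=u'u^*$ (dots for $\partial_s$, primes for $\partial_t$). A direct computation using unitarity gives the identity $\partial_s b-\partial_t a=[a,b]$, whence
\begin{equation*}
\frac{d}{ds}\Lambda_\tau(u_s)=\frac{1}{2\pi i}\int_0^1\tau(\partial_t a)\,dt+\frac{1}{2\pi i}\int_0^1\tau([a,b])\,dt .
\end{equation*}
The second integrand vanishes by traciality, and the first integral equals $\frac{1}{2\pi i}\big(\tau(a(1))-\tau(a(0))\big)$. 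Differentiating $u_s(1)=\alpha(u_s(0))$ in $s$ and using that $\alpha$ is a $*$-automorphism yields $a(1)=\alpha(a(0))$, so $\tau(a(1))=\tau(a(0))$ precisely because $\tau$ is $\alpha$-invariant; thus $\frac{d}{ds}\Lambda_\tau(u_s)=0$. This settles the first two assertions, including independence of the representative.

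For the commutativity of \eqref{tlt}, both $\tau$ and $\Lambda_\tau\circ\psi_\alpha^0$ are homomorphisms $K_0(C_0(X)\rtimes_\alpha\Z)\to\R$, and I would compare them along the short exact sequence of Proposition \ref{mc basics}(iii), which $\psi_\alpha^0$ intertwines with the mapping-torus sequence. On the image of $K^0(X)$ the check is explicit and self-contained: writing $\iota:C(X)\to C_0(X)\rtimes_\alpha\Z$ for the inclusion, Proposition \ref{mc basics}(ii) carries $\iota_*[p]$ of a projection $p\in M_n(C(X))$ to the image in $K^1(M_\alpha)$ of the Bott class of $p$, represented by $u(t)=\exp(2\pi i t p)$; then $u'u^*=2\pi i\,p$, so $\Lambda_\tau([u])=\tau(p)=\tau(\iota_*[p])$, and the two maps agree on $\iota_*(K^0(X)_\Z)$.

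It remains to treat lifts of classes in the quotient $K^1(X)^\Z$, and this is the main obstacle. Here I would invoke the dual form of Connes' theorem: $\tau$ is $\beta$-invariant (a quick check from $\beta_t(fu^n)=e^{2\pi i tn}fu^n$ and the conditional expectation), so by \cite[Section II, Theorem 3]{Connes:1981ab} the $\tau$-pairing on $K_0(C_0(X)\rtimes_\alpha\Z)$ equals the pairing of $\phi_\beta^0(\cdot)$ with the cyclic $1$-cocycle on a smooth subalgebra of $(C_0(X)\rtimes_\alpha\Z)\rtimes_\beta\R$ built from the dual trace and the generator of $\beta$. Transporting this cocycle along the isomorphism $m_\alpha$ of Proposition \ref{mc basics}(i) — under which the dual trace becomes $\tau$-integration against the operator trace, and the generator of $\beta$ becomes $\tfrac{1}{2\pi i}\tfrac{d}{dt}$ along the circle direction of $M_\alpha$ — identifies this pairing with $\Lambda_\tau$, giving $\tau=\Lambda_\tau\circ(m_\alpha)_*\circ\phi_\beta^0=\Lambda_\tau\circ\psi_\alpha^0$. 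The delicate step is exactly this identification of cyclic cocycles under $m_\alpha$, together with the smoothness and density bookkeeping needed to make the cyclic pairing meaningful; the explicit Bott computation above is reassuring in that it fixes the normalization on the subgroup where the statement can be verified by hand.
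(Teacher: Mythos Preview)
Your argument is correct and close in spirit to the paper's, though organized differently. For well-definedness of $\Lambda_\tau$ the paper simply invokes the de la Harpe--Skandalis determinant \cite[Lemme 1]{Harpe:1984aa}, whereas you give a self-contained proof of homotopy invariance via the Maurer--Cartan identity $\partial_s b-\partial_t a=[a,b]$ and $\alpha$-invariance of $\tau$; this is more transparent and makes the role of invariance explicit. For commutativity of \eqref{tlt}, both approaches rest on Connes' work in \cite{Connes:1981ab}: the paper introduces the intermediate pairing $\Theta_\tau[u]=\frac{1}{2\pi i}\widehat{\tau}(\delta(u)u^*)$ on $K_1(B\rtimes_\beta\R)$ and, for a fixed projection $p$, uses the exterior-equivalence trick \cite[Section II, Proposition 4]{Connes:1981ab} to choose a $\beta'$ fixing $p$, then checks a three-square diagram $\tau\leftrightarrow\Theta_\tau\leftrightarrow\Theta_\tau\leftrightarrow\Lambda_\tau$ by direct computation with the explicit representative $e^{2\pi i\chi(t)p}$ and Fourier-transform identities under $m_\alpha$. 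Your route invokes \cite[Section II, Theorem 3]{Connes:1981ab} more globally and then transports the resulting cyclic $1$-cocycle along $m_\alpha$; this is the same content, but the paper's square-by-square decomposition isolates exactly the ``delicate step'' you flag (the identification under $m_\alpha$) and handles it concretely via the Fourier dictionary between $\delta$ and $d/dt$, and between evaluation at $0$ and integration. Your preliminary split along the Pimsner--Voiculescu sequence is unnecessary once you run the Connes argument globally, and your explicit Bott computation $u(t)=e^{2\pi it p}$ is essentially the paper's leftmost-square check, confirming that the normalizations match.
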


\begin{proof}
For a function $u:[0,1]\to \mathcal{U}(M_n(C(X)))$, the real number $\Lambda_\tau[u]$ is the de la Harpe-Skandalis determinant of the path as in \cite[Section 1]{Harpe:1984aa} (the notation there would be $\widetilde{\Delta}_\tau(u)$).  The fact that $\Lambda_\tau$ is a well-defined group homomorphism (including that any $K_1$-class can be represented by a function that is differentiable in the $t$-variable) then follows directly from \cite[Lemme 1]{Harpe:1984aa}.

We need some more notation.  Let $B$ be a general unital $C^*$-algebra equipped with an action $\beta$ of $\R$, and let $\tau$ denote an $\R$-invariant tracial state on $B$.   Let $\widehat{\tau}$ denote the `dual' (unbounded) trace on $B\rtimes_\beta \R$ determined on $C_c(\R,B)$ by $\widehat{\tau}(f)=\tau(f(0))$.  Let $\delta_\beta$ denote the (densely defined) derivation on $B\rtimes_\beta \R$ determined as on \cite[page 32]{Connes:1981ab} by the formula ${\displaystyle \delta_\beta(a)=\lim_{t\to 0} t^{-1}(\beta_t(a)-a)}$.  As in \cite[Section II, Theorem 3]{Connes:1981ab} there is then a well-defined homomorphism 
$$
\Theta_{\tau}:K_1(B\rtimes_\beta \R)\to \R,\quad [u] \mapsto \frac{1}{2\pi i}\widehat{\tau}(\delta(u)u^*)
$$
(this includes the statement that any class in $K_1(B\rtimes_\beta \R)$ can be represented by a unitary such that the right hand side makes sense).

Now, let $p\in M_n(C(X)\rtimes_\alpha \Z)$ represent a class $[p]$ in $K_0(C(X)\rtimes_\alpha \Z)$.  According to \cite[Section II, Proposition 4]{Connes:1981ab},  there is an exterior equivalent action $\beta'$ of $\R$ on $C(X)\rtimes_\alpha \Z$ that fixes $p$.  If $(u_t)$ is the unitary one-cocycle implementing the exterior equivalence, then there is an isomorphism 
$$
\theta:C(X)\rtimes_\alpha \Z\rtimes_\beta \R\to C(X)\rtimes_\alpha \Z\rtimes_{\beta'} \R
$$
determined on $C_c(\R,C(X)\rtimes_\alpha \Z)$ by $(\theta b)(t):=u_tb(t)$.  Consider the diagram below
$$
\xymatrix{  K_0(C(X)\rtimes_\alpha\Z) \ar[r]^-{\phi_\beta^0} \ar[d]^-\tau &  K_1(C(X)\rtimes_\alpha \Z\rtimes _{\beta'} \R) \ar[d]^-{\Theta_\tau}  \ar[r]^-{(\theta^{-1})_*}  & K_1(C(X)\rtimes_\alpha \Z\rtimes _\beta \R) \ar[r]^-{(m_\alpha)_*} \ar[d]^-{\Theta_\tau} &   K^1(M_\alpha) \ar[d]^-{\Lambda_\tau}  \\
\R \ar@{=}[r] & \R \ar@{=}[r] & \R \ar@{=}[r] & \R }.
$$
We have that $\theta_*\circ \phi^0_{\beta'}=\phi_0^\beta$ (see \cite[Section 1, Proposition 3]{Connes:1981ab}), so it suffices to show that this diagram commutes on the class $[p]$.  We do this one square at a time.  

For the leftmost square, we can represent $\phi_\beta^0[p]$ as the Fourier transform of the function $u(t)=e^{2\pi i \chi(t)p}$ where $\chi:\R\to [0,1]$ is a smooth nondecreasing function such that $\chi(t)=0$ for $t\leq 0$ and $\chi(t)=1$ for $t\geq 1$ (compare \cite[pages 37-38]{Connes:1981ab}).  As the Fourier transform interchanges $\delta$ and differentiation, and as it interchanges evaluation at $0$ with integration, we thus have that 
$$
\Theta_\tau[u]=\frac{1}{2\pi i} \int_\R \tau(2\pi i\chi'(t)p uu^*)dt=\tau(p)\int_\R\chi'(t)dt=\tau(p). 
$$ 
Commutativity of the central square is again a direct computation using that $\widehat{\tau}$ is a trace and $\theta$ is induced by a unitary multiplier.  Commutativity of the rightmost square is another direct computation, based on the description of $m_\alpha$ in terms of the Fourier transform, and the fact that the Fourier transform interchanges evaluation at $0$ with integration, and that it interchanges the derivation $\delta$ with differentiation; the integral ends up being only over $[0,1]$ rather than $\R$ as we are implicitly also taking the standard trace on $\mathcal{K}(\ell^2(\Z))$, which has the effect of `adding up' the $\Z$ translates of this.
\end{proof}

We now turn to homology.  Let $X$ be a locally compact Hausdorff space equipped with an action $\alpha$ of $\Z$.  Recall from Remark \ref{h0 explicit rem} that cycles for $H_0(\Z\ltimes X)$ can be thought of as pairs $(f,v)$ where $f:X\to \R$ is continuous and compactly supported, and $v:X\to S^1$ is continuous and compactly supported (i.e.\ equal to one outside a compact set), and satisfies $\alpha(v)v^*=e^{2\pi i f}$.  Moreover, cycles for $H^1_c(M_\alpha)$ (respectively, $H^1_c(SX)$) can be thought of as continuous and compactly supported functions $u:M_\alpha\to S^1$ (respectively, $u:SX\to S^1$), and cycles for $H^0_c(X)$ can be thought of as locally constant and compactly supported functions $f:X\to \Z$.  

\begin{lemma}\label{exp low}
Let $X$ be a locally compact Hausdorff space equipped with an action $\alpha$ of $\Z$.  There is an isomorphism $H_0(\Z\ltimes X)\to H^1_c(M_\alpha)$ explicitly realized on the level of cycles by sending $(f,v)$ to the function $u:M_\alpha\to S^1$ defined by $u(t,x)=e^{2\pi i tf(x)}v(x)$.  This isomorphism fits moreover into a commutative diagram 
$$
\xymatrix{0 \ar[r] & H^1_c(SX)_\Z  \ar[r]   & H^1_c(M_\alpha) \ar[r]  & H^1_c(X)^\Z \ar@{=}[d] \ar[r] & 0 \\
0 \ar[r] & H^{0}_c(X)_\Z \ar@<1ex>[u]  \ar[r] & H_{0}(\Z\ltimes X) \ar@<1ex>[u]  \ar[r] & H^1_c(X)^\Z \ar[r] & 0 }
$$
where the bottom row is as in Proposition \ref{free gp ses}.  The same holds if we use rational coefficients.
\end{lemma}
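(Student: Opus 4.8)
The plan is to exhibit the stated formula as a map on cycles, check that it descends to a homomorphism $\Phi\colon H_0(\Z\ltimes X)\to H^1_c(M_\alpha)$, and then deduce that it is an isomorphism by comparing it with the two short exact sequences through the five lemma. Throughout I would compute $H^1_c(M_\alpha)$ using the exponential resolution $0\to\mathcal{Z}\to\mathcal{R}\to\mathcal{S}^1_B\to\cdots$ on $M_\alpha$ exactly as in Remark \ref{h0 explicit rem}, so that a class in $H^1_c(M_\alpha)$ is represented by a continuous compactly supported function $u\colon M_\alpha\to S^1$, with coboundaries being exactly the functions of the form $e^{2\pi i h}$ for $h\colon M_\alpha\to\R$ continuous and compactly supported. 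I would view $M_\alpha$ as $[0,1]\times X$ glued along the seam by $(1,x)\sim(0,\alpha^{-1}x)$, so that continuous functions on $M_\alpha$ are continuous functions on $[0,1]\times X$ whose values agree under this identification.

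First I would verify that $(f,v)\mapsto u$, $u(t,x)=e^{2\pi i tf(x)}v(x)$, is well defined. The cocycle relation $\alpha(v)v^*=e^{2\pi i f}$ from Remark \ref{h0 explicit rem} is precisely what forces $u(1,x)=e^{2\pi i f(x)}v(x)=v(\alpha^{-1}x)=u(0,\alpha^{-1}x)$, so $u$ is a well-defined continuous compactly supported function on $M_\alpha$, i.e.\ a cocycle. To see that $\Phi$ descends to homology I would trace a boundary: a zeroth boundary has the form $(f,v)=(\alpha(h)-h,\,e^{2\pi i h})$ for a single $h\in\Gamma_c(X;\mathcal{R})$ (the degree-one part of the total complex is the single slot $\Gamma_c(X;\mathcal{R})$ in the second column), and then a direct computation gives $u(t,x)=e^{2\pi i H(t,x)}$ with $H(t,x)=t\,h(\alpha^{-1}x)+(1-t)h(x)$. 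Since $H$ satisfies $H(1,x)=h(\alpha^{-1}x)=H(0,\alpha^{-1}x)$ it glues to a continuous compactly supported real function on $M_\alpha$, so $u$ is a coboundary. Hence $\Phi$ is a well-defined homomorphism.

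Next I would produce the top row as the long exact sequence in compactly supported sheaf cohomology for the open-closed decomposition $M_\alpha=SX\sqcup X$ (with $SX=(0,1)\times X$ open and the fiber $X=\{t=0\}$ closed), combined with the suspension isomorphism $H^n_c(SX)\cong H^{n-1}_c(X)$; splitting it requires identifying the connecting map $H^n_c(X)\to H^{n+1}_c(SX)\cong H^n_c(X)$ with $\id-\alpha^*$, which is the Wang sequence of the mapping torus and which I would justify by a Mayer--Vietoris computation over a two-arc cover of the base circle (or by citation), yielding $0\to H^1_c(SX)_\Z\to H^1_c(M_\alpha)\to H^1_c(X)^\Z\to 0$ and matching the bottom row of Proposition \ref{free gp ses} term by term. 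I would then check the two squares commute: the right-hand map out of $H^1_c(M_\alpha)$ is restriction to the fiber $X$, sending $u$ to $u(0,\cdot)=v$, which agrees with the map $(f,v)\mapsto[v]$ of Proposition \ref{free gp ses}; and the left-hand map $H^1_c(SX)_\Z\to H^1_c(M_\alpha)$ is extension by zero, under which the suspension class of $g\in H^0_c(X)$ is represented by $(t,x)\mapsto e^{2\pi i tg(x)}$, extended by the constant $1$ since $g$ is $\Z$-valued, which is exactly $\Phi$ applied to the cycle $(g,1)$ representing the image of $[g]$ in $H_0(\Z\ltimes X)$. With both rows exact and the outer verticals the suspension isomorphism and the identity, the five lemma shows $\Phi$ is an isomorphism. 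The rational statement follows by running the identical argument with $\Q$-coefficients.

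I expect the main obstacle to be the identification of the connecting map of the mapping-torus sequence with $\id-\alpha^*$, since this is what guarantees that the top short exact sequence has the same two outer terms as the bottom one and therefore makes the five-lemma comparison meaningful; by contrast, the well-definedness of $\Phi$ on boundaries (once one writes down the explicit homotopy $H$) and the commutativity of the two squares are comparatively routine cycle-level checks.
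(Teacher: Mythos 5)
Your proposal is correct and follows essentially the same route as the paper: define the map on cycles by the stated exponential formula, identify the left vertical map with the ($\Z$-equivariant) suspension isomorphism $[g]\mapsto[e^{2\pi i tg(x)}]$, check the two squares commute, and conclude by the five lemma. You simply fill in details the paper leaves implicit (the cycle-level well-definedness via the homotopy $H$, the derivation of the top row as the Wang sequence, and the explicit commutativity checks), all of which are accurate.
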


\begin{proof}
The left vertical map is induced by the map sending the class $[f]\in H^0_c(X)$ of a compactly supported locally constant function $f:X\to \Z$ to the class of the function $u(t,x)=e^{2\pi i tf(x)}$ in $H^1_c(SX)$.  This is $\Z$-equivariant (for the trivial $\Z$-action in the suspension variable), and induces an isomorphism $H^0_c(X)\to H^1_c(SX)$, whence it also induces an isomorphism after taking coinvariants.  Clearly the diagram commutes, whence the central vertical map is an isomorphism by the five lemma.  The same argument works with rational coefficients (or use the universal coefficient theorem).
\end{proof}

Analogously to Lemma \ref{trace agree} we may define a map $\Lambda_\tau^H:H^1_c(M_\alpha)\to \R$ using the same formula as for $\Lambda_\tau:K^1(M_\alpha)\to \R$ (see line \eqref{lt} above - this involves first approximating a cycle $u:M_\alpha\to S^1$ by a cycle that is continuously differentiable in the $t$ variable).  We then have another compatibility lemma about pairings.

\begin{lemma}\label{trace agree 2}
With notation as in Lemma \ref{exp low}, the diagram 
$$
\xymatrix{ H^1_c(M_\alpha) \ar[r]^-{\Lambda^H_\tau} & \R \ar@{=}[d] \\ H_0(\Z\ltimes X) \ar@<1ex>[u]  \ar[r]^-\tau & \R}
$$
commutes.  The same holds with rational coefficients.
\end{lemma}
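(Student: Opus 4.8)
The plan is to prove the commutativity by a direct computation on the explicit cycles supplied by Remark \ref{h0 explicit rem} and Lemma \ref{exp low}, so that both composites around the square become honest integrals that can be compared by hand. Recall from Remark \ref{h0 explicit rem} that a class in $H_0(\Z\ltimes X)$ is represented by a pair $(f,v)$ with $f:X\to\R$ continuous and $v:X\to S^1$ continuous, subject to $\alpha(v)v^*=e^{2\pi i f}$, where the relevant resolution begins $0\to\mathcal{Z}\to\mathcal{R}\to\mathcal{S}_B^1\to\cdots$. Since $\mathcal{R}\subseteq\mathcal{B}$ contains $\mathcal{Z}$ and the map $\mathcal{Z}\to\mathcal{R}$ is the inclusion, this is a Borel resolution in the sense of Definition \ref{top res}; hence by Proposition \ref{pair good} the bottom horizontal map sends $(f,v)$ to $\int_X f\,d\tau$, i.e.\ the integral against $\tau$ of the component $f$ living over $\gpd^{(0)}$.

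First I would push the class $(f,v)$ through the isomorphism of Lemma \ref{exp low}, obtaining the representative $u:M_\alpha\to S^1$, $u(t,x)=e^{2\pi i tf(x)}v(x)$, of the corresponding class in $H^1_c(M_\alpha)$. The crucial point is that this representative is already smooth in the $t$-variable, so no approximation is needed in the definition of $\Lambda^H_\tau$. Differentiating in $t$ and using $|v(x)|=1$ gives, pointwise on $X$,
$$
u'(t,x)\,u(t,x)^*=\bigl(2\pi i f(x)e^{2\pi i tf(x)}v(x)\bigr)\bigl(e^{-2\pi i tf(x)}\overline{v(x)}\bigr)=2\pi i f(x),
$$
which is independent of $t$. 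Applying the trace (that is, integrating against $\tau$) and then integrating over $t\in[0,1]$ yields
$$
\Lambda^H_\tau[u]=\frac{1}{2\pi i}\int_0^1\tau\bigl(u'(t)u(t)^*\bigr)\,dt=\frac{1}{2\pi i}\int_0^1 2\pi i\,\tau(f)\,dt=\int_X f\,d\tau,
$$
which agrees with the value of the bottom map computed above. This establishes commutativity on representatives, hence on homology classes. It is worth noting that the cycle relation $\alpha(v)v^*=e^{2\pi i f}$ is \emph{not} used in this computation itself; it enters only through Lemma \ref{exp low}, where it guarantees that $u$ descends to a well-defined function on $M_\alpha$.

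The only further points I would record are routine. Well-definedness of $\Lambda^H_\tau$ on homology classes (independence of the smooth approximation and of the chosen cycle) is inherited verbatim from the de la Harpe--Skandalis determinant argument already invoked for $\Lambda_\tau$ in Proposition \ref{trace agree}, specialized to the scalar, commutative case. The rational-coefficient statement follows by the identical computation, or formally from naturality together with the universal coefficient theorem. Since the heart of the matter is the elementary cancellation $v(x)\overline{v(x)}=1$ occurring in the product $u'u^*$, there is no genuine analytic obstacle here: the only real care required is bookkeeping, namely correctly identifying which component of an $H_0$-cycle the pairing of Proposition \ref{pair good} extracts, and checking that the representative produced by Lemma \ref{exp low} is exactly the one feeding into the formula for $\Lambda^H_\tau$.
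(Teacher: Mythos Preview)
Your proof is correct and follows essentially the same approach as the paper: both take a cycle $(f,v)$, push it through the isomorphism of Lemma \ref{exp low} to the explicit representative $u(t,x)=e^{2\pi i tf(x)}v(x)$, and compute $\Lambda^H_\tau[u]=\tau(f)$ directly from $u'(t)u(t)^*=2\pi i f$. Your write-up is slightly more detailed (noting explicitly that the resolution is Borel, and that the cycle relation enters only to make $u$ well-defined on $M_\alpha$), but the argument is the same.
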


\begin{proof}
Let $(f,v)$ be a cycle for $H_0(\Z\ltimes X)$, so $\tau[f,v]=\int_X fd\tau$ by definition.  On the other hand, the isomorphism $H_0(\Z\ltimes X)\to H^1_c(M_\alpha)$ from Lemma \ref{exp low} sends $(f,v)$ to the function $u:M_\alpha\to S^1$ defined by $u(t,x)=e^{2\pi i tf(x)}v(x)$.  Computing,
$$
\Lambda_\tau[u]=\frac{1}{2\pi i } \int_0^1 \tau(u'(t)u(t)^*)dt = \frac{1}{2\pi i}\int_0^1 \tau(2\pi i fu(t)u(t)^*) dt=\tau(f),
$$
so we are done.
\end{proof}

We will also need to compare the mapping cone sequence in cohomology to the sequence in groupoid homology.  The following result is in some sense an (easier) analogue of $\Z$-equivariant Bott periodicity for groupoid homology.  It is also essentially a special case of \cite[Theorem B]{Proietti:2023aa}, but we provide a direct proof in the case we need for the sake of keeping the paper self-contained.

\begin{proposition}\label{gh bott}
Let $X$ be a locally compact Hausdorff space equipped with an action $\alpha$ of $\Z$.   Let $\R$ be equipped with the usual translation of $\Z$, and equip $\R\times X$ with the diagonal action.  Then there is a canonical isomorphism
$$
H_i(\Z\ltimes X)\cong H_{i-1}(\Z\ltimes (\R\times X))
$$
for each $i\in \Z$.
\end{proposition}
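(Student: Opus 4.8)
The plan is to reduce the statement to a $\Z$-equivariant suspension isomorphism for compactly supported sheaf cohomology, exploiting the explicit description of $H_*(\Z\ltimes X)$ obtained in the proof of Proposition \ref{free gp ses}. Recall from that proof that, after fixing a $c$-soft equivariant resolution $0\to\mathcal{Z}\to\mathcal{S}^\bullet$ of the constant sheaf on $X$, the groups $H_*(\Z\ltimes X)$ are computed by the total complex $T_\bullet$ of the double complex obtained by tensoring the free resolution \eqref{fm res} of the trivial $\Z G$-module $\Z$ with the coefficient complex $M_\bullet:=\Gamma_c(X;\mathcal{S}^{-\bullet})$; equivalently, via Corollary \ref{cm hh}, $H_*(\Z\ltimes X)$ is the group hyperhomology of $\Z$ with coefficients in $M_\bullet$. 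The same description applies to $\R\times X$: choosing an equivariant $c$-soft resolution $\mathcal{T}^\bullet$ of the constant sheaf on $\R\times X$ for the diagonal action, $H_*(\Z\ltimes(\R\times X))$ is computed by the total complex $T'_\bullet$ of \eqref{fm res} tensored with $M'_\bullet:=\Gamma_c(\R\times X;\mathcal{T}^{-\bullet})$.

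The heart of the argument is to produce a $\Z$-equivariant quasi-isomorphism $M'_\bullet\simeq M_{\bullet+1}$ realizing the suspension isomorphism $H^{k}_c(\R\times X)\cong H^{k-1}_c(X)$. I would build this from a K\"unneth-type identification at the level of compactly supported sections. Concretely, take $\mathcal{T}^\bullet$ to be the external tensor product of the resolution $\mathcal{S}^\bullet$ on $X$ with a short equivariant $c$-soft resolution $0\to\mathcal{Z}\to\mathcal{P}^0\to\mathcal{P}^1\to 0$ of the constant sheaf on $\R$ --- for instance the integer ``jump'' resolution modeled on \eqref{res}, which is manifestly invariant under the translation action of $\Z$ on $\R$. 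Since $\mathcal{Z}$ is torsion-free, the external product of resolutions again resolves $\mathcal{Z}$ on $\R\times X$, and a Fubini argument for compactly supported sections identifies $M'_\bullet$ with the total complex of $\Gamma_c(\R;\mathcal{P}^\bullet)\otimes M_\bullet$ (one technical point to verify here is that external tensor products of $c$-soft sheaves are $c$-soft and that $\Gamma_c$ converts external products into tensor products).

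Now $\Gamma_c(\R;\mathcal{P}^\bullet)$ computes $H^*_c(\R)$, which is $\Z$ in degree $1$ and $0$ otherwise; moreover the translation action of $\Z$ on $\R$ is properly homotopic to the identity and orientation preserving, so it acts trivially on $H^1_c(\R)$. Hence $\Gamma_c(\R;\mathcal{P}^\bullet)$ is $\Z$-equivariantly quasi-isomorphic to $\Z$ placed in cohomological degree $1$ with trivial action, and the Fubini identification upgrades to the desired $\Z$-equivariant quasi-isomorphism $M'_\bullet\simeq M_{\bullet+1}$. I expect this equivariance to be the main obstacle: because the diagonal action moves the $\R$-coordinate by translation while acting by $\alpha$ on $X$, one must check that the suspension isomorphism intertwines the $\Z$-action on $H^*_c(\R\times X)$ with the $\alpha$-action on $H^{*-1}_c(X)$, with no extra twist contributed by the $\R$-factor; this is exactly what the triviality of the translation action on $H^1_c(\R)$ guarantees.

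To conclude, I would tensor the equivariant quasi-isomorphism $M'_\bullet\simeq M_{\bullet+1}$ with the free resolution \eqref{fm res}; as \eqref{fm res} consists of free (hence flat) $\Z G$-modules, forming the total complex preserves quasi-isomorphisms, so $T'_\bullet$ is quasi-isomorphic to the singly degree-shifted total complex $T_{\bullet+1}$. Tracking the one degree shift then gives $H_n(\Z\ltimes(\R\times X))\cong H_{n+1}(\Z\ltimes X)$ for all $n$, i.e.\ $H_{i-1}(\Z\ltimes(\R\times X))\cong H_i(\Z\ltimes X)$, as required. Alternatively, the final step can be phrased as a five-lemma comparison of the two short exact sequences of Proposition \ref{free gp ses}: the equivariant suspension isomorphism identifies $H^{1-i}_c(\R\times X)\cong H^{-i}_c(X)$ and $H^{2-i}_c(\R\times X)\cong H^{1-i}_c(X)$, so matches the coinvariant and invariant outer terms of the sequence for $H_{i-1}(\Z\ltimes(\R\times X))$ with those for $H_i(\Z\ltimes X)$ via line \eqref{free gp hom}, forcing the induced map of middle terms to be an isomorphism.
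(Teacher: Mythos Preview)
Your proposal is correct and follows essentially the same route as the paper. The paper's Lemma \ref{int fib lem} carries out precisely the construction you sketch---the external tensor product of the ``jump'' resolution on $\R$ with a $c$-fine resolution on $X$, the Fubini identification of compactly supported sections (via \cite[Proposition II.5.1]{Bredon:1997aa}), and the K\"unneth argument giving the $\Z$-equivariant quasi-isomorphism $\Gamma_c(\R\times X;(\mathcal{S}\otimes\mathcal{T})^\bullet)\to\Gamma_c(X;\mathcal{T}^{\bullet-1})$---and the proof of Proposition \ref{gh bott} then passes to hyperhomology. The only cosmetic difference is in the concluding step: the paper compares $E^2$-pages of the hyperhomology spectral sequence, whereas you invoke either flatness of the free resolution (which is the content of Lemma \ref{h1 wd}) or the five-lemma applied to the short exact sequences of Proposition \ref{free gp ses}; these are interchangeable justifications of the same fact.
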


We need an ancillary lemma, which is standard algebraic topology: we provide a proof as we could not find an appropriate reference.  For the statement, recall that a sheaf $\mathcal{S}$ on a locally compact space $X$ is \emph{$c$-fine} if the homomorphism sheaf $\mathcal{H}om(\mathcal{S},\mathcal{S})$ is $c$-soft (see for example \cite[Section II.9]{Bredon:1997aa}).

\begin{lemma}\label{int fib lem}
Let $X$ be a locally compact Hausdorff space equipped with an action $\alpha$ of $\Z$.   Let $\R$ be equipped with the translation action of $\Z$, and equip $\R\times X$ with the diagonal action. Then there is an `integration along the fiber' isomorphism $\int:H_c^{*}(\R\times X)\to H_c^{*-1}(X)$ that is moreover equivariant for the induced $\Z$-actions.
\end{lemma}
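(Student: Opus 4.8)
The plan is to realize $\int$ as integration along the fibre of the projection $p \colon \R \times X \to X$, and to extract equivariance from the fact that $p$ intertwines the two actions. Note first that $p$ is $\Z$-equivariant: if $n \in \Z$ acts on $\R \times X$ by $(t,x) \mapsto (t+n, \alpha^n x)$ and on $X$ by $\alpha^n$, then $p(t+n,\alpha^n x) = \alpha^n p(t,x)$. The degree shift in the statement will come entirely from the one-dimensionality of the fibre $\R$, and the whole construction is natural, so equivariance should be essentially automatic once one orientation issue is checked.

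Concretely, I would compute the higher proper direct images $R^q p_! \mathcal{Z}$ (using the lower-shriek pushforward, which is well-behaved for compact supports even though $p$ is not proper). Their stalks are the fibrewise compactly supported cohomology groups $H^q_c(\R;\mathcal{Z})$, which vanish for $q \neq 1$ and equal $\Z$ for $q=1$; since the bundle is a global product with connected fibre, $R^1 p_! \mathcal{Z} \cong \mathcal{Z}$ canonically once the standard orientation of $\R$ is fixed, and $R^q p_! \mathcal{Z} = 0$ otherwise. Feeding this into the Leray spectral sequence for compact supports, $E_2^{a,b} = H^a_c(X; R^b p_! \mathcal{Z}) \Rightarrow H^{a+b}_c(\R \times X)$, only the row $b=1$ survives, so the spectral sequence collapses to an edge isomorphism $H^n_c(\R\times X) \cong H^{n-1}_c(X;\mathcal{Z}) = H^{n-1}_c(X)$. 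This is the desired degree-lowering map $\int$; on the level of an explicit $c$-soft resolution it is honest integration of a section over the $\R$-coordinate (after reducing a general cocycle to a representative concentrated in the fibre direction), which pins the map down concretely and makes the later comparison lemmas usable.

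For equivariance I would invoke naturality. Each $n \in \Z$ yields a commuting square of spaces relating $p$ to itself via the homeomorphisms above, and integration along the fibre — equivalently, the edge map of the Leray spectral sequence — is natural with respect to such equivariant squares, so it intertwines the induced actions on $H^n_c(\R\times X)$ and on $H^{n-1}_c(X)$. The one point that genuinely needs checking, and which I expect to be the main (indeed the only real) obstacle, is that the identification $R^1 p_! \mathcal{Z} \cong \mathcal{Z}$ is $\Z$-equivariant for the \emph{trivial} action on $\mathcal{Z}$: this is exactly the assertion that the generator of $\Z$ acts on the fibre $\R$ by an orientation-preserving homeomorphism, which holds because the translation $t \mapsto t+1$ preserves the canonical generator of $H^1_c(\R)$. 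Were the fibre action orientation-reversing, the isomorphism would pick up a sign twist and fail to be equivariant in the stated sense; the proof therefore rests on this orientation computation, with everything else being formal naturality of fibre integration.
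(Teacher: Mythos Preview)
Your argument is correct and complete for the lemma as stated, but it follows a genuinely different route from the paper's. The paper builds an explicit $c$-fine two-term resolution $0\to\mathcal Z\to\mathcal S^0\to\mathcal S^1\to 0$ of $\mathcal Z$ on $\R$ by hand (sections of $\mathcal S^0$ are right-continuous $\Z$-valued step functions, $\mathcal S^1=\mathcal S^0/\mathcal Z$), defines a literal summation map $\int:\Gamma_c(\R;\mathcal S^1)\to\Z$, checks it is a quasi-isomorphism to the complex $\Z$ concentrated in degree one, tensors with a $c$-fine resolution $\mathcal T^\bullet$ on $X$, and then applies the algebraic K\"unneth theorem to the tensor product of section complexes to get a chain-level quasi-isomorphism $\int\otimes\mathrm{id}:\Gamma_c(\R\times X;(\mathcal S\otimes\mathcal T)^\bullet)\to\Gamma_c(X;\mathcal T^{\bullet-1})$.

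Your Leray/$R^\bullet p_!$ argument is cleaner and more conceptual, and your identification of the orientation check as the only nontrivial point is exactly right. The paper's approach, by contrast, is more laborious but pays a dividend one step later: the very next proposition (the ``Bott periodicity'' result $H_i(\Z\ltimes X)\cong H_{i-1}(\Z\ltimes(\R\times X))$) needs $\int$ as an honest $\Z$-equivariant map of chain complexes of sections, not merely an isomorphism of cohomology groups, in order to induce a map on hyperhomology and compare $E^2$-pages. You gesture at this (``on the level of an explicit $c$-soft resolution it is honest integration\ldots''), but your spectral-sequence edge map does not by itself furnish such a chain-level realisation; you would need to go back and construct one, at which point you are essentially redoing the paper's work. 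So your proof of \emph{this} lemma is fine, but be aware that the paper's more explicit construction is what actually drives the subsequent argument.
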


\begin{proof}
We define essentially the same resolution $0\to \mathcal{Z}\to \mathcal{S}^0\to \mathcal{S}^1\to 0$ of the sheaf $\mathcal{Z}$ of locally constant $\Z$-valued functions on $\R$ that we used to compute the pairing for an irrational rotation action in Section \ref{Sec:Irrational}.  Precisely: $\mathcal{S}^0$ is the sheaf whose sections on an open set $U\subseteq \R$ are the functions $s:U\to \Z$ such that for all $x\in U$, ${\displaystyle \lim_{y\to x^-}s(y)}$ exists, ${\displaystyle\lim_{y\to x^+}s(y)}$ exists and equals $s(x)$, and $s$ has a discrete set of discontinuities in $U$; $\mathcal{S}^1$ is the sheaf whose sections on an open set $U$ are functions $s:U\to \Z$ with discrete support; the map $\mathcal{Z}\to \mathcal{S}^0$ is the canonical inclusion; and the map $\mathcal{S}^0\to \mathcal{S}^1$ is defined over an open set $U$ by sending a section $s$ to the section $t$ defined by ${\displaystyle t(x):=s(x)-\lim_{y\to x^-}s(y)}$.  One checks directly that each $\mathcal{S}^i$ is a $c$-fine $\Z$-equivariant sheaf, and that $\mathcal{S}^1$ identifies with $\mathcal{S}^0/\mathcal{Z}$.   

Define now a map $\int:\Gamma_c(\R;\mathcal{S}^1)\to \Z$ by sending a section $s$ to $\sum_{x\in \R} s(x)$ (this makes sense as $s$ is finitely supported).  The map $\int$ extends to a map of complexes $\int:\Gamma_c(\R;\mathcal{S}^\bullet)\to \mathcal{C}^\bullet$ where $\mathcal{C}^\bullet$ is $\Z$ in degree one, and zero in all other degrees.  As the homology groups of the complex $\Gamma_c(\R;\mathcal{S}^\bullet)$ (i.e.\ the cohomology $H_c^i(\R))$ are $\Z$ in degree one, and zero in all other degrees, this map is a quasi-isomorphism.

Let now $\mathcal{T}^\bullet$ be any resolution of the sheaf $\mathcal{Z}$ of locally constant $\Z$-valued functions on $X$ by $c$-fine $\Z$-sheaves, and define $(\mathcal{S}\otimes \mathcal{T})^i$ to be the sheaf $\mathcal{S}^0\otimes \mathcal{T}^i\oplus \mathcal{S}^1\otimes \mathcal{T}^{i-1}$.  As each $\mathcal{S}^i$ is torsion free, the tensor product sheaf $(\mathcal{S}\otimes \mathcal{T})^\bullet$ is a resolution of the sheaf of locally constant $\Z$-valued functions on $\R\times X$.  Moreover, it consists of $\Z$-equivariant sheaves, and each constituent sheaf in the resolution is $c$-fine by \cite[Corollary II.9.18]{Bredon:1997aa}.  

Now, as $\mathcal{S}^\bullet$ and $\mathcal{T}^\bullet$ are $c$-fine, \cite[Proposition II.5.1]{Bredon:1997aa} identifies $\Gamma_c(\R\times X;(\mathcal{S}\otimes \mathcal{T})^\bullet)$ with the tensor product complex $\Gamma_c(\R;\mathcal{S}^\bullet)\otimes \Gamma_c(X;\mathcal{T}^\bullet)$.  As $\int$ induces a quasi-isomorphism $\Gamma_c(\R;\mathcal{S}^\bullet)\to \mathcal{C}^\bullet$, $\int\otimes \text{id}$ induces a quasi-isomorphism 
\begin{equation}\label{last qi}
\smallint\otimes \text{id}:\Gamma_c(\R\times X;(\mathcal{S}\otimes \mathcal{T})^\bullet)\to \Gamma_c(X;\mathcal{T}^{\bullet-1})
\end{equation}
by the K\"{unneth} formula (the purely algebraic version for chain complexes - see for example \cite[Theorem 3.6.3]{Weibel:1995ty}).  As the left hand side in line \eqref{last qi} computes $H_c^{*}(\R\times X)$ and the right hand side computes $H_c^{*-1}(X)$, we are done.
\end{proof}

\begin{proof}[Proof of Proposition \ref{gh bott}]
As the map $\int:\Gamma_c(\R\times X;(\mathcal{S}\otimes \mathcal{T})^\bullet)\to \Gamma_c(X;\mathcal{T}^{\bullet-1})$ is $\Z$-equivariant, it induces a map $\int:\mathbb{H}_*(\Z;\R\times X)\to \mathbb{H}_*(\Z;X)$ on hyperhomology (see Definitions \ref{hyper1} and \ref{hyper3} for notation).  On the other hand, the map it induces on the $E_2$-page of the hyperhomology spectral sequences (see for example \cite[Proposition 5.7.6]{Weibel:1995ty}) from $E^2_{pq}(\R\times X)=H_p(\Z;H^q_c(\R\times X))$ to $E^2_{pq}(X):=H_{p}(\Z;H^q_c(X))$ is an isomorphism (with appropriate degree shift).  As the hyperhomology spectral sequence converges, $\int$ induces an isomorphism on hyperhomology $\int:\mathbb{H}_*(\Z;\R\times X)\to \mathbb{H}_*(\Z;X)$, and so we are done by the identification of hyperhomology and groupoid homology of Corollary \ref{cm hh}.
\end{proof}

\begin{lemma}\label{mc seq gpd}
Let $X$ be a locally compact Hausdorff space equipped with an action $\alpha$ of $\Z$.   For each $n$, the short exact sequences of Proposition \ref{free gp ses} form the bottom row in a commutative diagram of short exact sequences 
$$
\xymatrix{0 \ar[r] & H^n_c(SX)_\Z  \ar[r] \ar[d]  & H^n_c(M_\alpha) \ar[r] \ar[d] & H^n_c(X)^\Z \ar@{=}[d] \ar[r] & 0 \\
0 \ar[r] & H^{n-1}_c(X)_\Z   \ar[r] & H_{1-n}(\Z\ltimes X)   \ar[r] & H^n_c(X)^\Z \ar[r] & 0 }
$$
where the left hand vertical map is induced by integration along the fiber $\R$, the middle vertical map is induced again by the isomorphism from Lemma \ref{gh bott} plus the Morita equivalence $\Z\ltimes (\R\times X)\sim M_\alpha$, and all the vertical maps are isomorphisms.  The same holds if we use rational rather than integer coefficients.
\end{lemma}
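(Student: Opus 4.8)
The plan is to route the entire diagram through hyperhomology and realize it as the map of short exact sequences induced by a \emph{single} $\Z$-equivariant quasi-isomorphism: the integration-along-the-fiber map $\int\otimes\text{id}$ of Lemma \ref{int fib lem}, which is exactly the map already used to prove Lemma \ref{gh bott}. Because the middle vertical map is \emph{defined} as the composite of the Morita identification with the isomorphism of Lemma \ref{gh bott}, and the outer maps are integration and the identity, the fact that all three verticals are isomorphisms will be essentially automatic from the cited results; the real content is commutativity of the two squares, and this will follow from naturality of the two-column hyperhomology filtration.

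First I would reinterpret the top row on the homology side. Since $\Z$ acts freely and properly on $\R\times X$ with quotient $M_\alpha$, Morita invariance of Crainic--Moerdijk homology together with Example \ref{spex} gives canonical isomorphisms $H^n_c(M_\alpha)\cong H_{-n}(M_\alpha)\cong H_{-n}(\Z\ltimes(\R\times X))$. Applying Proposition \ref{free gp ses} to the groupoid $\Z\ltimes(\R\times X)$ at degree $-n$ produces the short exact sequence $0\to H^{n}_c(\R\times X)_\Z\to H_{-n}(\Z\ltimes(\R\times X))\to H^{n+1}_c(\R\times X)^\Z\to 0$. Rewriting the outer terms by the equivariant isomorphisms $H^{*}_c(\R\times X)\cong H^{*-1}_c(X)$ of Lemma \ref{int fib lem} (and $H^{*}_c(SX)\cong H^{*-1}_c(X)$, both monodromies acting as $\alpha^*$) recovers exactly the top row. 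The bottom row is Proposition \ref{free gp ses} for $\Z\ltimes X$ at degree $1-n$, which gives $0\to H^{n-1}_c(X)_\Z\to H_{1-n}(\Z\ltimes X)\to H^n_c(X)^\Z\to 0$, and the middle isomorphism $H_{-n}(\Z\ltimes(\R\times X))\cong H_{1-n}(\Z\ltimes X)$ is precisely Lemma \ref{gh bott}.

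Next I would invoke naturality. Recall from the proof of Proposition \ref{free gp ses} that each of these short exact sequences is the edge sequence of the two-column (with $p=0,1$) hyperhomology double complex computing the relevant groupoid homology via Corollary \ref{cm hh}: the sub is the coinvariants $H_0(\Z;H^\bullet_c)=(H^\bullet_c)_\Z$, the quotient is the invariants $H_1(\Z;H^\bullet_c)=(H^\bullet_c)^\Z$, and the connecting map is $\text{id}-\alpha$. The map $\int\otimes\text{id}$ is $\Z$-equivariant, hence commutes with $\text{id}-\alpha$; tensoring it with the free resolution of line \eqref{fm res} yields a map of these double complexes and so a map of the associated short exact sequences, making both squares commute. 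Exactly as in the proof of Lemma \ref{gh bott}, integration is a quasi-isomorphism, so it induces isomorphisms $H_p(\Z;H^q_c(\R\times X))\xrightarrow{\cong}H_p(\Z;H^{q-1}_c(X))$ on the $E_2$-pages; since both spectral sequences have only the two columns $p=0,1$ they degenerate, and the map of edge sequences is an isomorphism of short exact sequences. On the sub this is the coinvariants of integration (the degree drops from $n$ to $n-1$, so it is genuinely the integration map of the statement), on the middle it is Morita composed with Lemma \ref{gh bott}, and on the quotient it is the invariants of integration, which after the identifications above is the identity $H^n_c(X)^\Z=H^n_c(X)^\Z$. The rational statement is identical, as Lemmas \ref{int fib lem} and \ref{gh bott} and Proposition \ref{free gp ses} all hold verbatim over $\Q$ (alternatively, tensor the integral diagram with $\Q$).

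The main obstacle is matching the top row \emph{as stated}---the mapping-torus (Wang) sequence coming from line \eqref{mc seq}, whose connecting map is $\text{id}-\alpha^*$---with the hyperhomology short exact sequence for $\Z\ltimes(\R\times X)$ produced above. Both are two-step filtrations of $H^n_c(M_\alpha)$ with the same associated graded $H^{n-1}_c(X)_\Z$ and $H^n_c(X)^\Z$ and the same connecting map, so they coincide; but making this precise requires checking that the hyperhomology sub ($p=0$, coinvariants) corresponds to the image of $H^*_c(SX)$ and the hyperhomology quotient ($p=1$, invariants) to restriction to the fiber $X$. One must also be careful that the comparison $H^*_c(SX)\cong H^*_c(\R\times X)$ is an isomorphism of \emph{cohomology groups with their $\alpha^*$-actions}, not of $\Z$-spaces (the two $\Z$-actions, trivial on $(0,1)$ versus translation on $\R$, are genuinely different). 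Once the top row is realized on the $\Z\ltimes(\R\times X)$ side, the single equivariant quasi-isomorphism $\int\otimes\text{id}$ governs all three verticals simultaneously and commutativity is forced.
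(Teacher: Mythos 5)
Your proof is correct and follows essentially the same route as the paper's: both compare the Pimsner--Voiculescu-type sequences of Proposition \ref{free gp ses} for $\Z\ltimes(\R\times X)$ and $\Z\ltimes X$ via the $\Z$-equivariant integration-along-the-fiber quasi-isomorphism of Lemma \ref{int fib lem}, identify the middle term of the top row with $H^n_c(M_\alpha)$ by Morita invariance, and conclude that all verticals are isomorphisms (the paper via the five lemma, you via the degenerate $E_2$-page comparison, which amounts to the same thing). The ``main obstacle'' you flag --- matching the resulting two-step filtration of $H^n_c(M_\alpha)$ with the topological Wang sequence coming from line \eqref{mc ses} --- is not actually needed for the lemma as stated, since the top row is simply taken to be the transported groupoid-homology sequence, though your caution is relevant to how the lemma is later combined with naturality of the Chern character in the proof of Theorem \ref{main seq}.
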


\begin{proof}
Equip $\R$ with the translation action of $\Z$, and let $\int:H^n_c(\R\times X)\to H^{n-1}_c( X)$ be the integration along the fiber isomorphism if Lemma \ref{int fib lem}.  Moreover, Lemma \ref{gh bott} implies that external product with this generator also induces isomorphisms on groupoid homology $H_{n-1}(\Z\ltimes (\R\times X))\cong H_n(\Z\ltimes X)$.  We thus get a commutative diagram
$$
\xymatrix{ 0 \ar[r] & H^{n+1}_c(\R\times X)_\Z  \ar[r]  \ar[d] & H_{-n-1}(\Z \ltimes (\R\times X)) \ar[r] \ar[d] & H^{n+2}_c(\R \times X)^\Z  \ar[r] \ar[d] & 0  \\
0 \ar[r] & H^n_c(X)_\Z  \ar[r] & H_{-n}(\Z\ltimes X)  \ar[r] & H^{n+1}_c(X)^\Z \ar[r]  & 0 }.
$$
Morita invariance (see \cite[Corollary 4.6]{Crainic:2000aa}\footnote{This reference requires a finite-dimensionality assumption on $X$, and there are some technical difficulties with derived categories to remove it.  A simpler argument in the infinite-dimensional case can be deduced using hyperhomology $\mathbb{H}_*(\Z;\R\times X)$ and a Cartan-Leray spectral sequence argument as in \cite[page 173, particularly line (7.8)]{Brow:1982rt}}.) gives that the middle entry of the top row is isomorphic to $H^{n+1}_c(M_\alpha)$ and the five lemma implies that central vertical map is an isomorphism as claimed.  The same argument works with rational coefficients (or use the universal coefficient theorem).
\end{proof}

\begin{theorem}\label{main seq}
Let $X$ be a locally compact Hausdorff space, and let $\alpha:X\to X$ be a homeomorphism inducing an action, also denoted $\alpha$, of $\Z$ on $X$. 

There is a natural Chern character 
$$
\text{ch}_\alpha:K_*(C_0(X)\rtimes_\alpha \Z)\to H_{**}(\Z\ltimes X;\Q).
$$
that becomes an isomorphism on tensoring the left hand side by $\Q$, and that fits into a commutative diagram of short exact sequences
\begin{equation}\label{ch com}
\xymatrix{0 \ar[r] & K^*(X)_\Z  \ar[r] \ar[d] & K_*(C_0(X)\rtimes_\alpha \Z) \ar[r] \ar[d]^-{\text{ch}_\alpha}  \ar[r] & K^{*+1}(X)^\Z \ar[r]  \ar[d] & 0  \\
 0 \ar[r] & H^{**}_c(X;\Q)_\Z  \ar[r] & H_{**}(\Z\ltimes X;\Q) \ar[r] & H^{**+1}_c(X;\Q)^\Z \ar[r] & 0  }
\end{equation}
with the left and right vertical maps induced by the classical Chern character.  Moreover, one can replace $\text{ch}_\alpha$ by an integral isomorphism if there is an integral Chern isomorphism for $M_\alpha$ in the sense of Definition \ref{ch int}.

Finally, if $X$ is compact, then for any invariant probability measure $\tau$ on $X$ the diagram 
\begin{equation}\label{last tau}
\xymatrix{  K_0(C(X)\rtimes_\alpha \Z) \ar[d]^{\text{ch}_\alpha} \ar[r]^-\tau & \R \ar@{=}[d]\\ H_{ev}(\Z\ltimes X;\Q) \ar[r]  \ar[r]^-\tau  & \R }
\end{equation}
commutes\footnote{This can be usefully compared to material in \cite[Section 3]{Guo:2024ab}.}.
\end{theorem}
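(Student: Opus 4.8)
The plan is to construct $\text{ch}_\alpha$ by routing everything through the mapping torus $M_\alpha$, so that all of the assertions follow by gluing diagrams that are already available. First I would set
$$
\text{ch}_\alpha:=\Phi\circ \text{ch}_{M_\alpha}\circ \psi_\alpha,
$$
where $\psi_\alpha:K_*(C_0(X)\rtimes_\alpha\Z)\xrightarrow{\cong}K^{*+1}(M_\alpha)$ is the integral isomorphism assembled from the Connes--Thom map and the Morita identification $m_\alpha$ of Proposition \ref{mc basics}, $\text{ch}_{M_\alpha}:K^{*+1}(M_\alpha)\to H^{**+1}_c(M_\alpha;\Q)$ is the classical Chern character, and $\Phi:H^{**+1}_c(M_\alpha;\Q)\xrightarrow{\cong}H_{**}(\Z\ltimes X;\Q)$ is the integral isomorphism of Lemma \ref{mc seq gpd}. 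The degree bookkeeping works out: $K_0\to K^{1}(M_\alpha)\to H^{od}_c(M_\alpha)\to H_{ev}$ and $K_1\to K^{0}(M_\alpha)\to H^{ev}_c(M_\alpha)\to H_{od}$. Since $\psi_\alpha$ and $\Phi$ are integral isomorphisms and $\text{ch}_{M_\alpha}$ is a rational isomorphism, $\text{ch}_\alpha$ is a rational isomorphism; if $M_\alpha$ carries an integral Chern isomorphism in the sense of Definition \ref{ch int} one simply substitutes it for $\text{ch}_{M_\alpha}$ to obtain an integral isomorphism. Naturality is inherited from the naturality of each of the three factors.

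For the commutative diagram \eqref{ch com} I would stack three commuting ladders of short exact sequences: the $K$-theoretic ladder from Proposition \ref{mc basics}(iii) on top, the naturality ladder of $\text{ch}_{M_\alpha}$ applied to the mapping cone sequence \eqref{mc ses} in the middle, and the homological ladder of Lemma \ref{mc seq gpd} on the bottom. The three middle columns compose to $\text{ch}_\alpha$, the three right columns compose to the identity, and the three left columns should compose to the classical Chern character on $X$. The one thing that needs checking is precisely this last point: it comes down to the fact that the classical Chern character intertwines the two suspension isomorphisms (Bott periodicity on the $K$-side and the integration-along-the-fibre isomorphism of Lemma \ref{int fib lem} on the homology side) and that the connecting maps of both rows are $\text{id}-\alpha_*$. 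This is standard but is where the bulk of the routine bookkeeping lives.

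For the final statement, commutativity of \eqref{last tau}, I would chase a class $x\in K_0(C(X)\rtimes_\alpha\Z)$ through the factorization. By Definition \ref{gpd ell} the measure pairing on $H_{ev}(\Z\ltimes X;\Q)$ is supported on the $H_0$-summand, which corresponds under $\Phi$ exactly to the $H^1_c(M_\alpha)$-summand of $H^{od}_c(M_\alpha;\Q)$; and by Lemma \ref{trace agree 2} (after noting that the $n=1$ case of Lemma \ref{mc seq gpd} is inverse to the explicit isomorphism of Lemma \ref{exp low}) the measure pairing on $H_0$ agrees with $\Lambda^H_\tau$ on $H^1_c(M_\alpha)$. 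Writing $\text{ch}_1$ for the degree-one component of $\text{ch}_{M_\alpha}$, this gives
$$
\tau\big(\text{ch}_\alpha(x)\big)=\Lambda^H_\tau\big(\text{ch}_1(\psi_\alpha x)\big),
$$
since the higher-degree components of $\text{ch}_{M_\alpha}(\psi_\alpha x)$ land under $\Phi$ in the summands $H_{2m}$ with $m\neq 0$, on which the measure pairing vanishes. Combining this with Proposition \ref{trace agree}, whose diagram \eqref{tlt} gives $\Lambda_\tau(\psi_\alpha x)=\tau(x)$, reduces the whole statement to the single identity $\Lambda_\tau=\Lambda^H_\tau\circ\text{ch}_1$ on $K^1(M_\alpha)$.

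The hard part, and the genuinely new point, is this last identity, which says that the de la Harpe--Skandalis determinant pairing detects precisely the degree-one Chern character. To prove it I would use that, for a unitary $u$ over $C(M_\alpha)$, both $\Lambda_\tau$ and $\Lambda^H_\tau$ are given by the same formula $\tfrac{1}{2\pi i}\int_0^1\tau(w'(t)w(t)^*)\,dt$, and that $\Lambda_\tau[u]$ depends on $u$ only through $\det u$. Indeed, cyclicity of $\tau\circ\tr$ together with Jacobi's formula gives $\tr(u'(t)u(t)^*)=\tfrac{d}{dt}\log\det u(t)=(\det u)'(t)(\det u)(t)^*$, so that $\Lambda_\tau[u]=\Lambda^H_\tau[\det u]$; and the degree-one odd Chern form satisfies $\tfrac{1}{2\pi i}\tr(u^*\,du)=\tfrac{1}{2\pi i}(\det u)^*\,d(\det u)$, so $\text{ch}_1[u]=[\det u]$ in $H^1_c(M_\alpha;\Q)$. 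This reduces the identity to the tautological scalar case and completes the argument. I expect the only real subtlety to be keeping the normalizations of $\text{ch}_1$, $\Lambda_\tau$, and $\Lambda^H_\tau$ mutually consistent, everything else being assembly of previously established diagrams.
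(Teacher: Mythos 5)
Your proposal is correct and follows essentially the same route as the paper's proof: define $\text{ch}_\alpha$ as the composite of the Connes--Thom/mapping-torus isomorphism, the classical Chern character of $M_\alpha$, and the homological comparison isomorphisms; verify \eqref{ch com} by stacking the three ladders; and reduce the trace compatibility to the identity $\Lambda_\tau=\Lambda^H_\tau\circ\mathrm{ch}_1$ on $K^1(M_\alpha)$ via the determinant/Jacobi computation. The only cosmetic difference is that the paper defines the degree-one component of your $\Phi$ directly by (the inverse of) the explicit map of Lemma \ref{exp low} rather than via Lemma \ref{mc seq gpd}, which spares it the small verification you flag that the two agree there.
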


\begin{proof}
The Chern character $\text{ch}_\alpha$ is defined by composing the maps
$$
K_*(C(X)\rtimes_\alpha\Z)\to K^{*+1}(M_\alpha) \stackrel{\text{ch}}{\to} H^{**+1}(M_\alpha;\Q)\to H_{**}(\Z\ltimes X;\Q)
$$
where the first arrow is from Proposition \ref{mc seq}, the second is the classical Chern character, and the third is from Lemma \ref{exp low} on the summand $H^1(M_\alpha)$, and from Lemma \ref{mc seq gpd} on the summands in all other degrees.  It is a rational isomorphism as the central map is a rational isomorphism, and as the left and right maps are isomorphisms.

For commutativity of the diagram in line \eqref{ch com}, we consider the large diagram
$$
\xymatrix{0 \ar[r] & K^*(X)_\Z  \ar[r] \ar[d] & K_*(C_0(X)\rtimes_\alpha \Z) \ar[r] \ar[d]  \ar[r] & K^{*+1}(X)^\Z \ar[r]  \ar[d] & 0  \\
0 \ar[r] & K^*(X)_\Z  \ar[r] \ar[d] & K_{*+1}(M_\alpha) \ar[r] \ar[d]  \ar[r] & K^{*+1}(X)^\Z \ar[r]  \ar[d] & 0 \\
 0 \ar[r] & H^{**}_c(X;\Q)_\Z  \ar[r] \ar[d] & H^{**+1}_c(M_\alpha) \ar[r] \ar[d] & H^{**+1}_c(X;\Q)^\Z \ar[r] \ar[d] & 0 \\
 0 \ar[r] & H^{**}_c(X;\Q)_\Z  \ar[r] & H_{**}(\Z\ltimes X;\Q) \ar[r] & H^{**+1}_c(X;\Q)^\Z \ar[r] & 0  }
$$
where the top two rows are as in Proposition \ref{mc basics}, part \eqref{pv ct} (which shows that part of the diagram commutes), the maps between the middle two rows are the classical Chern characters (so that part of the diagram commutes by naturality of the Chern character), and the maps between the bottom two rows are as in Lemma \ref{exp low} in degree one and as in Lemma \ref{mc seq gpd} for all other degrees (and so that part of the diagram commutes by those lemmas).

To check compatibility of the pairings, consider the diagram
$$
\xymatrix{ K_0(C(X)\rtimes_{\alpha} \Z) \ar[r] \ar[d]^-\tau & K^1(M_\alpha) \ar[r] \ar[d]^-{\Lambda_\tau}& H^{od}_c(M_\alpha;\Q) \ar[r] \ar[d]^{\Lambda^H_\tau} & H_{ev}(\Z\ltimes X;\Q) \ar[d]^\tau \\
\R \ar@{=}[r] & \R \ar@{=}[r] & \R \ar@{=}[r] & \R },
$$
where the top composition defines the Chern character, and the last two vertical arrows are defined by first projecting onto $H^1(M_\alpha;\Q)$ and $H^0(\Z\ltimes X;\Q)$ and then taking the respective pairings.  The first square commutes by Proposition \ref{trace agree}, and the last square by Lemma \ref{trace agree 2}, so it remains to check commutativity of the central square.

For this, recall that for a compact space $Y$, the component of the Chern character $K^1(Y) \to H^1(Y;\Q)$ with values in $H^1_c$ is defined by taking a unitary $u\in M_n(C(Y))$ to the continuous function $\text{det}(u):Y\to S^1$ (this is even defined integrally, but we do not need it).  It thus suffices to check that if $u\in M_n(C(M_\alpha))$ is a unitary that is continuously differentiable in the $t$ variable (and equal to one outside a compact set), then 
$$
\tau(\text{det}(u)'\text{det}(u^*))=\tau(u'u^*)
$$
(where the prime denotes the derivative in the $t$-direction).  This follows as if we locally write $u=e^{2\pi i x}$ with $x$ self-adjoint, then $\text{det}(u)=e^{2\pi i \text{tr}(x)}$ and so $\text{det}(u)'=\text{tr}(x')\text{det}(u)$, while on the other hand $u'=x'u$.  The result follows from these local computations.
\end{proof}

\begin{corollary}\label{int cor}
A transformation groupoid $\Z\ltimes X$ with compact base space is HK-good under either of the following conditions:
\begin{enumerate}[(i)]
\item \label{ch4} the cohomology of $X$ vanishes above dimension three, and the Chern class map $c:K^*(X)\to H^{**}(X)$ of Definition \ref{chb2} is an isomorphism; or
\item \label{chfree} there is an integral Chern isomorphism for $X$ in the sense of Definition \ref{ch int}, and the groups $K^i(X)$ are free. 
\end{enumerate}
\end{corollary}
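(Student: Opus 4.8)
The plan is to deduce the corollary entirely from Theorem \ref{main seq} by promoting the rational Chern character $\text{ch}_\alpha$ to an \emph{integral} isomorphism. First I would recall what HK-goodness demands (Definition \ref{hk good}): algebraic isomorphisms $\phi_0\colon K_0(C(X)\rtimes_\alpha\Z)\to H_{ev}(\Z\ltimes X)$ and $\phi_1\colon K_1\to H_{od}$, the identity $\phi_0([1]_K)=[1]_H$, and an affine homeomorphism $\tau\colon T(\Z\ltimes X)\to T(C^*_r(\Z\ltimes X))$ intertwining $\rho_K$ and $\rho_H$. Two of these are already supplied: the last diagram of Theorem \ref{main seq} (line \eqref{last tau}) shows $\text{ch}_\alpha$ intertwines the pairings with every invariant measure, while Lemma \ref{meas to trace} gives the affine homeomorphism of trace simplices (this step invokes principality, i.e.\ freeness of the action, which holds for the minimal systems that are the intended applications). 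Since the pairings are real-valued they only see the rationalization of the $\phi_i$, and the integral commuting square \eqref{ch com} identifies the image of $[1]_K$ with $[1]_H$ (the classical Chern character carries the rank-one class to $1\in H^0$); hence \emph{any} integral isomorphism agreeing with $\text{ch}_\alpha$ after $\otimes\,\Q$ will automatically be unit- and pairing-preserving. So the only substantive task is the integral upgrade, and by the ``Moreover'' clause of Theorem \ref{main seq} this reduces to exhibiting an integral Chern isomorphism for the mapping torus $M_\alpha$ in the sense of Definition \ref{ch int}.

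To build that, I would compare the two Wang-type short exact sequences
$$0\to K^{*-1}(X)_\Z\to K^*(M_\alpha)\to K^*(X)^\Z\to 0$$
and
$$0\to H^{**-1}(X)_\Z\to H^{**}(M_\alpha)\to H^{**}(X)^\Z\to 0,$$
arising respectively from the mapping-cone sequence \eqref{mc seq} together with Proposition \ref{mc basics}\eqref{pv ct} and Lemma \ref{exp low}, and from its cohomological analogue in Lemma \ref{mc seq gpd} (using $SX$ to supply the degree shift). Because the Chern class map $c$ of Definition \ref{chb2} is natural and $\Z$-equivariant, it commutes with $\text{id}-\alpha_*$ and so induces a morphism between these two sequences whose outer vertical maps are the maps induced by $c$ on $X$ after passing to $\Z$-coinvariants and $\Z$-invariants. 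Once the outer maps are isomorphisms, the five lemma forces the middle map $c\colon K^*(M_\alpha)\to H^{**}(M_\alpha)$ to be an isomorphism, which is exactly the integral Chern isomorphism required.

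It then remains to verify the outer isomorphisms under each hypothesis. Under \eqref{ch4} the assumption is precisely that $c\colon K^*(X)\to H^{**}(X)$ is an isomorphism; being an equivariant isomorphism it descends to isomorphisms on both invariants and coinvariants, and the vanishing of $H^*(X)$ above degree three forces $H^*(M_\alpha)$ to vanish above degree four through the Wang sequence, which is the range in which the resulting middle map is recognized as the integral Chern isomorphism of Definition \ref{ch int}. Under \eqref{chfree} the integral Chern isomorphism for $X$ again supplies an equivariant integral isomorphism $K^*(X)\cong H^{**}(X)$; here freeness of the groups $K^i(X)$ guarantees that $K^*(X)^\Z=\ker(\text{id}-\alpha_*)$ is free, so the $K$-theory Wang sequence splits and the assembled middle term carries no torsion extension invisible to $c$, allowing the integral five-lemma comparison to go through.

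The hard part, and the only genuinely non-formal point, is this passage from $X$ to $M_\alpha$: one must ensure that the \emph{integral} Chern isomorphism (not merely the rational Chern character) is compatible with the mapping-cone sequences and that no torsion in the coinvariants $K^*(X)_\Z$ and $H^{**}(X)_\Z$ obstructs the integral argument. This is exactly why the two hypotheses are phrased differently: \eqref{ch4} controls torsion through low-dimensionality of $M_\alpha$, whereas \eqref{chfree} controls it through freeness of the $K$-groups. Once the integral Chern isomorphism for $M_\alpha$ is in hand, Theorem \ref{main seq} delivers the integral isomorphism $K_*(C(X)\rtimes_\alpha\Z)\cong H_{**}(\Z\ltimes X)$ together with unit and pairing compatibility, and Lemma \ref{meas to trace} provides the trace-simplex homeomorphism, completing the verification that $\Z\ltimes X$ is HK-good.
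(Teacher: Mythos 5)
Your overall strategy is the paper's: reduce everything to Theorem \ref{main seq} (which already handles the pairing and, via the commuting square \eqref{ch com}, the class of the unit), and then supply the integral isomorphism by comparing the Pimsner--Voiculescu/Wang sequences in $K$-theory and cohomology. However, there are two concrete problems. First, in part (i) your five-lemma diagram needs a \emph{middle} vertical map $c\colon K^*(M_\alpha)\to H^{**}(M_\alpha)$, and the Chern class map of Definition \ref{chb2} is only defined as a homomorphism on spaces with trivial even cup products (Lemma \ref{chb hom}); this is not automatic for $M_\alpha$ even though it is automatic for $X$ under the degree-three hypothesis, because $H^4(M_\alpha)$ can be nonzero and could a priori receive a nontrivial product $H^2\smile H^2$. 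The paper closes this by observing that the sequence \eqref{ma ses} makes $H^4_c(SX)\to H^4(M_\alpha)$ surjective and that cup products vanish on a suspension (Lemma \ref{no cup}), so that Lemma \ref{chb hom} applies to $M_\alpha$. Your write-up appeals only to naturality and the degree bound, which does not by itself produce the middle map.

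Second, and more seriously, your argument for part (ii) does not work as stated: under hypothesis (ii) there is no candidate middle vertical map at all. An integral Chern isomorphism for $X$ in the sense of Definition \ref{ch int} is an abstract natural isomorphism and gives nothing on $M_\alpha$; the Chern class map $c$ of Definition \ref{chb2} need not even be defined (take $X$ a high-dimensional torus, where even cup products are nontrivial but hypothesis (ii) holds). The five lemma verifies that a \emph{given} morphism of exact sequences is an isomorphism in the middle --- it cannot conjure the middle map. The paper's actual argument for (ii) is different: freeness of $K^i(X)$ forces $K^i(X)^\Z$ and (by naturality of $\text{ch}_\Z$) $H^i(X)^\Z$ to be free, so both short exact sequences split, and one \emph{defines} the middle isomorphism as the direct sum of the outer isomorphisms with respect to suitably chosen splittings (chosen so as to rationalize to $\text{ch}_\alpha$ and preserve the unit; your observation that rational agreement suffices for the pairings, and also for the unit since everything in sight is torsion-free here, is correct and is the ``algebra left to the reader''). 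You do mention the splitting, but you deploy it to control torsion inside a five-lemma comparison that has no middle map, rather than to construct that map.
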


\begin{proof}
For part \eqref{ch4}, we will show that there is an integral Chern isomorphism for $M_\alpha$ in the sense of definition \ref{ch int}.  We first note that the short exact sequences
\begin{equation}\label{ma ses}
0\to H^i_c(SX)_\Z\to H^i(M_\alpha)\to H^i(X)^\Z\to 0
\end{equation}
from Proposition \ref{free gp ses} imply that $H^i(M_\alpha)=0$ for $i>4$, so we only need to work in degrees four or less.  Moreover, this short exact sequence gives an isomorphism $H^4_c(SX)_\Z\to H^4(M_\alpha)$; this implies in particular that the map $H^4_c(SX)\to H^4(M_\alpha)$ is surjective, and as cup products are zero on a suspension (see Lemma \ref{no cup}) we are in the situation of Lemma \ref{chb hom} where the map $c$ of Definition \ref{ch basic} is a homomorphism for $M_\alpha$. 

Hence we have a commutative diagram
$$
\xymatrix{0 \ar[r] & K^*(X)_\Z  \ar[r] \ar[d]^-c & K^*(M_\alpha) \ar[r] \ar[d]^-c  \ar[r] & K^{*+1}(X)^\Z \ar[r]  \ar[d]^-c & 0  \\
 0 \ar[r] & H^{**}_c(X)_\Z  \ar[r] & H^{**}(M_\alpha) \ar[r] & H^{**+1}_c(X)^\Z \ar[r] & 0  }
$$   
of such Chern class maps.  The left and right vertical maps are induced by the isomorphism $c:K^*(X)\cong H^{**}_c(X)$; by naturality of $c$, this is also an isomorphism of $\Z$-modules whence it induce isomorphisms on invariants and coinvariants.  The result follows from the five lemma.

For part \eqref{chfree}, if $K^i(X)$ is free, then so are $K^i(X)^\Z$ (as subgroups of free groups are free) and $H^i(X)^\Z$ (as the assumed integral Chern isomorphism must also induce an isomorphism on fixed point subgroups by naturality).  Hence the Pimnser-Voiculescu type sequences all split (unnaturally), and the result follows by choosing the splittings appropriately (this is possible as everything in sight is free: we leave the algebra to the reader).    
\end{proof}

\section{Examples of crossed products by the integers} \label{Sec:crossed}

In this section, we discuss a number of crossed product groupoids and their associated $C^*$-algebras. Each is obtained by a free action of the integers on a compact metric space. First, as promised in Section \ref{Sec:Irrational}, we give a result covering the irrational rotation algebras.

\begin{example}\label{tori ex}
Let $X$ be the $d$-torus or the $d$-sphere for some $d$, and take any free $\Z$-action.  Then by Corollary \ref{int cor} part \eqref{chfree} and Example \ref{sphere}, the groupoid $\Z\ltimes X$ is HK-good.   This covers the case of the irrational rotation algebras in particular.
\end{example}

Our next example provides a negative result: we give an example of a minimal action on a manifold so that the transformation groupoid is not HK-good.

\begin{example} \label{Ex:CrossNotGood}
Let $M=S^3 \times \mathbb{R}P^4$. Then, by \cite[Theorem 1]{Fathi:1977aa}, there exists $\varphi : M \rightarrow M$ a minimal diffeomorphism that is homotopic to the identity and uniquely ergodic. The Pimsner--Voiculescu sequence and the fact that $\varphi$ is homotopic to the identity imply that 
\[ K_0( C(M) \rtimes \Z) \cong K_1(C(M) \rtimes \Z) \cong K^0(M) \oplus K^1(M). \]
Since 
\[ K^0(S^3\times \mathbb{R}P^4) \cong K^1(S^3\times \mathbb{R}P^4) \cong K^0(\mathbb{R}P^4)\oplus K^1(\mathbb{R}P^4)\cong \Z\oplus (\Z/4) \]
(see for example \cite[IV.6.47]{Karoubi:1978ai} for the $K$-theory of real projective space), we get 
\[
K_0( C(M) \rtimes \Z)  \cong K_1(C(M) \rtimes \Z) \cong \Z^2 \oplus (\Z/4)^2.
\]
On the other hand, the homology of the transformation groupoid can be computed using Proposition \ref{free gp ses} and the well-known cohomology groups of real projective space (see for example \cite[Theorem 3.12]{Hatcher:2002ud}). One gets
\[
H_{ev}(\Z\ltimes M) \cong H_{od}(\Z\ltimes M) \cong \Z^2 \oplus (\Z/2)^4.
\]
Hence $\Z\ltimes M$ is not HK-good.
\end{example}

Returning to positive results, there are other (and in fact many) transformation groupoids associated to integer actions that are HK-good. A number of the constructions build on work in \cite{Deeley:2018aa, Deeley:2023ab}. In particular, we will use a specific dynamical system constructed in \cite{Deeley:2018aa}: we summarize the key points in the next theorem.

\begin{theorem} \label{ThmAboutZ}
Let $S^d$  be a sphere with odd dimension $d\geq 3$, and let $\varphi : S^d \to S^d$  be a minimal diffeomorphism. Then there exist an infinite compact metric space $Z$ with finite covering dimension and a minimal homeomorphism \mbox{$\zeta : Z \to Z$} satisfying the following:
\begin{enumerate}
\item $Z$ is compact, connected, and homeomorphic to an inverse limit of contractible metric spaces $(Z_n, d_n)_{n \in \mathbb{N}}$.
\item For any continuous generalized cohomology theory, $\hat{H}$, we have an isomorphism $\hat{H}^*(Z) \cong \hat{H}^*(\{\mathrm{pt}\})$. In particular this holds for sheaf cohomology and $K$-theory. \label{sameCohomAsPoint}
\item There is an almost one-to-one factor map $q : Z \to S^d$ which induces a bijection between $\zeta$-invariant Borel probability measures on $Z$ and $\varphi$-invariant Borel probability measures on $S^d$. \label{FactorMap} \qed
\end{enumerate} 
\end{theorem}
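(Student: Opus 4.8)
The plan is to realize $(Z,\zeta)$ as an inverse limit of contractible spaces carrying compatible homeomorphisms, following the point-like construction of Deeley, Putnam, and Strung \cite{Deeley:2018aa}; the three listed properties then follow from general principles once the tower is in place. Concretely, I would produce a sequence of compact, connected, contractible finite-dimensional metric spaces $(Z_n)_{n\in\N}$, surjective bonding maps $p_n\colon Z_{n+1}\to Z_n$, homeomorphisms $\zeta_n\colon Z_n\to Z_n$ satisfying $p_n\circ\zeta_{n+1}=\zeta_n\circ p_n$, and factor maps $q_n\colon Z_n\to S^d$ intertwining $\zeta_n$ with $\varphi$ and compatible with the $p_n$. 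Setting $Z:=\varprojlim(Z_n,p_n)$, $\zeta:=\varprojlim\zeta_n$, and $q:=\varprojlim q_n$ then gives the desired data, and $\dim Z<\infty$ because the dimensions $\dim Z_n$ can be kept bounded throughout the construction.

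Granting the tower, property \eqref{sameCohomAsPoint} is immediate from the continuity of \v{C}ech (equivalently sheaf) cohomology under inverse limits of compact spaces: $\check{H}^*(Z)\cong\varinjlim\check{H}^*(Z_n)$, and since each $Z_n$ is contractible every term equals $\check{H}^*(\{\mathrm{pt}\})$, so the colimit is $\check{H}^*(\{\mathrm{pt}\})$ as well. The same argument applies to any continuous generalized cohomology theory by its continuity (Milnor $\varprojlim^1$) axiom, giving $\hat{H}^*(Z)\cong\hat{H}^*(\{\mathrm{pt}\})$; in particular this covers $K$-theory. Property (1) is then the standard fact that an inverse limit of compact connected metric spaces is compact, connected, and metrizable, with finite covering dimension by the bound on the $\dim Z_n$.

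For property \eqref{FactorMap}, the construction is arranged so that $q\colon Z\to S^d$ is injective except over a single $\varphi$-orbit $O\subseteq S^d$, with the fibres over $O$ nowhere dense; this makes $q$ almost one-to-one, since then $q^{-1}(O)$ is meagre and its complement is a dense $G_\delta$ on which $q$ is injective. To get the measure bijection, push forward: $q_*$ is an affine continuous map from $\zeta$-invariant to $\varphi$-invariant measures. Both injectivity and surjectivity follow from the observation that $O$ is countable and that every invariant measure for a minimal homeomorphism of an infinite compact space is non-atomic (a periodic point would force a finite invariant set, contradicting minimality), so $\nu(O)=0$ for every $\varphi$-invariant $\nu$ and $\mu(q^{-1}(O))=0$ for every $\zeta$-invariant $\mu$. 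Hence $q$ restricts to a Borel isomorphism between full-measure subsets of $Z$ and $S^d$ that intertwines $\zeta$ with $\varphi$, and $q_*$ is therefore a bijection of the two measure simplices, with inverse the pullback along this isomorphism.

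The genuine difficulty lies entirely in the first paragraph: constructing the tower so that the limit homeomorphism $\zeta$ is \emph{minimal} while each level $Z_n$ is contractible. These requirements pull against each other, since a compact contractible space admits no minimal self-homeomorphism (it has a fixed point by the Lefschetz theorem), so minimality must emerge only in the limit. Reconciling this with the demand that $q$ collapse the sphere's cohomology -- so that $q^*$ is zero on reduced cohomology -- while remaining injective off a single nowhere-dense, null orbit is the heart of the Deeley--Putnam--Strung argument: roughly, the contractible approximants are assembled from cones and mapping cylinders glued along Rokhlin towers for $(S^d,\varphi)$, and minimality of $\zeta$ is verified by showing every $\zeta$-orbit is dense using minimality of $\varphi$ together with the behaviour of the bonding maps on fibres. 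I would import this construction wholesale from \cite{Deeley:2018aa} rather than reprove it.
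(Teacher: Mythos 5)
The paper gives no proof of this theorem at all: it is imported verbatim from Deeley--Putnam--Strung \cite{Deeley:2018aa} (hence the terminating QED box and the explicit remark that the theorem ``summarizes the key points'' of that construction), which is exactly what you do by deferring the tower construction to the same reference, so your approach matches the paper's. Your surrounding sketch of how properties (1)--(3) follow from the tower is sound; the only quibble is that your Lefschetz aside requires ANR-type hypotheses (contractible compacta can fail the fixed point property, e.g.\ Kinoshita's example), but that remark is purely motivational and carries no weight in the argument.
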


\begin{example} \label{Ex:CrossedGoodSpecialCase}
Let $G$ be a finitely generated abelian group and take $Y$ be a connected finite CW-complex with dimension at most three, with 
\[
H^0(Y)\cong \Z \hbox{ and }H^2(Y) \cong G, 
\]
and other cohomology groups trivial. Using Example \ref{low ch int}, the K-theory of $Y$ is  
\[ K^0(Y) \cong \Z \oplus G \hbox{ and }K^1(Y)\cong \{ 0 \}.
\]
Let $X=Z\times Y\times Q$ where $Z$ as in Theorem \ref{ThmAboutZ} and $Q$ the Hilbert cube. In particular, 
\[
K^*(X) \cong K^*(Y) \hbox{ and }H^*(X) \cong H^*(Y).
\] 
Applying \cite[Theorem 1]{Fathi:1977aa} there exists a minimal, uniquely ergodic homeomorphism $\varphi: X \rightarrow X$ that is homotopic to the identity. The Pimsner--Voiculescu exact sequence and the fact that $\varphi$ is homotopic to the identity imply that
\[
K_0(C(X)\rtimes \Z) \cong K_1(C(X)\rtimes \Z) \cong \Z \oplus G.
\]
The groupoid homology can be computed similarly from Proposition \ref{free gp ses}, giving
\[
H_{ev}(\Z \ltimes X)\cong H_{od}(\Z\ltimes X) \cong \Z \oplus G.
\]
These groupoids are HK-good by Corollary \ref{int cor}, part \eqref{ch4}.

It is worth noting that if we take $G=\Z \oplus (\Z/4)$, then the $C^*$-algebra $C(X) \rtimes \Z$ is isomorphic to the crossed product $C^*$-algebra in Example \ref{Ex:CrossNotGood}: thus we see that this $C^*$-algebra admits both an HK-good and a non HK-good model.
\end{example}

The next class of examples we consider are crossed products where the underlying space has finitely generated $K$-theory. The starting point is a slight generalization of \cite[Theorem 4.5]{Deeley:2024ab}.
\begin{theorem} \label{thm:fgKtheory}
Suppose $d\in \N \setminus \{0\}$ and $F_0$, $F_1$ are finite abelian groups. Then there exists a connected finite CW-complex $Y$ with dimension at most three and $\varphi: X\rightarrow X$ a minimal uniquely ergodic homeomorphism such that
\[
K_0(C(X)\rtimes \Z) \cong \Z^d \oplus F_0 \hbox{ and }K_1(C(X)\rtimes \Z)\cong \Z^d\oplus F_1
\]
where $X=Z\times Y\times Q$ (with $Z$ as in Theorem \ref{ThmAboutZ} and \cite{Deeley:2018aa} and $Q$ the Hilbert cube).
\end{theorem}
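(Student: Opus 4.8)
The plan is to reduce the theorem to an elementary piece of integer linear algebra, and then to invoke the dynamical machinery already developed for \cite[Theorem 4.5]{Deeley:2024ab}. The crossed-product $K$-theory is controlled by the Pimsner--Voiculescu sequence of Proposition \ref{mc basics}(\ref{pv ct}): writing $\alpha^*=\varphi^*$ for the induced automorphism of $K^*(X)$, there is a short exact sequence
\[
0\to \operatorname{cok}(1-\alpha^*|_{K^0(X)})\to K_0(C(X)\rtimes_\alpha\Z)\to \ker(1-\alpha^*|_{K^1(X)})\to 0,
\]
and the analogous one for $K_1$ with $K^0$ and $K^1$ interchanged. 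If $K^*(X)$ is free, then every kernel appearing is free, so both sequences split and the torsion subgroups of $K_0$ and $K_1$ are carried precisely by $\operatorname{cok}(1-\alpha^*|_{K^0(X)})$ and $\operatorname{cok}(1-\alpha^*|_{K^1(X)})$ respectively. Thus it suffices to produce $X$ with free $K$-theory together with a homeomorphism realizing prescribed cokernels.

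For the space, I would take $Y$ to be a finite wedge of $2$-spheres and $3$-spheres; this is a connected $3$-dimensional finite CW complex with free cohomology, and the Atiyah--Hirzebruch spectral sequence collapses because cup products vanish on a wedge of spheres, so $K^0(Y)=H^0(Y)\oplus H^2(Y)$ and $K^1(Y)=H^1(Y)\oplus H^3(Y)$ are free, exactly as in Example \ref{low ch int}. Setting $X=Z\times Y\times Q$ with $Z$ as in Theorem \ref{ThmAboutZ} and $Q$ the Hilbert cube, the $K$-theoretic triviality of $Z$ (Theorem \ref{ThmAboutZ}(\ref{sameCohomAsPoint})) and contractibility of $Q$ give $K^*(X)\cong K^*(Y)$, still free, via a K\"unneth argument.

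The algebraic core is the construction of the matrices. For a single cyclic group $\Z/m$ the matrix $A=\bigl(\begin{smallmatrix}1-m&-1\\ m&1\end{smallmatrix}\bigr)\in SL_2(\Z)$ satisfies $\operatorname{cok}(1-A)\cong\Z/m$ and $\ker(1-A)=0$, and block sums of such matrices realize any finite abelian group as a pure-torsion cokernel with vanishing kernel. I would use such a block sum for $F_1$ as the action on $H^3(Y)\subseteq K^1(X)$, so that $\operatorname{cok}(1-\alpha^*|_{K^1})\cong F_1$ and $\ker(1-\alpha^*|_{K^1})=0$; and on $H^2(Y)\subseteq K^0(X)$ a block sum for $F_0$ together with $d-1$ trivial (identity) blocks, each contributing a free $\Z$ to both kernel and cokernel. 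Since $X$ is connected, $\varphi^*$ automatically fixes $[1]\in H^0(X)\subseteq K^0(X)$, supplying one further free summand -- this is exactly what makes the hypothesis $d\ge 1$ necessary. Altogether $\operatorname{cok}(1-\alpha^*|_{K^0})\cong\Z^d\oplus F_0$ and $\ker(1-\alpha^*|_{K^0})\cong\Z^d$, and the split sequences then give $K_0\cong\Z^d\oplus F_0$ and $K_1\cong\Z^d\oplus F_1$.

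The main obstacle is the remaining dynamical step: realizing the prescribed block-matrix automorphism of $K^*(X)\cong K^*(Y)$ by a genuine \emph{minimal, uniquely ergodic} homeomorphism $\varphi$ of $X$. Realizing the automorphism up to homotopy is routine, since all of $GL_n(\Z)$ is induced by self-homotopy-equivalences of a wedge of equidimensional spheres (pinch-and-fold maps giving the elementary matrices); the difficulty is upgrading this to an actual minimal, uniquely ergodic homeomorphism. This is precisely the content of the construction behind \cite[Theorem 4.5]{Deeley:2024ab}: one exploits the inverse-limit and cohomological-triviality structure of $Z$ from \cite{Deeley:2018aa} together with the factor map of Theorem \ref{ThmAboutZ}(\ref{FactorMap}) onto a uniquely ergodic minimal diffeomorphism of a sphere (via \cite{Fathi:1977aa}) to force minimality and unique ergodicity, while using the flexibility of the Hilbert-cube factor to absorb the prescribed self-map of $Y$. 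The only new ingredient relative to \cite{Deeley:2024ab} is feeding in the decoupled matrices above, so that $K_0$ and $K_1$ pick up the independent torsion groups $F_0$ and $F_1$; the final $K$-theory computation is then just the Pimsner--Voiculescu bookkeeping of the first paragraph.
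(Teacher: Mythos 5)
Your proposal is correct, and its skeleton is the same as the paper's: the paper's entire proof is a two-sentence citation of \cite[Theorem 4.5]{Deeley:2024ab} plus the remark that the construction there already produces a $Y$ of dimension at most three, so in effect you have reconstructed the argument that the paper delegates to the references. Your Pimsner--Voiculescu bookkeeping is right (freeness of $K^*(X)$ makes the quotients $K^{*+1}(X)^\Z$ free, so both extensions split and the torsion lands entirely in the coinvariants), the matrix $\bigl(\begin{smallmatrix}1-m&-1\\ m&1\end{smallmatrix}\bigr)$ does have determinant $1$ with $\operatorname{cok}(1-A)\cong\Z/m$ and $\ker(1-A)=0$, and the count of free summands (one from $H^0$, $d-1$ identity blocks on $H^2$) matches the hypothesis $d\geq 1$. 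Where you genuinely diverge from the underlying construction is the choice of $Y$ and of the self-map: you take a wedge of $2$- and $3$-spheres and realize an arbitrary block matrix by pinch-and-fold homotopy equivalences, whereas \cite{Deeley:2023ab,Deeley:2024ab} (and the paper's own worked example in Section \ref{Sec:crossed}) build $Y$ from reduced mapping cones of maps between wedges of circles carrying \emph{finite-order homeomorphisms}, so that the torsion in the cokernel arises from a genuine symmetry of the complex. Your route buys a cleaner and more uniform piece of linear algebra; its cost is that the dynamical realization theorem must accept a self-homotopy-equivalence of $Y$ rather than a finite-order homeomorphism. This is exactly the point you flag as the main obstacle, and it is handled by the Hilbert-cube factor (a homotopy equivalence of compact ANRs becomes homotopic to a homeomorphism after crossing with $Q$, which is why $Q$ appears in $X$ at all), but it is the one load-bearing step that you, like the paper, resolve only by citation to \cite[Theorem 5.11]{Deeley:2023ab} and \cite[Lemmas 4.3 and 4.4]{Deeley:2024ab}; a referee would want you to confirm that the realization theorem there is stated in enough generality to accept your wedge-of-spheres input.
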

\begin{proof}
The only difference between the present theorem and the statement of \cite[Theorem 4.5]{Deeley:2024ab} is the explicit statement that the space, $Y$, can be taken to be a finite CW-complex of dimension at most three.  However, this follows from the construction: see \cite[Theorem 5.8]{Deeley:2023ab} and \cite[Lemmas 4.3 and 4.4]{Deeley:2024ab}.
\end{proof}

The next result is immediate from Corollary \ref{int cor}, part \eqref{ch4}.

\begin{proposition}
Using the notation of the previous theorem, the groupoid $\Z\ltimes X$ is HK-good.  \qed
\end{proposition}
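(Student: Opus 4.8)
The plan is to verify the two hypotheses of Corollary~\ref{int cor}, part~\eqref{ch4}, for the space $X=Z\times Y\times Q$ supplied by Theorem~\ref{thm:fgKtheory}; once these are checked, HK-goodness of $\Z\ltimes X$ is immediate from that corollary. Concretely, part~\eqref{ch4} asks for two things: that the sheaf cohomology of $X$ vanish above degree three, and that the Chern class map $c\colon K^*(X)\to H^{**}(X)$ of Definition~\ref{chb2} be an isomorphism.

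First I would reduce everything to the single factor $Y$. By property~\eqref{sameCohomAsPoint} of Theorem~\ref{ThmAboutZ}, the space $Z$ has the sheaf cohomology and $K$-theory of a point for any continuous generalized cohomology theory, and the Hilbert cube $Q$ is contractible and hence likewise point-like for both theories. Since $K^*(\{\mathrm{pt}\})$ and $H^*(\{\mathrm{pt}\})$ are free, there are no $\mathrm{Tor}$ contributions in the relevant K\"{u}nneth formulas, so the factor $Z\times Q$ behaves as a point and the projection $X\to Y$ induces isomorphisms $K^*(X)\cong K^*(Y)$ and $H^*(X)\cong H^*(Y)$ (exactly as recorded in Example~\ref{Ex:CrossedGoodSpecialCase}). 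As $Y$ is a finite CW-complex of dimension at most three, $H^*(Y)$, and therefore $H^*(X)$, vanishes above degree three; this gives the first hypothesis.

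Next I would establish the Chern isomorphism. Because the cohomology of $X$ is concentrated in degrees at most three, the low-dimensional integral Chern isomorphism of Example~\ref{low ch int} applies and shows that $c\colon K^*(X)\to H^{**}(X)$ is an isomorphism; by naturality this is just the corresponding statement for $Y$ transported across the identifications above. With both hypotheses in hand, Corollary~\ref{int cor}\eqref{ch4} yields that $\Z\ltimes X$ is HK-good. I would also note in passing that $\varphi$ is minimal on the infinite space $X$ and hence free, so $\Z\ltimes X$ is principal and the trace-side comparison of Lemma~\ref{meas to trace} is an affine homeomorphism; this is the piece of HK-goodness handled internally by the corollary.

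Since the argument simply applies an already-proved corollary, there is no serious obstacle here. The only point deserving a little care is the reduction to $Y$: one must confirm that the point-like behavior of $Z$ and $Q$ propagates correctly through the K\"{u}nneth formulas in both $K$-theory and sheaf cohomology with no spurious torsion terms, so that the low-dimensional Chern isomorphism established for the finite complex $Y$ genuinely transports to the infinite-dimensional space $X$.
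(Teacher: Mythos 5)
Your proof is correct and takes essentially the same route as the paper, which simply declares the result ``immediate from Corollary \ref{int cor}, part \eqref{ch4}''; you have merely spelled out the verification of that corollary's two hypotheses (vanishing of $H^*(X)$ above degree three and the integral Chern isomorphism, both obtained by reducing to the finite three-dimensional complex $Y$ via the point-like factors $Z$ and $Q$), which the paper leaves implicit since the identifications $K^*(X)\cong K^*(Y)$ and $H^*(X)\cong H^*(Y)$ are already recorded in the statement and surrounding discussion of Theorem \ref{thm:fgKtheory}.
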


Next, a refinement of the previous two results is discussed. We state it because of the importance of projectionless unital $C^*$-algebras and as in this case the Elliott invariant of the relevant $C^*$-algebra can be completely determined (rather than just the $K$-theory as in Theorem \ref{thm:fgKtheory}).

\begin{corollary}
For any $d \in \mathbb{N} \setminus \{0\}$ and any pair of finite abelian groups $F_0, F_1$, there exists a minimal dynamical system $(X, \varphi)$ such for the crossed product $A := C(X) \rtimes_{\varphi} \mathbb{Z}$, we have
\begin{enumerate}
\item $A$ is a classifiable $C^*$-algebra;,
\item the pointed ordered $K_0$-group $(K_0(A),  K_0(A)_+,  [1])$ is isomorphic to  $( \mathbb{Z}^d \oplus F_0, \,  \mathbb{Z}_{> 0} \oplus \mathbb{Z}^{d-1} \oplus F_0 \cup (0_{\mathbb{Z}^d},0_{F_0}),\, (1, 0_{\mathbb{Z}^{d-1}}, 0_{F_0}))$;
\item $K_1(A) \cong \mathbb{Z}^d \oplus F_1$;
\item $A$ has a unique tracial state;
\item $r : T(A) \to SK_0(A)$ satisfies $r(\tau)((n_1, \dots, n_d), g)) = n_1$;
\item $A$ has no non-trivial projections; and
\item the groupoid $\Z\ltimes X$ is HK-good.
\end{enumerate}
\end{corollary}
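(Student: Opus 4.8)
The plan is to separate the purely $C^*$-algebraic assertions (1)--(6), which concern $A$ itself and largely repackage the construction underlying Theorem \ref{thm:fgKtheory}, from the groupoid assertion (7), which is where the machinery of the present paper does the real work. First I would fix the system by applying Theorem \ref{thm:fgKtheory}, producing $(X,\varphi)$ with $X=Z\times Y\times Q$, $Y$ a connected finite CW-complex of dimension at most three, and $\varphi$ minimal and uniquely ergodic with $K$-theory as stated; this gives (3) directly. Since $\varphi$ is minimal and $X$ is infinite, $\varphi$ has no periodic points, so the action is free and $A=C(X)\rtimes_\varphi\Z$ is simple; it is separable, nuclear and satisfies the UCT because $C(X)$ has these properties and they pass to crossed products by $\Z$, and Jiang--Su stability comes from the construction (mean dimension zero), giving (1). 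Unique ergodicity together with freeness gives, via Lemma \ref{meas to trace} applied to the principal groupoid $\Z\ltimes X$, that $T(\gpd)$ is a single point and hence that $A$ has a unique tracial state $\tau$, which is (4).

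For (7) I would apply Corollary \ref{int cor}(i). As recorded in Example \ref{Ex:CrossedGoodSpecialCase}, the factor $Z$ has the sheaf cohomology and $K$-theory of a point by Theorem \ref{ThmAboutZ}(2) and $Q$ is contractible, so the projection $X\to Y$ induces isomorphisms $H^*(X)\cong H^*(Y)$ and $K^*(X)\cong K^*(Y)$. In particular $H^i(X)=0$ for $i>3$, so the dimension hypothesis of Corollary \ref{int cor}(i) is satisfied even though $X$ itself is infinite-dimensional. Since $Y$ is a connected finite CW-complex of dimension at most three, the Chern class map $c:K^*(Y)\to H^{**}(Y)$ is an integral isomorphism by Example \ref{low ch int}; by naturality of $c$ along the projection $X\to Y$ together with the two isomorphisms just noted, $c:K^*(X)\to H^{**}(X)$ is an isomorphism as well. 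Both hypotheses of Corollary \ref{int cor}(i) then hold, and $\Z\ltimes X$ is HK-good.

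For (2), (5) and (6) I would use that $A$ is classifiable with a unique trace, so the order on $K_0(A)$ is recovered from the trace pairing: once the formula $r(\tau)((n_1,\dots,n_d),g)=n_1$ of (5) is known, the positive cone and the position of the unit in (2) follow formally, and (6) follows because any projection $p\in A$ has $[p]\in K_0(A)$ with $0\le[p]\le[1]$, whence $r(\tau)([p])\in\Z\cap[0,1]=\{0,1\}$, and faithfulness of $\tau$ forces $p\in\{0,1\}$. To prove (5) I would transport the trace pairing to homology using Theorem \ref{main seq}, whose diagram identifies $\tau\circ\mathrm{ch}_\alpha$ with the homological pairing $\rho_H(\tau)$ on $H_{ev}(\Z\ltimes X;\Q)$; by Proposition \ref{pair good} this is computed by integrating the $H^0$-component of a cycle against the unique invariant measure $\mu$. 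The class of the unit lies in the $H^0_c(X)\cong\Z$ summand and pairs to $\mu(X)=1$; torsion pairs to $0$ since $\R$ is torsion-free; the summands of $H_{ev}$ off $H_0$ pair to $0$ by definition of $\rho_H$; and the remaining free generators, which arise from the $H^1_c(X)^{\Z}$ part of $H_0$, must also pair to $0$.

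The main obstacle is precisely this last vanishing inside (5): that the free generators of $K_0(A)$ other than $[1]$ pair trivially with $\tau$, equivalently that $\mu$ has vanishing Ruelle--Sullivan (rotation-number) pairing against the relevant degree-one classes. This is a feature of the dynamics rather than of the groupoid homology, as the contrasting irrational rotation computation of Proposition \ref{hom lem} shows, where the analogous $H^1$-class pairs to $\theta\neq0$. I would handle it by combining the mapping-torus description of the pairing (Proposition \ref{trace agree} and Lemma \ref{trace agree 2}) with the structure of the construction --- in particular the almost one-to-one factor map $q:Z\to S^d$ of Theorem \ref{ThmAboutZ}(3), which carries the unique invariant measure to the unique invariant measure on $S^d$ and thereby collapses the rotation pairing to one on the sphere that vanishes --- so that only the rank survives and the trace range is exactly $\Z$.
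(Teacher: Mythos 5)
The paper's own proof of this corollary is a two-line citation: items (1)--(6) are exactly \cite[Corollary 4.6]{Deeley:2024ab}, and item (7) is the proposition immediately preceding the corollary, which in turn is Corollary \ref{int cor}, part \eqref{ch4}. Your treatment of (7) coincides with the paper's route (Corollary \ref{int cor}(i) applied to $X=Z\times Y\times Q$ with $K^*(X)\cong K^*(Y)$ and $H^*(X)\cong H^*(Y)$), and that part is fine. Where you diverge is in attempting to reprove (1)--(6) from scratch instead of quoting the earlier work, and that is where there is a genuine gap.

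The gap is at item (5), on which your proofs of (2) and (6) also depend. You correctly isolate the crux --- that the classes of $K_0(A)$ mapping onto $K^1(X)^{\Z}$ in the Pimsner--Voiculescu sequence (equivalently, the $H^1_c(X)^{\Z}$ part of $H_0(\Z\ltimes X)$) must pair to $0$ with the unique trace --- but your proposed resolution does not address it. The factor map $q:Z\to S^d$ of Theorem \ref{ThmAboutZ}(3) controls the invariant measure, but the $K^1$- and $H^1$-classes in question are pulled back from the $Y$ factor of $X=Z\times Y\times Q$ (indeed $K^1(Z)\cong K^1(\mathrm{pt})=0$), so pushing the measure forward to $S^d$ says nothing about their rotation numbers; there is no reason the resulting pairing ``collapses to one on the sphere.'' What actually makes the statement true in this construction is that the finite CW-complex $Y$ produced by Theorem \ref{thm:fgKtheory} (via \cite[Theorem 5.8]{Deeley:2023ab}) is simply connected, so $H^1(X)=0$ and $H_0(\Z\ltimes X)\cong H^0(X)_{\Z}$, whence the pairing is determined by rank alone and has range $\Z$; you neither establish nor invoke this. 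As written, your argument for (5), and hence for the positive cone in (2) and for projectionlessness in (6), is incomplete. The paper avoids the issue entirely by citing \cite[Corollary 4.6]{Deeley:2024ab}, which is the clean way to dispose of (1)--(6); only (7) requires the machinery of the present paper.
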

\begin{proof}
Without the HK-good statement this is \cite[Corollary 4.6]{Deeley:2024ab}. That we can take the groupoid to be HK-good follows from the previous proposition.
\end{proof}

\begin{example}
In this example, we give a self-contained development of the construction used in Theorem \ref{thm:fgKtheory} in the special case when $d=1$, $F_0=\Z/3$, and $F_1=\Z/2$. We hope this example is useful to readers less familiar with \cite{Deeley:2024ab}.

Take $S^1\vee S^1\vee S^1$ with base point the wedge point and the finite order self-homomorphism given by cyclically permuting the copies of $S^1$. Form the reduced mapping cone, $C_f$ of the map $f: S^1\vee S^1\vee S^1 \rightarrow S^1$ defined by mapping each circle in $S^1\vee S^1\vee S^1$ to $S^1$ identically. The reduced mapping cone construction is equivariant, so we get a finite order self-homomorphism, $\beta_1: C_f \rightarrow C_f$. Using the long exact sequence in cohomology, 
\[
H^0(C_f)\cong \Z\quad  \hbox{and}\quad H^2(C_f)\cong \Z \oplus \Z
\]
where the part of the long exact sequence that is relevant for later use is 
\[
0 \rightarrow \Z \rightarrow \Z^3 \rightarrow H^2(C_f) \rightarrow 0
\]
with map $\Z \rightarrow \Z^3$ given by $n \mapsto (n,n,n)$. 

Let $e_1$, $e_2$, and $e_3$ denote respectively the images of $(1,0,0)$, $(0,1,0)$, and $(0,0,1)$ in $H^2(C_f)$. Equivariance implies that $\beta^*(e_1) = e_2$ and that $\beta^*(e_2)=e_3=-e_1-e_2$ where the second equality follows from the specific nature of the map (i.e., $n \mapsto (n,n,n)$). Therefore, we have that the map $\beta^*: \Z^2 \cong H^2(C_f)\rightarrow H^2(C_f)\cong \Z^2$ is given by the matrix
\[
A=\left( \begin{array}{cc} 0 & -1 \\ 1 & -1 \end{array}\right).
\] 

Let $Y$ be $S^3 \vee C_f$ with base point the wedge point. Define $\beta : S^3\vee C_f \rightarrow S^3\vee C_f$ via a reflection in a plane containing the base point on the $S^3$ factor and $\beta_1$ on $C_f$. By construction, 
\[ 
H^0(Y) \cong \Z,~H^2(Y) \cong \Z^2,~H^3(Y)\cong \Z,
\]
and all other cohomology groups are trivial. Moreover, $\beta^*$ is given by the identity on $H^0(Y)$, the negation of the identity on $H^3(Y)$, and the matrix $A$ on $H^2(Y)$.

A similar argument (or one can use the Chern character) implies that for $K$-theory, we have
\[
K^0(Y) \cong \Z \oplus \Z^2 \quad \hbox{and}\quad K^1(Y)\cong \Z
\]
and $\beta^*: K^0(Y) \rightarrow K^0(Y)$ is given by $id \oplus A$ and $\beta^*: K^1(Y) \rightarrow K^1(Y)$ is given by the negation of the identity. 

Let $X=Z\times Y\times Q$  where $Z$ as in Theorem \ref{ThmAboutZ} (also see \cite{Deeley:2018aa}) and $Q$ the Hilbert cube. By \cite[Theorem 5.11]{Deeley:2023ab}, there exists a minimal uniquely ergodic homeomorphism $\varphi: X \rightarrow X$ such that $\varphi^*$ is given by $\beta^*$ on both $K$-theory and cohomology (here we are identifying $K^*(Y)$ and $K^*(X)$ and likewise for cohomology). Hence the map $id - \varphi^*$ is given by the direct sum of the zero map on $\Z$ with the matrix
\[
B=\left( \begin{array}{cc} 1 & 1 \\ -1 & 2 \end{array}\right)
\]
in even degree and multiplication by $2$ in odd degree. The Smith normal form of $B$ is
\[
\left( \begin{array}{cc} 1 & 0 \\ 0 & 3 \end{array}\right).
\]
Using the Pimsner--Voiculescu exact sequence, we get that 
\[
K_0(C(X)\rtimes \Z) \cong \Z \oplus \Z/3 \quad \hbox{and}\quad K_1(C(X)\rtimes \Z) \cong \Z \oplus \Z/2.
\]
A similar computation in groupoid homology using Proposition \ref{free gp ses} gives that 
\[
H_{ev}(\Z\ltimes X)  \cong \Z \oplus \Z/3 \quad  \hbox{and}\quad H_{od}(\Z\ltimes X) \cong \Z \oplus \Z/2.
\]
That the pairings are compatible can be seen directly, or follows from Corollary \ref{int cor}, part \eqref{ch4}. Moreover, the crossed product $C(X) \rtimes \Z$ has a unique tracial state and fits within classification because the minimal system is uniquely ergodic. 
\end{example}

\section{Matui's long exact sequence for an open inclusion}\label{incl sec}

We now start working towards showing that the examples constructed by the first author, Putnam, and Strung in \cite{Deeley:2024ab} are HK-good; this will take the rest of the main body of the paper.  The first step, carried out in this section, will be to study a long-exact sequence for an open inclusion of groupoids developed by Matui in  \cite{Matui:2022aa}.  Matui works primarily with ample groupoids, i.e.\ those with zero-dimensional base spaces.  Here we generalize Matui's work to other base spaces.

Throughout this section, $\gpd$ denotes a locally compact, Hausdorff, \'{e}tale groupoid.  Let also $\gpd'$ be an open subgroupoid of $\gpd$ with the same base space.  Then $\gpd'$ is \'{e}tale in its own right, and for each $n$, $(\gpd')^{(n)}$ is an open subspace of $\gpd^{(n)}$.  Given a $c$-soft $\gpd$-sheaf $\mathcal{S}$ on $\gpd^{(n)}$, we write $\mathcal{S}_{(\gpd')^{(n)}}$ for the sheaf on $\gpd^{(n)}$ defined by setting all stalks outside $(\gpd')^{(n)}$ to be zero (compare \cite[page 11]{Bredon:1997aa}).  We write moreover $F^{(n)}:=\gpd^{(n)}\setminus (\gpd')^{(n)}$ for the closed `difference' space, and write $\mathcal{S}_{F^{(n)}}:=\mathcal{S}/\mathcal{S}_{(\gpd')^{(n)}}$ for the corresponding quotient sheaf.  Then for each $n$, using $c$-softness we have a short exact sequence 
\begin{equation}\label{diff ses}
0\to \Gamma_c(\gpd^{(n)};\mathcal{S}_{(G')^{(n)}}) \to \Gamma_c(\gpd^{(n)};\mathcal{S}) \to \Gamma_c(\gpd^{(n)};\mathcal{S}_{F^{(n)}}) \to 0. 
\end{equation}
(this follows for example from \cite[Theorem II.9.9]{Bredon:1997aa} and on noting that $\mathcal{S}_{(\gpd')^{(n)}}$ is $c$-soft, either by applying \cite[Corollary II.9.13]{Bredon:1997aa}, or arguing directly).  

Let now $\mathcal{A}$ be a $\gpd$-sheaf on $\gpd^{(0)}$, and consider a resolution
\begin{equation}\label{resolve}
0\to \mathcal{A}\to \mathcal{S}^0\to \mathcal{S}^1\to \cdots 
\end{equation}
of $\mathcal{A}$ by $c$-soft $\gpd$-sheaves as in line \eqref{reso} above.  Then the short exact sequences in line \eqref{diff ses} give rise to a short exact sequence of double complexes as in line \eqref{tot com}, and therefore to a short exact sequence of total complexes which in the $n^\text{th}$ entry looks like
{\small \begin{equation}\label{ses tc}
0\to \bigoplus_{p-q=n}\Gamma_c(\gpd^{(p)};(\mathcal{S}_p^q)_{(G')^{(p)}}) \to \bigoplus_{p-q=n}\Gamma_c(\gpd^{(p)};\mathcal{S}_p^q) \to \bigoplus_{p-q=n}\Gamma_c(\gpd^{(p)};(\mathcal{S}_p^q)_{F^{(p)}}) \to 0. 
\end{equation}}
Recall moreover that for any sheaf $\mathcal{S}$ on $G^{(n)}$ there is a canonical identification
$$
\Gamma_c((\gpd')^{(n)};\mathcal{S}|_{(\gpd')^{(n)}})=\Gamma_c(\gpd^{(n)};\mathcal{S}_{(\gpd')^{(n)}})
$$
(compare \cite[Proposition I.6.6]{Bredon:1997aa}).  It follows that regarding the resolution of line \eqref{resolve} as a resolution of $\mathcal{A}$ considered as a $\gpd'$ sheaf, we have that the left hand complex appearing in line \eqref{ses tc} computes the homology $H_*(\gpd';\mathcal{A})$.  

\begin{definition}\label{rel comp}
With notation as above, let $\mathcal{A}$ be a $\gpd$-sheaf on $\gpd^{(0)}$, which we also consider as a $\gpd'$ sheaf on $(\gpd')^{(0)}=\gpd^{(0)}$.   We define $H_n(\gpd/\gpd';\mathcal{A})$ to be the homology of the double complex 
$$
\Big(\Gamma_c(\gpd^{(p)};(\mathcal{S}_p^{-q})_{F^{(p)}})\Big)_{q=,...-1,0;~p=0,1,2,...} 
$$ 
\end{definition}

Summarizing the discussion above and using that short exact sequences of complexes induce long-exact sequences in homology, we deduce the following result.

\begin{proposition}\label{hom les}
With notation as in Definition \ref{rel comp}, there is a long exact sequence of homology groups 
$$
\cdots H_{n-1}(\gpd';\mathcal{A}) \leftarrow H_n(\gpd/\gpd';\mathcal{A})\leftarrow  H_n(\gpd;\mathcal{A})\leftarrow H_n(\gpd';\mathcal{A})\leftarrow H_{n+1}(\gpd/\gpd';\mathcal{A}) \leftarrow \cdots \eqno\qed
$$
\end{proposition}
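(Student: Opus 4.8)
The plan is to read the result directly off the short exact sequence of total complexes recorded in line \eqref{ses tc}, by invoking the standard long exact sequence in homology attached to any short exact sequence of chain complexes. The first thing I would confirm is that line \eqref{ses tc} genuinely defines a short exact sequence \emph{of chain complexes}, and not merely a degreewise short exact sequence of abelian groups. This is immediate from the construction: the sheaf-theoretic sequence in line \eqref{diff ses} is natural in the sheaf $\mathcal{S}$, so applying it to each entry $\mathcal{S}^q_p$ of the double complex in line \eqref{tot com} produces a short exact sequence of double complexes, and passing to associated total complexes preserves both exactness (a direct sum of short exact sequences is short exact) and the compatibility of the subobject inclusion and quotient projection with the horizontal and vertical differentials, hence with the total differential.

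Next I would record the three identifications already established in the surrounding discussion. The left-hand complex in line \eqref{ses tc} computes $H_*(\gpd';\mathcal{A})$, using the canonical identification $\Gamma_c((\gpd')^{(n)};\mathcal{S}|_{(\gpd')^{(n)}})=\Gamma_c(\gpd^{(n)};\mathcal{S}_{(\gpd')^{(n)}})$ together with the fact that the resolution in line \eqref{resolve} restricts to a $c$-soft resolution of $\mathcal{A}$ as a $\gpd'$-sheaf. The middle complex computes $H_*(\gpd;\mathcal{A})$ by Definition \ref{gpd hom}. The right-hand complex computes $H_*(\gpd/\gpd';\mathcal{A})$ by Definition \ref{rel comp}.

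Finally I would invoke the zig-zag lemma: a short exact sequence of chain complexes $0\to A_\bullet\to B_\bullet\to C_\bullet\to 0$ induces a natural long exact sequence $\cdots\to H_n(A_\bullet)\to H_n(B_\bullet)\to H_n(C_\bullet)\to H_{n-1}(A_\bullet)\to\cdots$ (see for example \cite[Theorem 1.3.1]{Weibel:1995ty}), and this applies verbatim to $\Z$-indexed complexes of the fourth-quadrant type occurring here. Taking $A_\bullet$, $B_\bullet$, $C_\bullet$ to be the left, middle, and right complexes of line \eqref{ses tc} respectively, and feeding in the three identifications above, yields exactly the claimed sequence, with connecting homomorphism $H_n(\gpd/\gpd';\mathcal{A})\to H_{n-1}(\gpd';\mathcal{A})$. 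I expect no genuine obstacle: all the substantive content has been front-loaded into building the short exact sequence of total complexes and identifying what each term computes, so that the proposition becomes a formal consequence. The only point meriting a moment's care is the naturality argument of the first paragraph, ensuring that line \eqref{ses tc} is a short exact sequence of complexes rather than just of graded groups.
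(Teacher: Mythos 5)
Your proposal is correct and takes essentially the same route as the paper: the paper likewise constructs the short exact sequence of (total) complexes in line \eqref{ses tc}, identifies the left, middle, and right terms as computing $H_*(\gpd';\mathcal{A})$, $H_*(\gpd;\mathcal{A})$, and $H_*(\gpd/\gpd';\mathcal{A})$ respectively, and then deduces the proposition immediately from the standard long exact sequence associated to a short exact sequence of chain complexes. The one point you flag as meriting care (that \eqref{ses tc} is exact as a sequence of complexes, compatible with both differentials) is handled the same way in the paper, via naturality of the sequence \eqref{diff ses} in the sheaf.
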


The long exact sequence from Proposition \ref{hom les}  is not immediately useful without a way to compute the groups $H_n(\gpd/\gpd';\mathcal{A})$.  In the remainder of this section, we will adapt an `excision' type result of Matui \cite[Section 3]{Matui:2022aa} to our context; this will allow us to compute $H_n(\gpd/\gpd';\mathcal{A})$ for the orbit breaking groupoids of interest to us. Matui's result (along with our result) are the homology version of Putnam's $K$-theory result, \cite{Putnam:2021aa}.

Keeping in the situation of a pair $(\gpd,\gpd')$ of an \'{e}tale groupoid and open subgroupoid with the same base space (and following Putnam \cite[Section 6]{Putnam:2021aa} and Matui \cite[Section 3]{Matui:2022aa}), we define $\Delta:= \gpd\setminus \gpd'$.

\begin{definition}\label{reg incl}(Compare \cite[Definition 6.3]{Putnam:2021aa}).
With notation as above, equip $r(\Delta)\subseteq \gpd^{(0)}$ with the quotient topology it inherits from the surjection $r:\Delta\to r(\Delta)$.  The inclusion $\gpd'\subseteq \gpd$ is \emph{regular} if the map $r:\Delta\to r(\Delta)$ is open.
\end{definition}

\begin{definition}\label{hpd def}(Compare \cite[Definition 6.5 and Theorem 6.8]{Putnam:2021aa})
With notation as above, let $m:\gpd^{(2)}\to \gpd$ be the multiplication map, define $\hpd':=m((\Delta\times \Delta)\cap m^{-1}(\gpd'))$ equipped with the quotient topology it inherits from the surjection $m:(\Delta\times \Delta)\cap m^{-1}(\gpd')\to \hpd'$, and define $\hpd:=\hpd'\sqcup r(\Delta)$ where $r(\Delta)$ is equipped with the quotient topology as in Definition \ref{reg incl} and $\hpd$ is equipped with the disjoint union topology.
\end{definition}

In \cite[Theorems 6.7 and 6.8]{Putnam:2021aa}, Putnam shows that we have identifications of sets $\hpd=\gpd|_{r(\Delta)}$ and $\hpd'=\gpd'|_{r(\Delta)}$ (but not of  topological spaces); we use these identifications to equip $\hpd$ and $\hpd'$ with groupoid structures.  The next theorem essentially summarizes \cite[Theorems 6.7 and 6.8]{Putnam:2021aa} mentioned above, and also results of Matui \cite[Theorem 3.7 and Proposition 3.9]{Matui:2022aa}.  

\begin{theorem}\label{mat the}
With conventions as above, both $\hpd$ and $\hpd'$ are locally compact, Hausdorff, \'{e}tale groupoids, with $\hpd'$ clopen in $\hpd$.  Moreover, for any $n\in \N\cup\{0\}$, the natural inclusion 
$$
\hpd^{(n)}\setminus (\hpd')^{(n)}\to \gpd^{(n)}\setminus (\gpd')^{(n)}
$$
is a homeomorphism.
\end{theorem}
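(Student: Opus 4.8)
The plan is to treat the two assertions separately. The statements that $\hpd$ and $\hpd'$ are locally compact, Hausdorff, \'{e}tale groupoids and that $\hpd'$ is clopen in $\hpd$ are exactly the content of \cite[Theorems 6.7 and 6.8]{Putnam:2021aa}; I would check that Putnam's proofs of these facts use only that $\gpd$ is a locally compact Hausdorff \'{e}tale groupoid and that the inclusion $\gpd'\subseteq \gpd$ is regular, and never invoke ampleness. Granting this, the remaining and more substantial point is the homeomorphism of difference spaces, and this is where I would do the work, adapting \cite[Theorem 3.7 and Proposition 3.9]{Matui:2022aa} to the non-ample setting.

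The first step is to pin down the map at the level of sets and show it is a bijection. I claim it is simply the inclusion of strings of arrows of $\gpd$, justified by the following combinatorial lemma: if $g_1,\dots,g_n$ is a composable string in $\gpd$ with at least one $g_i\in\Delta$, then every vertex of the string lies in $r(\Delta)$, and hence each $g_j$ lies in $\gpd|_{r(\Delta)}=\hpd$. The proof is short. Since $\gpd'$ is a subgroupoid we have $\Delta=\Delta^{-1}$, so for $g\in\Delta$ both $r(g)$ and $s(g)=r(g^{-1})$ lie in $r(\Delta)$; and the auxiliary fact that $v\in\gpd'$ with $s(v)\in r(\Delta)$ forces $r(v)\in r(\Delta)$ follows by choosing $d\in\Delta$ with $r(d)=s(v)$ and observing that $vd\in\Delta$ (otherwise $d=v^{-1}(vd)\in\gpd'$), so $r(v)=r(vd)\in r(\Delta)$. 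Applying this to the $\gpd'$-blocks flanking the leftmost and rightmost $\Delta$-arrows of the string propagates membership in $r(\Delta)$ to all vertices. Consequently every arrow of such a string lies in $\hpd$, at least one lies in $\Delta=\hpd\setminus\hpd'$, and so the string is a point of $\hpd^{(n)}\setminus(\hpd')^{(n)}$; the reverse inclusion is immediate, giving the set-level bijection.

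With bijectivity in hand the content becomes purely topological: I must show the natural map is a homeomorphism for the subspace topology on $\gpd^{(n)}\setminus(\gpd')^{(n)}$ coming from $\gpd^n$ and the topology on $\hpd^{(n)}\setminus(\hpd')^{(n)}$ coming from the quotient / disjoint-union topology of $\hpd$ described in Definition \ref{hpd def}. Continuity in one direction is formal: Putnam's identification $\hpd=\gpd|_{r(\Delta)}$ realizes the $\hpd$-topology as a refinement of the subspace topology from $\gpd$, so the inclusion is continuous. The real work is openness, i.e.\ that these two topologies in fact coincide once one restricts to strings containing a $\Delta$-arrow. Here I would argue locally using open bisections: around a chosen $\Delta$-arrow $g_i$ I use that $\Delta$ is closed and that regularity makes $r\colon\Delta\to r(\Delta)$ an open map, so that the $\hpd$-topology on the $\Delta$-coordinate agrees with the $\gpd$-subspace topology; the composability constraints together with the openness of $r|_\Delta$ then force the neighbourhoods of the remaining coordinates (including those landing in $\hpd'$, whose quotient topology is a priori strictly finer) to be controlled by their $\gpd$-neighbourhoods.

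The step I expect to be the main obstacle is precisely this last one: verifying that the strictly finer quotient topology that $\hpd'=m((\Delta\times\Delta)\cap m^{-1}(\gpd'))$ can carry does not survive after intersecting with the difference space, so that the inclusion is open. This is exactly the point at which Matui's proof exploits zero-dimensionality (clopen bisections and locally constant functions), and the whole burden of the non-ample generalisation is to replace that input by the regularity hypothesis, namely the openness of $r\colon\Delta\to r(\Delta)$, together with a careful bisection-by-bisection analysis. Everything else — the groupoid axioms for $\hpd$ and $\hpd'$, local compactness, and the Hausdorff property — I would either quote from \cite{Putnam:2021aa} or check directly, since none of it depends on the base space being totally disconnected.
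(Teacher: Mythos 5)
Your overall strategy is the same as the paper's: the entire proof in the text consists of the observation that \cite[Theorems 6.7 and 6.8]{Putnam:2021aa} and \cite[Theorem 3.7 and Proposition 3.9]{Matui:2022aa} prove every assertion in the statement, and that inspecting those proofs shows the standing hypotheses of second countability and total disconnectedness are never actually used. Your set-level lemma (that a composable string containing an arrow of $\Delta$ has all its vertices in $r(\Delta)$, via $\Delta=\Delta^{-1}$ and the observation that $v\in\gpd'$, $d\in\Delta$, $s(v)=r(d)$ forces $vd\in\Delta$) is correct and is exactly the combinatorial content underlying Putnam's identification $\hpd=\gpd|_{r(\Delta)}$; writing it out is a reasonable way of doing part of the required ``inspection.''

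The one place your proposal stops short is the step you yourself flag as the main obstacle: showing that the quotient topology on $\hpd'$ (and on $r(\Delta)$) does not make the difference space strictly finer than the subspace topology from $\gpd^{(n)}$. You conjecture that Matui's proof of this ``exploits zero-dimensionality (clopen bisections and locally constant functions),'' and on that basis you defer the argument; but the paper's position is precisely the opposite: the proofs of \cite[Theorem 3.7 and Proposition 3.9]{Matui:2022aa} and \cite[Theorem 6.8]{Putnam:2021aa} of this step use only the regularity hypothesis, i.e.\ the openness of $r\colon\Delta\to r(\Delta)$ (together with $\Delta$ being closed and the existence of a basis of open bisections, which every \'{e}tale groupoid has), and never the clopenness of bisections. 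So there is no new argument to invent; the missing move in your writeup is simply to carry out the inspection of that specific step and record that it goes through verbatim. As it stands, your proposal leaves the crux as an acknowledged to-do rather than closing it, and the vague appeal to ``a careful bisection-by-bisection analysis'' would need to be replaced either by that inspection or by an actual proof of openness.
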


\begin{proof}
The results \cite[Theorems 6.7 and 6.8]{Putnam:2021aa} and \cite[Theorem 3.7 and Proposition 3.9]{Matui:2022aa} of Matui and Putnam show everything in the statement under the additional assumptions that $\gpd$ is second countable and totally disconnected.  Inspection of the arguments reveals that those assumptions are not necessary.
\end{proof}

\begin{proposition}\label{excision}
With conventions as above, let $\mathcal{A}$ be a $\gpd$-sheaf on $\gpd^{(0)}$, let $\iota:\hpd^{(0)}\to \gpd^{(0)}$ denote the canonical continuous map\footnote{We say `continuous map' rather than `inclusion' as while this injective continuous map is set-theoretically an inclusion, it is typically not a homeomorphism onto its image.}, and write $\iota^*\mathcal{A}$ for the pullback $\hpd$-sheaf.  Then the natural map of pairs $(\hpd,\hpd')\to (\gpd,\gpd')$ induces an isomorphism 
$$
H_*(\hpd/\hpd';\iota^*\mathcal{A})\cong H_*(\gpd/\gpd';\mathcal{A}) .
$$
\end{proposition}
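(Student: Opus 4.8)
The plan is to show that, after choosing resolutions compatibly, the two relative double complexes of Definition \ref{rel comp} become isomorphic once we restrict attention to the `difference spaces', whose homeomorphism is exactly the content of Theorem \ref{mat the}. First I would rewrite the relative complexes so that they visibly depend only on these difference spaces. For a closed subset $F\subseteq Y$ and a sheaf $\mathcal{S}$ on $Y$, the quotient sheaf $\mathcal{S}_F$ is the pushforward of $\mathcal{S}|_F$ along the closed inclusion, so compactly supported sections satisfy $\Gamma_c(Y;\mathcal{S}_F)\cong \Gamma_c(F;\mathcal{S}|_F)$ (compare the open analogue \cite[Proposition I.6.6]{Bredon:1997aa}). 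Applying this with $Y=\gpd^{(p)}$ and $F=F^{(p)}:=\gpd^{(p)}\setminus(\gpd')^{(p)}$, the double complex computing $H_*(\gpd/\gpd';\mathcal{A})$ has entries $\Gamma_c(F^{(p)};\mathcal{S}_p^{-q}|_{F^{(p)}})$, and likewise on the $\hpd$ side with $\hat{F}^{(p)}:=\hpd^{(p)}\setminus(\hpd')^{(p)}$. Since the restriction of a $c$-soft sheaf to a closed subset is again $c$-soft, each column is the compactly supported section complex of a $c$-soft resolution, hence computes the compactly supported sheaf cohomology of the difference space with its restricted coefficient sheaf.

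Next I would fix a $c$-soft $\gpd$-resolution $\mathcal{S}^\bullet$ of $\mathcal{A}$ and pull it back along $\iota$ to the resolution $\iota^*\mathcal{S}^\bullet$ of $\iota^*\mathcal{A}$ (pullback is exact, so this is again a resolution). Using that $\tau_p^*\mathcal{S}$ restricted to a subspace is the pullback along the restricted anchor map, together with the commuting relation $\tau_p\circ\theta_p=\iota\circ\hat{\tau}_p$ on difference spaces (which holds because the groupoid structure on $\hpd=\gpd|_{r(\Delta)}$ is inherited set-theoretically from $\gpd$), the homeomorphisms $\theta_p:\hat{F}^{(p)}\xrightarrow{\cong}F^{(p)}$ of Theorem \ref{mat the} identify $(\iota^*\mathcal{S})_p^{-q}|_{\hat{F}^{(p)}}$ with $\theta_p^*\big(\mathcal{S}_p^{-q}|_{F^{(p)}}\big)$. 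These same homeomorphisms intertwine the induced face maps $d_i$, since the nerves of $\hpd$ and $\gpd$ agree set-theoretically on the difference spaces. Hence the relative double complex for $(\gpd,\gpd')$ built from $\mathcal{S}^\bullet$ is \emph{isomorphic}, as a double complex with both differentials, to the relative double complex for $(\hpd,\hpd')$ built from $\iota^*\mathcal{S}^\bullet$.

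The remaining issue, and the part I expect to be the main obstacle, is that $\iota$ is only a continuous bijection onto $r(\Delta)$ with a finer topology, so $\iota^*\mathcal{S}^\bullet$ need not be $c$-soft on all of $\hpd^{(0)}$, whereas Definition \ref{rel comp} is phrased using an honest $c$-soft $\hpd$-resolution. To bridge this, I would take any injective (hence $c$-soft, via Lemma \ref{inj ginj}) $\hpd$-resolution $\mathcal{T}^\bullet$ of $\iota^*\mathcal{A}$ and use the comparison theorem \cite[Theorems 2.2.6 and 2.3.7]{Weibel:1995ty} to obtain a chain map $\iota^*\mathcal{S}^\bullet\to\mathcal{T}^\bullet$ over the identity of $\iota^*\mathcal{A}$. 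Although $\iota^*\mathcal{S}^\bullet$ is not $c$-soft, its restriction to each $\hat{F}^{(p)}$ \emph{is} $c$-soft, being the $\theta_p$-transport of the $c$-soft sheaf $\mathcal{S}_p^{-q}|_{F^{(p)}}$; thus on each column the comparison map is a morphism between $c$-soft resolutions of the same sheaf on $\hat{F}^{(p)}$, and so a quasi-isomorphism. A morphism of these fourth-quadrant, direct-sum totalized double complexes that is a quasi-isomorphism on every column is a quasi-isomorphism on total complexes, the relevant convergence being exactly the one established in Proposition \ref{cm id wd}.

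Chaining the isomorphism of the second paragraph with this quasi-isomorphism yields $H_*(\gpd/\gpd';\mathcal{A})\cong H_*(\hpd/\hpd';\iota^*\mathcal{A})$, and unwinding the construction identifies the resulting map with the one induced by the natural map of pairs. The delicate bookkeeping lies in verifying the face-map compatibility of the homeomorphisms $\theta_p$ (so that one genuinely obtains an isomorphism of double complexes rather than merely a levelwise one) and in managing the $c$-softness and convergence subtleties just described; the purely topological identification of the difference spaces is handed to us by Theorem \ref{mat the}, following Matui and Putnam.
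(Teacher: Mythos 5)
Your proposal is correct and follows the same skeleton as the paper's proof: identify each relative complex with the complex of compactly supported sections over the difference spaces $F^{(p)}$ via \cite[Proposition II.6.6]{Bredon:1997aa}, and then invoke the homeomorphisms of Theorem \ref{mat the} to match the two double complexes. The one place you diverge is the step you flag as ``the main obstacle'': you assume $\iota^*\mathcal{S}^\bullet$ may fail to be $c$-soft on $\hpd^{(0)}$ and insert a comparison with an injective $\hpd$-resolution, arguing column-by-column quasi-isomorphism. This detour is valid (and your column-wise argument is essentially Lemma \ref{c soft no dep 2} again), but it is unnecessary: since $c$-softness of a sheaf on a locally compact Hausdorff space can be checked on compact subsets, and $\iota$ is an injective continuous map -- hence restricts to a homeomorphism onto its image on every compact set -- the pullback $\iota^*\mathcal{S}^q$ of a $c$-soft sheaf is again $c$-soft. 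This is exactly the observation the paper makes, and it lets one take $\iota^*\mathcal{S}^\bullet$ itself as the $c$-soft $\hpd$-resolution of $\iota^*\mathcal{A}$ in Definition \ref{rel comp}, after which the two double complexes are literally identified by Theorem \ref{mat the} with no further homological algebra. Your extra care in checking that the homeomorphisms $\theta_p$ intertwine the face maps and the anchor maps is a point the paper leaves implicit, and is a worthwhile addition.
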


\begin{proof}
By definition, $H_*(\gpd/\gpd';\mathcal{A})$ is the homology of the double complex 
$$
\Big(\Gamma_c(\gpd^{(p)};(\mathcal{S}_p^{-q})_{F^{(p)}})\Big)_{q=,...-1,0;~p=0,1,2,...} 
$$
as in the statement of Proposition \ref{hom les}, where $\mathcal{S}^\bullet$ is a resolution of $\mathcal{A}$ by $c$-soft sheaves.  Using the natural identification 
$$
\Gamma_c(\gpd^{(p)};(\mathcal{S}_p^{-q})_{F^{(p)}})=\Gamma_c(F^{(p)};(\mathcal{S}_p^{-q})|_{F^{(p)}})
$$
of \cite[Proposition II.6.6]{Bredon:1997aa} (compare also \cite[Section II.10]{Bredon:1997aa}), $H_*(\gpd/\gpd';\mathcal{A})$ is the same as the homology of the double complex 
\begin{equation}\label{g dc}
\Big(\Gamma_c(F^{(p)};(\mathcal{S}_p^{-q})|_{F^{(p)}})\Big)_{q=,...-1,0;~p=0,1,2,...} .
\end{equation}
On the other hand, note that as pullbacks are exact functors (this is straightforward from the definition: compare \cite[line (2.3.3)]{Kashiwara:1990aa}), and as pullbacks of $c$-soft sheaves by $\iota$ are $c$-soft (this follows as $\iota$ is injective and takes compact sets to compact sets) we have that $\iota^*\mathcal{S}^\bullet$ is a $c$-soft resolution of $\iota^*\mathcal{A}$.  Hence $H_*(\hpd/\hpd';\iota^*\mathcal{A})$ is the homology of the double complex 
\begin{equation}\label{h dc}
\Big(\Gamma_c(\hpd^{(p)}\setminus (\hpd')^{(p)};(\iota^*\mathcal{S}_p^{-q})|_{\hpd^{(p)}\setminus (\hpd')^{(p)}})\Big)_{q=,...-1,0;~p=0,1,2,...}.
\end{equation}
However, the last statement of Theorem \ref{mat the} implies that for each $p$, the (set-theoretic) inclusion $\hpd^{(p)}\to \gpd^{(p)}$ induces an identification of topological spaces $F_{(p)}=\hpd^{(p)}\setminus (\hpd')^{(p)}$.  Hence the double complexes in lines \eqref{g dc} and \eqref{h dc} are the same.
\end{proof}

\section{Orbit breaking subgroupoids}\label{obsec}

In this section we specialize the results of Section \ref{incl sec} to so-called orbit-breaking groupoids associated to actions of the integers.   We finish with a general result comparing homology of orbit breaking groupoids and $K$-theory of the associated $C^*$-algebras.   Throughout, we will fix a locally compact Hausdorff space $X$ and a homeomorphism $\varphi : X \to X$ inducing a free action of $\Z$ on $X$.

From the dynamical system $(X, \varphi)$, one can construct the transformation groupoid $\Z \ltimes X$ as we discussed in Section \ref{secPrelim} above.  To match the existing literature better, we instead work with the equivalence relation 
\[
\mathcal{R}_{\varphi} := \{ (\varphi^{n}(x),x ) \mid x \in X, n \in \mathbb{Z} \},
\]
which is a groupoid with the operations restricted from the pair groupoid $X\times X$.  Since the dynamical system is free, the map 
\[ \Z \ltimes X \to \mathcal{R}_{\varphi}, \quad  (n,x) \mapsto (\varphi^{n}(x),x) \] is 
 a bijection.  We equip $\mathcal{R}_{\varphi}$ with the topology such that this bijection is a homeomorphism; this makes $\mathcal{R}_\varphi$ into a topological groupoid, isomorphic to $\Z\ltimes X$.

Let now $Y$ be a closed non-empty subset of $X$ , and assume for simplicity that $Y$ has finite covering dimension.  We say that $Y$ meets every orbit at most once if $\varphi^n(Y) \cap Y = \varnothing$ for each $n \neq 0$.   Following Putnam (compare \cite{Putnam:1989hi}, \cite[ Example 2.6]{Putnam:1998aa}, and \cite{Putnam:2021aa}) we define $\mathcal{R}_Y \subseteq \mathcal{R}_{\varphi}$ to be the subequivalence relation of $\mathcal{R}_\varphi$ obtained from splitting every orbit that passes through $Y$ into two equivalence classes: precisely, define 
\begin{equation}\label{Delta def}
\Delta:=\{(\phi^k(y),\phi^l(y))\in  \mathcal{R}_{\varphi}\mid y\in Y,~l<1\leq k \text{ or } k<1\leq l\}.
\end{equation}
 and then define
$$
\mathcal{R}_{Y}: = \mathcal{R}_{\varphi} \setminus\Delta.
$$
 One checks $\mathcal{R}_Y$ is an open subequivalence relation of $\mathcal{R}_{\varphi}$. In particular, it is itself an \'{e}tale groupoid when equipped with the subspace topology.   Notice that if a given orbit does not meet $Y$, then it is an equivalence class in both $\mathcal{R}_{\varphi}$ and $\mathcal{R}_{Y}$. However, if a given orbit does meet $Y$, say at the point $y$, then (because it meets $Y$ exactly once) its orbit becomes two distinct equivalence classes in $\mathcal{R}_{Y}$; they are $\{ \varphi^{n}(y) \mid n \geq 1 \}$ and   $\{ \varphi^{n}(y) \mid n \leq 0 \}$. In this sense, the orbit has been `broken' in two at the point $y$.
 
Having introduced these preliminaries, we now apply the machinery of Section \ref{incl sec}; to match notation from that section, we write $\gpd:=\mathcal{R}_\varphi$ and $\gpd':=\mathcal{R}_Y$ for the remainder of this section.   

\begin{lemma}\label{reg}
The inclusion $\gpd'\subseteq \gpd$ is regular in the sense of Definition \ref{reg incl}.
\end{lemma}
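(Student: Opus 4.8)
By Definition \ref{reg incl}, we must show that $r\colon\Delta\to r(\Delta)$ is open once $r(\Delta)$ is given the quotient topology. The plan is to describe $\Delta$ explicitly in the coordinates $\mathcal{R}_\varphi\cong\Z\ltimes X$ and to exhibit $r|_\Delta$ as a family of homeomorphisms on a clopen partition. Recall from \eqref{Delta def} that an element of $\Delta$ is a pair $(\varphi^k(y),\varphi^l(y))$ with $y\in Y$ and exactly one of $k,l$ at least $1$ and the other at most $0$; under the identification $(n,x)\mapsto(\varphi^n(x),x)$ this is the point $(n,x)$ with source $x=\varphi^l(y)$ and exponent $n=k-l$, so that $r(n,x)=\varphi^n(x)$. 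First I would decompose $\Delta=\bigsqcup_{n\in\Z}\Delta_n$, where $\Delta_n:=\Delta\cap(\{n\}\times X)$ is clopen in $\Delta$ because $\{n\}\times X$ is clopen in $\Z\ltimes X$. A short bookkeeping with the inequalities defining $\Delta$ shows that $\Delta_n$ is nonempty only for $n\neq0$, and then $\Delta_n=\{n\}\times\bigsqcup_{l\in L_n}\varphi^l(Y)$ for an explicit finite set $L_n$ of integers ($L_n=\{1-n,\dots,0\}$ when $n\geq1$, and $L_n=\{1,\dots,-n\}$ when $n\leq-1$).

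Because $Y$ meets each orbit at most once and the action is free, the translates $\varphi^l(Y)$ are pairwise disjoint closed subsets of $X$, each carried homeomorphically onto $\varphi^{l}(Y)$ by $\varphi^l$. Hence inside the clopen set $\Delta_n$ the finitely many disjoint closed pieces $\{n\}\times\varphi^l(Y)$ are themselves clopen, and on each of them $r$ acts as the homeomorphism $\varphi^n$ sending $\varphi^l(Y)$ onto $\varphi^{n+l}(Y)$. Writing $k=n+l$ and letting $(n,l)$ range over all admissible pairs, $k$ runs over all of $\Z$, so $r(\Delta)=\bigsqcup_{k\in\Z}\varphi^k(Y)$, once more a disjoint union of closed sets. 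The key observation is then that the quotient topology on $r(\Delta)$ coincides with the coproduct (disjoint-union) topology of the pieces $\varphi^k(Y)$: for $W\subseteq r(\Delta)$ the preimage $r^{-1}(W)\cap\Delta$ is open in $\Delta$ if and only if $W\cap\varphi^k(Y)$ is open in $\varphi^k(Y)$ for every $k$, which one checks by testing against the clopen partition above and using that $r$ restricts to a homeomorphism on each piece.

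With this in hand, openness of $r$ is immediate: given $U$ open in $\Delta$, its intersection with each clopen piece $\{n\}\times\varphi^l(Y)$ maps homeomorphically onto an open subset of $\varphi^{n+l}(Y)$, and since $r(U)$ is the union of these images, $r(U)\cap\varphi^k(Y)$ is open in $\varphi^k(Y)$ for every $k$; thus $r(U)$ is open in the quotient topology. I expect the only genuine obstacle to be conceptual rather than technical: one must resist identifying $r(\Delta)$ with the subspace topology it would inherit from $X$, since the translates $\varphi^k(Y)$ may accumulate in $X$ so that these two topologies genuinely differ, and instead verify that the quotient topology is exactly the coproduct topology. Once that point is isolated, the factorization of $r|_\Delta$ into homeomorphisms on a clopen partition makes openness a formality, and the remaining work is only the routine bookkeeping of the index sets $L_n$.
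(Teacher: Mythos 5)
Your proof is correct and follows essentially the same route as the paper: the paper packages your clopen partition of $\Delta$ into an explicit homeomorphism $\Delta\cong\Z\times Y\times\N$ (with $\Z$ and $\N$ discrete), reads off that $r$ is given by $(k,y,\cdot)\mapsto\varphi^k(y)$, and likewise identifies the quotient topology on $r(\Delta)=\bigsqcup_{k\in\Z}\varphi^k(Y)$ with the disjoint-union topology to conclude openness. Your version merely makes explicit the verification that the quotient topology is the coproduct topology, and correctly isolates the one point worth flagging, namely that this is not the subspace topology inherited from $X$.
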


\begin{proof}
With $\Delta=\gpd\setminus \gpd'$ as in line \eqref{Delta def}, the map 
\begin{equation}\label{delta homeo}
\Delta\to \Z\times Y\times \N,\quad (\phi^k(y),k-l,\phi^l(y))\mapsto \left\{\begin{array}{ll} (k,y,-l) & k\geq 1 \\ (k,y,l-1) & k<1 \end{array}\right.
\end{equation}
is a homeomorphism.  With respect to the homeomorphism in line \eqref{delta homeo}, we have that $r:\Delta\to r(\Delta)$ is described by 
\begin{equation}\label{r desc}
(k,y,l)\mapsto \phi^k(y).
\end{equation}
Note moreover that by assumption each orbit intersects $Y$ at most once, we have that as sets
$$
r(\Delta)=\Z\cdot Y =\bigsqcup_{n\in \Z} \phi^n(Y);
$$  
it follows from this and the description of the quotient map $:\Delta\to r(\Delta)$ in line \eqref{r desc} that the quotient topology on $r(\Delta)$ identifies with the disjoint union topology on $r(\Delta)=\bigsqcup_{n\in \Z} \phi^n(Y)$, i.e.\ with the product topology on $\Z\times Y$.  In particular, the map $r:\Delta\to r(\Delta)$ is open as required.   
\end{proof}

Let now $\hpd$ and $\hpd'$ be the \'{e}tale groupoids built from the regular inclusion $\gpd'\subseteq \gpd$ as in Definition \ref{hpd def} and Theorem \ref{mat the} above.  We note the following structural results.

\begin{proposition}\label{h str}
For a set $S$, let $P_S$ be the pair groupoid on $S$ equipped with the discrete topology.  Then the inclusion $\hpd'\to \hpd$ fits into a commuting square of groupoid homomorphisms
$$
\xymatrix{ \hpd'\ar[d]^\cong \ar[r] & \hpd \ar[d]^\cong \\
(Y\times P_{\Z_{\geq 1}})\sqcup (Y\times P_{\Z_{\leq 0}}) \ar[r] & Y\times P_\Z }
$$
where the bottom arrow is the obvious inclusion, and the vertical arrows are isomorphisms of topological groupoids.  Moreover the map
$$
\iota_Y:Y\to Y\times P_\Z,\quad y\mapsto (y,(0,0))
$$
and the map 
$$
\iota_Y:Y\sqcup Y \to (Y\times P_{\Z_{\geq 1}})\sqcup (Y\times P_{\Z_{\leq 0}})
$$
induced by $y\mapsto (y,(1,1))$ in the first copy of $Y$ and $y\mapsto (y,(0,0))$ in the second copy are Morita equivalences.  Finally these maps induce isomorphisms
$$
H^{-n}_c(Y)\stackrel{\cong}{\to} H_n(\hpd)\quad \text{and}\quad H_c^{-n}(Y)\oplus H^{-n}_c(Y)\stackrel{\cong}{\to} H_n(\hpd')
$$
for all $n\in \Z$, where the left hand sides are the usual sheaf cohomology groups.
\end{proposition}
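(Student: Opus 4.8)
The plan is to first pin down $\hpd$ and $\hpd'$ as explicit topological groupoids, then read off the Morita equivalences, and finally obtain the homology isomorphisms from Crainic--Moerdijk Morita invariance together with Example \ref{spex}.

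\emph{Set-level identification.} Using Putnam's identifications $\hpd=\gpd|_{r(\Delta)}$ and $\hpd'=\gpd'|_{r(\Delta)}$ (recalled before Theorem \ref{mat the}) together with $r(\Delta)=\Z\cdot Y=\bigsqcup_{n\in\Z}\varphi^n(Y)$, the bijection $\varphi^n(y)\leftrightarrow(n,y)$ identifies $\hpd^{(0)}$ with $\Z\times Y$. Since $Y$ meets each orbit at most once, $\varphi^n(y)$ and $\varphi^m(y')$ are $\mathcal{R}_\varphi$-equivalent exactly when $y=y'$; this realises $\hpd\cong Y\times P_\Z$, with $(y,(m,n))$ the arrow from $(n,y)$ to $(m,y)$. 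Under the same bijection $\mathcal{R}_Y$ relates $\varphi^n(y)\sim\varphi^m(y)$ iff $n,m\geq 1$ or $n,m\leq 0$, identifying $\hpd'$ with the clopen subgroupoid $(Y\times P_{\Z_{\geq1}})\sqcup(Y\times P_{\Z_{\leq0}})$ and making the bottom map of the square the obvious inclusion.

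\emph{Topology.} The real content is that these bijections are homeomorphisms. On unit spaces this is exactly Lemma \ref{reg}. For arrows I would unwind the quotient topology on $\hpd'$ coming from the multiplication map $m\colon(\Delta\times\Delta)\cap m^{-1}(\gpd')\to\hpd'$. Via the homeomorphism $\Delta\cong\Z\times Y\times\N$ of line \eqref{delta homeo}, composability forces the two factors to share the point $y$ and the middle index, while $gh\in\gpd'$ forces the sign pattern $(k,j\geq1,\,l\leq0)$ or $(k,j\leq0,\,l\geq1)$; hence the domain is a disjoint union of copies of $Y$ indexed by this discrete data, and $m$ merely forgets the middle index $l$. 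A subset of the quotient is open iff its preimage is open, and forgetting $l$ collapses the copies of $Y$ over a fixed $(k,j)$ to a single copy carrying its own topology, so the quotient topology is the product topology on $Y\times\{(k,j)\}$. This gives the stated topological-groupoid isomorphisms; the clopen and disjoint-union claims are in any case already furnished by Theorem \ref{mat the}.

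\emph{Morita equivalences and homology.} Each inclusion $\iota_Y$ is the standard Morita equivalence expressing that a pair groupoid is equivalent to a point: $\{0\}\subseteq\Z$ (resp. $\{1\}\subseteq\Z_{\geq1}$ and $\{0\}\subseteq\Z_{\leq0}$) meets the unique orbit of the relevant pair groupoid, so $Y\times\{0\}$ is a full clopen subset of the unit space $Y\times\Z$ and the reduction $(Y\times P_\Z)|_{Y\times\{0\}}\cong Y$ is precisely the image of $\iota_Y$; the same holds componentwise for $\hpd'$. Crainic--Moerdijk Morita invariance (\cite[Corollary 4.6]{Crainic:2000aa}, applicable since $Y$ and hence all the relevant unit spaces have finite covering dimension) then yields $H_n(\hpd)\cong H_n(Y)$ and, splitting the defining double complex along the clopen decomposition, $H_n(\hpd')\cong H_n(Y)\oplus H_n(Y)$. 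Finally Example \ref{spex} rewrites $H_n(Y)$ as $H^{-n}_c(Y)$, producing the two displayed isomorphisms. I expect the main obstacle to be the topology check in the second step — matching the quotient topology defining $\hpd'$ with the product topology on $Y\times P_\Z$ — since the remaining steps are essentially Putnam--Matui bookkeeping and a direct appeal to Morita invariance.
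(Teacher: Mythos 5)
Your proposal is correct and follows exactly the route the paper intends: identify $\hpd$ and $\hpd'$ with $Y\times P_\Z$ and its clopen subgroupoid via Putnam's description of $\gpd|_{r(\Delta)}$, observe the standard full-clopen-reduction Morita equivalences, and conclude by Crainic--Moerdijk Morita invariance together with Example \ref{spex}. The paper in fact leaves the construction of the square and the verification of the Morita equivalences to the reader, so your unwinding of the quotient topology on $\hpd'$ simply supplies the checks the paper omits.
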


\begin{proof}
We leave the construction of the commutative square and checking the isomorphisms and Morita equivalences to the reader.  Once one knows the Morita equivalences, the homology isomorphisms follow from the fact that Morita equivalences induce isomorphisms on groupoid homology (see \cite[Corollary 3.6]{Crainic:2000aa}\footnote{We assumed finite covering dimension for $Y$ in order to be able to apply this result: we do not think it is really necessary, but reproving it in general would require significant extra work.}) and the fact that the groupoid homology of a trivial groupoid (i.e.\ a space) identifies with the compactly supported sheaf cohomology, up to replacing degrees by their negative (see Example \ref{spex} above, or \cite[3.5 (3)]{Crainic:2000aa}).
\end{proof}

\begin{corollary}\label{y hom}
With notation as in Definition \ref{rel comp}, for each $n$ there is a canonical isomorphism
$$
H_n(\gpd/\gpd')\cong  H_n(\hpd/\hpd') \cong H^{1-n}_c(Y).
$$
\end{corollary}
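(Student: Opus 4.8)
The plan is to treat the two isomorphisms separately. For the first, $H_n(\gpd/\gpd')\cong H_n(\hpd/\hpd')$, I would simply invoke the excision result of Proposition \ref{excision}: the inclusion $\gpd'\subseteq \gpd$ is regular by Lemma \ref{reg}, so Proposition \ref{excision} applies with coefficient sheaf $\mathcal{A}=\mathcal{Z}$. The only point to check is that $\iota^*\mathcal{Z}=\mathcal{Z}$, which holds because the pullback of a constant sheaf along any continuous map is again the constant sheaf; thus $H_*(\hpd/\hpd';\iota^*\mathcal{Z})=H_*(\hpd/\hpd')$ and the claimed isomorphism is exactly the content of Proposition \ref{excision}.

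For the second isomorphism, $H_n(\hpd/\hpd')\cong H^{1-n}_c(Y)$, I would feed the structural computations of Proposition \ref{h str} into the long exact sequence of Proposition \ref{hom les} applied to the pair $(\hpd,\hpd')$. Rewritten with rightward arrows, this reads
$$
\cdots \to H_{n+1}(\hpd/\hpd') \xrightarrow{\partial_{n+1}} H_n(\hpd') \xrightarrow{f_n} H_n(\hpd) \xrightarrow{g_n} H_n(\hpd/\hpd') \xrightarrow{\partial_n} H_{n-1}(\hpd') \to \cdots,
$$
and Proposition \ref{h str} identifies $H_n(\hpd)\cong H^{-n}_c(Y)$ and $H_n(\hpd')\cong H^{-n}_c(Y)\oplus H^{-n}_c(Y)$ compatibly with the inclusion-induced map $f_n$.

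The crux is to identify $f_n$. Under the Morita equivalences of Proposition \ref{h str}, the groupoid $\hpd$ is $Y\times P_\Z$ and $\hpd'$ is $(Y\times P_{\Z_{\geq 1}})\sqcup (Y\times P_{\Z_{\leq 0}})$, and the two summands of $\hpd'$ include into $\hpd$ via the objects $(1,1)$ and $(0,0)$ of the pair groupoid $P_\Z$. Since all objects of a pair groupoid are canonically isomorphic, both inclusions induce the identity after the identifications with $H^{-n}_c(Y)$; hence $f_n$ is the fold map $(a,b)\mapsto a+b$ on $H^{-n}_c(Y)\oplus H^{-n}_c(Y)$, which is surjective with kernel the antidiagonal $\{(a,-a)\}\cong H^{-n}_c(Y)$. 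Verifying this claim carefully --- that the two inclusions really induce the identity, i.e.\ that the choice of basepoint object in each Morita equivalence is irrelevant --- is the main technical obstacle, though it is routine given Proposition \ref{h str}.

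With $f_n$ surjective for every $n$, the long exact sequence decouples: surjectivity of $f_n$ forces $g_n=0$ by exactness at $H_n(\hpd)$, hence $\partial_n$ is injective by exactness at $H_n(\hpd/\hpd')$, and exactness at $H_{n-1}(\hpd')$ identifies the image of $\partial_n$ with $\ker f_{n-1}$. Therefore $\partial_n$ gives an isomorphism $H_n(\hpd/\hpd')\cong \ker f_{n-1}\cong H^{-(n-1)}_c(Y)=H^{1-n}_c(Y)$, which is the desired conclusion. Combining this with the first isomorphism completes the proof.
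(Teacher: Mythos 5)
Your proposal is correct and follows essentially the same route as the paper's proof: the first isomorphism is exactly Proposition \ref{excision} applied to the constant sheaf, and the second comes from feeding the Morita identifications of Proposition \ref{h str} into the long exact sequence of Proposition \ref{hom les}, identifying the inclusion-induced map as the fold $(x,y)\mapsto x+y$ so that the sequence splits into short exact sequences with kernel the antidiagonal copy of $H^{1-n}_c(Y)$. The paper asserts the fold-map identification at the same level of detail that you flag as the remaining routine check.
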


\begin{proof}
The first isomorphism $H_n(\gpd/\gpd')\cong  H_n(\hpd/\hpd')$ follows from Proposition \ref{excision}.

For the second isomorphism, note that the homology isomorphisms from Proposition \ref{h str} give rise to isomorphisms
$$
H_n(\hpd)\cong H_c^{-n}(Y)\quad \text{and}\quad H_n(\hpd')\cong H_c^{-n}(Y)\oplus H_c^{-n}(Y).
$$
Combining these Morita equivalence isomorphisms and the long exact sequence of Proposition \ref{hom les} we get a long exact sequence
$$
\cdots  \leftarrow H_n(\hpd/\hpd')\leftarrow  H^{-n}_c(Y)\leftarrow H_c^{-n}(Y)\oplus H_c^{-n}(Y)\leftarrow H_{n+1}(\hpd/\hpd';\mathcal{A}) \leftarrow\cdots
$$
where the map $H^{-n}_c(Y)\oplus H^{-n}_c(Y)\to  H^{-n}_c(Y)$ is given by $(x,y)\mapsto x+y$ and is in particular surjective.  The long exact sequence above thus reduces to a collection of short exact sequences
$$
 0 \leftarrow H^{-n}_c(Y) \leftarrow H_c^{-n}(Y)\oplus H_c^{-n}(Y) \leftarrow H_{n+1}(\hpd/\hpd') \leftarrow 0 .
$$
It follows that $H_{n+1}(\hpd/\hpd')$ identifies with the subgroup of $H^{-n}_c(Y)\oplus H^{-n}_c(Y)$ consisting of elements of the form $(x,-x)$, and we are done.
\end{proof}

Assume from now on that $X$ (and hence also $Y$) is compact.  Let $\mathcal{Z}$ be the sheaf of locally constant $\Z$-valued functions on $G^{(0)}$, and fix a resolution 
\begin{equation}\label{res2}
0\to \mathcal{Z} \to \mathcal{S}^0  \to \mathcal{S}^1\to \cdots
\end{equation}
by $c$-soft $\gpd$-sheaves as in line \eqref{resolve} above. 

Recall the notation $\mathcal{S}_p$ from line \eqref{s sub n} above.  Let now $u\in \Gamma_c(\gpd^{(1)};\mathcal{S}^0_1)$ be the image of the characteristic function of $\{(\phi(x),x)\mid x\in X\}$, which is an element of $\Gamma_c(\gpd^{(1)};\mathcal{Z}_1)$, under the map induced by the resolution in line \eqref{res2}.  We have the following lemma.

\begin{lemma}\label{u cocycle}
The element $u$ defined above is a cocycle for $H_1(\gpd)$, and in particular defines a class $[u]$ in this group.
\end{lemma}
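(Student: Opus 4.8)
The plan is to compute the total differential of $u$ directly and show it vanishes. Recall that the degree-one chain group of the total complex is $\bigoplus_{p\geq 1}\Gamma_c(\gpd^{(p)};\mathcal{S}^{p-1}_p)$, and that $u$ lives in it as the $(p,q)=(1,0)$ summand $\Gamma_c(\gpd^{(1)};\mathcal{S}^0_1)$, with all other components zero. The total differential carries this summand into $C_0(\gpd)=\bigoplus_{p\geq 0}\Gamma_c(\gpd^{(p)};\mathcal{S}^p_p)$ through exactly two pieces: the horizontal map gives $\delta(u)\in\Gamma_c(\gpd^{(0)};\mathcal{S}^0_0)$, landing in the $p=0$ summand, and the vertical (resolution-induced) map gives $\partial(u)\in\Gamma_c(\gpd^{(1)};\mathcal{S}^1_1)$, landing in the $p=1$ summand. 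As these lie in distinct direct summands of $C_0(\gpd)$ and no other component of $u$ feeds into them, the cycle condition is equivalent to the two independent equations $\delta(u)=0$ and $\partial(u)=0$; in particular the sign convention on the total differential is irrelevant here.

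The identity $\partial(u)=0$ is immediate from exactness of the resolution. By construction $u$ is the image of the characteristic section $\chi\in\Gamma_c(\gpd^{(1)};\mathcal{Z}_1)$ under the map on sections induced by $\mathcal{Z}\to\mathcal{S}^0$, while $\partial$ on this column is induced by the subsequent map $\mathcal{S}^0\to\mathcal{S}^1$ (pulled back along $\tau_1$). Since pullback along $\tau_1$ is exact, the composite $\mathcal{Z}_1\to\mathcal{S}^0_1\to\mathcal{S}^1_1$ is the pullback of $\mathcal{Z}\to\mathcal{S}^0\to\mathcal{S}^1$, which is zero by exactness of the resolution at $\mathcal{S}^0$. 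Hence $\partial(u)=0$.

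For $\delta(u)=0$, I would first verify the statement with $\mathcal{Z}$-coefficients and then transport it along $\mathcal{Z}\to\mathcal{S}^0$. Working in the model $\gpd=\mathcal{R}_\varphi$, the section $\chi$ is the characteristic function of the set $\{(\varphi(x),x)\mid x\in X\}$, which is clopen and (as $X$ is compact) compact, so $\chi$ is a legitimate compactly supported section. Applying the formula in line \eqref{first bound}, for $x\in\gpd^{(0)}$ we have
\begin{equation*}
(\delta\chi)(x)=\sum_{g\in s^{-1}(x)}\chi(g)-\sum_{g\in r^{-1}(x)}\chi(g).
\end{equation*}
The unique element of $s^{-1}(x)$ in the support of $\chi$ is $(\varphi(x),x)$, and the unique element of $r^{-1}(x)$ in the support is $(x,\varphi^{-1}(x))=(\varphi(\varphi^{-1}(x)),\varphi^{-1}(x))$; freeness of the action rules out any further contributions, and each of these two terms equals $1$, so $(\delta\chi)(x)=1-1=0$. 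Because the horizontal differentials of the Crainic--Moerdijk complex are natural in the $\gpd$-sheaf, the morphism $\mathcal{Z}\to\mathcal{S}^0$ induces a chain map of horizontal complexes intertwining the two copies of $\delta$, whence $\delta(u)=\delta(\text{image of }\chi)=\text{image of }\delta(\chi)=0$.

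Combining the two vanishings shows the total differential of $u$ is zero, so $u$ is a cycle and defines a class $[u]\in H_1(\gpd)$. The computations themselves are short; the only step that is not a direct calculation is the naturality of the horizontal differential $\delta$ invoked at the end, and this is where I would be most careful. In the case at hand both $\mathcal{Z}$ and $\mathcal{S}^0$ may be taken to have sections given by (group-valued) functions, so $\delta$ is the explicit pushforward of Remark \ref{horiz arrows} and naturality is transparent; in general it is an instance of functoriality of the simplicial face maps $(d_i)$ underlying the construction of \cite[1.7]{Crainic:2000aa}. I would spell this out rather than leave it implicit, since it is the one place where more than bookkeeping is required.
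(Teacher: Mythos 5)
Your proof is correct and follows essentially the same route as the paper's: reduce the cycle condition to the two independent vanishings $\partial(u)=0$ (from exactness of the resolution, since $u$ comes from $\Gamma_c(\gpd^{(1)};\mathcal{Z}_1)$) and $\delta(u)=0$ (computed on $\mathcal{Z}$-coefficients, where the paper writes the answer as $\chi_{\phi(X)}-\chi_X=0$ rather than checking fibers pointwise). Your extra care about naturality of $\delta$ along $\mathcal{Z}\to\mathcal{S}^0$ is a reasonable elaboration of a step the paper leaves implicit, but the argument is the same.
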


\begin{proof}
Looking at the double complex in line \eqref{tot com} we have differentials as pictured
$$
\xymatrix{ \Gamma_c(\gpd^{(0)};\mathcal{S}_0^0) & \ar[l]^-{\delta} \Gamma_c(\gpd^{(1)};\mathcal{S}_1^0) \ar[d]^- {\partial} \\
& \Gamma_c(\gpd^{(1)};\mathcal{S}_1^1)},
$$
where $\partial$ comes from the resolution in line \eqref{res2}, and $\delta$ is the differential from the bar complex defined in \cite[3.1]{Crainic:2000aa}.  We must show that $\partial(u)=\delta(u)=0$.  For $\partial$ this follows as $u$ comes via the inclusion $\Gamma_c(\gpd^{(1)};\mathcal{Z}_1)\to \Gamma_c(\gpd^{(1)};\mathcal{S}^0_1)$, and as the composition of any two arrows in line \eqref{res2} is zero.  To show $\delta(u)=0$ is suffices to show that the corresponding map
$$
\delta:\Gamma_c(\gpd^{(1)};\mathcal{Z}_1)\to \Gamma_c(\gpd^{(0)};\mathcal{Z})
$$
sends $u$ to zero.  Indeed, comparing to line \eqref{first bound} above, it sends $u$ to $\chi_{\phi(X)}-\chi_X$.  As $\phi$ is a homeomorphism, $\phi(X)=X$, so we are done.
\end{proof}

\begin{lemma}\label{id quot}
With notation as above, the image of the class $[u]\in H_1(\gpd)$ under the map 
$$
H^0(Y) \cong H_1(\gpd/\gpd') \leftarrow H_1(\gpd)
$$
from the long exact sequence in Proposition \ref{hom les} and the isomorphism in Corollary \ref{y hom} is the class $[\chi_Y]$ of the characteristic function of $Y$ (identified as an element of $H^0_c(Y)$ via the inclusion $\Gamma_c(Y;\mathcal{Z}|_Y)\to \Gamma_c(Y;\mathcal{S}^0|_Y)$). 
\end{lemma}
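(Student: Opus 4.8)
The plan is to track the class $[u]$ explicitly through the three maps making up the composite $H_1(\gpd)\to H_1(\gpd/\gpd')\cong H_1(\hpd/\hpd')\cong H^0(Y)$, since each of these admits an understood description at the level of cocycles.

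First I would identify the map $H_1(\gpd)\to H_1(\gpd/\gpd')$ from Proposition \ref{hom les}. It is induced by the quotient map of complexes in line \eqref{ses tc}, which on the bidegree $(p,q)=(1,0)$ summand is exactly the restriction $\Gamma_c(\gpd^{(1)};\mathcal{S}^0_1)\to \Gamma_c(\Delta;(\mathcal{S}^0_1)|_\Delta)$ to the closed difference space $F^{(1)}=\gpd\setminus\gpd'=\Delta$. Since $u$ is the image of the characteristic function of the graph $\{(\phi(x),x)\mid x\in X\}$, its image here is the restriction of that characteristic function to $\Delta$. Using the description of $\Delta$ in line \eqref{Delta def}, a point $(\phi(x),x)=(\phi^{l+1}(y),\phi^{l}(y))$ lies in $\Delta$ only via the clause $l<1\leq l+1$, which forces $l=0$ and $x=y\in Y$ (the clause $l+1<1\leq l$ is impossible). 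Hence the graph of $\phi$ meets $\Delta$ precisely in $\{(\phi(y),y)\mid y\in Y\}$, and $[u]$ maps to the class of (the image in the resolution of) $\chi_{\{(\phi(y),y)\,:\,y\in Y\}}$.

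Next I would transport this cocycle to $\hpd$ using the excision isomorphism of Proposition \ref{excision} together with the structural description in Proposition \ref{h str}. Under the homeomorphism $F^{(1)}=\Delta\cong \hpd\setminus\hpd'$ of Theorem \ref{mat the} and the identification $\hpd\cong Y\times P_\Z$, the point $(\phi(y),y)$ corresponds to $(y,(1,0))$; since $(1,0)$ has one coordinate $\geq 1$ and one $\leq 0$, it indeed lies in $\hpd\setminus\hpd'$, and the relevant double complexes are literally identical, so our class is represented by $\chi_{Y\times\{(1,0)\}}$ in the complex computing $H_1(\hpd/\hpd')$.

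Finally I would evaluate the connecting isomorphism underlying Corollary \ref{y hom}. Lifting $\chi_{Y\times\{(1,0)\}}$ to the section $\tilde u:=\chi_{Y\times\{(1,0)\}}$ of $\mathcal{S}^0_1$ on all of $\hpd^{(1)}$ and applying the total differential: the vertical (resolution) component vanishes because $\tilde u$ comes from $\mathcal{Z}_1$ (as in Lemma \ref{u cocycle}), while the horizontal component, computed from line \eqref{first bound} using $s(y,(1,0))=(y,0)$ and $r(y,(1,0))=(y,1)$, equals $\chi_{Y\times\{0\}}-\chi_{Y\times\{1\}}$. Under the two Morita equivalences of Proposition \ref{h str} this is the anti-diagonal element $(-\chi_Y,\chi_Y)$ of $H_0(\hpd')\cong H^0_c(Y)\oplus H^0_c(Y)$, the summand $Y\times\{1\}$ sitting in the $P_{\Z_{\geq 1}}$-part and $Y\times\{0\}$ in the $P_{\Z_{\leq 0}}$-part; since $H_1(\hpd)\cong H^{-1}_c(Y)=0$ the connecting map is injective, so this image determines the class. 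This is exactly the image of $[\chi_Y]$ under the identification of $H_1(\hpd/\hpd')$ with the anti-diagonal subgroup $\{(x,-x)\}$ of $H_0(\hpd')$ from the proof of Corollary \ref{y hom}, whence the composite sends $[u]$ to $[\chi_Y]$. The main obstacle is purely bookkeeping: one must apply the excision, Morita, and connecting-map identifications consistently at the level of explicit sections, and in particular keep the two copies of $H^0(Y)$ and the sign in the identification $\{(x,-x)\}\cong H^0(Y)$ straight so that the answer is $[\chi_Y]$ and not its negative.
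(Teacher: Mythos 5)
Your proposal is correct and follows essentially the same route as the paper's proof: pass to $\hpd/\hpd'$ via excision, represent the image of $u$ by the characteristic function of $\{(\phi(y),y)\mid y\in Y\}$, and evaluate the connecting map into $H_0(\hpd')\cong H^0_c(Y)\oplus H^0_c(Y)$ as an anti-diagonal element, which pins down the class under the identification from Corollary \ref{y hom}. The sign bookkeeping you flag at the end is handled no more carefully in the paper (whose stated output $(\chi_{\phi(Y)}-\chi_Y)$ versus the formula in line \eqref{first bound} already hides a sign convention for the connecting map), so your treatment is, if anything, slightly more explicit.
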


\begin{proof}
According to the proof of Proposition \ref{excision}, we have an identification 
$$
H_1(\gpd/ \gpd')\cong H_1(\hpd/ \hpd');
$$
abusing notation, also write $[u]$ for the class of the image of $u$ in $H_1(\hpd/\hpd')$.  According to the proof of Proposition \ref{y hom} it suffices to show that the image of $[u]$ under the composition
$$
H_1(\hpd/\hpd')\to H_0(\hpd')\cong H^0_c(Y)\oplus H^0_c(Y)
$$
of the boundary map from Proposition \ref{hom les} and Morita equivalence isomorphism from Proposition \ref{h str} identifies with the class $([\chi_Y],-[\chi_Y])$.  According to the definition of the boundary map in a long exact sequence of homology groups associated to a short exact sequence of chain complexes, we can compute the image of $[u]$ in $H_0(\hpd')$ by: (1) restricting $u$ to $\hpd\setminus \hpd'$, getting an element of $\Gamma_c(\hpd\setminus \hpd';\mathcal{Z})$; then (2) lifting it to $\hpd$, getting an element of $\Gamma_c(\hpd;\mathcal{Z})$; and then (3) taking the image under the boundary map $\Gamma_c(\hpd;\mathcal{Z})\to \Gamma_c(\hpd^{(0)};\mathcal{Z})$ (which identifies with $\Gamma_c((\hpd')^{(0)};\mathcal{Z})$).  Passing through this process, steps (1) and (2) give us the characteristic function of $\{(\phi(y),y)\mid y\in Y\}$ in $\Gamma_c(\hpd;\mathcal{Z})$ and the boundary map takes this to $\chi_{\phi(Y)} - \chi_Y$. Under the explicit Morita equivalence isomorphism 
$$
H_0(\hpd')\cong H^0_c(Y)\oplus H^0_c(Y)
$$
of Proposition \ref{h str} this is exactly what we wanted, so we are done.
\end{proof}

The following result summarizes our main result on the homology of orbit breaking subgroupoids.  For ease of reading, we now pass back from the $\gpd$ and $\gpd'$ notation to the more specific orbit breaking groupoid notation.  For the statement of the result, recall that the action of $\Z$ on $X$ is \emph{minimal} if all orbits are dense.

\begin{proposition} \label{lem:long-Z-action-orbit-break}
Suppose $(X, \varphi)$ is a free dynamical system, $Y \subseteq X$ is closed and meets every orbit at most once, $\mathcal{R}_{\varphi}$ is the transformation groupoid associated to $\varphi$, and $\mathcal{R}_Y$ is the orbit breaking groupoid obtained from $Y$. Then there is a long exact sequence in groupoid homology given by
$$
\cdots \leftarrow H_{n-1}(\mathcal{R}_Y) \leftarrow H^{1-n}_c(Y)\leftarrow  H_n(\mathcal{R}_{\varphi})\leftarrow H_n(\mathcal{R}_Y)\leftarrow H^{-n}_c(Y) \leftarrow \cdots.
$$
Moreover, the following hold:
\begin{enumerate}[(i)]
\item \label{0 surj} If $\mathcal{R}_{\varphi}$ is such that the natural map $H^0(X)\to H_0(\mathcal{R}_{\varphi})$ coming from the inclusion of the first column in line \eqref{tot com} is surjective, then the map $H_0(\mathcal{R}_{\varphi})\leftarrow H_0(\mathcal{R}_Y)$ from the long exact sequence above is also surjective.
\item \label{z iso} If $X$ is compact, the action is minimal, and $Y$ is connected, then both groups $H^0_c(Y)$ and $H_1(\mathcal{R}_{\varphi})$ appearing in this long exact sequence are isomorphic to $\Z$, and the map $H^0_c(Y)\leftarrow  H_1(\mathcal{R}_{\varphi})$ is an isomorphism.
\end{enumerate}
\end{proposition}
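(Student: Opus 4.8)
The plan is to obtain all three assertions by assembling results already in place, leaving only routine bookkeeping. For the long exact sequence itself I would simply specialize Proposition \ref{hom les} to the pair $(\gpd,\gpd')=(\mathcal{R}_\varphi,\mathcal{R}_Y)$ with $\mathcal{Z}$-coefficients (legitimate since Lemma \ref{reg} shows the inclusion is regular, so that the machinery of Section \ref{incl sec} applies), and then substitute the identification $H_n(\gpd/\gpd')\cong H^{1-n}_c(Y)$ of Corollary \ref{y hom}. Concretely, in the sequence $\cdots\leftarrow H_{n-1}(\gpd')\leftarrow H_n(\gpd/\gpd')\leftarrow H_n(\gpd)\leftarrow H_n(\gpd')\leftarrow H_{n+1}(\gpd/\gpd')\leftarrow\cdots$ one replaces $H_n(\gpd/\gpd')$ by $H^{1-n}_c(Y)$ and $H_{n+1}(\gpd/\gpd')$ by $H^{-n}_c(Y)$, which yields exactly the displayed sequence.

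For part \eqref{0 surj} I would read off the relevant segment at $n=0$, namely $H^1_c(Y)\leftarrow H_0(\mathcal{R}_\varphi)\leftarrow H_0(\mathcal{R}_Y)$, and reduce the claim to showing that the left-hand map $H_0(\mathcal{R}_\varphi)\to H_0(\gpd/\gpd')\cong H^1_c(Y)$ vanishes; exactness then gives surjectivity of $H_0(\mathcal{R}_Y)\to H_0(\mathcal{R}_\varphi)$. The point is that this map is induced by the quotient-of-complexes surjection of line \eqref{ses tc}, which in horizontal degree $p$ is restriction to $F^{(p)}=\gpd^{(p)}\setminus(\gpd')^{(p)}$. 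Since $\gpd$ and $\gpd'$ share a base space, $F^{(0)}=\varnothing$, so this restriction is zero in degree $p=0$. As any class in the image of $H^0(X)\to H_0(\mathcal{R}_\varphi)$ is represented by a cycle sitting entirely in the $p=0$ column of line \eqref{tot com} (its vertical differential vanishes because it is an $H^0$-cocycle, and there is no $p=-1$ column for the horizontal differential), it maps to zero in $H_0(\gpd/\gpd')$; the surjectivity hypothesis on $H^0(X)\to H_0(\mathcal{R}_\varphi)$ then forces the whole map $H_0(\mathcal{R}_\varphi)\to H^1_c(Y)$ to vanish.

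For part \eqref{z iso} I would first pin down the two groups. Being a closed subset of the compact space $X$, $Y$ is compact, so $H^0_c(Y)=H^0(Y)$, and connectedness gives $H^0(Y)\cong\Z$ with generator $[\chi_Y]$. For $H_1(\mathcal{R}_\varphi)$ I would invoke the $n=1$ case of Proposition \ref{free gp ses}, which degenerates to $H_1(\mathcal{R}_\varphi)\cong H^0_c(X)^\Z$; with $X$ compact, $H^0_c(X)=H^0(X)$ is the group of locally constant $\Z$-valued functions, and minimality forces any $\varphi$-invariant such function to be constant (it is constant along a dense orbit, hence everywhere by continuity into discrete $\Z$), so $H_1(\mathcal{R}_\varphi)\cong\Z$. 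Finally, the map $H^0_c(Y)\leftarrow H_1(\mathcal{R}_\varphi)$ of the sequence is precisely the one computed in Lemma \ref{id quot}, which sends $[u]$ to the generator $[\chi_Y]$; its image therefore contains a generator of the target, so it is surjective, and a surjective homomorphism $\Z\to\Z$ is an isomorphism.

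The genuine content is carried by Corollary \ref{y hom} and Lemma \ref{id quot}, so the remaining labor is largely organizational. The subtlest point will be in part \eqref{z iso}: I deliberately avoid proving that $[u]$ generates $H_1(\mathcal{R}_\varphi)$ — which would require tracing $[u]$ through the hyperhomology identification behind Proposition \ref{free gp ses} — and instead exploit that a surjection $\Z\to\Z$ is automatically injective, so that Lemma \ref{id quot} supplies everything needed. Apart from this, the only new (if minor) step is the minimality argument identifying $H^0(X)^\Z$ with $\Z$.
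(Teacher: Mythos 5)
Your proposal is correct and follows essentially the same route as the paper: the long exact sequence is obtained by specializing Proposition \ref{hom les} and substituting Corollary \ref{y hom}, and part (ii) is argued exactly as in the paper via Lemma \ref{id quot}, the identification $H_1(\mathcal{R}_\varphi)\cong H^0(X)^{\Z}\cong\Z$ from minimality, and the fact that a surjective homomorphism $\Z\to\Z$ is an isomorphism. The only divergence is in part (i): the paper simply observes that, since $\mathcal{R}_Y$ and $\mathcal{R}_\varphi$ share the base space $X$, the map $H^0(X)\to H_0(\mathcal{R}_\varphi)$ factors as $H^0(X)\to H_0(\mathcal{R}_Y)\to H_0(\mathcal{R}_\varphi)$, giving surjectivity without touching the exact sequence, whereas you show instead that the quotient map to the relative complex kills the $p=0$ column because $F^{(0)}=\varnothing$ and then invoke exactness; both arguments rest on the same observation and yours is a valid alternative, just run through the exact sequence rather than around it.
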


\begin{proof}
The long exact sequence obtained from the inclusion $\mathcal{R}_Y\subseteq \mathcal{R}_{\varphi}$ as in Proposition \ref{hom les} is given by
$$
\cdots \leftarrow H_{n-1}(\mathcal{R}_Y) \leftarrow H_n(\mathcal{R}_{\varphi}/\mathcal{R}_Y)\leftarrow  H_n(\mathcal{R}_{\varphi})\leftarrow H_n(\mathcal{R}_Y)\leftarrow H_{n+1}(\mathcal{R}_{\varphi}/\mathcal{R}_Y) \leftarrow \cdots.
$$
On the other hand, Corollary \ref{y hom} gives us isomorphisms 
$$
H_n(\mathcal{R}_{\varphi}/\mathcal{R}_Y)\cong H^{1-n}_c(Y)
$$
for each $n$, so we get the long exact sequence in the statement.

For part \eqref{0 surj}, we note that as $\mathcal{R}_{\varphi}$ and $\mathcal{R}_Y$ both have base space $X$, the map $H_0(\mathcal{R}_{\varphi})\leftarrow H_0(\mathcal{R}_Y)$ from the long exact sequence (which is just induced by inclusion) fits into a commutative diagram
$$
\xymatrix{ H^0(X) \ar[d] & \ar@{=}[l] H^0(X) \ar[d] \\ 
 H_n(\mathcal{R}_{\varphi}) &  H_n(\mathcal{R}_Y) \ar[l] }.
$$
Surjectivity of the bottom horizontal arrow therefore follows from surjectivity of the left vertical arrow.

For part \eqref{z iso}, note that as $Y$ is connected and compact, $H^0(Y)$ is isomorphic to $\Z$ and generated by $[\chi_Y]$.  Lemma \ref{id quot} thus implies that the map $H^0(Y)\leftarrow  H_1(\mathcal{R}_{\varphi})$ is surjective.  On the other hand, $H_1(\mathcal{R}_{\varphi})\cong H^0(X)^\Z$ by Proposition \ref{free gp ses} (whether or not $Y$ is connected).  As $H^0(X)$ consists of locally constant functions from $X$ to $\Z$ with the $\Z$-action induced from the action on $X$, the only classes in $H^0(X)$ that are invariant for the $\Z$-action are constant by minimality.  Hence $H_1(\mathcal{R}_{\varphi})\cong \Z$.  As the map $H^0(Y)\leftarrow  H_1(\mathcal{R}_{\varphi})$ is surjective and both groups are copies of $\Z$, it is an isomorphism as claimed.
\end{proof}

In the remainder of this section, we derive some sufficient conditions for an orbit-breaking groupoid to be HK-good.  First, we record a lemma about $K$-theory which is essentially contained in \cite{Deeley:2023ab}.

\begin{lemma}\label{dps struct}
Suppose $(X, \varphi)$ is a free and minimal dynamical system, $Y \subseteq X$ is closed and meets every orbit at most once, $\mathcal{R}_{\varphi}$ is the transformation groupoid associated to $\varphi$, and $\mathcal{R}_Y$ is the orbit breaking groupoid obtained from $Y$.  Assume moreover that $K^1(X)=0$, and that the canonical map $C(X,\Z)\to K^0(X)$ is an isomorphism.

Then there are a short exact sequence 
$$
0\to \widetilde{K}^0(Y) \to K_0(C^*_r(\mathcal{R}_Y)) \to  K_0(C^*_r(\mathcal{R}_\phi)) \to 0
$$
and an isomorphism $K^1(Y)\to K_1(C^*_r(\mathcal{R}_Y))$.  
\end{lemma}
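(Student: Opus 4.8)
The plan is to run the $K$-theoretic analogue of the homology long exact sequence from Proposition \ref{lem:long-Z-action-orbit-break} and then extract the two assertions by a short diagram chase. Write $A := C^*_r(\mathcal{R}_{\varphi})$ and $A_Y := C^*_r(\mathcal{R}_Y)$. The starting point is Putnam's six-term exact sequence for an orbit-breaking inclusion (\cite{Putnam:1989hi, Putnam:1998aa, Putnam:2021aa}, and \cite{Deeley:2023ab} in the present generality), which reads
\[
K^0(Y) \xrightarrow{a} K_0(A_Y) \xrightarrow{b} K_0(A) \xrightarrow{c} K^1(Y) \xrightarrow{d} K_1(A_Y) \xrightarrow{f} K_1(A) \xrightarrow{e} K^0(Y),
\]
where $b$ and $f$ are induced by the inclusion $A_Y \hookrightarrow A$, the maps $c$ and $e$ are the natural maps to the relative groups $K^*(Y)$, and $a$, $d$ are the boundary maps. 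This is the $K$-theoretic mirror of the sequence in Proposition \ref{lem:long-Z-action-orbit-break} (with $H_c^*(Y)$ replaced by $K^*(Y)$), so the facts we need are formally parallel to those established in Lemma \ref{id quot} and Proposition \ref{lem:long-Z-action-orbit-break}.

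First I would compute the relevant groups via Pimsner--Voiculescu. Since $K^1(X) = 0$, the map $\iota_* : K^0(X) \to K_0(A)$ induced by the inclusion $C(X) \hookrightarrow A$ is surjective, and $K_1(A) \cong \ker(1 - \varphi^* : K^0(X) \to K^0(X))$. Using the hypothesis $C(X,\Z) \cong K^0(X)$ together with minimality, a $\varphi$-invariant locally constant $\Z$-valued function on $X$ must be constant, so $K_1(A) \cong \Z$, generated by the class $[u]$ of the canonical unitary (which corresponds to the constant function $1$ under Pimsner--Voiculescu).

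The crux is to identify the map $e$ on this generator: I claim $e([u]) = [1_Y]$, the class of the trivial rank-one bundle. This is the $K$-theoretic shadow of Lemma \ref{id quot}, where the corresponding homology class $[u] \in H_1(\mathcal{R}_{\varphi})$ maps to $[\chi_Y]$; I would verify it either by citing the description of the boundary map in \cite{Deeley:2023ab} or by repeating the computation of Lemma \ref{id quot} at the level of $C^*$-algebras, restricting $u$ to the difference part $\gpd \setminus \gpd'$ and tracking it through the boundary map. Granting this, $e$ is injective with image $\Z[1_Y]$, since $[1_Y]$ has infinite order in $K^0(Y)$ (the rank map sends it to $1$). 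By exactness at $K^0(Y)$ we then get $\ker(a) = \Z[1_Y]$, so $a$ factors through an injection $\widetilde{K}^0(Y) = K^0(Y)/\Z[1_Y] \hookrightarrow K_0(A_Y)$ whose image is $\ker(b)$.

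Finally I would show that $b$ is surjective, which makes both conclusions drop out. The inclusion $C(X) \hookrightarrow A$ factors as $C(X) \hookrightarrow A_Y \hookrightarrow A$ because $\mathcal{R}_{\varphi}$ and $\mathcal{R}_Y$ share the unit space $X$; hence on $K_0$ the surjection $\iota_*$ factors through $b$, forcing $b$ to be surjective. Exactness at $K_0(A)$ then gives $c = 0$; exactness at $K^1(Y)$ gives that $d$ is injective; and since $e$ is injective, exactness at $K_1(A)$ forces $f = 0$, whence exactness at $K_1(A_Y)$ shows $d$ is also surjective. Thus $d : K^1(Y) \xrightarrow{\cong} K_1(A_Y)$, and the sequence $0 \to \widetilde{K}^0(Y) \xrightarrow{a} K_0(A_Y) \xrightarrow{b} K_0(A) \to 0$ is exact, as required. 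The one genuinely non-formal input is the identification $e([u]) = [1_Y]$; everything else is Pimsner--Voiculescu together with a diagram chase.
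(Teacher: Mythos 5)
Your proposal is correct and follows essentially the same route as the paper: both start from Putnam's six-term exact sequence, identify $K_1(C^*_r(\mathcal{R}_\varphi))\cong\Z$ via Pimsner--Voiculescu together with the hypotheses $K^1(X)=0$ and $K^0(X)=C(X,\Z)$ plus minimality, use the identification of the image of the generator in $K^0(Y)$ as $[1_Y]$ (the paper cites \cite[Corollary 5.2]{Deeley:2023ab} for exactly this non-formal step, which you correctly flag as the crux), and deduce surjectivity of $K_0(C^*_r(\mathcal{R}_Y))\to K_0(C^*_r(\mathcal{R}_\varphi))$ from the factorization of $C(X)\hookrightarrow C^*_r(\mathcal{R}_\varphi)$ through $C^*_r(\mathcal{R}_Y)$. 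The only difference is cosmetic: you spell out the concluding diagram chase more explicitly than the paper does.
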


\begin{proof}
Putnam \cite[Theorem 2.5 and Example 2.4]{Putnam:1998aa} constructs a six-term exact sequence 
\begin{equation}\label{base 6tex}
\xymatrix{ K^0(Y) \ar[r] & K_0(C^*_r(\mathcal{R}_Y)) \ar[r] & K_0(C^*_r(\mathcal{R}_\phi)) \ar[d] \\
K_1(C^*_r(\mathcal{R}_\phi)) \ar[u] & K_1(C^*_r(\mathcal{R}_Y)) \ar[l] & K^1(Y) \ar[l] }.
\end{equation}
where the maps $K_i(C^*_r(\mathcal{R}_Y))\to K_i(C^*_r(\mathcal{R}_\phi))$ are induced by inclusion (this works without the assumptions on $K^0(X)$).  As $K^0(X)=C(X,\Z)$ and as the action on $X$ is minimal, we therefore have that 
$$
K^0(X)^\Z\cong \Z,
$$
generated by the constant function with value one.  Hence in particular, from this and the Pimsner-Voiculescu exact sequence (plus the fact that $K^1(X)=0$)  $K_1(C^*_r(\mathcal{R}_\phi))\cong \Z$, generated by the canonical unitary implementing the $\Z$-action.  On the other hand, \cite[Corollary 5.2]{Deeley:2023ab} implies that the map $K_1(C^*_r(\mathcal{R}_\phi))\to K^0(Y)$ appearing in line \eqref{base 6tex} takes the generator of this copy of $K_1(C^*_r(\mathcal{R}_\phi))$ to the class $[1]$ of the constant function on $Y$, and so the exact sequence in line \eqref{base 6tex} implies the existence of an exact sequence
\begin{equation}\label{second 6tex}
\xymatrix{ \widetilde{K}^0(Y) \ar[r] & K_0(C^*_r(\mathcal{R}_Y)) \ar[r] & K_0(C^*_r(\mathcal{R}_\phi)) \ar[d] \\
0 \ar[u] & K_1(C^*_r(\mathcal{R}_Y)) \ar[l] & K^1(Y) \ar[l] }.
\end{equation}
Continuing, the Pimsner-Voiculescu exact sequence for $\mathcal{R}_\phi$ again implies that the canonical inclusion $C(X)\to C^*_r(\mathcal{R}_\phi)$ induces a surjection on $K_0$.  As the inclusion $C(X)\to C^*_r(\mathcal{R}_\phi)$ factors through the inclusion $C(X)\to C^*_r(\mathcal{R}_Y)$ (this follows as $X$ is the base space of both $\mathcal{R}_\phi$ and $\mathcal{R}_Y$) we get that the map $K_0(C^*_r(\mathcal{R}_Y)) \to K_0(C^*_r(\mathcal{R}_\phi))$ is surjective, and so the exact sequence in line \eqref{second 6tex} simplifies to the short exact sequence and isomorphism in the statement.
\end{proof}

\begin{proposition}\label{orbit break main}
Suppose $(X, \varphi)$ is a free dynamical system, $Y \subseteq X$ is closed and meets every orbit at most once, $\mathcal{R}_{\varphi}$ is the transformation groupoid associated to $\varphi$, and $\mathcal{R}_Y$ is the orbit breaking groupoid obtained from $Y$.  We make the following further assumptions:
\begin{enumerate}[(i)]
\item $K^0(X)=C(X,\Z)=H^0(X)$, $K^1(X)=0$, and $H^i(X)=0$ for $i\geq 1$;
\item $Y$ is connected, and has covering dimension at most three;
\item the quotient map $K_0(C^*_r(\mathcal{R}_Y)) \to K_0(C^*_r(\mathcal{R}_\phi))$ from the short exact sequence from Lemma \ref{dps struct} splits.\footnote{We do not know if this last condition is necessary: it is possible it follows from the other assumptions.}
\end{enumerate}
Then $\mathcal{R}_Y$ is HK-good.
\end{proposition}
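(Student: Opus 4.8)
The plan is to reduce the statement to the HK-goodness of the full transformation groupoid $\mathcal{R}_\varphi=\Z\ltimes X$, to the low-dimensional comparison of $K$-theory with cohomology for $Y$, and to the long exact sequence of Proposition \ref{lem:long-Z-action-orbit-break}. First I would observe that hypothesis (i) says $K^0(X)=C(X,\Z)=H^0(X)$, $K^1(X)=0$ and $H^i(X)=0$ for $i\ge 1$; thus the cohomology of $X$ vanishes above degree three and the Chern class map $c\colon K^*(X)\to H^{**}(X)$ is an isomorphism (the identity of $C(X,\Z)$ in even degree, zero in odd degree), so Corollary \ref{int cor}, part \eqref{ch4}, applies and $\mathcal{R}_\varphi$ is HK-good. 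In particular there is a unit-preserving, pairing-compatible isomorphism $\chi\colon K_0(C^*_r(\mathcal{R}_\varphi))\xrightarrow{\cong}H_{ev}(\mathcal{R}_\varphi)=H_0(\mathcal{R}_\varphi)$, and Proposition \ref{free gp ses} together with minimality of the action (which we need in any case to invoke Lemma \ref{dps struct}) identifies $K_0(C^*_r(\mathcal{R}_\varphi))\cong H_0(\mathcal{R}_\varphi)\cong C(X,\Z)_\Z$.

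Next I would compute $H_*(\mathcal{R}_Y)$ from the long exact sequence
$$\cdots \leftarrow H_{n-1}(\mathcal{R}_Y) \leftarrow H^{1-n}_c(Y)\leftarrow H_n(\mathcal{R}_\varphi)\leftarrow H_n(\mathcal{R}_Y)\leftarrow H^{-n}_c(Y) \leftarrow \cdots$$
of Proposition \ref{lem:long-Z-action-orbit-break}, feeding in $H_n(\mathcal{R}_\varphi)=0$ for $n\ne 0,1$ (Proposition \ref{free gp ses}) and $H^j(Y)=0$ for $j>3$ (hypothesis (ii)). Part \eqref{z iso} makes $H_1(\mathcal{R}_\varphi)\to H^0(Y)$ an isomorphism $\Z\to\Z$, which forces $H_1(\mathcal{R}_Y)=0$ and the injectivity of $H_0(\mathcal{R}_Y)\to H_0(\mathcal{R}_\varphi)$, while part \eqref{0 surj} gives its surjectivity; hence $H_0(\mathcal{R}_Y)\xrightarrow{\cong}H_0(\mathcal{R}_\varphi)\cong C(X,\Z)_\Z$. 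The rest of the sequence collapses to $H_{-k}(\mathcal{R}_Y)\cong H^k(Y)$ for $k=1,2,3$ and zero otherwise, so that
$$H_{ev}(\mathcal{R}_Y)\cong C(X,\Z)_\Z\oplus H^2(Y),\qquad H_{od}(\mathcal{R}_Y)\cong H^1(Y)\oplus H^3(Y),$$
the even group being the canonical direct sum $H_0\oplus H_{-2}$. On the $C^*$-side Lemma \ref{dps struct} gives $K_1(C^*_r(\mathcal{R}_Y))\cong K^1(Y)$ and a short exact sequence $0\to\widetilde K^0(Y)\to K_0(C^*_r(\mathcal{R}_Y))\to K_0(C^*_r(\mathcal{R}_\varphi))\to 0$; the integral Chern isomorphism for the space $Y$ of covering dimension at most three (Example \ref{low ch int}) gives $\widetilde K^0(Y)\cong H^2(Y)$ and $K^1(Y)\cong H^1(Y)\oplus H^3(Y)$, matching the two sides group-by-group.

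I would then assemble the Elliott isomorphism. Set $\phi_1\colon K_1(C^*_r(\mathcal{R}_Y))\cong K^1(Y)\cong H^1(Y)\oplus H^3(Y)=H_{od}(\mathcal{R}_Y)$. For $\phi_0$ I would compare the two short exact sequences
$$0\to\widetilde K^0(Y)\to K_0(C^*_r(\mathcal{R}_Y))\xrightarrow{q}K_0(C^*_r(\mathcal{R}_\varphi))\to 0,$$
$$0\to H^2(Y)\to H_{ev}(\mathcal{R}_Y)\xrightarrow{p}H_0(\mathcal{R}_\varphi)\to 0,$$
which split by hypothesis (iii) and canonically, respectively, using $\psi\colon\widetilde K^0(Y)\xrightarrow{\cong}H^2(Y)$ on the kernels and $\chi$ on the quotients to produce an isomorphism $\phi_0$ commuting with the ladder. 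Since $C(X)$ sits unitally in both $C^*_r(\mathcal{R}_Y)$ and $C^*_r(\mathcal{R}_\varphi)$ we have $q([1]_K)=[1]_K$, and $\chi([1]_K)=[1]_H$ because $\mathcal{R}_\varphi$ is HK-good; choosing the section of $q$ so that it carries $[1]_K\in K_0(C^*_r(\mathcal{R}_\varphi))$ to $[1]_K\in K_0(C^*_r(\mathcal{R}_Y))$, and using the canonical section $H_0(\mathcal{R}_\varphi)\hookrightarrow H_{ev}(\mathcal{R}_Y)$ (which sends $[1]_H$ to $[1]_H$), makes the $H^2(Y)$-component of $\phi_0([1]_K)$ vanish, so that $\phi_0([1]_K)=[1]_H$.

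Finally I would treat the pairings. As $\mathcal{R}_Y$ is a subequivalence relation of the free relation $\mathcal{R}_\varphi$ it is principal, so $\tau\colon T(\mathcal{R}_Y)\to T(C^*_r(\mathcal{R}_Y))$ is an affine homeomorphism by Lemma \ref{meas to trace}. I would then check $T(\mathcal{R}_Y)=T(\mathcal{R}_\varphi)$: an $\mathcal{R}_\varphi$-invariant measure is certainly $\mathcal{R}_Y$-invariant, and conversely the open bisections $\{(\varphi^{\pm 1}(x),x):x\in X\setminus Y\}\subseteq\mathcal{R}_Y$ give $\mu(\varphi^{n}(Y))=\mu(Y)$ for all $n$, forcing $\mu(Y)=0$ by disjointness of the $\varphi^n(Y)$, whence $\mu$ is $\varphi$-invariant. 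With this, both pairings factor through $\mathcal{R}_\varphi$: on homology the inclusion of complexes leaves the degree-zero component untouched, so $\rho_H^{\mathcal{R}_Y}(\mu)=\rho_H^{\mathcal{R}_\varphi}(\mu)\circ(H_0(\mathcal{R}_Y)\xrightarrow{\cong}H_0(\mathcal{R}_\varphi))$, and on $K$-theory the inclusion $\iota\colon C^*_r(\mathcal{R}_Y)\hookrightarrow C^*_r(\mathcal{R}_\varphi)$ satisfies $\tau_\mu^{\mathcal{R}_\varphi}\circ\iota=\tau_\mu^{\mathcal{R}_Y}$, so $\rho_K^{\mathcal{R}_Y}(\tau_\mu)=\rho_K^{\mathcal{R}_\varphi}(\tau_\mu)\circ q$. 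Both therefore vanish on the kernel summands $H^2(Y)$ and $\widetilde K^0(Y)$ (consistent with Definition \ref{gpd ell}, where $\rho_H$ is zero on $H_{-2}$), and the commuting square of Definition \ref{hk good} reduces to the pairing compatibility of $\chi$ already supplied by the HK-goodness of $\mathcal{R}_\varphi$. The main obstacle I expect is exactly this last step: verifying that the two pairings factor through the quotient to $\mathcal{R}_\varphi$ and that the splitting from hypothesis (iii) can be arranged compatibly with the class of the unit (entirely transparent when $[1]$ generates a free summand of $C(X,\Z)_\Z$, e.g.\ when $X$ is connected); by comparison the homology computation and the Chern-isomorphism matching are routine applications of the cited results.
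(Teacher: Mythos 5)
Your proof is correct and follows essentially the same route as the paper: homology via Propositions \ref{free gp ses} and \ref{lem:long-Z-action-orbit-break}, $K$-theory via Lemma \ref{dps struct} and the assumed splitting, matching via the integral Chern isomorphism for the at-most-three-dimensional connected space $Y$, and reduction of the pairing compatibility to the HK-goodness of $\mathcal{R}_\varphi$. The only notable deviations are that you prove $T(\mathcal{R}_Y)=T(\mathcal{R}_\varphi)$ by a direct measure-theoretic argument where the paper cites \cite[Theorem 12.3.12]{Giordano:2018aa}, and that you are more explicit than the paper about arranging the splitting of $K_0(C^*_r(\mathcal{R}_Y))\to K_0(C^*_r(\mathcal{R}_\varphi))$ so that it carries $[1]_K$ to $[1]_K$ --- a point the paper's proof asserts without comment.
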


\begin{proof}
Proposition \ref{free gp ses} plus the homological assumptions on $X$ imply that 
$$
H_i(\mathcal{R}_\phi)\cong \left\{\begin{array}{ll} H^0(X)_\Z & i=0 \\ \Z & i=1 \\ 0 & \text{otherwise} \end{array}\right..
$$
Proposition \ref{lem:long-Z-action-orbit-break} therefore implies that the long exact sequence given there induces isomorphisms 
\begin{equation}\label{hom isos}
 H^0(X)_\Z\cong  H_0(\mathcal{R}_Y)\cong H_0(\mathcal{R}_\phi) \quad \text{and}\quad H_i(\mathcal{R}_Y) \cong H^{-i}(Y) \text{ for } i<0
\end{equation}
(the first two are induced by the open inclusions $X\to \mathcal{R}_Y\to \mathcal{R}_\phi$), and that the groupoid homology is zero in other dimensions.  

On the other hand, Lemma \ref{dps struct} and the assumptions implies that we have isomorphisms
\begin{equation}\label{k isos}
K_0(C^*_r(\mathcal{R}_Y))\cong K_0(C_r^*(\mathcal{R}_\phi))\oplus \widetilde{K}^0(Y) \quad \text{and}\quad K_1(C^*_r(\mathcal{R}_Y))\cong K^1(Y)
\end{equation}
(the component of the first isomorphism giving the map $K_0(C^*_r(\mathcal{R}_Y))\cong K_0(C_r^*(\mathcal{R}_\phi))$ is induced by the open inclusion).  Moreover, from the Pimsner-Voiculescu sequence for $\phi$, we have a canonical isomorphism
\begin{equation}
K^0(X)_\Z\cong K_0(C_r^*(\mathcal{R}_\phi))
\end{equation}
induced by the open inclusion $X\to \mathcal{R}_\phi$.  

Now, the assumption that $Y$ has covering dimension at most three implies that there is an integral Chern isomorphism for $Y$ in the sense of Definition \ref{ch int} (see Example \ref{low ch int 2}), and the fact that $Y$ is connected therefore implies that $\widetilde{K}^0(Y)\cong H^2(Y)$.  Combining lines \eqref{hom isos} and \eqref{k isos} implies therefore that we have isomorphisms
$$
K_0(C^*_r(\mathcal{R}_Y))\cong K_0(C_r^*(\mathcal{R}_\phi))\oplus \widetilde{K}^0(Y)\cong H_0(\mathcal{R}_\phi)\oplus H^2(Y)$$
and 
$$
K_1(C^*_r(\mathcal{R}_\phi))\cong H^1(Y)\oplus H^3(Y)
$$
with the first isomorphism respecting the direct sum decomposition, and moreover taking $[1]\in K_0(C^*_r(\mathcal{R}_Y))$ to $[1]\in H_0(\mathcal{R}_\phi)$.  This is the statement about homology and $K$-theory we need for $\mathcal{R}_Y$ to be HK-good, so it remains to check the conditions on traces.  

Now, by Lemma \ref{meas to trace}, we have that the canonical map $T(\mathcal{R}_Y)\to T(C^*_r(\mathcal{R}_Y))$ is an isomorphism, so it remains to check the pairings.  Note first that the canonical map $T(\mathcal{R}_\phi)\to T(C^*_r(\mathcal{R}_\phi))$ is also an isomorphism by Lemma \ref{meas to trace} again.  Note moreover that for any $\tau \in T(\mathcal{R}_\phi)\cong T(C^*_r(\mathcal{R}_\phi))$, the pairings with $H_0$ and $K_0$ are compatible with the isomorphism $H_0(\mathcal{R}_\phi)\cong K_0(C^*_r(\mathcal{R}_\phi))$ using that $\mathcal{R}_\phi$ is HK-good (this follows from Corollary \ref{int cor}, part \eqref{ch4}).  On the other hand, using \cite[Theorem 12.3.12]{Giordano:2018aa}, the canonical restriction map $T(C^*_r(\mathcal{R}_\phi))\to T(C^*_r(\mathcal{R}_Y))$ is a homeomorphism, and therefore this is true for invariant measures also.  It follows that the pairings of elements of $K_0(C^*_r(\mathcal{R}_Y))$ (respectively, $H_0(\mathcal{R}_Y)$) with a trace agree with the corresponding pairings of their images in $K_0(C^*_r(\mathcal{R}_\phi))$ (respectively $H_0(\mathcal{R}_\phi)$), and these agree as we have already noted.
\end{proof}

\section{Homology and point-like systems} \label{Sec:point}
In this section, we refine one of the main constructions in \cite{Deeley:2024ab}.  Here is a statement of the result we are interested in: see \cite[Corollary 6.4]{Deeley:2024ab} for more details.

\begin{theorem}[Deeley-Putnam-Strung]\label{pt like}
Let $G_0$ and $G_1$ be countable abelian groups, let $\Delta$ be a finite-dimensional simplex, and let $\rho:\Delta\to \text{Hom}(\Z\oplus G_0,\R)$ be the constant map sending all $\delta\in \Delta$ to the map $(n,g)\mapsto n$.  

Then there is a locally compact, Hausdorff, \'{e}tale, principal, amenable groupoid $\gpd$ with compact base space of finite covering dimension and such that the Elliott invariant of $C^*_r(\mathcal{G})$ agrees with $(G_0,G_1,(1,0)\footnote{One can also achieve that the class of the unit is $(k,0)$ for any $k\geq 1$ on adjusting $\rho$ correspondingly; we could also deal with that case, but avoid it for simplicity.},\Delta,\rho)$. \qed
\end{theorem}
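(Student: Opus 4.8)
The plan is to realize the prescribed data as the Elliott invariant of an orbit-breaking groupoid $\mathcal{R}_Y$ of the type studied in Sections \ref{obsec} and \ref{Sec:point}, following the strategy of Deeley--Putnam--Strung. The target invariant has three features that drive the construction: $K_0 = \Z \oplus G_0$ with a distinguished $\Z$-summand carrying the unit, $K_1 = G_1$, and a pairing $\rho$ that factors through the $\Z$-summand via $(n,g)\mapsto n$, independently of $\delta \in \Delta$. These are precisely the features produced by breaking a \emph{point-like} minimal $\Z$-system along a suitable compact space $Y$, and the role of $\Delta$ is handled by arranging the invariant measures of the ambient system.

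First I would build the ambient dynamics. Using the point-like space $Z$ of Theorem \ref{ThmAboutZ} (which has the $K$-theory and sheaf cohomology of a point) together with the factor-map realization of invariant measures in Theorem \ref{ThmAboutZ}\eqref{FactorMap}, I would produce a free minimal system $(X,\varphi)$ with $K^0(X) = C(X,\Z) = \Z$, $K^1(X)=0$, and $H^i(X)=0$ for $i\geq 1$, whose simplex of invariant measures is affinely homeomorphic to the prescribed $\Delta$; the almost one-to-one factor map transports a chosen model for $\Delta$ to the invariant-measure simplex while keeping the cohomology point-like. For such a system the Pimsner--Voiculescu sequence gives $K_0(C^*_r(\mathcal{R}_\varphi))\cong \Z$ (generated by $[1]$) and $K_1(C^*_r(\mathcal{R}_\varphi))\cong \Z$, and by Lemma \ref{meas to trace} the tracial state space equals $T(\mathcal{R}_\varphi)\cong \Delta$ with the pairing sending $[1]\mapsto 1$.

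Next I would construct the break set. The key step is a realization lemma: for arbitrary countable abelian groups $G_0,G_1$ there is a connected, finite-dimensional compact metrizable space $Y$ with $\widetilde{K}^0(Y)\cong G_0$ and $K^1(Y)\cong G_1$. For finitely generated groups one takes a finite CW-complex built from wedges and mapping cones of Moore spaces, exactly as in the worked example of Section \ref{Sec:crossed}; for general countable groups one writes each $G_i$ as a direct limit of finitely generated groups and realizes the limit by a mapping telescope, which keeps the covering dimension finite. I would then arrange, as in \cite{Deeley:2018aa,Deeley:2023ab}, for a copy of $Y$ to sit inside a point-like $X$ of the form $Z\times Y\times Q$ (with $Q$ the Hilbert cube) as a closed subset meeting each orbit at most once. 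Applying Lemma \ref{dps struct} yields a short exact sequence $0\to \widetilde{K}^0(Y)\to K_0(C^*_r(\mathcal{R}_Y))\to \Z\to 0$ and an isomorphism $K_1(C^*_r(\mathcal{R}_Y))\cong K^1(Y)\cong G_1$; arranging that this sequence splits gives $K_0(C^*_r(\mathcal{R}_Y))\cong \Z\oplus G_0$ with $[1]\mapsto (1,0)$.

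Finally I would identify the trace data and pairing. By Lemma \ref{meas to trace} (principality) the invariant-measure simplex and the tracial state space coincide for both $\mathcal{R}_\varphi$ and $\mathcal{R}_Y$, and by the restriction result used in the proof of Proposition \ref{orbit break main} the map $T(C^*_r(\mathcal{R}_\varphi))\to T(C^*_r(\mathcal{R}_Y))$ is an affine homeomorphism, so $T(C^*_r(\mathcal{R}_Y))\cong \Delta$. The pairing is then forced: a trace pairs with $K_0(C^*_r(\mathcal{R}_Y))$ only through its image in $K_0(C^*_r(\mathcal{R}_\varphi))\cong \Z$, since $Y$ meets each orbit at most once and is therefore null for every invariant measure, so any class in $\widetilde{K}^0(Y)$ has vanishing trace; this gives exactly $\rho:(n,g)\mapsto n$. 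Principality and amenability of $\mathcal{R}_Y$ are inherited from the free minimal $\Z$-system. The main obstacle is the realization-plus-embedding step: producing a finite-dimensional $Y$ realizing \emph{arbitrary} countable $G_0,G_1$, simultaneously fitting it as an orbit-breaking subset of a point-like minimal system with the correct measure simplex, and controlling the splitting of the Lemma \ref{dps struct} sequence. This is where the substantial work of \cite{Deeley:2018aa,Deeley:2023ab,Deeley:2024ab} is concentrated.
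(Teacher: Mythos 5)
Your overall strategy is the same as the paper's (Section \ref{Sec:point}): realize the invariant by orbit-breaking a point-like minimal $\Z$-system along a compact connected space $Y$ with $\widetilde{K}^0(Y)\cong G_0$ and $K^1(Y)\cong G_1$, with the simplex $\Delta$ arranged at the level of the ambient sphere system via Windsor's theorem and transported through the almost one-to-one factor map of Theorem \ref{ThmAboutZ}. The trace/pairing argument at the end is also essentially the paper's: traces on $C^*_r(\mathcal{R}_Y)$ restrict from traces on $C^*_r(\mathcal{R}_\varphi)$, so the pairing factors through $\iota_*:K_0(C^*_r(\mathcal{R}_Y))\to K_0(C^*_r(\mathcal{R}_\varphi))\cong\Z$, whose kernel is exactly the $\widetilde{K}^0(Y)$-summand.

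There is, however, one concrete error in the middle of your construction. You place $Y$ inside an ambient space ``of the form $Z\times Y\times Q$,'' which is the construction from Section \ref{Sec:crossed} (Theorem \ref{thm:fgKtheory}), not the point-like orbit-breaking construction. The space $Z\times Y\times Q$ is \emph{not} point-like: it has $K^0\cong K^0(Y)\cong\Z\oplus G_0$ rather than $C(X,\Z)=\Z$, so it contradicts the hypotheses you yourself impose in the previous paragraph ($K^0(X)=C(X,\Z)$, $K^1(X)=0$), and the ambient crossed product would then have $K_0\cong K^0(Y)_\Z\oplus\cdots$ instead of $\Z$, wrecking the computation of $K_0(C^*_r(\mathcal{R}_Y))$ via Lemma \ref{dps struct}. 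The correct move, which the paper takes, is to embed $Y$ into the point-like space $Z$ itself: choose $Y$ of covering dimension at most three, embed it in $S^{d-2}$ (Hurewicz--Wallman), and use the structure of $Z$ as an almost one-to-one extension of $S^d$ to produce an embedding $\iota:Y\to Z$ with $\varphi^n(\iota(Y))\cap\iota(Y)=\emptyset$ for $n\neq 0$ (Lemma \ref{disj emb}). Two smaller points: your ``mapping telescope'' for realizing arbitrary countable $G_0,G_1$ should be an \emph{inverse} limit of finite complexes of dimension at most three (using continuity of $K$-theory under inverse limits), since a telescope is neither compact nor dimension-controlled; and the splitting of the sequence $0\to\widetilde{K}^0(Y)\to K_0(C^*_r(\mathcal{R}_Y))\to K_0(C^*_r(\mathcal{R}_\varphi))\to 0$ need not be ``arranged'' --- it is automatic here because the quotient is $\Z$, which is free.
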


Our goal in this section is to show that the groupoid in this construction can also be chosen to be HK-good.

The starting point for the construction of $\gpd$ in \cite{Deeley:2024ab} is the dynamical system constructed in \cite{Deeley:2018aa}. The main features of this system are summarized in Theorem \ref{ThmAboutZ} above. In particular, $(Z, \zeta)$ will denote this dynamical system throughout our discussion.

\begin{lemma} \label{ThmAboutZhomology}
Suppose $(Z, \zeta)$ is as in Theorem \ref{ThmAboutZ}. Then the homology of the transformation groupoid associated to $(Z, \zeta)$, $\mathcal{R}_{\zeta}$ satisfies
$$
H_0(\mathcal{R}_{\zeta})\cong H_1(\mathcal{R}_{\zeta}) \cong \Z
$$
and is trivial otherwise.
\end{lemma}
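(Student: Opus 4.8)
The plan is to identify $\mathcal{R}_\zeta$ with the transformation groupoid $\Z\ltimes Z$ and then apply the Pimsner--Voiculescu-type short exact sequence of Proposition \ref{free gp ses}, feeding in the fact that $Z$ has the sheaf cohomology of a point. Since the lemma refers to $\mathcal{R}_\zeta$ as the transformation groupoid associated to $(Z,\zeta)$, I may work directly with $\Z\ltimes Z$, for which Proposition \ref{free gp ses} applies without any further hypotheses on the action.

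First I would record the cohomology of $Z$. Because $Z$ is compact (Theorem \ref{ThmAboutZ}), we have $H^j_c(Z)=H^j(Z)$, and by the point-like property in Theorem \ref{ThmAboutZ}(\ref{sameCohomAsPoint}), applied to sheaf cohomology, $H^0(Z)\cong\Z$ while $H^j(Z)=0$ for all $j\neq 0$. Next I would check that the induced $\Z$-action on $H^0(Z)\cong\Z$ is trivial: $H^0(Z)$ consists of the locally constant $\Z$-valued functions on the connected space $Z$, i.e.\ the constant functions, and the homeomorphism $\zeta$ fixes constants. Consequently both the coinvariants $H^0(Z)_\Z$ and the invariants $H^0(Z)^\Z$ are isomorphic to $\Z$.

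Then I would feed these computations into Proposition \ref{free gp ses}, which for each $n\in\Z$ supplies a short exact sequence $0\to H^{-n}_c(Z)_\Z \to H_n(\Z\ltimes Z)\to H^{1-n}_c(Z)^\Z\to 0$. The left-hand term is nonzero only when $-n=0$, that is $n=0$, where it contributes $\Z$; the right-hand term is nonzero only when $1-n=0$, that is $n=1$, where it contributes $\Z$. Reading off each degree: for $n=0$ the sequence collapses to $H_0(\mathcal{R}_\zeta)\cong H^0(Z)_\Z\cong\Z$; for $n=1$ it degenerates (as noted in Proposition \ref{free gp ses}) to the isomorphism $H_1(\mathcal{R}_\zeta)\cong H^0(Z)^\Z\cong\Z$; and for every other $n$ both outer terms vanish, so $H_n(\mathcal{R}_\zeta)=0$.

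I do not expect any genuine obstacle here, as the entire content is carried by Proposition \ref{free gp ses} together with Theorem \ref{ThmAboutZ}(\ref{sameCohomAsPoint}); the computation is a direct specialization. The only point meriting an explicit word is the triviality of the $\Z$-action on $H^0(Z)$, which I would justify via connectedness of $Z$ as above, since this is what lets me identify both the coinvariants and the invariants with $\Z$ rather than with some proper quotient or subgroup.
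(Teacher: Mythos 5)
Your proposal is correct and follows exactly the paper's argument: read off $H^*(Z)\cong H^*(\mathrm{pt})$ from Theorem \ref{ThmAboutZ}, note that $\zeta^*$ is the identity on $H^0(Z)\cong\Z$ by connectedness, and feed this into the short exact sequences of Proposition \ref{free gp ses}. The paper's proof is just a more compressed version of the same computation.
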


\begin{proof}
Theorem \ref{ThmAboutZ} implies that $H^0(Z) \cong \Z$ and $H^i(Z)$ is trivial when $i\neq 0$. Since $\zeta$ is a homeomorphism and $Z$ is connected, $\zeta^*$ is the identity map on $H^0(Z) \cong \Z$.  The result follows from Proposition \ref{free gp ses}.
\end{proof}

Now, let $G_0$ and $G_1$ be countable abelian groups as in Theorem \ref{pt like}. Standard results (see for example \cite[Exercise 13.2]{Rordam:2000mz}) imply that we can take a compact connected metric space $Y$ of covering dimension at most three such that
\[ K^0(Y) \cong H^{ev}(Y) \cong \mathbb{Z} \oplus G_0, \qquad K^1(Y) \cong H^{od}(Y)  \cong G_1.\]
We now consider $Y$ fixed for the rest of the construction below. Let $d$ be an odd number large enough such that there exists an embedding $Y \hookrightarrow S^{d-2}$: for example, $d=9$ would be good enough by \cite[Theorem V.2]{Hurewicz:1948aa}. 

Let $(Z, \zeta)$ be a minimal dynamical system constructed from a minimal diffeomorphism $\varphi : S^d \rightarrow S^d$ as given by Theorem~\ref{ThmAboutZ}.  Using the main result of \cite{Windsor:2001aa}, we may assume that the simplex of invariant measures on $S^d$ agrees with the given simplex $\Delta$ from Theorem \ref{pt like}.  We will use the following lemma from \cite{Deeley:2024ab}; note that the embedding of $Y$ into $S^{d-2}$ is used in the proof of this lemma, see \cite[Lemma 6.2]{Deeley:2024ab} for details.

\begin{lemma}\label{disj emb}
There exists an embedding $\iota : Y \to Z$ such that $\varphi^n(\iota(Y)) \cap \iota(Y) = \emptyset$ for every $n \in \mathbb{N} \setminus \{0\}$.  \qed
\end{lemma}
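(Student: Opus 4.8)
The plan is to build $\iota$ by first placing $Y$ in the base $S^d$ and then lifting through the almost one-to-one factor map $q:Z\to S^d$ of Theorem \ref{ThmAboutZ}. Write $C\subseteq S^d$ for the set of points over which the fibre of $q$ is not a single point; by the construction of $(Z,\zeta)$ in \cite{Deeley:2018aa} this non-injectivity locus is a single $\varphi$-orbit, and in particular $C$ is countable. Starting from the given embedding $Y\hookrightarrow S^{d-2}\subseteq S^d$, I would perturb it to an embedding $j:Y\to S^d$ with two properties: (a) $\varphi^n(j(Y))\cap j(Y)=\emptyset$ for every $n\in\Z\setminus\{0\}$; and (b) $j(Y)\cap C=\emptyset$. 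Granting such a $j$, the image $j(Y)$ is a compact subset of $S^d\setminus C$, so $q$ restricts to a continuous bijection $q^{-1}(j(Y))\to j(Y)$ of compact Hausdorff spaces, hence a homeomorphism; setting $\iota:=(q|_{q^{-1}(j(Y))})^{-1}\circ j$ then produces an embedding $Y\to Z$.

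The existence of $j$ is a general position plus Baire category argument. The space $C(Y,S^d)$ of continuous maps is a Baire space, and since $2\dim Y+1\le 7\le d$ the embeddings form a dense $G_\delta$ (Menger--N\"obeling and Hurewicz--Wallman general position for compact metric spaces of covering dimension at most three); this is the same inequality that yields the initial embedding $Y\hookrightarrow S^{d-2}$. For a fixed $n\neq 0$, consider the coincidence condition $\varphi^n(j(y))=j(y')$. Along the diagonal $y=y'$ this would force $\varphi^n$ to fix $j(y)$, which is impossible because a minimal homeomorphism of the infinite space $S^d$ is aperiodic. Off the diagonal the two evaluations depend on $j$ near two distinct points of $Y$, so independent bump-function perturbations give full transversality control; since the diagonal of $S^d\times S^d$ has codimension $d=9$ while $\dim(Y\times Y)\le 6<9$, a generic perturbation of $j$ removes all off-diagonal coincidences. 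Thus property (a) for each fixed $n$ holds on a dense $G_\delta$ subset of $C(Y,S^d)$. Property (b) likewise holds on a dense $G_\delta$: a single point has codimension $d\ge 1$, so $j(Y)$ can be pushed off each of the countably many points of $C$ by an arbitrarily small perturbation. Intersecting the countably many dense $G_\delta$ sets (one per $n\in\Z\setminus\{0\}$, one for (b), and the embedding condition) gives, by Baire, a nonempty set of admissible $j$.

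Finally I would verify the orbit-breaking condition for $\iota$. The factor map intertwines the dynamics, $q\circ\zeta=\varphi\circ q$, and by construction $q\circ\iota=j$. If $\zeta^n(\iota(y))=\iota(y')$ for some $y,y'\in Y$ and $n\neq 0$, then applying $q$ gives $\varphi^n(j(y))=j(y')$, contradicting (a). Hence $\zeta^n(\iota(Y))\cap\iota(Y)=\emptyset$ for all $n\neq 0$, which is the asserted condition (here $\varphi$ in the statement denotes the homeomorphism $\zeta$ of the extension $Z$).

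The main obstacle is the general position step itself: because $Y$ is only a compact metric space of covering dimension at most three and not a manifold, the transversality arguments must be carried out through dimension theory (Menger--N\"obeling embeddings and Hurewicz--Wallman general position) rather than smooth transversality, and one must check carefully that each coincidence and avoidance condition is a dense $G_\delta$ in $C(Y,S^d)$. This is precisely where the numerical choice $d\ge 9$ is used: it simultaneously guarantees the embedding $Y\hookrightarrow S^{d-2}$ (needing $d-2\ge 2\dim Y+1$) and the emptiness of off-diagonal coincidences (needing $d>2\dim Y$).
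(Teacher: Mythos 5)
The paper does not actually prove this lemma: it is imported verbatim from \cite[Lemma 6.2]{Deeley:2024ab} (hence the \verb|\qed| in the statement), with only the remark that the embedding $Y\hookrightarrow S^{d-2}$ is used in that proof. So there is no in-paper argument to compare against; what follows is an assessment of your route on its own terms. Your strategy --- position $Y$ in the base $S^d$ by general position, then lift through $q$ --- is coherent, and the reduction steps are correct: $q\circ\zeta=\varphi\circ q$ does transport the disjointness condition downstairs, a continuous bijection of compact Hausdorff spaces is a homeomorphism, the near-diagonal coincidences are excluded uniformly because a minimal homeomorphism of an infinite compact space is aperiodic (so $\inf_p d(\varphi^n(p),p)>0$), and you are right that the $\varphi$ in the statement should be read as $\zeta$.

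Two points need more than you have given. First, your argument hinges on the non-injectivity locus $C$ of $q$ being a single (countable, zero-dimensional) orbit. Theorem \ref{ThmAboutZ} as stated only gives that $q$ is \emph{almost} one-to-one, which merely says the injectivity set is residual; the complement could a priori be an uncountable meager $F_\sigma$, and then both the avoidance step and the conclusion that $q$ restricts to a bijection over $j(Y)$ would need reworking. This fact is true for the construction in \cite{Deeley:2018aa}, but it is an essential ingredient you must extract from that construction rather than from the stated black box --- note also that the fibres over $C$ cannot all be acyclic (else Vietoris--Begle would force $H^d(Z)\cong H^d(S^d)$), so $C$ genuinely carries all of the topology and cannot be wished away. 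Second, the Baire argument as written has a dimension-raising trap: for a general continuous $j_0$ the image $j_0(A)$ of a closed set $A\subseteq Y$ can have covering dimension larger than $\dim A$ (Peano phenomena), so the codimension count $\dim A+\dim B\le 6<d$ does not directly make each coincidence condition dense in all of $C(Y,S^d)$. The standard fix is to run the intersection inside the dense $G_\delta$ of embeddings (itself a Baire space), where images do have the right dimension and perturbations on disjoint closed sets can be made independently; you should say this. Finally, it is worth flagging that your argument would only use $d\ge 2\dim Y+1$ and never the embedding into the codimension-two subsphere $S^{d-2}$, whereas the paper explicitly says that embedding is used in the cited proof --- so even if your route can be completed, it is a genuinely different (and in this respect more economical) argument from the one the authors are invoking, and the burden of the two gaps above falls entirely on you.
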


Using Lemma \ref{disj emb}, we can consider the orbit breaking subgroupoid $\mathcal{R}_Y \subseteq \mathcal{R}_{\zeta}$.  The groupoid $\gpd=\mathcal{R}_Y$ is shown in  \cite[Corollary 6.4]{Deeley:2024ab} to have the properties in Theorem \ref{pt like}.  The following result is now immediate from Proposition \ref{orbit break main}.

\begin{theorem} \label{K(A_Y)}
With notation as above, the groupoid $\mathcal{R}_Y$ is HK-good.  \qed
\end{theorem}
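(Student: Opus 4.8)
The plan is to verify that the orbit breaking groupoid $\mathcal{R}_Y$ satisfies the three hypotheses of Proposition \ref{orbit break main}, applied with the minimal system $(Z,\zeta)$ of Theorem \ref{ThmAboutZ} in the role of $(X,\varphi)$, and with $\iota(Y)\subseteq Z$ (which meets every orbit at most once by Lemma \ref{disj emb}) in the role of $Y$. Once these hypotheses are checked, the conclusion that $\mathcal{R}_Y$ is HK-good is exactly the content of that proposition, so no further work is needed.

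For hypothesis (i), I would invoke Theorem \ref{ThmAboutZ}(\ref{sameCohomAsPoint}): since every continuous generalized cohomology theory of $Z$ agrees with that of a point, we have $H^0(Z)\cong\Z$ with $H^i(Z)=0$ for $i\geq 1$, and $K^0(Z)\cong\Z$ with $K^1(Z)=0$. Connectedness of $Z$ (Theorem \ref{ThmAboutZ}, part (1)) gives $C(Z,\Z)\cong\Z$, and the canonical maps $C(Z,\Z)\to K^0(Z)$ and $C(Z,\Z)\to H^0(Z)$ both carry the constant function $1$ to a generator, so the three groups are identified as required. Hypothesis (ii) is immediate from the construction of $Y$, which was taken to be connected and of covering dimension at most three.

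The only point needing a small argument is hypothesis (iii), the splitting of the short exact sequence from Lemma \ref{dps struct}. Here I would note that, because $K^0(Z)\cong\Z$ carries the trivial $\Z$-action ($\zeta$ fixes the class of the unit, $Z$ being connected) and $K^1(Z)=0$, the Pimsner--Voiculescu sequence for $C(Z)\rtimes_\zeta\Z$ collapses to give $K_0(C^*_r(\mathcal{R}_\zeta))\cong K^0(Z)_\Z\cong\Z$ (compare the computation inside the proof of Lemma \ref{dps struct}). Since $\Z$ is free, hence projective, the sequence
$$0\to \widetilde{K}^0(Y)\to K_0(C^*_r(\mathcal{R}_Y))\to K_0(C^*_r(\mathcal{R}_\zeta))\to 0$$
splits automatically. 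This disposes of hypothesis (iii), and the theorem then follows directly from Proposition \ref{orbit break main}.

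I do not expect any genuine obstacle: all the substantive work has been front-loaded into the general machinery (Proposition \ref{orbit break main}, Lemma \ref{dps struct}, and the long exact sequence of Proposition \ref{lem:long-Z-action-orbit-break}) and into the properties of $(Z,\zeta)$ recorded in Theorem \ref{ThmAboutZ}. The only care needed is to confirm freeness of $(Z,\zeta)$, so that $\mathcal{R}_\zeta$ is genuinely principal and the orbit-breaking construction applies; this is part of the standing setup inherited from \cite{Deeley:2018aa} and \cite{Deeley:2024ab}, and is in any case consistent with the fact that the resulting groupoid is asserted to be principal in Theorem \ref{pt like}.
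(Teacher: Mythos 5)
Your proposal is correct and follows the paper's route exactly: the paper simply declares the theorem ``immediate from Proposition \ref{orbit break main},'' and your verification of its three hypotheses (via Theorem \ref{ThmAboutZ}(\ref{sameCohomAsPoint}) for the cohomological/$K$-theoretic triviality of $Z$, the construction of $Y$ for connectedness and dimension, and the freeness of $K_0(C^*_r(\mathcal{R}_\zeta))\cong\Z$ for the splitting in hypothesis (iii)) supplies precisely the details the authors left implicit. The automatic splitting because the quotient is $\Z$ is indeed why the point-like case needs no analogue of the extra citation to Putnam that the Cantor-like case in Section \ref{Sec:Cantor} requires.
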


\begin{remark} \label{rem:HKbad}
If we take $Y$ as above, but without the assumption that $K^*(Y) \cong H^*(Y)$, then we get groupoids that are not HK-good. For an explicit an example, the reader can consider the case when $Y=\mathbb{R}P^4$.
\end{remark}

\section{Homology and Cantor-like systems} \label{Sec:Cantor}

In this section, we refine another construction from \cite{Deeley:2024ab} so that homology is taken into account.  Here is a statement of the result we are interested in: see \cite[Corollary 7.4]{Deeley:2024ab} for more details.

\begin{theorem}[Deeley-Putnam-Strung]\label{cant like}
Let $G_1$ and $T$ be countable abelian groups, and let $G_0$ be a simple dimension group\footnote{See for example \cite[Definitions 5.1.6 and 7.2.4]{Rordam:2000mz}} with a specified order unit $u\in G_0$.  Let $\Delta$ be the Choquet simplex of order preserving homomorphisms $G_0\to \R$ that take the given order unit $u$ to $1$\footnote{i.e.\ the states on the ordered scaled abelian group $(G_0,u)$ in the sense of \cite[Section 5.2*]{Rordam:2000mz}.}, and let $\rho:\Delta\to \text{Hom}(T\oplus G_0,\R)$ be the map satisfying $\rho(\delta)(t,g)=\delta(g)$.  

Then there is a locally compact, Hausdorff, \'{e}tale, principal, amenable groupoid $\gpd$ with compact base space of finite covering dimension, such that the groupoid $C^*$-algebra is classifiable and real rank zero, and such that the Elliott invariant of $C^*_r(\mathcal{G})$ agrees with $(T\oplus G_0,G_1,(0,u),\Delta,\rho)$.  \qed
\end{theorem}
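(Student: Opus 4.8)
The statement is \cite[Corollary 7.4]{Deeley:2024ab}, so strictly one simply cites it; but let me indicate how I would reconstruct the argument, since the groupoid it produces is exactly the object this section goes on to show is HK-good. The plan is to realize $\gpd$ as an orbit-breaking subgroupoid $\mathcal{R}_Y \subseteq \mathcal{R}_\zeta$ of the transformation groupoid of a minimal $\Z$-action, engineered so that Putnam's six-term exact sequence \cite{Putnam:1998aa, Putnam:2021aa} delivers the prescribed $K$-theory with its order structure, and so that the classification theorem (applied to the class of Definition \ref{class class}) then pins down the whole Elliott invariant.

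First I would realize the pair $(G_0,u)$ together with its state simplex $\Delta$: by the structure theory of Cantor minimal systems (Herman--Putnam--Skau, Giordano--Putnam--Skau), every simple dimension group with a distinguished order unit arises as the ordered $K_0$-group of a minimal homeomorphism of the Cantor set, whose trace simplex is the associated state space $\Delta$. This Cantor (AF-type) ingredient is the source both of the dimension-group order and of the real rank zero property. Second, to install the finitely generated topological data, I would take a compact connected metric space $Y$ of covering dimension at most three carrying the correct torsion in $K^0(Y)$ and the group $G_1$ in $K^1(Y)$, realized as in \cite[Exercise 13.2]{Rordam:2000mz}; the extra summand $T$ is the even-degree topological part. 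These are combined with the system $(Z,\zeta)$ of Theorem \ref{ThmAboutZ}, whose point-like cohomology makes it an inert carrier and whose factor map organizes the invariant-measure bookkeeping.

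Third, I would embed $Y$ so that it meets each orbit of the ambient minimal system at most once, break the orbit along $Y$ to form $\mathcal{R}_Y$, and compute $K_*(C^*_r(\mathcal{R}_Y))$ from Putnam's sequence, tracking throughout the order unit $(0,u)$ and the pairing $\rho(\delta)(t,g)=\delta(g)$ between $\Delta$ and $T\oplus G_0$. One then checks that $C^*_r(\mathcal{R}_Y)$ is simple, separable, nuclear (from amenability of $\gpd$), satisfies the UCT, is Jiang--Su stable and of real rank zero, so that classification identifies its Elliott invariant with $(T\oplus G_0, G_1, (0,u), \Delta, \rho)$.

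The main obstacle is the order structure. One must arrange that the orbit-breaking $K_0$ acquires \emph{exactly} the simple dimension-group order of $G_0$, with its state simplex $\Delta$ and order unit $(0,u)$, while simultaneously absorbing the torsion part $T$ and the odd part $G_1$ coming from the finite-dimensional factor $Y$ — and doing so without disturbing either the order or the real rank zero property. Disentangling the Cantor (AF) contribution from the finite-dimensional contribution inside Putnam's six-term sequence, and verifying real rank zero for the resulting crossed product in the presence of a nontrivially cohomological $Y$, is the technical heart of the Deeley--Putnam--Strung argument.
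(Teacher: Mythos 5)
Your first two ingredients are right --- a Cantor minimal system realizing $(G_0,u)$ with state simplex $\Delta$ via Herman--Putnam--Skau \cite{Herman:1992aa}, and a compact connected $Y$ of covering dimension at most three carrying $T$ and $G_1$ --- and so is the overall shape (orbit breaking along $Y$, Putnam's exact sequence, classification). But there is a genuine error in your third step: you combine these with the point-like system $(Z,\zeta)$ of Theorem \ref{ThmAboutZ}. That system is the ingredient for the \emph{point-like} construction of Theorem \ref{pt like} (Section \ref{Sec:point}), where the ambient crossed product has $K_0\cong\Z$ and the resulting orbit-breaking algebras are typically projectionless. It cannot produce the Cantor-like examples: if the ambient minimal system has the cohomology and $K$-theory of a point, its crossed product has $K_0\cong\Z$ with its unique order, and no amount of orbit breaking will recover a general simple dimension group $G_0$ with its order, its state simplex $\Delta$, and order unit $(0,u)$, nor will it give real rank zero. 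You also never say how a Cantor minimal system and $(Z,\zeta)$ would be ``combined'' into one minimal system; there is no evident way to do this (products of minimal systems need not be minimal, and the Fathi--Herman machinery used elsewhere in the paper does not apply to disconnected spaces).

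What the construction actually requires is the Floyd/Gjerde--Johansen extension of Theorem \ref{FloGjeJohSysThm} \cite{Floyd:1949aa,Gjerde:1999aa}: starting from the Cantor minimal system $(K,\varphi)$ realizing $(G_0,u,\Delta)$, one builds an almost one-to-one minimal extension $(\tilde K,\tilde\varphi)$ with factor map $\pi:\tilde K\to K$ whose fibers are either single points or cubes $[0,1]^d$, and with $\pi^*$ an isomorphism on $K$-theory and a bijection on invariant measures. One embeds $Y$ into a single cube fiber $\pi^{-1}(x)\cong[0,1]^d$ (so it automatically meets each orbit at most once) and breaks the orbit there. It is this choice of ambient system that simultaneously keeps the ambient ordered $K_0$ equal to $G_0$, provides the room to embed a three-dimensional $Y$, and yields real rank zero for the orbit-breaking algebra; the details, including the splitting needed for the $K_0$-computation, are \cite[Corollary 7.4]{Deeley:2024ab} building on \cite{Putnam:1989hi}. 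The point-like system $Z$ plays no role in this theorem.
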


The starting point for establishing this at the dynamical system level is the following result.  It comes from \cite{Deeley:2024ab}, but is very much based on work of Floyd \cite{Floyd:1949aa} and its generalization by Gjerde and Johansen \cite{Gjerde:1999aa}.

\begin{theorem} \label{FloGjeJohSysThm}
Let $(K, \varphi)$ be a minimal dynamical system with $K$ the Cantor set and let $d \geq 1$ be a natural number. Then for any $d\geq 1$ there exists a minimal system 
$(\tilde{K}, \tilde{\varphi})$ with a factor map 
$$
\pi: (\tilde{K}, \tilde{\varphi}) \rightarrow (K, \varphi)
$$
such that for each $x\in K$, $\pi^{-1}(x)$ is either $[0,1]^d$, or a single point, and both cases occur.

Moreover, the map $\pi$ induces isomorphisms $\pi^{*}:K^{*}(K) \rightarrow K^{*}(\tilde{K})$ and $\pi_*:T(\Z\ltimes \tilde{K})\to T(\Z\ltimes K)$. \qed
\end{theorem}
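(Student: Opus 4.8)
The plan is to build $(\tilde K,\tilde\varphi)$ by a controlled ``blow-up'' of a single orbit, following the scheme of Floyd \cite{Floyd:1949aa} as reorganised through Bratteli--Vershik models by Gjerde--Johansen \cite{Gjerde:1999aa} and used in \cite{Deeley:2024ab}. First I would fix a Bratteli--Vershik presentation of $(K,\varphi)$ and a point $x_0\in K$ whose orbit $E:=\{\varphi^n(x_0)\mid n\in\Z\}$ is to be thickened. Using the Kakutani--Rokhlin tower structure coming from the diagram, I would insert a copy $C_n$ of the cube $[0,1]^d$ in place of each $\varphi^n(x_0)$, arranging the diameters of the inserted cubes to shrink along the towers so that the resulting set $\tilde K$ is compact, metrizable, and of covering dimension $d$. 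The homeomorphism $\tilde\varphi$ is defined to agree with $\varphi$ off the cubes and to carry $C_n$ onto $C_{n+1}$ by a fixed affine identification; the factor map $\pi\colon\tilde K\to K$ collapses each $C_n$ to $\varphi^n(x_0)$ and is the identity elsewhere. By construction $\pi^{-1}(x)$ is $[0,1]^d$ for $x\in E$ and a single point for $x\notin E$, so both fibre types occur, and $\pi$ is one-to-one over the dense co-countable set $K\setminus E$.

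The most delicate step, and the one where I expect the real work to lie, is checking that this recipe genuinely produces a compact metric space carrying a \emph{minimal} homeomorphism: one must verify that the inserted cubes accumulate correctly onto $K\setminus E$, that $\tilde\varphi$ extends continuously across the accumulation set, and that metrizability and compactness survive. This is exactly the technical content of the Floyd/Gjerde--Johansen construction. Granting it, minimality of $\tilde\varphi$ follows from minimality of $\varphi$ together with the fact that every orbit in $\tilde K$ projects to a dense orbit in $K$ and meets a dense family of cubes.

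For the isomorphism $\pi^{*}\colon K^{*}(K)\to K^{*}(\tilde K)$ I would appeal to the Vietoris--Begle mapping theorem. Since $\tilde K$ is compact and $K$ is Hausdorff, $\pi$ is a closed surjection, and every point inverse ($[0,1]^d$ or a point) is contractible, hence acyclic for sheaf cohomology; therefore $\pi^{*}$ is an isomorphism on $H^{*}(-)$ integrally. Because $\tilde K$ is finite-dimensional, the Atiyah--Hirzebruch spectral sequence converges for both spaces, and naturality of this spectral sequence upgrades the cohomology isomorphism on the $E_2$-page to the desired isomorphism on $K^{*}$. (One could instead argue via the Chern character after first obtaining a rational isomorphism, but the Vietoris--Begle route gives the integral statement directly.)

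For the trace statement I would show that the thick fibres are invisible to invariant measures. Pushforward gives a continuous affine map $\pi_{*}\colon T(\Z\ltimes\tilde K)\to T(\Z\ltimes K)$, since $\pi\circ\tilde\varphi=\varphi\circ\pi$ forces $\pi_{*}\tilde\mu$ to be invariant whenever $\tilde\mu$ is. Given $\tilde\mu\in T(\Z\ltimes\tilde K)$, invariance together with $\tilde\varphi(C_n)=C_{n+1}$ forces all cubes to have equal $\tilde\mu$-mass, so each has mass zero and $\tilde\mu(\pi^{-1}(E))=0$; hence $\tilde\mu$ is determined by its restriction to $\tilde K\setminus\pi^{-1}(E)\cong K\setminus E$, which yields injectivity of $\pi_{*}$. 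Conversely, every $\mu\in T(\Z\ltimes K)$ is nonatomic because the system is minimal and infinite, so $\mu(E)=0$ and $\mu$ pulls back through the homeomorphism $K\setminus E\cong\tilde K\setminus\pi^{-1}(E)$ to an invariant Borel probability measure on $\tilde K$, giving surjectivity. A continuous affine bijection between Choquet simplices is an affine homeomorphism, so $\pi_{*}$ is the required isomorphism.
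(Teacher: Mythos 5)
The paper does not prove this theorem itself: it is imported from \cite{Deeley:2024ab}, which in turn relies on the constructions of Floyd \cite{Floyd:1949aa} and Gjerde--Johansen \cite{Gjerde:1999aa}. Measured against those constructions, your sketch has a genuine gap precisely at the step you flag as delicate, and the justification you offer there is not just incomplete but wrong. The construction you describe --- insert a cube $C_n$ in place of each point $\varphi^n(x_0)$ of a single orbit, let $\tilde\varphi$ agree with $\varphi$ off the cubes and carry $C_n$ onto $C_{n+1}$ by a fixed affine identification --- is a Denjoy-type blow-up, and for $d\geq 1$ it is not minimal. The orbit of a point $p$ in the interior of $C_0$ meets each fibre $C_n$ in exactly one point $p_n$, so for its closure to contain $C_0$ the singletons $p_n$, which lie in \emph{other} fibres, would have to accumulate on every point of the $d$-dimensional cube $C_0$. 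With rigid identifications and shrinking inserted cubes they accumulate at best on a boundary point of $C_0$, exactly as the inserted intervals in the Denjoy circle example accumulate only on the endpoints of any given inserted interval. Your stated reason for minimality --- every orbit projects to a dense orbit and meets a dense family of cubes --- is therefore insufficient: meeting every cube in one point says nothing about density \emph{inside} the cubes. The actual construction is structurally different: $\tilde K$ is an inverse limit of disjoint unions of cubes indexed by the towers of a Bratteli--Vershik model, with bonding maps embedding the level-$(n+1)$ cubes as sub-cubes spread throughout the level-$n$ cubes; the cube fibres arise as non-degenerate nested intersections, singleton fibres accumulate on every interior point of a cube fibre, and minimality is inherited from simplicity of the diagram. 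This cannot be ``granted''; it is where the theorem lives.

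The last two paragraphs of your proposal are essentially sound, granted the construction. The Vietoris--Begle argument for $\pi^*$ on sheaf cohomology is exactly what the paper uses in Lemma \ref{lem:Cantor-fat-Cantor-same-homology}, and upgrading to $K$-theory is legitimate because $H^*(\tilde K)\cong H^*(K)$ is concentrated in degree zero (though since $\tilde K$ is not a CW complex one should phrase the Atiyah--Hirzebruch step via continuity of $K$-theory and \v{C}ech cohomology along an inverse limit of finite complexes). The trace argument works provided the set of cube fibres is a single orbit, hence countable, so that equal mass on countably many disjoint sets forces mass zero; you also implicitly use that $\pi$ restricts to a homeomorphism over $K\setminus E$ (true, as a continuous surjection from a compact space is closed) and that invariant measures on an infinite minimal system are non-atomic. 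Those points are fine, but they do not repair the construction itself.
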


\begin{lemma}\label{cant lem} 
Suppose that $(K, \varphi)$ is a Cantor minimal system. Then $H_0(\mathcal{R}_{\varphi}) \cong H^0(K)_\Z$, $H_1(\mathcal{R}_{\varphi}) \cong \Z$, and all other homology groups are trivial.
\end{lemma}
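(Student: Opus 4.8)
The plan is to apply Proposition \ref{free gp ses} directly, so the first step is to put ourselves in a position to do so. Since $K$ is infinite and the action is minimal, it is automatically free: a point $x$ with $\varphi^n(x)=x$ for some $n\neq 0$ would have finite orbit, but minimality forces every orbit to be dense, and a finite set cannot be dense in an infinite Hausdorff space. Freeness gives that $\mathcal{R}_{\varphi}$ is isomorphic (as a topological groupoid) to the transformation groupoid $\Z\ltimes K$, exactly as in the discussion in Section \ref{obsec}, so it suffices to compute $H_*(\Z\ltimes K)$.

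Next I would record the sheaf cohomology of the base. As $K$ is the Cantor set it is compact and totally disconnected, so compactly supported sheaf cohomology is concentrated in degree zero: $H^0_c(K)=H^0(K)=C(K,\Z)$ and $H^i_c(K)=0$ for all $i\geq 1$ (this is precisely the point made in Section \ref{secPrelim} that $\mathcal{Z}$ is $c$-soft when $K$ is totally disconnected). Feeding this into the short exact sequence
$$
0\to H^{-n}_c(K)_\Z\to H_{n}(\Z\ltimes K)\to H_c^{1-n}(K)^\Z \to 0
$$
of Proposition \ref{free gp ses} handles every degree at once. For $n=0$ the right-hand term $H^1_c(K)^\Z$ vanishes, giving $H_0(\Z\ltimes K)\cong H^0(K)_\Z$. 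The case $n=1$ is the degenerate isomorphism $H_1(\Z\ltimes K)\cong H^0_c(K)^\Z=H^0(K)^\Z$ already noted in the proposition. For $n\geq 2$ the proposition gives $H_n=0$ directly, and for $n\leq -1$ both flanking terms $H^{-n}_c(K)$ and $H^{1-n}_c(K)$ vanish (their degrees are $\geq 1$ and $\geq 2$ respectively), so $H_n(\Z\ltimes K)=0$.

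It remains only to identify $H^0(K)^\Z$ with $\Z$, which is where minimality enters. Here $H^0(K)=C(K,\Z)$ carries the $\Z$-action induced by $\varphi$, and an invariant element is a $\varphi$-invariant continuous function $K\to\Z$; by minimality such a function is constant on a dense orbit and hence constant, so $H^0(K)^\Z\cong\Z$ and therefore $H_1(\mathcal{R}_{\varphi})\cong\Z$. This mirrors the argument already used in Lemma \ref{ThmAboutZhomology} and in Proposition \ref{lem:long-Z-action-orbit-break}\eqref{z iso}. The computation involves no real obstacle; the only point requiring any care is the degree bookkeeping in the exact sequence together with the vanishing of higher cohomology of the Cantor set, and the elementary minimality argument for the invariants.
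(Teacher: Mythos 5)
Your proof is correct and follows essentially the same route as the paper's: feed the vanishing of $H^i_c(K)$ for $i\geq 1$ (from total disconnectedness of $K$) into the short exact sequences of Proposition \ref{free gp ses}, and use minimality to see that the only $\varphi$-invariant continuous $\Z$-valued functions are constants, so $H^0(K)^\Z\cong\Z$. The only addition is your (correct) preliminary observation that minimality of an infinite system forces freeness, which the paper leaves implicit.
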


\begin{proof}
The isomorphism $H_0(\mathcal{R}_{\varphi}) \cong H^0(K)_\Z$ follows from the short exact sequence
$$
0\to H^0(K)_\Z\to H_0(\mathcal{R}_\phi)\to H^1(K)^\Z \to 0
$$ 
of Proposition \ref{free gp ses} and the fact that $H^1(K)=0$.  For the isomorphism $H_1(\mathcal{R}_{\varphi}) \cong \Z$ note that $H_1(\mathcal{R}_{\varphi})\cong H^0(K)^\Z$ by Proposition \ref{free gp ses}.  On the other hand, $H^0(K)$ is the space $C(K,\Z)$ of continuous functions from $K$ to $\Z$, and minimality thus forces $H^0(K)^\Z\cong \Z$ as the only invariant functions in $C(K,\Z)$ are constant.  The vanishing statements follow from the short exact sequences of Proposition \ref{free gp ses} and the fact that $H^i(K)=0$ for $i\neq 0$.
\end{proof}

\begin{lemma} \label{lem:Cantor-fat-Cantor-same-homology}
The map $\pi: (\tilde{K}, \tilde{\varphi}) \rightarrow (K, \varphi)$ induces isomorphisms $\pi^*:H^*(K)\to H^*(\tilde K)$ and $\pi^*:H_*(\mathcal{R}_{\varphi}) \rightarrow H_*(\mathcal{R}_{\tilde{\varphi}})$.  
\end{lemma}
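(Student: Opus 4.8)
The plan is to prove the cohomology statement first and then deduce the homology statement by functoriality. For the cohomology isomorphism $\pi^*:H^*(K)\to H^*(\tilde K)$, I would invoke the Vietoris--Begle mapping theorem for sheaf cohomology (see for example \cite[Theorem II.11.1]{Bredon:1997aa}). The map $\pi$ is a continuous surjection of compact metric spaces, hence closed and proper, and by Theorem \ref{FloGjeJohSysThm} each fiber $\pi^{-1}(x)$ is either a single point or a copy of the contractible cube $[0,1]^d$. In either case the fiber is connected with $H^0(\pi^{-1}(x))\cong \Z$ and $H^q(\pi^{-1}(x))=0$ for $q>0$. Phrased via the Leray spectral sequence $E_2^{p,q}=H^p(K;R^q\pi_*\mathcal{Z})\Rightarrow H^{p+q}(\tilde K;\mathcal{Z})$, properness of $\pi$ identifies the stalk of $R^q\pi_*\mathcal{Z}$ at $x$ with $H^q(\pi^{-1}(x))$, so that $R^0\pi_*\mathcal{Z}\cong\mathcal{Z}$ and $R^q\pi_*\mathcal{Z}=0$ for $q>0$; the spectral sequence then collapses and the edge map $\pi^*$ is an isomorphism. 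In particular this simultaneously computes $H^0(\tilde K)\cong C(K,\Z)$ with all higher groups vanishing.

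Next I would record that $\pi^*$ is $\Z$-equivariant. Since $\pi$ is a factor map we have $\pi\circ\tilde\varphi=\varphi\circ\pi$, and therefore $\tilde\varphi^*\circ\pi^*=\pi^*\circ\varphi^*$; that is, $\pi^*$ is a homomorphism of $\Z$-modules, where the generator acts by $\varphi^*$ on $H^*(K)$ and by $\tilde\varphi^*$ on $H^*(\tilde K)$. Being an equivariant isomorphism, $\pi^*$ then induces isomorphisms on both invariants and coinvariants, $H^*(K)^\Z\cong H^*(\tilde K)^\Z$ and $H^*(K)_\Z\cong H^*(\tilde K)_\Z$.

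For the homology statement I would pass to hyperhomology via Corollary \ref{cm hh}, writing $H_*(\mathcal{R}_\varphi)=\mathbb{H}_*(\Z;K)$ and $H_*(\mathcal{R}_{\tilde\varphi})=\mathbb{H}_*(\Z;\tilde K)$. The equivariant proper map $\pi$ induces, by pullback of compactly supported sections of a functorial (for instance Godement) resolution of $\mathcal{Z}$, a map of the defining double complexes and hence a map $\pi^*:\mathbb{H}_*(\Z;K)\to\mathbb{H}_*(\Z;\tilde K)$ of groupoid homologies. On the $E_2$-page of the hyperhomology spectral sequence (as in \cite[Proposition 5.7.6]{Weibel:1995ty}) this map is $H_p(\Z;\pi^*):H_p(\Z;H^q(K))\to H_p(\Z;H^q(\tilde K))$, which is an isomorphism because $\pi^*$ is an equivariant isomorphism on cohomology by the previous two paragraphs. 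As the hyperhomology spectral sequence converges, the comparison theorem for spectral sequences shows $\pi^*$ is an isomorphism on groupoid homology. Alternatively, one can apply the five lemma to the map of short exact sequences of Proposition \ref{free gp ses} induced by $\pi$, whose outer terms are precisely the isomorphisms $H^*(K)_\Z\cong H^*(\tilde K)_\Z$ and $H^*(K)^\Z\cong H^*(\tilde K)^\Z$ established above.

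The hard part will be the first paragraph: cleanly verifying the hypotheses of the Vietoris--Begle theorem, and in particular that properness of $\pi$ lets one identify the stalks of the higher direct image sheaves $R^q\pi_*\mathcal{Z}$ with the fiberwise groups $H^q(\pi^{-1}(x))$, so that the contractible cube fibers contribute nothing in positive degree. Once the equivariant cohomology isomorphism is in hand, the passage to groupoid homology is routine functorial bookkeeping via either the spectral sequence comparison or the five lemma.
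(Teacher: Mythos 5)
Your proof is correct and follows essentially the same route as the paper's: the Vietoris(--Begle) mapping theorem gives that $\pi^*$ is an equivariant isomorphism on sheaf cohomology, and then the five lemma applied to the natural short exact sequences of Proposition \ref{free gp ses} --- the ``alternative'' you mention at the end, which is exactly the paper's argument --- yields the isomorphism on groupoid homology. The extra Leray/hyperhomology spectral sequence detail you supply is a fine (if more elaborate) way of packaging the same two steps.
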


\begin{proof}
The map $\pi:\tilde{K}\to K$ induces an isomorphism on cohomology by the Vietoris mapping theorem (see for example \cite[Theorem II.11.7]{Bredon:1997aa}).  As it is equivariant, it therefore also induces isomorphisms $\pi^*:H^i(K)^\Z\to H^i(\tilde{K})^\Z$ and $\pi^*: H^i(K)_\Z\to H^i(\tilde{K})_\Z$ for all $i$.  The result follows from naturality of the short exact sequences in Proposition \ref{free gp ses} and the five lemma.
\end{proof}

We now consider an orbit breaking groupoid again.  Let $G_0$, $T$, and $G_1$ be as in Theorem \ref{cant like}.  Let $Y$ be a compact connected metric space of covering dimension at most three such that $H^2(Y)\cong T$ and $H^1(Y)\oplus H^3(Y)\cong G_1$.  Let also $K$ be a Cantor minimal system such that $K_0(C(K)\rtimes \Z)$ is isomorphic to $G_0$ as an ordered, scaled group (such a system exists by \cite[Corollary 8.7]{Herman:1992aa}).  Let $d$ be large enough so that $Y$ embeds into $[0,1]^d$ ($d\geq 7$ is good enough by \cite[Theorem V.2]{Hurewicz:1948aa}).   Suppose that $(K, \varphi)$, $(\tilde{K}, \tilde{\varphi})$ and $\pi:\tilde{K}\to K$ are as in Theorem \ref{lem:Cantor-fat-Cantor-same-homology} and let $x\in K$ be such that $\pi^{-1}(x)\cong [0,1]^d$.  Embed $Y$ in $\pi^{-1}(x)$, and let $\mathcal{R}_Y$ be the associated orbit breaking subgroupoid of $\mathcal{R}_{\tilde\varphi}$. 

Then the groupoid $\mathcal{R}_Y$ is shown to satisfy all the conditions of Theorem \ref{cant like} in \cite[Corollary 7.4]{Deeley:2024ab}.  Our last result in this section is that $\mathcal{R}_Y$ as above is HK-good.  It is almost immediate from Proposition \ref{orbit break main}: only thing that still needs to be checked is the splitting of the short exact sequence from Lemma \ref{dps struct}, which is contained in the proof of \cite[Theorem 7.3]{Putnam:1989hi}.

\begin{theorem}
With notation as above, the groupoid $\mathcal{R}_Y$ is $HK$-good. \qed
\end{theorem}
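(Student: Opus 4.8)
The plan is to verify the three hypotheses of Proposition \ref{orbit break main} with $X=\tilde K$ and $\varphi=\tilde\varphi$, and then invoke that proposition directly. The system $(\tilde K,\tilde\varphi)$ is minimal, and free since it factors onto the free system $(K,\varphi)$ via $\pi$; moreover $\tilde K$ is compact. By construction $Y$ is a compact connected metric space of covering dimension at most three, embedded into the fiber $\pi^{-1}(x)\cong [0,1]^d$, and it meets every orbit at most once. Thus hypothesis (ii) holds immediately, and the groupoids $\mathcal{R}_{\tilde\varphi}$ and $\mathcal{R}_Y$ are of the required form; it remains to check hypotheses (i) and (iii).

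For hypothesis (i) the idea is to transport the relevant statements for the Cantor system $(K,\varphi)$ across the factor map $\pi$. As $K$ is totally disconnected and compact, one has $K^0(K)=C(K,\Z)$, $K^1(K)=0$, $H^0(K)=C(K,\Z)$, and $H^i(K)=0$ for $i\geq 1$; the cohomological statements hold because the sheaf $\mathcal{Z}$ is $c$-soft on a totally disconnected space. Theorem \ref{FloGjeJohSysThm} provides an isomorphism $\pi^*:K^*(K)\to K^*(\tilde K)$, and Lemma \ref{lem:Cantor-fat-Cantor-same-homology} provides an isomorphism $\pi^*:H^*(K)\to H^*(\tilde K)$. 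Hence $K^1(\tilde K)=0$ and $H^i(\tilde K)=0$ for $i\geq 1$. To obtain the identifications $K^0(\tilde K)=C(\tilde K,\Z)=H^0(\tilde K)$, I would consider the commuting square whose horizontal maps are the isomorphisms $\pi^*$ on $C(-,\Z)=H^0(-)$ and on $K^0(-)$, and whose vertical maps are the canonical maps $C(-,\Z)\to K^0(-)$; since the left vertical map is an isomorphism for the Cantor set $K$, so is the right vertical map for $\tilde K$. This establishes hypothesis (i).

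The remaining hypothesis (iii) asks that the short exact sequence
$$
0\to \widetilde{K}^0(Y)\to K_0(C^*_r(\mathcal{R}_Y))\to K_0(C^*_r(\mathcal{R}_{\tilde\varphi}))\to 0
$$
of Lemma \ref{dps struct} splits, and I expect this to be the only genuinely nontrivial point. Rather than reprove it, the plan is to appeal to the argument in the proof of \cite[Theorem 7.3]{Putnam:1989hi}, where precisely this splitting is established for orbit-breaking extensions over a Cantor-like base in the course of computing the ordered $K$-theory; the Floyd-type system $(\tilde K,\tilde\varphi)$ together with the embedding $Y\subseteq \pi^{-1}(x)$ fits into that framework. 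With all three hypotheses verified, Proposition \ref{orbit break main} yields directly that $\mathcal{R}_Y$ is HK-good.
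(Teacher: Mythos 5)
Your proposal is correct and follows essentially the same route as the paper: the paper also reduces the theorem to Proposition \ref{orbit break main}, noting that the only hypothesis not immediate from the construction is the splitting in Lemma \ref{dps struct}, which it likewise extracts from the proof of \cite[Theorem 7.3]{Putnam:1989hi}. Your verification of hypothesis (i) via Theorem \ref{FloGjeJohSysThm}, Lemma \ref{lem:Cantor-fat-Cantor-same-homology}, and the two-out-of-three argument on the square of canonical maps $C(-,\Z)\to K^0(-)$ is a correct elaboration of what the paper leaves implicit.
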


\begin{example} \label{Ex:ManyHKgood}
In this example, we consider a special case of the above construction. Let $\theta\in \R$ be irrational, and define $G_0 \cong \Z + \theta \Z \subseteq \R$ (a simple dimension group), $G_1\cong \Z^2$, and $T$ trivial. Then the construction in this section gives a groupoid $\gpd$ that is HK-good and has associated $C^*$-algebra the irrational rotation $C^*$-algebra. However, the groupoid is a different groupoid than the standard transformation groupoid that gives the irrational rotation $C^*$-algebra. In particular, this example shows that a single $C^*$-algebra can have more than one groupoid that realizes it and is HK-good.  

It is maybe also interesting to note that the homologies of the two groupoid models are not even the same!  Indeed, the model above has 
$$
H_i(\gpd)\cong \left\{\begin{array}{ll} \Z\oplus \Z & i\in \{0,-1\} \\ 0 & \text{otherwise} \end{array}\right.,
$$
while the standard model $\Z\ltimes S^1$ has 
$$
H_i(\Z\ltimes S^1)\cong \left\{\begin{array}{ll} \Z\oplus \Z & i=0 \\ \Z& i\in \{1,-1\} \\ 0 & \text{otherwise} \end{array}\right..
$$
Nonetheless, the homologies agree once one reduces the $\Z$-grading on groupoid homology to a $\Z/2$-grading in the standard way (as they must if both are HK-good).

There is another HK-good groupoid that gives the irrational rotation $C^*$-algebra, and is moreover ample. The relevant groupoid was constructed by Putnam as part of the class of examples in \cite[Theorem 1.1]{Putnam:2018aa}; the proof that it is HK-good is due to Reardon \cite{Reardon:PhDthesis}.  The homology of this groupoid is again different: we get
$$
H_i(\gpd)\cong \left\{\begin{array}{ll} \Z\oplus \Z & i\in \{0,1\} \\ 0 & \text{otherwise} \end{array}\right..
$$
Thus we have three different HK-good groupoid models of the irrational rotation algebra, all with different homology.
\end{example}

\begin{remark} \label{Rem:NotHKgoodCantor}
As was discussed in Remark \ref{rem:HKbad} in the case of the point-like space situation, if we take $Y$ as above but without the assumption that $K^*(Y) \cong H^*(Y)$, then we get groupoids that are not HK-good. Again, for an explicit an example, the reader can consider the case when $Y=\mathbb{R}P^4$.
\end{remark}

\appendix

\section{Groupoid homology and infinite-dimensional base spaces} 

When they defined groupoid homology, Crainic and Moerdijk \cite{Crainic:2000aa} made a finite-dimensionality assumption on the base space of the relevant groupoid.  This assumption is needed so that the machinery of derived functors (see for example \cite[Chapter 10]{Weibel:1995ty} or \cite[Chapter 1]{Kashiwara:1990aa}) works in the expected way: the issue is that the machinery works best for bounded below complexes.  In this appendix we show how to make sense of the definition for groupoids with possibly infinite-dimensional base space (for simplicity, however, and unlike Crainic and Moerdijk, we keep our standing assumption that all groupoids are Hausdorff).

The following lemma is well-known.  See for example \cite[Section I.3]{Bredon:1997aa} for the notion of pullback, or inverse image, sheaf $\tau^*\mathcal{S}$ used in the statement.

\begin{lemma}\label{c soft no dep}
Let $X$ be a locally compact Hausdorff space, and let $\tau:Y\to X$ be an \'{e}tale\footnote{An \emph{\'{e}tale} function is a local homeomorphism.} map.  Let $\mathcal{S}^\bullet$ and $\mathcal{T}^\bullet$ be bounded below cochain complexes of $c$-soft sheaves on $X$, and let 
$$
f:\mathcal{S}^\bullet\to \mathcal{T}^\bullet
$$ 
be a quasi-isomorphism in the category of sheaves\footnote{We mean that $f$ induces isomorphisms on the homology of the complexes as taken in the category of sheaves, not that it induces an isomorphism on sheaf cohomology.}.  Then the map
$$
f:\Gamma_c(Y;\tau^*\mathcal{S}^\bullet)\to \Gamma_c(Y;\tau^*\mathcal{T}^\bullet)
$$
functorially induced by $f$ on complexes of compactly supported sections is also a quasi-isomorphism.
\end{lemma}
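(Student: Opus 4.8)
The plan is to reduce the statement to the standard fact that compactly supported sections are exact on bounded below exact complexes of $c$-soft sheaves, reaching that situation by passing to the mapping cone of the pulled-back map. First I would record two properties of the pullback functor $\tau^*$. Since $\tau$ is étale, $\tau^*$ is computed stalkwise---the stalk of $\tau^*\mathcal{S}$ at $y$ is the stalk of $\mathcal{S}$ at $\tau(y)$---so $\tau^*$ is exact and in particular carries quasi-isomorphisms of sheaf complexes to quasi-isomorphisms. Moreover $\tau^*$ preserves $c$-softness: each $y\in Y$ has an open neighbourhood $W$ on which $\tau$ restricts to a homeomorphism onto an open subset $\tau(W)\subseteq X$, so $(\tau^*\mathcal{S})|_W$ is carried by this homeomorphism to $\mathcal{S}|_{\tau(W)}$, which is $c$-soft because the restriction of a $c$-soft sheaf to an open set is $c$-soft; as $c$-softness is a local property (see \cite[Section II.9]{Bredon:1997aa}), $\tau^*\mathcal{S}$ is $c$-soft. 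Applying these to the given $f$, the map $g:=\tau^*f$ is a quasi-isomorphism of bounded below complexes of $c$-soft sheaves on $Y$.

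Next I would form the mapping cone $\mathcal{C}^\bullet:=\operatorname{Cone}(g)$. Its degree $n$ term is the finite direct sum $(\tau^*\mathcal{S}^{n+1})\oplus(\tau^*\mathcal{T}^n)$, hence $c$-soft, and $\mathcal{C}^\bullet$ is bounded below; because $g$ is a quasi-isomorphism, $\mathcal{C}^\bullet$ is exact as a complex of sheaves. Since $\Gamma_c(Y;-)$ is additive and commutes with the shift, $\Gamma_c(Y;\mathcal{C}^\bullet)=\operatorname{Cone}(\Gamma_c(Y;g))$, so it suffices to show $\Gamma_c(Y;\mathcal{C}^\bullet)$ is exact: the lemma then follows since a map of complexes is a quasi-isomorphism exactly when its cone is exact.

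The main step is therefore to show that $\Gamma_c(Y;-)$ sends the bounded below exact complex of $c$-soft sheaves $\mathcal{C}^\bullet$ to an exact complex. Writing $\mathcal{K}^i:=\ker(d^i)=\operatorname{im}(d^{i-1})\subseteq\mathcal{C}^i$ (the equality being exactness), I would split $\mathcal{C}^\bullet$ into short exact sequences $0\to\mathcal{K}^i\to\mathcal{C}^i\to\mathcal{K}^{i+1}\to0$ and argue by induction from the bottom that each $\mathcal{K}^i$ is $c$-soft: the lowest such kernel vanishes, and in a short exact sequence whose sub and total objects are $c$-soft the quotient is $c$-soft (see \cite[Section II.9]{Bredon:1997aa}). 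Once every $\mathcal{K}^i$ is $c$-soft, applying $\Gamma_c(Y;-)$ to these short exact sequences gives short exact sequences $0\to\Gamma_c(Y;\mathcal{K}^i)\to\Gamma_c(Y;\mathcal{C}^i)\to\Gamma_c(Y;\mathcal{K}^{i+1})\to0$, the right-exactness coming from the vanishing of $H^1_c(Y;\mathcal{K}^i)$ for the $c$-soft sheaf $\mathcal{K}^i$; splicing them identifies $\operatorname{im}(\Gamma_c(Y;d^i))$ with $\ker(\Gamma_c(Y;d^{i+1}))$, which is precisely exactness of $\Gamma_c(Y;\mathcal{C}^\bullet)$.

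The step I expect to be the main obstacle is the claim that $\tau^*$ preserves $c$-softness for a general étale map rather than merely an injective one; this is the only place where the local homeomorphism hypothesis is genuinely used, through the locality of $c$-softness, and it is what distinguishes the present argument from the injective case treated in the proof of Proposition \ref{excision}. The remaining ingredients are routine homological algebra, organised via the mapping cone so as to avoid any appeal to derived categories, in keeping with the conventions of the paper.
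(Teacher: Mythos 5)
Your proposal is correct and follows essentially the same route as the paper: reduce to exactness of $\Gamma_c$ on the mapping cone, a bounded-below exact complex of $c$-soft sheaves, using that $\tau^*$ is exact and preserves $c$-softness because $\tau$ is \'{e}tale. The only cosmetic differences are that you pull back before forming the cone (the paper treats $\tau=\mathrm{id}$ first and then pulls back) and that you re-derive the acyclicity of $\Gamma_c$ on such complexes by the kernel induction, where the paper simply cites \cite[Theorem II.9.11]{Bredon:1997aa}.
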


\begin{proof}
We first prove the result in the case that $X=Y$ and $\tau:X\to X$ is the identity map.  Let $\mathcal{C}(f)^\bullet$ be the usual mapping cone cochain complex (see for example \cite[1.5.1]{Weibel:1995ty}), so for each $i$, 
\begin{equation}\label{direct sum}
\mathcal{C}(f)^i=\mathcal{S}^{i+1}\oplus \mathcal{T}^i,
\end{equation}
and there is a short exact sequence
\begin{equation}\label{sheaf ses}
0\to \mathcal{T}^\bullet \to \mathcal{C}(f)^\bullet\to  \mathcal{S}^{\bullet+1} \to 0 
\end{equation}
of cochain complexes of sheaves.  Taking sections and applying \cite[Theorem II.9.9]{Bredon:1997aa}, this induces a short exact sequence
\begin{equation}\label{sect ses}
0\to \Gamma_c(X;\mathcal{T}^\bullet) \to \Gamma_c(X;\mathcal{C}(f)^\bullet)\to  \Gamma_c(X;\mathcal{S}^{\bullet+1}) \to 0 
\end{equation}
of cochain complexes of sections.  On the other hand, as $f$ is a quasi-isomorphism, the long exact sequence in cohomology (we mean taken in the category of sheaves, not sheaf cohomology) associated to the short exact sequence in line \eqref{sheaf ses} implies that the complex $\mathcal{C}(f)^\bullet$ is exact.  Noting that each $\mathcal{C}(f)^i$ is $c$-soft (by the formula in line \eqref{direct sum}), \cite[Theorem II.9.11]{Bredon:1997aa} implies that the complex $\Gamma_c(X;\mathcal{C}(f)^\bullet)$ is also exact.  As $\Gamma_c(X;\mathcal{C}(f)^\bullet)$ is the same as the mapping cone of the map induced by $f$ on sections, this last mapping cone complex is also exact.  Hence the boundary map in the long exact sequence on cohomology associated to the short exact sequence in line \eqref{sect ses} is a quasi-isomorphism.  This is exactly the map induced by $f$ on sections, however, so we are done with the case that $\tau:X\to X$ is the identity.

In general, note that the functor $\tau^*$ from sheaves on $X$ to sheaves on $Y$ preserves exact sequences of sheaves (see for example \cite[pages 12-13]{Bredon:1997aa}), and takes mapping cones to mapping cones.   A similar mapping cone argument to the first part therefore shows that the map induced by $f$ from $\tau^*\mathcal{S}^\bullet$ to $\tau^*\mathcal{T}^\bullet$ is a quasi-isomorphism.  Moreover, as $\tau$ is \'{e}tale direct checks show that $\tau^*$ takes $c$-soft sheaves to $c$-soft sheaves. We are thus in the situation of the first part again.
\end{proof}

For the next lemma, we recall that for an \'{e}tale groupoid $\gpd$ and a $\gpd$-sheaf $\mathcal{S}$ on $\gpd^{(0)}$, $\mathcal{S}_p$ denotes the sheaf $\tau^*_p\mathcal{S}$ on $\gpd^{(p)}$, where
\begin{equation}\label{taup}
\tau_p:\gpd^{(p)}\to \gpd^{(0)}, \quad (g_1,...,g_n)\mapsto r(g_1)
\end{equation}
is the \'{e}tale map from \cite[3.1]{Crainic:2000aa} (see also line \eqref{taun map} above).

\begin{lemma}\label{c soft no dep 2}
Let $\gpd$ be an \'{e}tale groupoid, and let $f:(\mathcal{S}^q)_{q\geq 0}\to (\mathcal{T}^q)_{q\geq 0}$ be a quasi-isomorphism of bounded below cochain complexes of c-soft $\gpd$-sheaves on $\gpd^{(0)}$.  Then the map 
$$
f:\Gamma_c(\gpd^{(p)};\mathcal{S}^{-q}_p)_{p,q\geq 0}\to \Gamma_c(\gpd^{(p)};\mathcal{T}^{-q}_p)_{p,q\geq 0}
$$
induced by $f$ on the associated fourth quadrant double complexes induces an isomorphism on the homology of the associated direct sum total complex\footnote{Compare for example \cite[1.2.6]{Weibel:1995ty} for the direct sum total complex associated to a double complex.}.
\end{lemma}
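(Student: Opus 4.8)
The plan is to reduce the statement to the Acyclic Assembly Lemma by passing to mapping cones, so that no convergence argument for an unbounded spectral sequence is needed. Write $M^{p,q}:=\Gamma_c(\gpd^{(p)};\mathcal{S}^{-q}_p)$ and $N^{p,q}:=\Gamma_c(\gpd^{(p)};\mathcal{T}^{-q}_p)$ for the two fourth-quadrant double complexes in the statement, and continue to write $f$ for the induced map $M\to N$. The first step is to fix a column, i.e.\ a value of $p\geq 0$, and observe that the vertical map $f\colon M^{p,\bullet}\to N^{p,\bullet}$ is exactly the map of compactly supported section complexes $\Gamma_c(\gpd^{(p)};\tau_p^*\mathcal{S}^\bullet)\to\Gamma_c(\gpd^{(p)};\tau_p^*\mathcal{T}^\bullet)$ obtained by pulling the resolutions back along the \'{e}tale map $\tau_p\colon\gpd^{(p)}\to\gpd^{(0)}$ of line \eqref{taup}. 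Since $\mathcal{S}^\bullet$ and $\mathcal{T}^\bullet$ are bounded below complexes of $c$-soft $\gpd$-sheaves and $f$ is a quasi-isomorphism of sheaves, Lemma \ref{c soft no dep}, applied with $Y=\gpd^{(p)}$, $X=\gpd^{(0)}$ and $\tau=\tau_p$, shows precisely that this column map is a quasi-isomorphism.

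Next I would form the mapping cone double complex $\mathcal{C}(f)^{\bullet,\bullet}$, taken in the vertical (resolution) direction, so that for each fixed $p$ the column $\mathcal{C}(f)^{p,\bullet}$ is the mapping cone of the column map $f\colon M^{p,\bullet}\to N^{p,\bullet}$. By the previous step each such column is the mapping cone of a quasi-isomorphism, hence acyclic (see \cite[1.5.1]{Weibel:1995ty}). Thus $\mathcal{C}(f)$ is a double complex with exact columns occupying a single (fourth) quadrant: in homological coordinates it is a lower half-plane complex, bounded above in the vertical direction by the terms in resolution degree $q=0$. This is exactly where the bounded-belowness of the resolutions is used, and it is what makes the argument valid even when the resolutions are of infinite length (the case motivating this appendix). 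The Acyclic Assembly Lemma \cite[Lemma 2.7.3]{Weibel:1995ty} then gives that the direct sum total complex $\mathrm{Tot}^\oplus(\mathcal{C}(f))$ is acyclic.

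Finally I would use that totalization commutes with the (degreewise) mapping cone construction: $\mathrm{Tot}^\oplus(\mathcal{C}(f))$ is canonically the mapping cone of the map $\mathrm{Tot}^\oplus(M)\to\mathrm{Tot}^\oplus(N)$ induced by $f$ on direct sum total complexes. Acyclicity of this cone is equivalent to that map being a quasi-isomorphism, which is exactly the assertion of the lemma.

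The main obstacle is the bookkeeping around the direct-sum total complex of a genuinely infinite double complex: one must be sure to invoke the form of the Acyclic Assembly Lemma that applies to $\mathrm{Tot}^\oplus$ rather than $\mathrm{Tot}^\prod$. The two structural facts making this work are that each column is bounded above (at resolution degree zero), so the complex lies in a lower half-plane, and that an element of the direct sum total complex has finite support across columns; together these drive the staircase argument underlying \cite[Lemma 2.7.3]{Weibel:1995ty}. An alternative would be to run the spectral sequence of the column filtration and apply a comparison theorem on the $E_1$-page, using the column-wise quasi-isomorphism of the first step, but this requires separately verifying convergence of the spectral sequence for the unbounded direct-sum totalization, so the mapping-cone route seems cleaner and more self-contained.
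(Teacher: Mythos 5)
Your overall strategy is sound and genuinely different from the paper's. Both arguments have the same essential input: the column at nerve degree $p$ is $\Gamma_c(\gpd^{(p)};\tau_p^*\mathcal{S}^\bullet)\to\Gamma_c(\gpd^{(p)};\tau_p^*\mathcal{T}^\bullet)$, and Lemma \ref{c soft no dep} applied to the \'{e}tale map $\tau_p$ shows it is a quasi-isomorphism. From there the paper runs the column-filtration spectral sequence of the fourth quadrant double complex: this filtration of $\mathrm{Tot}^{\oplus}$ is exhaustive and bounded below (because $p\geq 0$), so the spectral sequence converges, and the column-wise quasi-isomorphism gives an isomorphism on $E^1$, whence the comparison theorem finishes the proof. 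You instead assemble the columnwise cones into a double complex and invoke the Acyclic Assembly Lemma. This is a perfectly good alternative (arguably more self-contained, as you say), and the identification of $\mathrm{Tot}^{\oplus}$ of the vertical cone with the cone of the map on $\mathrm{Tot}^{\oplus}$ is unproblematic.

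There is, however, one misattribution you should fix: you claim that what makes the Acyclic Assembly Lemma applicable to $\mathrm{Tot}^{\oplus}$ is that the columns are bounded above at resolution degree zero, i.e.\ that the complex is a lower half-plane complex. That is the wrong half-plane. In \cite[Lemma 2.7.3]{Weibel:1995ty} the case giving acyclicity of $\mathrm{Tot}^{\oplus}$ from exact \emph{columns} is the one for \emph{right} half-plane complexes; a lower half-plane complex with exact columns is (after reflecting) the ``upper half-plane with exact columns'' case, which only yields acyclicity of $\mathrm{Tot}^{\Pi}$. What actually terminates the staircase for $\mathrm{Tot}^{\oplus}$ here is that the nerve degree satisfies $p\geq 0$, i.e.\ the horizontal boundedness coming from the bar complex, not the vertical boundedness of the resolutions. (The bounded-belowness of $\mathcal{S}^\bullet$ and $\mathcal{T}^\bullet$ is still used, but only inside Lemma \ref{c soft no dep} to get the columnwise quasi-isomorphism.) Since your double complex is genuinely fourth quadrant, it is in particular a right half-plane complex, so the lemma does apply and your conclusion stands; but as written the justification points at a hypothesis that would not by itself support the claim.
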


\begin{proof}
Let $E_{pq}^r$ be the double complex spectral sequence associated to the filtration of the double complex by columns (see for example \cite[Sectiuon 5.6]{Weibel:1995ty}).  As we are working with the direct sum total complex and a fourth quadrant double complex, this converges (see for example \cite[page 142]{Weibel:1995ty}).  Hence it suffices to show that the map induced by $f$ on the $r^\text{th}$ page induces an isomorphism for some $r$.  The first page of the spectral sequence has entries $E_{pq}^1=H^{-q}(\Gamma_c(\mathcal{G}^{(p)};\mathcal{S}^\bullet_p))$, i.e.\ in the $p^\text{th}$ column we have the homology of the complex $\Gamma_c(\gpd^{(p)};\mathcal{S}^{-\bullet}_p)$, and the map between spectral sequences induced by $f$ is the map functorially induced by $f$ on these homology groups.  Directly applying Lemma \ref{c soft no dep}, however, the map $f$ induces an isomorphism on these groups, so we are done.
\end{proof}

\begin{lemma}\label{inj ginj}
Let $\gpd$ be an \'{e}tale groupoid, and let $0\to \mathcal{A}^0\to \cdots \to \mathcal{A}^d$ be an exact sequence of $\gpd$-sheaves\footnote{We will mainly be interested in the case of a single sheaf $\mathcal{A}=\mathcal{A}^0$, but will need the more general statement once.} on the base space $\gpd^{(0)}$.  Then there exists an exact sequence 
$$
0\to \mathcal{A}^0\to \cdots \mathcal{A}^d\to  \mathcal{I}^0\to \mathcal{I}^1\to \mathcal{I}^2\to \cdots
$$
of $\gpd$-sheaves such that each $\mathcal{I}^i$ is simultaneously injective in the category of sheaves (whence $c$-soft) and in the category of $\gpd$-sheaves.
\end{lemma}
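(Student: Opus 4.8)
The plan is to produce enough injectives in the category $\mathrm{Sh}_\gpd(\gpd^{(0)})$ of $\gpd$-sheaves in such a way that the injective objects obtained are \emph{simultaneously} injective as ordinary sheaves; the lemma then follows by a routine dimension-shift. The engine is an adjunction. Let $U:\mathrm{Sh}_\gpd(\gpd^{(0)})\to \mathrm{Sh}(\gpd^{(0)})$ be the functor forgetting the equivariant structure, and let $s,r:\gpd\to\gpd^{(0)}$ be the (\'etale) source and range maps. I would define the coinduction functor $R:=r_*s^*$, equipped with its canonical $\gpd$-action, and check the adjunction $U\dashv R$. At the level of stalks one has $(R\mathcal{F})_x\cong \prod_{g\in r^{-1}(x)}\mathcal{F}_{s(g)}$, since $r$ is a local homeomorphism; this is the groupoid analogue of a coinduced module, and the adjunction is the sheaf-theoretic form of Frobenius reciprocity (compare the equivariant-sheaf formalism of \cite[Section 5]{Grothendieck:1957aa}).

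With the adjunction in hand, there are two things to extract. First, since $U$ is visibly exact (exactness of a sequence of $\gpd$-sheaves is tested on the underlying sheaves), its right adjoint $R$ carries injective sheaves to injective $\gpd$-sheaves. Second --- and this is the point that makes the lemma work --- $R\mathcal{F}$ is \emph{also} injective as an ordinary sheaf whenever $\mathcal{F}$ is: because $s$ is \'etale, the extension-by-zero functor $s_!$ exists and is exact (on stalks $(s_!\mathcal{G})_x=\bigoplus_{g\in s^{-1}(x)}\mathcal{G}_g$), so its right adjoint $s^*$ preserves injectives; and $r_*$ preserves injectives as the right adjoint of the exact functor $r^*$. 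Hence $R\mathcal{F}=r_*s^*\mathcal{F}$ is injective in $\mathrm{Sh}(\gpd^{(0)})$, whence flasque, whence $c$-soft (see for example \cite[Section II.9]{Bredon:1997aa}).

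To produce embeddings I would use that $\mathrm{Sh}(\gpd^{(0)})$ has enough injectives (standard for sheaves of abelian groups on any space). Given a $\gpd$-sheaf $\mathcal{S}$, embed $U\mathcal{S}\hookrightarrow\mathcal{F}$ into an injective sheaf; applying $R$ (which is left exact, being a right adjoint) and precomposing with the unit $\eta_\mathcal{S}:\mathcal{S}\to RU\mathcal{S}$ gives a map $\mathcal{S}\to R\mathcal{F}$. The key check is that $\eta_\mathcal{S}$ is a monomorphism: on the stalk at $x$ it is the map $\mathcal{S}_x\to\prod_{g\in r^{-1}(x)}\mathcal{S}_{s(g)}$ whose components are the (invertible) action maps, and the component at the unit $1_x\in r^{-1}(x)$ is the identity $\mathcal{S}_x\to\mathcal{S}_x$; hence $\eta_\mathcal{S}$ is injective on stalks, so a monomorphism. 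Therefore every $\gpd$-sheaf embeds into some $R\mathcal{F}$ with $\mathcal{F}$ injective, and every such $R\mathcal{F}$ is injective in both categories and $c$-soft.

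Finally I would assemble the resolution by the usual dimension-shift. Set $\mathcal{B}^0:=\mathrm{coker}(\mathcal{A}^{d-1}\to\mathcal{A}^d)$, choose an embedding $\mathcal{B}^0\hookrightarrow\mathcal{I}^0$ of the form above, and let $\mathcal{A}^d\to\mathcal{I}^0$ be the composite $\mathcal{A}^d\twoheadrightarrow\mathcal{B}^0\hookrightarrow\mathcal{I}^0$; exactness of the original complex makes $0\to\mathcal{A}^0\to\cdots\to\mathcal{A}^d\to\mathcal{I}^0$ exact at $\mathcal{A}^d$. Iterating with $\mathcal{I}^{-1}:=\mathcal{A}^d$ and $\mathcal{B}^{i+1}:=\mathrm{coker}(\mathcal{I}^{i-1}\to\mathcal{I}^i)$ produces the required exact sequence. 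The one genuine subtlety --- the main obstacle --- is the simultaneous injectivity in the two categories; everything hinges on the observation that \'etaleness of $s$ gives the exact left adjoint $s_!$, so that $s^*$, and hence $R=r_*s^*$, preserves injective objects of $\mathrm{Sh}(\gpd^{(0)})$. The remaining verifications (that $R\mathcal{F}$ genuinely is a $\gpd$-sheaf, the precise form of $U\dashv R$, and the dimension-shift bookkeeping) are routine.
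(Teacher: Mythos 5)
Your proposal is correct in substance but takes a genuinely different route from the paper. The paper's proof is a bare-hands construction: for each orbit it fixes a basepoint $x$, embeds the stalk $\mathcal{A}_x$ into an injective module $M_x$ over the integral group ring of the isotropy group $\gpd_x^x$, transports $M_x$ along the orbit via chosen arrows, and takes the Godement-type sheaf $U\mapsto\prod_{x\in U}M_x$; injectivity as a $\gpd$-sheaf is checked orbit by orbit using injectivity of $M_x$ over $\Z\gpd_x^x$, and injectivity as a sheaf follows from divisibility of each $M_x$. You instead produce the doubly-injective objects functorially as $R\mathcal{F}=r_*s^*\mathcal{F}$ with $\mathcal{F}$ injective, using the adjunction $U\dashv R$ together with the facts that $s^*$ preserves injectives (because $s_!$ exists and is exact, $s$ being \'{e}tale) and $r_*$ preserves injectives (right adjoint of the exact $r^*$). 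Your route is more conceptual and makes both injectivity statements fall out of adjoint-functor generalities, at the cost of having to set up the coinduction adjunction honestly --- in particular the base-change isomorphism needed to equip $r_*s^*\mathcal{F}$ with a $\gpd$-action, which does hold here precisely because $s$ and $r$ are local homeomorphisms. Note also that your construction is coinduction from the trivial subgroupoid, whereas the paper's is in effect coinduction from the isotropy groups; both are serviceable sources of injectives.

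One assertion in your writeup is false as stated, though not fatally so: the stalk of $r_*s^*\mathcal{F}$ at $x$ is \emph{not} in general $\prod_{g\in r^{-1}(x)}\mathcal{F}_{s(g)}$. Stalks do not commute with pushforward along a non-proper map, nor with infinite products of sheaves: a family of germs indexed by the discrete fibre $r^{-1}(x)$ need not be realised by a single section over any set of the form $r^{-1}(U)$ (this already fails for $\gpd=G\ltimes X$ with $G$ infinite). Fortunately the only load-bearing use of this formula is the claim that the unit $\eta_{\mathcal{S}}:\mathcal{S}\to RU\mathcal{S}$ is a monomorphism, and that survives without it: $\gpd^{(0)}$ is clopen in $\gpd$ and $r$, $s$ both restrict to the identity there, so restriction of sections from $r^{-1}(U)$ to $U$ defines a sheaf map $RU\mathcal{S}\to\mathcal{S}$ splitting $\eta_{\mathcal{S}}$, whence $\eta_{\mathcal{S}}$ is split mono. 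With that repair, and granting the verifications you flag as routine (that $r_*s^*\mathcal{F}$ is genuinely a $\gpd$-sheaf and that $U\dashv R$), your argument is a valid alternative proof; the concluding dimension shift agrees with the paper's appeal to the standard homological-algebra argument.
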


\begin{remark}
Crainic and Moerdijk \cite[2.1]{Crainic:2000aa} deduce that the category of $\gpd$-sheaves has enough injectives (i.e.\ that resolutions by injective $\gpd$-sheaves always exist) from the fact that the category of $\gpd$-sheaves of sets is a topos.  It may also be possible to deduce Lemma \ref{inj ginj} in topos language, but we did not pursue that, and instead give an elementary proof.  We also remark that we do not know an example of a $\gpd$-sheaf that is injective as a $\gpd$-sheaf, but not as a sheaf: it could therefore be that the existence of enough injectives in the category of $\gpd$-sheaves suffices for Lemma \ref{inj ginj}, but we do not know this.
\end{remark}

\begin{proof}[Proof of Lemma \ref{inj ginj}]
We follow \cite[page 197]{Grothendieck:1957aa}, which (essentially) treats the case of transformation groupoids.  The statement that an injective sheaf is $c$-soft is well-known: see for example from \cite[Proposition II.5.3 and Corollary II.9.6]{Bredon:1997aa}.  Using a standard argument from homological algebra (see for example \cite[2.2.5, and 2.3.6]{Weibel:1995ty}), it suffices to show that any $\gpd$-sheaf $\mathcal{A}$ embeds in a $\gpd$-sheaf $\mathcal{I}$ that is both $\gpd$-injective, and injective .

First, fix a point $x\in \gpd^{(0)}$, and let 
$$
\gpd x:=\{y\in \gpd^{(0)}\mid r(g)=y \text{ for some } g\in \gpd_x\}
$$
be the associated orbit.  Let $\gpd_x^x$ denote the isotropy group at $x$.  Note that the stalk $\mathcal{A}_x$ of $\mathcal{A}$ at $x$ is a module over the group algebra $\Z\gpd_x^x$ of $\gpd_x^x$.  Let $M_x$ denote any choice of injective $\Z\gpd_x^x$-module that contains $\mathcal{A}_x$ (see for example \cite[page 42]{Weibel:1995ty} for existence of such an injective module).  For any other point $y=x$ of the orbit $\gpd x$, choose $g=g_{yx}\in \gpd$ with $y=r(g)$ and $s(g)=x$.  Define $M_y$ to have the same underlying abelian group as $M_x$, and with action of $\Z\gpd_y^y$ determined for $h\in \gpd_y^y$ and $m\in M_x$ by 
$$
h\cdot m:=(g^{-1}hg)m.
$$
Note that $M_y$ is an injective $\Z\gpd_y^y$-module.  If $y,z\in \gpd x$ and $k\in \gpd$ satisfies $r(k)=y$ and $s(k)=z$, we define a corresponding map 
$$
k:M_z\to M_y,\quad m\mapsto g_{yx}^{-1}kg_{zx}m.
$$

Now, carry out the above procedure for every orbit in $\gpd^{(0)}$ to get a family $(M_x)_{x\in \gpd^{(0)}}$ indexed by $x\in \gpd^{(0)}$ such that each $M_x$ is an injective $\Z[\gpd^x_x]$-module, and such that for each $g\in \gpd_x^y$, there is a corresponding isomorphism $g:M_x\to M_y$; moreover, the family of such isomorphisms is compatible with multiplication and identities in $\gpd$.  We then define $\mathcal{I}$ to be the sheaf on $\gpd^{(0)}$ whose sections over an open set $U$ are given by 
$$
\mathcal{I}(U):=\prod_{x\in U}M_x.
$$
We leave it to the reader to check that this is a $\gpd$-sheaf in the natural way, and that $\mathcal{A}$ is a subsheaf.  To show that $\mathcal{I}$ is injective as a $\gpd$-sheaf, note that if $\mathcal{B}$ is a $\gpd$-sheaf, then giving a homomorphism $\mathcal{B}\to\mathcal{I}$ is the same thing as giving a family of homomorphisms $\{\mathcal{B}_x\to M_x\}_{x\in \gpd^{(0)}}$ that is compatible with the groupoid action in the appropriate sense.  Given then an inclusion $\mathcal{B}\subseteq \mathcal{C}$ and a homomorphism $\mathcal{B}\to\mathcal{I}$, we can build a homomorphism $\mathcal{C}\to \mathcal{I}$ one orbit at a time.  Indeed, given $x\in \gpd^{(0)}$, the dashed arrow 
$$
\xymatrix{ \mathcal{C}_x \ar@{-->}[dr]^{f_x}  & \\ \mathcal{B}_x\ar[u] \ar[r] & M_x}
$$
can be filled in with a $\gpd_x^x$-equivariant map using that $M_x$ is injective in the category of $\Z\gpd_x^x$-modules.  Having chosen such a map for $x$, extend it to the whole orbit $\gpd x$ equivariantly, i.e.\ given $y\in \gpd x$ and $g\in \gpd$ with $s(g)=x$ and $r(g)=y$, we define $f_y:\mathcal{C}_y\to M_y$ by $f_y:=g\circ f_x\circ g^{-1}$; this does not depend on the choice of $g$ as $f_x$ is $\gpd_x^x$ equivariant.  This completes the proof that $\mathcal{I}$ is injective as a $\gpd$-sheaf.  

The argument that $\mathcal{I}$ is injective as a sheaf is similar (and simpler, as there is no need to worry about equivariance), once we have observed that each $M_x$ is also injective as a $\Z$-module.  To see this, note that for fixed $m\in M$ and $n\geq 1$ there is a diagram
$$
\xymatrix{ \Z \ar@{-->}[dr]  & \\ \Z\ar[u]^-{\times n} \ar[r]_{1\mapsto m} & M_x }
$$
where the two copies of $\Z$ are considered as trivial $\Z\gpd_x^x$-modules, the vertical map is $z\mapsto nz$ and the horizontal map sends $z$ to $zm$.  Injectivity allows us to fill in the diagonal map, whence there is $d\in M_x$ with $nd=m$.  Hence $M_x$ is divisible as an abelian group, so injective as a $\Z$-module (see for example \cite[Corollary 2.3.2]{Weibel:1995ty}) as required.
\end{proof}

We now come to our main goal in this subsection.

\begin{proposition}\label{cm id wd}
Let $\gpd$ be an \'{e}tale groupoid and let $\mathcal{A}$ be a $\gpd$-sheaf on the base space $\gpd^{(0)}$.  Let 
$$
0\to \mathcal{A}\to \mathcal{S}^0\to \mathcal{S}^1\to \mathcal{S}^2\to \cdots
$$
be a resolution of $\mathcal{A}$ by $c$-soft $\gpd$-sheaves.  Then the homology of the direct sum total complex associated to the double complex $(\Gamma_c(\gpd^{(p)},\mathcal{S}^{-q}_p))_{p,q\geq 0}$ depends only on the quasi-isomorphism class of $\mathcal{S}^\bullet$.

In particular, the Crainic-Moerdijk homology groups are well-defined for all locally compact, Hausdorff, \'{e}tale groupoids.
\end{proposition}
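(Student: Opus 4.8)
The plan is to deduce the proposition from the two technical lemmas already established: Lemma~\ref{inj ginj}, which produces resolutions by $\gpd$-sheaves that are injective (and hence $c$-soft), and Lemma~\ref{c soft no dep 2}, which guarantees that a quasi-isomorphism of bounded-below complexes of $c$-soft $\gpd$-sheaves induces an isomorphism on the homology of the associated direct-sum total complexes. The idea is to compare an arbitrary $c$-soft resolution against a single fixed injective resolution, which serves as a common target.

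First I would apply Lemma~\ref{inj ginj} to the sheaf $\mathcal{A}$ itself to obtain a resolution $0\to \mathcal{A}\to \mathcal{I}^0\to \mathcal{I}^1\to \cdots$ by $\gpd$-sheaves that are injective in the category of $\gpd$-sheaves, and in particular $c$-soft. Given any $c$-soft resolution $\mathcal{S}^\bullet$ of $\mathcal{A}$, the standard comparison theorem for injective resolutions in the abelian category of $\gpd$-sheaves (cf.\ \cite[Comparison Theorem 2.3.7]{Weibel:1995ty}) lifts the identity map of $\mathcal{A}$ to a chain map $\varphi:\mathcal{S}^\bullet\to \mathcal{I}^\bullet$ compatible with the two augmentations. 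Since both $\mathcal{S}^\bullet$ and $\mathcal{I}^\bullet$ are resolutions, each has homology (computed in the category of $\gpd$-sheaves) equal to $\mathcal{A}$ in degree zero and zero elsewhere, and $\varphi$ induces the identity on this homology; hence $\varphi$ is a quasi-isomorphism of complexes of $c$-soft $\gpd$-sheaves. Applying Lemma~\ref{c soft no dep 2} to $\varphi$ then shows that the homology of the direct-sum total complex built from $(\Gamma_c(\gpd^{(p)};\mathcal{S}^{-q}_p))_{p,q\geq 0}$ agrees with the one built from $\mathcal{I}^\bullet$. Carrying out the same comparison for a second $c$-soft resolution $\mathcal{T}^\bullet$ and composing the resulting isomorphisms gives independence of the choice of resolution, which is precisely the well-definedness of Crainic--Moerdijk homology; the quasi-isomorphism invariance asserted in the first sentence follows identically, by routing both the source and target of a given quasi-isomorphism through $\mathcal{I}^\bullet$ (or, when the quasi-isomorphism is an honest chain map, by a direct appeal to Lemma~\ref{c soft no dep 2}).

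I expect no serious obstacle here, as the lemmas do the real work; the one point needing care is the verification that the comparison map $\varphi$ is a quasi-isomorphism, which rests only on the observation that an augmented resolution is exact and therefore has trivial $\gpd$-sheaf homology away from $\mathcal{A}$ in degree zero. In particular, no difficulty arises from possible infinite-dimensionality of $\gpd^{(0)}$: the convergence of the column-filtration spectral sequence for the fourth-quadrant double complex taken with the direct-sum total complex is already built into Lemma~\ref{c soft no dep 2}, and does not require the resolution to have finite length.
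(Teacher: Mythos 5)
Your proposal is correct and follows essentially the same route as the paper: fix the injective resolution $\mathcal{I}^\bullet$ from Lemma~\ref{inj ginj} as a common target, use the standard comparison argument in the category of $\gpd$-sheaves to obtain a quasi-isomorphism $\mathcal{S}^\bullet\to\mathcal{I}^\bullet$, and conclude with Lemma~\ref{c soft no dep 2}. The only difference is that you spell out the verification that the comparison map is a quasi-isomorphism, which the paper handles by a citation to Weibel.
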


\begin{proof}
Let $\mathcal{I}^\bullet$ be a resolution of $\mathcal{A}$ with the properties in Lemma \ref{inj ginj}.  As $\mathcal{I}^\bullet$ consists of $\gpd$-injective sheaves, a standard argument in homological algebra (see for example \cite[2.7.1]{Weibel:1995ty}) gives a quasi-isomorphism $f:\mathcal{S}^\bullet\to \mathcal{I}^\bullet$.  As the sheaves in $\mathcal{I}^\bullet$ are injective, they are $c$-soft (see for example \cite[Proposition II.5.3 and Corollary II.9.6]{Bredon:1997aa}), whence Lemma \ref{c soft no dep 2} implies that the homologies of the direct sum total complex associated to the double complexes $(\Gamma_c(\gpd^{(p)},\mathcal{S}^{-q}_p))_{p,q\geq 0}$ and $(\Gamma_c(\gpd^{(p)},\mathcal{I}^{-q}_p))_{p,q\geq 0}$ are the same.  As $\mathcal{I}^\bullet$ is independent of $\mathcal{S}^\bullet$, we are done.

The last statement is an immediate consequence of the definition of the Crainic-Moerdijk groups.
\end{proof}

\section{Group hyperhomology}\label{hyp hom sec}

In this appendix we recall two definitions of group hyperhomology, and relate them to each other, and to groupoid homology of the associated transformation groupoid.  

We will use the following as our `official' definition of group hyperhomology; we compare it with another, perhaps more standard, definition below.  The first definition is based on  \cite[Section VII.5]{Brow:1982rt}, although we drop the assumption stated there that the chain complex $C_\bullet$ is bounded below.  For now the definition is provisional; we show that it does not depend on the choices involved in Lemma \ref{h1 wd} below.

\begin{definition}\label{hyper1}
Let $G$ be a discrete group, and let $C_\bullet$ be a chain complex of $\Z G$-modules.  Let $(P_p)_{p\geq 0}$ be a resolution of the trivial $\Z G$-module $\Z$ by projective $\Z G$-modules, and let $(P_p\otimes_{\Z G}C_q)_{p\geq 0,q \in \Z}$ be the associated tensor product\footnote{Compare for example \cite[2.7.1]{Weibel:1995ty}.} (right-half-plane) double complex.  

The \emph{group hyperhomology of $G$ with coefficients in $C_\bullet$}, denoted $\mathbb{H}_*(G;C_\bullet)$ is defined to be the homology of the direct sum total complex associated to this double complex.
\end{definition}

\begin{lemma}\label{h1 wd}
With notation as in Definition \ref{hyper1}, $\mathbb{H}_*(G;C_\bullet)$ does not depend on the choice of projective resolution $P_\bullet$ used to define it up to canonical isomorphism, and depends on $C_\bullet$ only up to quasi-isomorphism.
\end{lemma}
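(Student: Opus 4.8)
The plan is to deduce both assertions from a single principle: a chain map of the relevant double complexes whose mapping cone has \emph{acyclic} direct sum total complex induces an isomorphism on $\mathbb{H}_*(G;C_\bullet)$. Recall that the mapping cone of a map of total complexes coming from a map of double complexes is itself the total complex of the ``difference'' double complex, so in each case it suffices to identify the appropriate cone double complex and prove that its direct sum total complex is acyclic.

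First I would treat independence of the projective resolution. Given two projective resolutions $P_\bullet\to\Z$ and $P'_\bullet\to\Z$, the comparison theorem \cite[Theorem 2.2.6]{Weibel:1995ty} furnishes a chain map $f\colon P_\bullet\to P'_\bullet$ lying over $\id_\Z$, unique up to chain homotopy; since both complexes resolve $\Z$, $f$ is a quasi-isomorphism. Its mapping cone $E_\bullet:=\operatorname{cone}(f)_\bullet$ is then an acyclic, bounded-below complex of projective $\Z G$-modules, and such a complex is contractible; fix a contracting homotopy $h$. The map $f\otimes\id$ induces a morphism $F$ from $\operatorname{Tot}^{\oplus}(P_p\otimes_{\Z G}C_q)$ to $\operatorname{Tot}^{\oplus}(P'_p\otimes_{\Z G}C_q)$ whose cone is $\operatorname{Tot}^{\oplus}(E_p\otimes_{\Z G}C_q)$, and a direct Koszul-sign computation shows that $h\otimes\id$ is a contracting homotopy for this total complex (the cross terms involving the differential of $C_\bullet$ cancel). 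Hence the cone is acyclic and $F$ is a quasi-isomorphism. Because $f$ is unique up to homotopy, and a homotopy $s$ between two such maps tensors to a homotopy $s\otimes\id$ on total complexes, the induced isomorphism on homology is independent of all choices; the usual coherence argument for three resolutions then makes it canonical.

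Next I would treat dependence on $C_\bullet$. A quasi-isomorphism $g\colon C_\bullet\to C'_\bullet$ has acyclic cone $C''_\bullet:=\operatorname{cone}(g)_\bullet$, and the induced map on hyperhomology total complexes has cone $\operatorname{Tot}^{\oplus}(P_p\otimes_{\Z G}C''_q)$. Each $P_p$ is projective, hence flat, so for fixed $p$ the column $P_p\otimes_{\Z G}C''_\bullet$ is acyclic. Thus this is a right half-plane double complex ($p\geq 0$) with exact columns, and a version of the Acyclic Assembly Lemma \cite[Lemma 2.7.3]{Weibel:1995ty} shows its direct sum total complex is acyclic. Concretely, given a total cycle $z$ of finite support, let $a$ be its largest $p$-index; the top component of $dz=0$ forces the $p=a$ term to be a vertical cycle, exactness of that column writes it as a vertical boundary, and subtracting the corresponding total boundary strictly lowers $a$. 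Since $p$ is bounded below by $0$, this staircase terminates, exhibiting $z$ as a boundary. Therefore $g$ induces an isomorphism on $\mathbb{H}_*(G;C_\bullet)$.

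The one genuine subtlety, and the reason the argument is not purely formal, is that $C_\bullet$ is not assumed bounded below, so the double complexes are unbounded in the $q$-direction and the usual ``convergent spectral sequence'' shortcut is unavailable. The plan circumvents this in two complementary ways: in the resolution step by using contractibility of the cone (which yields an honest contracting homotopy that manifestly preserves the finite-support condition defining the direct sum total complex), and in the coefficient step by using the half-plane bound $p\geq 0$ together with the direct sum, which is exactly what makes the staircase in the Acyclic Assembly Lemma terminate. The main care needed throughout is to stay with the direct sum (finite support) total complex, so that cones, homotopies, and the staircase are all well defined without ever forming an infinite sum.
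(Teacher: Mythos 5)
Your proof is correct. For the first assertion (independence of the projective resolution) you are doing essentially what the paper does: the paper invokes the chain homotopy equivalence between any two projective resolutions of $\Z$ and observes that it passes to the tensor-product total complexes; your version, which instead contracts the cone of the comparison map and tensors the contracting homotopy, is the same argument in a slightly different wrapper, and your extra remarks about tensoring the homotopy between two comparison maps is exactly what makes the isomorphism canonical. For the second assertion the routes genuinely diverge: the paper runs the column-filtration spectral sequence with $E^1_{pq}\cong P_p\otimes_{\Z G}H_q(C_\bullet)$ (using flatness of $P_p$) and settles convergence by citing Weibel's discussion for right half-plane double complexes with the direct sum total complex, whereas you form the cone and invoke the Acyclic Assembly Lemma (right half-plane, exact columns, $\operatorname{Tot}^{\oplus}$), proved by the explicit staircase. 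The two arguments rest on identical structural inputs --- flatness of the $P_p$, the bound $p\geq 0$, and the finite-support condition --- and indeed your staircase is essentially the proof of the convergence statement the paper cites; what your version buys is a self-contained, spectral-sequence-free treatment that makes the handling of the unbounded $q$-direction completely explicit, at the cost of being somewhat longer. Both are valid, and your emphasis on why the direct sum total complex is the right one here is a point the paper leaves largely to the reference.
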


\begin{proof}
Any two projective resolutions of $\Z$, say $(P_p)$ and $(Q_p)$, are chain homotopy equivalent: see for example \cite[2.2.6]{Weibel:1995ty}.  Such a chain homotopy equivalence gives rise to a chain homotopy equivalence of the total complexes associated to the tensor product double complexes; this gives independence of the projective resolution.  

To see that $\mathbb{H}_*(G;C_\bullet)$ only depends on $C_\bullet$ up to quasi-isomorphism\footnote{For an alternative argument avoiding spectral sequences, see the proof of \cite[Theorem I.8.6]{Brow:1982rt}.} , as in \cite[Section 5.6]{Weibel:1995ty} there is a spectral sequence 
$$
E^1_{pq}=H^{(v)}_q(P_p\otimes_{\Z G} C_\bullet)~\Rightarrow~ \mathbb{H}_{p+q}(G;C_\bullet)
$$
where $H^{(v)}$ denotes `homology in the vertical direction'; convergence is justified as on \cite[page 142]{Weibel:1995ty}, noting that our double complex avoids the second quadrant.  As the functor $M\mapsto P_p\otimes_{\Z G}M$ is exact by projectivity of $P_p$, we have canonical isomorphisms $H^{(v)}_q(P_p\otimes_{\Z G} C_\bullet)\cong P_p \otimes_{\Z G} H_q(C_\bullet)$.  Hence a quasi-isomorphism of $C_\bullet$ with another complex $D_\bullet$ induces an isomorphism on the $E^1$-page of our spectral sequence; as the spectral sequence converges to the hyperhomology groups $\mathbb{H}_*(G;C_\bullet)$, this suffices.
\end{proof}

For completeness, let us compare the definition of hyperhomology in Definiton \ref{hyper1} above with another definition which is perhaps more standard; it is more in the spirit of derived categories as opposed to the direct definition above.  Again, the definition is provisional for now: we justify it in Lemma \ref{h2 wd} below.

\begin{definition}\label{hyper2}
Let $G$ be a discrete group, and let $C_\bullet$ be a bounded below\footnote{It is not clear to us that this definition makes sense in the absence of the `bounded below' assumption; this is our reason for preferring the version in Definition \ref{hyper1} above.} chain complex of $\Z G$-modules.  Choose a quasi-isomorphism $f:Q_\bullet\to C_\bullet$ where $Q_\bullet$ is a bounded below complex of projective modules.

We define the \emph{second hyperhomology groups of $G$ with coefficients in $C_\bullet$}, denoted $\mathbb{H}^{(2)}_*(G;C_\bullet)$, to be the homology of the associated complex $(\Z\otimes_{\Z G} Q_q)_{q\in \Z}$ of coinvariants.
\end{definition}

\begin{lemma}\label{h2 wd}
Let $G$ be a discrete group, and let $C_\bullet$ be a bounded below chain complex of $\Z G$-modules.  With notation as in Definition \ref{hyper2}, the second hyperhomology $\mathbb{H}^{(2)}_*(G;C_\bullet)$ is well-defined, depends on $C_\bullet$ only up to quasi-isomorphism, and is canonically isomorphic to the hyperhomology $\mathbb{H}_*(G;C_\bullet)$ of Definition \ref{hyper1}.
\end{lemma}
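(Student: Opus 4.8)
The key observation is that all three assertions follow once we establish the canonical isomorphism $\mathbb{H}^{(2)}_*(G;C_\bullet)\cong \mathbb{H}_*(G;C_\bullet)$ for one arbitrary choice of projective resolution $f:Q_\bullet\to C_\bullet$ as in Definition \ref{hyper2}. Indeed, granting such an isomorphism, the right-hand side is manifestly independent of the choice of $Q_\bullet$ and depends on $C_\bullet$ only up to quasi-isomorphism by Lemma \ref{h1 wd}; transporting these two properties across the isomorphism yields both the well-definedness of $\mathbb{H}^{(2)}_*$ (for two choices $Q_\bullet,Q'_\bullet$ one gets $H_*((Q_\bullet)_G)\cong \mathbb{H}_*(G;C_\bullet)\cong H_*((Q'_\bullet)_G)$) and its quasi-isomorphism invariance. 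So the plan is to prove this single comparison and then read off the rest.

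To do so, fix a projective resolution $P_\bullet\to\Z$ of the trivial module and form the double complex $P_p\otimes_{\Z G}Q_q$, with direct sum total complex $T_\bullet$. There are two natural maps out of $T_\bullet$: the map $\alpha:T_\bullet\to \mathrm{Tot}(P_\bullet\otimes_{\Z G}C_\bullet)$ induced by $f$, and the map $\beta:T_\bullet\to (Q_\bullet)_G=\Z\otimes_{\Z G}Q_\bullet$ induced by the augmentation $P_\bullet\to\Z$. I claim both are quasi-isomorphisms. For $\alpha$, I would filter by the $P$-degree and pass to the associated spectral sequence (as in the proof of Lemma \ref{h1 wd}): since each $P_p$ is projective, hence flat, the functor $P_p\otimes_{\Z G}(-)$ is exact, so on the $E^1$-page $\alpha$ is the map $P_p\otimes_{\Z G}H_q(Q_\bullet)\to P_p\otimes_{\Z G}H_q(C_\bullet)$ induced by the quasi-isomorphism $f$, which is an isomorphism. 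For $\beta$, I would instead filter by the $Q$-degree: since each $Q_q$ is projective, hence flat, tensoring the quasi-isomorphism $P_\bullet\to\Z$ by $Q_q$ keeps it a quasi-isomorphism, so the complex $P_\bullet\otimes_{\Z G}Q_q$ has homology concentrated in degree zero equal to $\Z\otimes_{\Z G}Q_q$. The spectral sequence therefore collapses to the single column $(Q_\bullet)_G$, with edge map exactly $\beta$. Composing, $\alpha\circ\beta^{-1}$ furnishes the desired canonical isomorphism $\mathbb{H}^{(2)}_*(G;C_\bullet)\cong \mathbb{H}_*(G;C_\bullet)$.

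Two routine points require attention. First, one must know that a bounded below projective resolution $f:Q_\bullet\to C_\bullet$ exists at all; this is standard for bounded below complexes (see \cite{Weibel:1995ty}) and is precisely where the bounded below hypothesis is used. Second, the two spectral sequence arguments need convergence: because $P_p=0$ for $p<0$ and $Q_q=0$ below some fixed degree, the double complex $P_p\otimes_{\Z G}Q_q$ is, after a degree shift, a first-quadrant double complex, so both filtration spectral sequences of its direct sum total complex converge, as on \cite[page 142]{Weibel:1995ty}. I expect the only real friction to be bookkeeping — matching the direct sum totalization and homological grading conventions of Definitions \ref{hyper1} and \ref{hyper2}, and keeping the left/right module conventions in $\otimes_{\Z G}$ straight — rather than any genuine difficulty, since the homological input is entirely standard.
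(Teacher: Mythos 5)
Your proposal is correct and follows essentially the same route as the paper: both arguments come down to the fact that, once the coefficient complex is projective (your $Q_\bullet$, reached via the quasi-isomorphism invariance of Lemma \ref{h1 wd}), the augmented rows $\cdots\to P_1\otimes_{\Z G}Q_q\to P_0\otimes_{\Z G}Q_q\to \Z\otimes_{\Z G}Q_q\to 0$ are exact by flatness, and the bounded-below hypothesis makes the relevant (shifted first-quadrant) spectral sequence converge. The paper packages this via the acyclic assembly lemma applied to the augmented double complex and its mapping-cone interpretation, while you use the row-filtration spectral sequence and a roof $\alpha\circ\beta^{-1}$; these are interchangeable, and your derivation of well-definedness from the comparison isomorphism (rather than from a direct chain-homotopy equivalence of two projective resolutions, as in the paper) is a valid minor shortcut.
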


\begin{proof}
We first need to show that there is a quasi-isomorphism $f:Q_\bullet\to C_\bullet$ with $Q_\bullet$ consisting of projective modules and bounded below.  This can be proved in exactly the same way as \cite[Proposition 1.7.7 (i)]{Kashiwara:1990aa}: that reference is for bounded below \emph{co}chain complexes of \emph{injective} modules, but the same proof works mutatis mutandis on reversing the arrows\footnote{One could also use a Cartan-Eilenberg resolution of $C_\bullet$ as in \cite[Section 5.7]{Weibel:1995ty}.  In either case, the bounded below assumption seems important.}.  If moreover $P_\bullet$ and $Q_\bullet$ are bounded below chain complexes of projective modules equipped with quasi-isomorphisms to $C_\bullet$, then they are chain-homotopy equivalent to each other by the proof of \cite[Lemma 10.4.6 and Theorem 10.4.8]{Weibel:1995ty} (again, this is stated for cochain complexes of injective modules, but the same idea works).  Such a chain homotopy equivalence induces a chain homotopy equivalence of chain complexes of coinvariants, so we are done with showing that $\mathbb{H}^{(2)}_*(G;C_\bullet)$ is well-defined.  It is immediate that it only depends on $C_\bullet$ up to quasi-isomorphism.

Let us now show that $\mathbb{H}^{(2)}_*(G;C_\bullet)$ and $\mathbb{H}_*(G;C_\bullet)$ are isomorphic.  Using the first part of the proof and the fact that both sorts of hyperhomology only depend on $C_\bullet$ up to quasi-isomorphism, we may assume that $C_\bullet$ consists of projective modules.  As $C_\bullet$ is bounded below, we have moreover that there is $d\in \Z$ such that $C_q=0$ for all $q<d$.  Consider the augmented double complex 
$$
\xymatrix{ \ar[d] & \ar[d] & \ar[d] & \\ \Z\otimes_{\Z G} C_{d+2} \ar[d] & P_0\otimes_{\Z G} C_{d+2} \ar[l] \ar[d] &  P_1\otimes_{\Z G} C_{d+2} \ar[l] \ar[d] & \ar[l] \\
\Z\otimes_{\Z G} C_{d+1} \ar[d] & P_0\otimes_{\Z G} C_{d+1} \ar[l] \ar[d] &  P_1\otimes_{\Z G} C_{d+1} \ar[l] \ar[d] & \ar[l]\\
\Z\otimes_{\Z G} C_{d}  & P_0\otimes_{\Z G} C_{d} \ar[l]  &  P_1\otimes_{\Z G} C_{d} \ar[l] &\ar[l] }.
$$ 
The first column computes $\mathbb{H}^{(2)}_*(G;C_\bullet)$ and the double complex to the right of the first complex computes  $\mathbb{H}_*(G;C_\bullet)$.  As each $C_q$ is projective, the rows are exact.  Hence the acyclic assembly lemma (see for example \cite[2.7.3]{Weibel:1995ty}) implies that the augmented double complex has trivial homology.  On the other hand, as in the proof of \cite[Theorem 2.7.2]{Weibel:1995ty}, the total complex of the augmented double complex is the mapping cone of the augmentation map from the total complex of the right hand double complex to the complex in the first column (up to translation).  Acyclicity of this mapping cone implies that the augmentation map is an isomorphism, giving the result.
\end{proof}

We now relate group hyperhomology with appropriate coefficients to Crainic-Moerdijk groupoid homology.  We start with the following definition; it can be usefully compared with that of equivariant homology (see for example \cite[Section VII.7]{Brow:1982rt}).  As usual, it is provisional: see Lemma \ref{equi well def} for a proof that it does not depend on the choices involved.

\begin{definition}\label{hyper3}
Let $X$ be a locally compact Hausdorff space equipped with an action of a discrete group $G$.  Let 
\begin{equation}\label{long sheaf res}
0\to \mathcal{Z}\to \mathcal{S}^0\to \mathcal{S}^1\to \cdots 
\end{equation}
be a resolution of the sheaf $\mathcal{Z}$ of locally constant $\Z$-valued functions on $X$ by $c$-soft $G$-sheaves.   We define $\mathbb{H}_*(G;X)$ to be the hyperhomology $\mathbb{H}_*(G;\Gamma_c(X;\mathcal{S}^{-\bullet}))$.
\end{definition}

\begin{lemma}\label{equi well def}
The homology groups $\mathbb{H}_*(G;X)$ do not depend on the choice of resolution as in line \eqref{long sheaf res} up to canonical isomorphism.
\end{lemma}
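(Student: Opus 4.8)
The plan is to reduce to a single, canonically chosen resolution --- namely one by injective $G$-sheaves as produced by Lemma \ref{inj ginj} --- exactly as in the proof of Proposition \ref{cm id wd}, and then invoke the quasi-isomorphism invariance of hyperhomology from Lemma \ref{h1 wd}. Concretely, fix a resolution $0\to \mathcal{Z}\to \mathcal{I}^0\to \mathcal{I}^1\to \cdots$ of $\mathcal{Z}$ by $G$-sheaves that are simultaneously injective (hence $c$-soft) as given by Lemma \ref{inj ginj}. Given an arbitrary $c$-soft $G$-sheaf resolution $\mathcal{S}^\bullet$ as in line \eqref{long sheaf res}, the standard comparison argument of homological algebra (using injectivity of each $\mathcal{I}^i$ in the category of $G$-sheaves, exactly as in the diagram chase in the proof of Proposition \ref{pair good}) produces a morphism of $G$-sheaf complexes $f:\mathcal{S}^\bullet\to \mathcal{I}^\bullet$ lifting the identity on $\mathcal{Z}$; moreover $f$ is unique up to $G$-equivariant chain homotopy.

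Next I would observe that $f$ is a quasi-isomorphism in the category of sheaves: since both $\mathcal{S}^\bullet$ and $\mathcal{I}^\bullet$ are resolutions of $\mathcal{Z}$, each has cohomology sheaf $\mathcal{Z}$ in degree zero and zero elsewhere, and $f$ induces the identity on this degree-zero cohomology sheaf. Forgetting the $G$-action, $f$ is therefore a quasi-isomorphism of bounded-below cochain complexes of $c$-soft sheaves, so Lemma \ref{c soft no dep} (with $Y=X$ and $\tau=\id$) applies and shows that
$$
f:\Gamma_c(X;\mathcal{S}^\bullet)\to \Gamma_c(X;\mathcal{I}^\bullet)
$$
is a quasi-isomorphism. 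Because $f$ is a map of $G$-sheaves, this induced map is $G$-equivariant, hence a quasi-isomorphism of chain complexes of $\Z G$-modules from $\Gamma_c(X;\mathcal{S}^{-\bullet})$ to $\Gamma_c(X;\mathcal{I}^{-\bullet})$ (after the re-indexing that turns the cochain complex $\mathcal{S}^\bullet$ into the chain complex $\mathcal{S}^{-\bullet}$). Lemma \ref{h1 wd} then gives a canonical isomorphism $\mathbb{H}_*(G;\Gamma_c(X;\mathcal{S}^{-\bullet}))\cong \mathbb{H}_*(G;\Gamma_c(X;\mathcal{I}^{-\bullet}))$.

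Running the identical argument for a second $c$-soft resolution $\mathcal{T}^\bullet$ yields a canonical isomorphism $\mathbb{H}_*(G;\Gamma_c(X;\mathcal{T}^{-\bullet}))\cong \mathbb{H}_*(G;\Gamma_c(X;\mathcal{I}^{-\bullet}))$ through the same injective resolution, and composing the two gives the desired identification of the two versions of $\mathbb{H}_*(G;X)$. Canonicity follows from the uniqueness up to chain homotopy of the comparison maps $f$: a $G$-equivariant chain homotopy descends to a chain homotopy on the complexes of compactly supported sections, hence induces the identity on hyperhomology by Lemma \ref{h1 wd}, so the resulting isomorphism is independent of all the choices made.

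I expect the only real subtlety --- and hence the point to state carefully --- is the transition from the sheaf-level quasi-isomorphism to a $\Z G$-module quasi-isomorphism on sections: one must check that Lemma \ref{c soft no dep} is being applied to a genuine quasi-isomorphism in the category of sheaves (as opposed to a mere sheaf-cohomology isomorphism), and that $G$-equivariance is retained at every stage so that Lemma \ref{h1 wd} applies. Both are formal once the comparison map is in hand, and the overall argument is essentially the same as the proof of Proposition \ref{cm id wd}, with that proof's appeal to the double complex and Lemma \ref{c soft no dep 2} replaced by the hyperhomology functor and its invariance property from Lemma \ref{h1 wd}.
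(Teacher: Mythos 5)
Your proposal is correct and follows essentially the same route as the paper: compare an arbitrary $c$-soft resolution to the injective $G$-sheaf resolution supplied by Lemma \ref{inj ginj}, pass to a quasi-isomorphism of the complexes of compactly supported sections, and conclude via the quasi-isomorphism invariance of hyperhomology in Lemma \ref{h1 wd}. Your extra remarks on equivariance and on canonicity via uniqueness of the comparison map up to chain homotopy are welcome elaborations rather than deviations (and your appeal to Lemma \ref{c soft no dep} with $\tau=\id$ is the natural citation for the section-level quasi-isomorphism).
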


\begin{proof}
Lemma \ref{inj ginj} gives us a resolution of $\mathcal{Z}$ by a complex $\mathcal{I}^\bullet$ of injective $G$-sheaves on $X$ that are also injective (whence $c$-soft) as sheaves.  As in the proof of Proposition \ref{cm id wd}, there is a quasi-isomorphism $f:\mathcal{S}^\bullet\to \mathcal{I}^\bullet$ which induces a quasi-isomorphism $f:\Gamma_c(X;\mathcal{S}^\bullet)\to \Gamma_c(X;\mathcal{I}^\bullet)$ by Lemma \ref{c soft no dep 2}.  As $\mathcal{I}^\bullet$ is independent of the choice of $\mathcal{S}$, the result thus follows from Lemma \ref{h1 wd}.
\end{proof}

If $X$ is compact, we now define a pairing between $\mathbb{H}_0(G;X)$ as in Definition \ref{hyper3} and the simplex $T(G\ltimes X)$ of invariant probability  measures on $X$.  Assume that a resolution as in line \eqref{long sheaf res} is a Borel resolution in the sense of Definition \ref{top res}.  Let $\tau$ be an invariant probability measure on $X$.  Choose a projective resolution $(P_p)_{p\geq 0}$ of $\Z$ by projective $\Z G$-modules with $P_0=\Z G$, and the map $P_0\to \Z$ the standard augmentation map defined by summing coefficients.  Then cycles for $\mathbb{H}_0(G;C_\bullet)$ are represented by tuples $(x_0,x_1,...)$ with $x_i\in P_i\otimes_{\Z G} \Gamma_c(X;\mathcal{S}^i)$, such that only finitely many $x_i$ are non-zero, and such that $(x_0,x_1,...)$ goes to zero under the boundary map for the total complex.  In particular, $x_0\in P_0\otimes_{\Z G} \Gamma_c(X;\mathcal{S}^0)=\Gamma_c(X;\mathcal{S}^0)$ is a complex-valued and compactly supported Borel function on $X$.  We define 
$$
\tau:\mathbb{H}_0(G;C_\bullet)\to \C,\quad [x_0,x_1,...]\mapsto \int_X x_0d\tau.
$$
The following lemma follows from almost the same argument as Proposition \ref{pair good} and is left to the reader.

\begin{lemma}\label{good hh pair}
The pairing defined above between the simplex $T(G\ltimes X)$ of invariant probability measures on $X$ and $\mathbb{H}_0(G;C_\bullet)$ is well-defined and does not depend on the choice of Borel resolution. \qed
\end{lemma}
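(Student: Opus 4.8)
The plan is to mirror the proof of Proposition \ref{pair good}, treating separately the two assertions: that the formula descends to homology, and that it is insensitive to the choice of Borel resolution.

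For well-definedness I would first record that integration against an invariant measure is a $G$-invariant functional on $\Gamma_c(X;\mathcal{S}^0)$. Since $\mathcal{S}^0$ is a subsheaf of the Borel sheaf $\mathcal{B}$, on which $G$ acts by the geometric action $(g\cdot a)(x)=a(g^{-1}x)$, invariance of $\tau$ gives $\int_X (g\cdot a)\,d\tau=\int_X a\,d\tau$ for every $g\in G$. Hence the functional $a\mapsto \int_X a\,d\tau$ annihilates the submodule generated by the elements $g\cdot a-a$, and so factors through the coinvariants $\Gamma_c(X;\mathcal{S}^0)_G=\Z\otimes_{\Z G}\Gamma_c(X;\mathcal{S}^0)$.

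Next I would isolate the relevant piece of the total differential. A class in $\mathbb{H}_0(G;C_\bullet)$ is represented by a tuple $(x_0,x_1,\dots)$ of total degree $0$, and the pairing reads off only $x_0\in P_0\otimes_{\Z G}\Gamma_c(X;\mathcal{S}^0)=\Gamma_c(X;\mathcal{S}^0)$ (using $P_0=\Z G$). To see that the pairing descends, it suffices to pair $\tau$ with the degree-$(0,0)$ component of the total differential of an arbitrary $1$-chain and check it vanishes. Because $C_q=\Gamma_c(X;\mathcal{S}^{-q})=0$ for $q>0$, there is no vertical contribution into the $(0,0)$ slot, so this component equals $(d_P\otimes 1)(y_0)$ for some $y_0\in P_1\otimes_{\Z G}\Gamma_c(X;\mathcal{S}^0)$. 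Tensoring the exact sequence $P_1\xrightarrow{d_P}P_0\xrightarrow{\epsilon}\Z\to 0$ with $\Gamma_c(X;\mathcal{S}^0)$ over $\Z G$ identifies $\epsilon\otimes 1\colon \Gamma_c(X;\mathcal{S}^0)\to\Gamma_c(X;\mathcal{S}^0)_G$ with the coinvariants quotient, and exactness gives $(\epsilon\otimes 1)\circ(d_P\otimes 1)=0$. Thus $(d_P\otimes 1)(y_0)$ lies in the kernel of the coinvariants map, on which $\tau$ vanishes by the previous paragraph; this yields well-definedness on homology.

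Finally, for independence of the Borel resolution I would argue exactly as in Proposition \ref{pair good}. Given any Borel resolution $\mathcal{S}^\bullet$, the canonical Borel resolution $0\to\mathcal{Z}\to\mathcal{B}\to\mathcal{I}^1\to\cdots$ from Lemma \ref{inj ginj} receives a comparison map whose first component $\mathcal{S}^0\to\mathcal{B}$ is the defining inclusion and whose higher components are filled in using $\gpd$-injectivity of the sheaves $\mathcal{I}^i$; this is a quasi-isomorphism of $c$-soft $G$-sheaf resolutions. By Lemma \ref{equi well def} (via Lemmas \ref{c soft no dep 2} and \ref{h1 wd}) it induces an isomorphism on $\mathbb{H}_0(G;-)$, and since the induced map on the $x_0$-slot is simply the inclusion of Borel functions, it preserves the value $\int_X x_0\,d\tau$. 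Comparing any two Borel resolutions to the canonical one then gives the stated independence. The only genuinely new input relative to Proposition \ref{pair good} is the algebraic observation that the projective-resolution differential lands in the augmentation-ideal part of $\Gamma_c(X;\mathcal{S}^0)$; I expect that bookkeeping, together with confirming the absence of a vertical contribution into the $(0,0)$ slot, to be the only place that requires care.
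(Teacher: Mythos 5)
Your proposal is correct and is exactly the argument the paper has in mind: the paper leaves the proof to the reader, saying it follows from almost the same argument as Proposition \ref{pair good}, and your write-up carries that out faithfully — invariance of $\tau$ kills the coinvariants kernel (hence the image of $d_P\otimes 1$ in the $(0,0)$ slot, with no vertical contribution since $C_q=0$ for $q>0$), and independence of the Borel resolution follows by comparison with the canonical injective resolution via Lemmas \ref{inj ginj} and \ref{equi well def}, the comparison map being the inclusion $\mathcal{S}^0\hookrightarrow\mathcal{B}$ in the slot the pairing reads. No gaps.
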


Our final goal in this appendix is to show that if $\gpd=G\ltimes X$ is a transformation groupoid associated to a locally compact Hausdorff space $X$ equipped with an action of a discrete group $G$, then there is a canonical isomorphism $H_*(G\ltimes X)\cong \mathbb{H}_*(G;X)$ between the Crainic-Moerdijk homology and the group hyperhomology, and moreover that this respects the pairing with $T(G\ltimes X)$ in case $X$ is compact.  To set this up, write $P_n$ for the free $\Z G$-module consisting of all finitely supported functions $c:G^{n+1}\to \Z$, where the $\Z G$-action is determined by the diagonal left translation action of $G$ on $G^{n+1}$.  There is then a resolution of the trivial $\Z G$-module $\Z$
$$
0\leftarrow \Z\leftarrow P_0 \leftarrow P_1 \leftarrow\cdots
$$
where the map $P_0\to \Z$ is given by $c\mapsto \sum_{g\in G} c(g)$, and the map $\partial: P_{n+1}\to P_{n}$ is given by $\partial=\sum_{i=0}^{n+1} (-1)^{i}\partial_i$, where 
$$
(\partial_i c)(g_0,...,g_n):=\sum_{g\in G} c(g_0~,...,\underbrace{g}_{i^\text{th}\text{ place}},...,~g_n)
$$ 
(compare for example \cite[Section I.5]{Brow:1982rt}).

\begin{lemma}\label{iso}
Let $\gpd=G\ltimes X$ be a transformation groupoid associated to an action of a discrete group $G$ on a locally compact Hausdorff space $X$.  Let $\mathcal{S}$ denote a $\gpd$-sheaf on $\gpd^{(0)}$, and let $\mathcal{S}_p:=\tau_p^*\mathcal{S}$ be the associated pullback sheaf on $\gpd^{(p)}$ (see line \eqref{taun map} above, or \cite[3.1]{Crainic:2000aa}).  Then the Crainic-Moerdijk `bar complex'
$$
0\leftarrow \Gamma_c(\gpd^{(0)};\mathcal{S}_0) \leftarrow \Gamma_c(\gpd^{(1)};\mathcal{S}_1)  \leftarrow \Gamma_c(\gpd^{(2)};\mathcal{S}_2) \leftarrow \cdots 
$$
of \cite[3.1]{Crainic:2000aa} is isomorphic to the complex 
\begin{equation}\label{stan res}
0\leftarrow P_0\otimes_{\Z G} \Gamma_c(X;\mathcal{S})\leftarrow  P_1\otimes_{\Z G} \Gamma_c(X;\mathcal{S})\leftarrow  P_2\otimes_{\Z G} \Gamma_c(X;\mathcal{S}) \leftarrow \cdots .
\end{equation}
Moreover, this isomorphism can be chosen natural in $\mathcal{S}$.
\end{lemma}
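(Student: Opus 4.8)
The plan is to prove Lemma \ref{iso} by first producing an explicit degreewise identification of the two complexes and then checking that it intertwines the two differentials; naturality in $\mathcal{S}$ will be transparent from the construction. First I would describe $\gpd^{(n)}$ concretely for the transformation groupoid $\gpd=G\ltimes X$. Since $G$ is discrete, a composable string $x_0\stackrel{g_1}{\leftarrow}\cdots\stackrel{g_n}{\leftarrow}x_n$ is determined by the point $x_n$ together with the group labels $(\gamma_1,\dots,\gamma_n)$ of the arrows $g_i=(\gamma_i,x_i)$, via $x_{i-1}=\gamma_i x_i$. This gives a homeomorphism $\gpd^{(n)}\cong\bigsqcup_{(\gamma_1,\dots,\gamma_n)\in G^n}X$ under which the component indexed by $(\gamma_1,\dots,\gamma_n)$ is mapped by $\tau_n$ into $X$ via the homeomorphism $x_n\mapsto \gamma_1\cdots\gamma_n x_n$. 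Consequently $\mathcal{S}_n=\tau_n^*\mathcal{S}$ restricts on this component to the pullback $(\gamma_1\cdots\gamma_n)^*\mathcal{S}$, and the equivariant-structure isomorphisms $\sigma_g\colon g^*\mathcal{S}\xrightarrow{\cong}\mathcal{S}$ of the $\gpd$-sheaf $\mathcal{S}$ identify $\Gamma_c(X;(\gamma_1\cdots\gamma_n)^*\mathcal{S})\cong\Gamma_c(X;\mathcal{S})$. Summing over $G^n$ and choosing the $\Z G$-basis of $P_n=\Z[G^{n+1}]$ given by the diagonal-orbit representatives (which are indexed by $G^n$), I obtain degreewise isomorphisms $\Gamma_c(\gpd^{(n)};\mathcal{S}_n)\cong\bigoplus_{G^n}\Gamma_c(X;\mathcal{S})\cong P_n\otimes_{\Z G}\Gamma_c(X;\mathcal{S})$.

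The substantive step is to verify that these identifications are compatible with the boundary maps. On the Crainic--Moerdijk side the differential is the alternating sum $\delta=\sum_{i=0}^{n}(-1)^i\delta_i$, where each $\delta_i$ is fiber-summation along the face map $d_i$ composed with the canonical sheaf comparison $\mathcal{S}_n\cong d_i^*\mathcal{S}_{n-1}$ (as described in Remark \ref{horiz arrows} for function sheaves and in \cite[1.7, 3.1]{Crainic:2000aa} in general); on the algebraic side the differential is $\sum_{i}(-1)^i\,\partial_i\otimes\mathrm{id}$ for the face maps $\partial_i$ of the standard resolution $P_\bullet$. I would match these one index at a time. For $1\le i\le n-1$ the face $d_i$ composes the adjacent arrows $\gamma_i,\gamma_{i+1}$ and leaves the leftmost point $x_0$ fixed, so $\tau_{n-1}\circ d_i=\tau_n$, the sheaf comparison is the identity, and $\delta_i$ matches the bar face map $\partial_i$. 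For $i=n$ the face $d_n$ deletes $x_n$ while leaving $x_0$ fixed, again with trivial sheaf comparison, matching $\partial_n$. The essential case is $i=0$: here $d_0$ deletes $x_0$, so the leftmost point changes from $x_0$ to $x_1$ where $x_0=\gamma_1 x_1$, and the sheaf comparison $\mathcal{S}_n\cong d_0^*\mathcal{S}_{n-1}$ is exactly $\sigma_{\gamma_1}$. Under the trivializations this is precisely the operation of absorbing the first group label into the coefficient module via its $\Z G$-action, which is what distinguishes $\partial_0$ from the other faces in $P_\bullet\otimes_{\Z G}\Gamma_c(X;\mathcal{S})$ (compare \cite[Section I.5]{Brow:1982rt}). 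Thus $\delta_0$ matches $\partial_0\otimes\mathrm{id}$, and the signs agree termwise.

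The main obstacle is the careful bookkeeping in this second step: tracking the equivariant maps $\sigma_g$ through the trivializations and confirming that fiber-summation along each $d_i$ becomes the summation $\sum_{g\in G}$ appearing in $\partial_i$, with the twist by $\sigma_{\gamma_1}$ for the action face $d_0$ landing exactly on the $\Z G$-module structure of $\Gamma_c(X;\mathcal{S})$; any discrepancy in whether the action face is indexed $0$ or $n$ is a harmless relabeling fixed by the conventions above. Once the $i=0$ case is pinned down the remaining faces are routine. Finally, naturality in $\mathcal{S}$ follows because every ingredient—the equivariant trivializations $\sigma_g$, the fiber summations, and the basis identification—is natural in the $\gpd$-sheaf $\mathcal{S}$, so a morphism $\mathcal{S}\to\mathcal{S}'$ induces a commuting ladder between the two complexes, giving the asserted natural isomorphism.
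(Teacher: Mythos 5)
Your proposal is correct and follows essentially the same route as the paper: both identify $\Gamma_c(\gpd^{(n)};\mathcal{S}_n)$ and $P_n\otimes_{\Z G}\Gamma_c(X;\mathcal{S})$ degreewise with finitely supported functions $G^n\to\Gamma_c(X;\mathcal{S})$ (the paper via an explicit formula on elementary tensors, you via the diagonal-orbit basis of $\Z[G^{n+1}]$, which is the same normalization), and then match the face maps, with the equivariant twist appearing exactly in the $i=0$ face. The only difference is presentational: the paper records the resulting formulas for the $d_i$ and leaves the comparison as a direct computation, whereas you explain conceptually why only the $d_0$ face picks up the $\sigma_{\gamma_1}$ twist.
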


\begin{proof}
For notational simplicity, let us write $M$ for the $G$-module $M:=\Gamma_c(X;\mathcal{S})$ and $\alpha$ for the action on this module.  We first note that for any $n$, $\Gamma_c(\gpd^{(n)};\mathcal{S}_n)$ identifies with the space of finitely supported functions $f:G^n\to \Gamma_c(X;\mathcal{S}_n)$, and that having made this identification, the Crainic-Moerdijk face map $d_i:\Gamma_c(\gpd^{(n+1)};\mathcal{S}_{n+1})\to \Gamma_c(\gpd^{(n)};\mathcal{S}_{n})$ is given by 
$$
(d_i f)(g_1,...,g_n)=\left\{\begin{array}{ll}  \sum_{g\in G} \alpha_gf(g,g_1,...,g_n) & i=0  \\ \sum_{g\in G} f(g_1,...,g,g^{-1}g_i,...,g_n) & 0<i<n \\ \sum_{g\in G} f(g_1,...,g_n,g) & i=n+1 \end{array}\right.
$$
(in the case $0<i<n$, the $g$ occurs in the $i^{\text{th}}$ place, and the $g^{-1}g_i$ in the $(i+1)^\text{th}$ place).  

On the other hand, for each $n$, $P_n\otimes_{\Z G} M$ identifies with the space of finitely supported functions $f:G^n\to M$ by the map sending the elementary tensor $c\otimes m$ to the function 
$$
f(g_1,...,g_n):=\sum_{h\in G} c(h,hg_1,hg_1g_2,hg_1g_2g_3,...,hg_1g_2...g_n)\alpha_h(m)
$$
Having made these identifications, direct computations show that the face maps $\partial_i$ and $d_i$ match up, completing the proof.
\end{proof}

The following corollary is now almost immediate from the definitions of Crainic-Moerdijk homology, and of $\mathbb{H}_*(G;C_\bullet)$ as in Definition \ref{hyper1} above.

\begin{corollary}\label{cm hh}
Let $\gpd=G\ltimes X$ be a transformation groupoid associated to an action of a discrete group $G$ on a locally compact Hausdorff space $X$.  Then there is a canonical isomorphism 
$$
H_*(\gpd)\cong \mathbb{H}_*(G;X)
$$
between the Crainic-Moerdijk homology and the group hyperhomology of Definition \ref{hyper3}.  Moreover, the isomorphism is compatible with the pairings of $H_0(\gpd)$ and of $\mathbb{H}_0(G;X)$ with invariant probability measures.
\end{corollary}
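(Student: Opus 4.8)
The plan is to exhibit an isomorphism of the two fourth-quadrant double complexes whose direct sum total complexes compute the two theories, and then pass to homology. Fix once and for all a resolution $0\to\mathcal{Z}\to\mathcal{S}^0\to\mathcal{S}^1\to\cdots$ of the sheaf $\mathcal{Z}$ on $X=\gpd^{(0)}$ by $c$-soft $\gpd$-sheaves; recall that for $\gpd=G\ltimes X$ these are the same as $c$-soft $G$-sheaves. By Proposition \ref{cm id wd} this resolution computes $H_*(\gpd)=H_*(\gpd;\mathcal{Z})$ as the homology of the direct sum total complex of $\big(\Gamma_c(\gpd^{(p)};\mathcal{S}^q_p)\big)_{p,q\geq 0}$, and by Definition \ref{hyper3} together with Lemma \ref{equi well def} the same resolution computes $\mathbb{H}_*(G;X)=\mathbb{H}_*\big(G;\Gamma_c(X;\mathcal{S}^{-\bullet})\big)$.

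For the hyperhomology side I would use the freedom granted by Lemma \ref{h1 wd} to compute $\mathbb{H}_*(G;C_\bullet)$ with the specific standard (bar) resolution $P_\bullet$ of the trivial module $\Z$ appearing in Lemma \ref{iso}. With that choice $\mathbb{H}_*(G;X)$ is the homology of the total complex of $\big(P_p\otimes_{\Z G}\Gamma_c(X;\mathcal{S}^q)\big)_{p,q\geq 0}$. Applying Lemma \ref{iso} with the sheaf $\mathcal{S}=\mathcal{S}^q$ for each fixed $q$ yields isomorphisms of horizontal complexes
$$
\Gamma_c(\gpd^{(\bullet)};\mathcal{S}^q_\bullet)\xrightarrow{\ \cong\ }P_\bullet\otimes_{\Z G}\Gamma_c(X;\mathcal{S}^q)
$$
intertwining the Crainic--Moerdijk bar differential with $\partial\otimes\mathrm{id}$. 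The crucial point is that Lemma \ref{iso} asserts these isomorphisms are natural in $\mathcal{S}$, so applying naturality to the resolution maps $\mathcal{S}^q\to\mathcal{S}^{q+1}$ shows they also commute with the vertical differentials (which on both sides are functorially induced by the resolution). Thus the family assembles into an isomorphism of double complexes, hence of total complexes, and taking homology gives the canonical isomorphism $H_*(\gpd)\cong\mathbb{H}_*(G;X)$.

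For the compatibility of pairings I would take the chosen resolution to be a Borel resolution in the sense of Definition \ref{top res}, which is legitimate since by Proposition \ref{pair good} and Lemma \ref{good hh pair} neither pairing depends on this choice. A class in $H_0(\gpd)$ is represented by a $0$-cycle $(a_p)_{p\geq 0}$ with $a_p\in\Gamma_c(\gpd^{(p)};\mathcal{S}^p_p)$, paired with an invariant measure $\mu$ by $\int_X a_0\,d\mu$; similarly a class in $\mathbb{H}_0(G;X)$ is represented by $(x_p)_{p\geq 0}$ with $x_p\in P_p\otimes_{\Z G}\Gamma_c(X;\mathcal{S}^p)$ and pairing $\int_X x_0\,d\mu$. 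It then suffices to observe that the double-complex isomorphism above restricts in bidegree $(0,0)$ to the identity of $\Gamma_c(X;\mathcal{S}^0)$: for $p=0$ one has $\gpd^{(0)}=X$, $\mathcal{S}^0_0=\mathcal{S}^0$, $P_0=\Z G$, and the explicit map of Lemma \ref{iso} reduces to the standard identification $\Z G\otimes_{\Z G}\Gamma_c(X;\mathcal{S}^0)\cong\Gamma_c(X;\mathcal{S}^0)$. Hence $a_0=x_0$ and the two pairings agree.

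The main obstacle — and really the only point beyond bookkeeping, given that Lemma \ref{iso} does the heavy lifting — is verifying that the family of horizontal isomorphisms genuinely intertwines the vertical differentials, so as to define a morphism of double complexes rather than merely a row-by-row isomorphism. This is exactly where the naturality clause of Lemma \ref{iso} is indispensable. Everything else (convergence of the relevant total complexes, independence from the choices of sheaf and projective resolution) is already supplied by the cited lemmas, which is why the corollary is essentially immediate once Lemma \ref{iso} is in hand.
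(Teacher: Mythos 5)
Your proposal is correct and follows essentially the same route as the paper's own proof: both compute each theory from the same $c$-soft resolution, apply Lemma \ref{iso} row by row (invoking its naturality in $\mathcal{S}$ to match the vertical differentials), and deduce the pairing compatibility from the fact that the isomorphism is the identity in bidegree $(0,0)$. The paper merely states these last two points more tersely; your write-up makes them explicit.
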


\begin{proof}
Let $(\mathcal{S}^q)_{q\geq 0}$ be a resolution of the $\gpd$-sheaf $\mathcal{Z}$ of locally constant $\Z$-valued functions on $X$ by $c$-soft $\gpd$-sheaves.  The Crainic-Moerdijk homology $H_*(\gpd)$ is by definition the homology of the double complex
$$
\xymatrix{ \Gamma_c(\gpd^{(0)};\mathcal{S}_0^0) \ar[d]& \ar[l] \ar[d] \Gamma_c(\gpd^{(1)};\mathcal{S}_1^0)  & \ar[l] \ar[d] \Gamma_c(\gpd^{(2)};\mathcal{S}_2^0) & \ar[l]   \\
\Gamma_c(\gpd^{(0)};\mathcal{S}_0^1)\ar[d]& \ar[l] \ar[d] \Gamma_c(\gpd^{(1)};\mathcal{S}_1^1)  & \ar[l] \ar[d] \Gamma_c(\gpd^{(2)};\mathcal{S}_2^1) & \ar[l]   \\
\Gamma_c(\gpd^{(0)};\mathcal{S}_0^2) \ar[d]& \ar[l] \ar[d] \Gamma_c(\gpd^{(1)};\mathcal{S}_1^2)  & \ar[l] \ar[d] \Gamma_c(\gpd^{(2)};\mathcal{S}_2^2) & \ar[l]   \\ & & & }
$$
where the vertical differentials are induced from the resolution and the horizontal differentials are as in the Crainic-Moerdijk bar resolution.  On the other hand, the hyperhomology $\mathbb{H}_*(G;X)$ is by definition the homology of the double complex
$$
\xymatrix{ P_0\otimes_{\Z G} \Gamma_c(X;\mathcal{S}^0) \ar[d] &\ar[l] \ar[d]  P_1\otimes_{\Z G} \Gamma_c(X;\mathcal{S}^0) &\ar[l] \ar[d]  P_2\otimes_{\Z G} \Gamma_c(X;\mathcal{S}^0)  &\ar[l]  \\
P_0\otimes_{\Z G} \Gamma_c(X;\mathcal{S}^1)  \ar[d]&\ar[l] \ar[d]  P_1\otimes_{\Z G} \Gamma_c(X;\mathcal{S}^1) &\ar[l] \ar[d]  P_2\otimes_{\Z G} \Gamma_c(X;\mathcal{S}^1)  &\ar[l]  \\
P_0\otimes_{\Z G} \Gamma_c(X;\mathcal{S}^2) \ar[d] &\ar[l] \ar[d]  P_1\otimes_{\Z G} \Gamma_c(X;\mathcal{S}^2) &\ar[l] \ar[d]  P_2\otimes_{\Z G} \Gamma_c(X;\mathcal{S}^2)  & \ar[l]  \\ & & & }
$$
where the horizontal differentials are induced from those in line \eqref{stan res}, and the vertical differentials are induced from the resolution.  Thanks to Lemma \ref{iso}, these double complexes are canonically isomorphic, so we are done with the isomorphisms.  Compatibility with the pairings follows directly.
\end{proof}

\section{Integral Chern characters}

In this appendix we discuss some examples where the Chern character is compatible with an integral isomorphism between $K$-theory and cohomology, as opposed to `just' being a rational isomorphism.  The material in this section is classical and no doubt well-known; we could not find appropriate references in the literature, however, so provide brief arguments.  We have made no attempt to get optimal results.  For background material on Chern classes as used here, we recommend \cite{Milnor:1974zf}.

We start with an ad-hoc definition.

\begin{definition}\label{ch int}
Let $X$ be a locally compact Hausdorff space.  We define an \emph{integral Chern isomorphism} for $X$ to be any (graded) isomorphism $\text{ch}_\Z:K^*(X)\to H^{**}_c(X)$ that is natural for self-homeomorphisms of $X$ and is compatible with the usual Chern character in the following sense.  The diagram below
$$
\xymatrix{ K^*(X)\ar[r]^-{\text{ch}_\Z} \ar[dr]_-{\text{ch}} & H^{**}_c(X) \ar[d] \\ & H^{**}_c(X;\Q) },
$$
where the diagonal map is the usual Chern character, and the vertical map is the comparison map from integral to rational cohomology, should commute.
\end{definition}  

\begin{example}\label{sphere}
The usual Chern character canonically induces an integral Chern isomorphism for any sphere: this follows for example from \cite[Theorem V.3.25]{Karoubi:1978ai}.  It follows from this and the K\"{u}nneth formulas for $K$-theory and cohomology that there is an integral Chern isomorphism for any (finite) product of spheres, and in particular for any torus.
\end{example}

The following definition starts to build a candidate for an integral Chern isomorphism.  

\begin{definition}\label{ch basic}
Let $X$ be a compact Hausdorff space, and let $V(X)$ denote the monoid of isomorphism classes of vector bundles over $X$.  We define the \emph{Chern class map} $c:V(X)\to H^{ev}(X)$ as follows.  Let $V$ be a vector bundle over $X$.  Then the component of $c(V)$ in $H^0(X)$ is defined to be the class of the $\Z$-valued function $\text{rank}(V)$ associating to each $x\in X$ the dimension of the fiber $V_x$.  For $n\geq 1$, the component of $c(V)$ in $H^{2n}(X)$ is $(-1)^{n-1}c_n(V)$, where $c_n$ is the $n^\text{th}$ Chern class\footnote{The class $c(V)$ is closely related to the \emph{total Chern class} $1+c_1(V)+c_2(V)+\cdots$ as in \cite[page 158]{Milnor:1974zf}, but it differs in dimension zero, and in the choice of signs.}.
\end{definition}

We need another ad-hoc definition.

\begin{definition}
Let $X$ be a locally compact Hausdorff space.  We say that $X$ has \emph{trivial (even) cup products} if all the cup product maps 
$$
\smile:H^i_c(X)\times H^j_c(X)\to H_c^{i+j}(X)
$$
are zero whenever $i,j>0$ (and both are even).
\end{definition}

We record a basic fact from algebraic topology: see for example \cite[Exercise 2 on page 228]{Hatcher:2002ud} (and the hints given there) for a proof.

\begin{lemma}\label{no cup}
If $X=SY$ is a suspension, then $X$ has trivial cup products.  \qed
\end{lemma}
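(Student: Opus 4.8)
The plan is to work entirely within compactly supported sheaf cohomology and to exploit that, up to homeomorphism, $SY=(0,1)\times Y\cong \R\times Y$, so that all of its positive-degree cohomology is ``carried'' by the contractible line $\R$, whose own compactly supported cohomology squares to zero for degree reasons. First I would set up the Künneth decomposition exactly as in the proof of Lemma \ref{int fib lem}: using a $c$-fine resolution $\mathcal{S}^\bullet$ of $\mathcal{Z}$ on $\R$ and a $c$-fine resolution $\mathcal{T}^\bullet$ of $\mathcal{Z}$ on $Y$, the product resolution computes $H^*_c(\R\times Y)$ and the section complex splits as a tensor product, so that the algebraic Künneth theorem \cite[Theorem 3.6.3]{Weibel:1995ty} gives
$$
H^n_c(\R\times Y)\;\cong\;\bigoplus_{i+j=n}H^i_c(\R)\otimes H^j_c(Y)\;\cong\;H^1_c(\R)\otimes H^{n-1}_c(Y).
$$
Here the second isomorphism uses that $H^i_c(\R)$ is $\Z$ for $i=1$ and zero otherwise (as recorded in the proof of Lemma \ref{int fib lem}), which also kills all Tor terms. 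Writing $e$ for the generator of $H^1_c(\R)$, this says that every class in $H^n_c(\R\times Y)$ is an external product $e\times\eta$ with $\eta\in H^{n-1}_c(Y)$.

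Next I would invoke the compatibility of the internal cup product with this external (cross) product. In the present setting the cup product on $H^*_c(\R\times Y)$ is induced by the evident product on the tensor-product resolution, and under the splitting of the section complex it is simply the tensor product of the cup products on the two factors (up to the usual Koszul sign); see \cite[Chapter II]{Bredon:1997aa} for the sheaf-theoretic products. Consequently, for classes $a=e\times\eta$ and $b=e\times\xi$,
$$
a\smile b \;=\; \pm\,(e\smile e)\times(\eta\smile\xi).
$$
Since $e\smile e\in H^2_c(\R)=0$, this vanishes. As every positive-degree class of $H^*_c(\R\times Y)$ is a sum of terms $e\times\eta$, it follows that $a\smile b=0$ for all $a\in H^i_c(SY)$, $b\in H^j_c(SY)$ with $i,j>0$, which is the asserted triviality of cup products (indeed in all positive degrees, not only the even ones).

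The main obstacle is the bookkeeping for the multiplicative structure: one must make the cross product, the Künneth isomorphism, and the cross--cup compatibility all precise and mutually consistent for compactly supported sheaf cohomology, rather than merely use the additive Künneth theorem as in Lemma \ref{int fib lem}. An alternative, closer to the cited reference \cite[Exercise 2, p.~228]{Hatcher:2002ud}, would be to identify $H^*_c(\R\times Y)$ with the reduced cohomology of the one-point compactification and to observe that $(\R\times Y)^+\cong S^1\wedge Y^+=\Sigma(Y^+)$ is a genuine reduced suspension; the classical relative-cup-product argument (two contractible cones covering $\Sigma(Y^+)$, with the relative product landing in $H^{*}(\Sigma(Y^+),\Sigma(Y^+))=0$) then gives the vanishing. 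I would avoid this second route, however, because verifying acyclicity of the cones in sheaf cohomology for the possibly pathological compactification $Y^+$ is more delicate than the purely algebraic Künneth argument above.
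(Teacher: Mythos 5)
Your argument is correct, but it proceeds along a genuinely different route from the one the paper points to: the paper gives no proof at all, simply citing the classical cone-covering argument of \cite[Exercise 2, p.~228]{Hatcher:2002ud} (cover the suspension by two contractible cones, factor the cup product through the relative product landing in $H^*(\Sigma Y,\Sigma Y)=0$). You instead stay inside compactly supported sheaf cohomology, use the K\"unneth splitting of $H^*_c(\R\times Y)$ already established in the proof of Lemma \ref{int fib lem} to write every positive-degree class as an external product $e\times\eta$ with $e$ the generator of $H^1_c(\R)$, and kill all products via $e\smile e\in H^2_c(\R)=0$ together with the cross--cup interchange formula. Your route has a real advantage in this paper's setting: the suspension here is the open cylinder $(0,1)\times Y$ and the cohomology is $H^*_c$ of a possibly quite pathological locally compact space, so the Hatcher citation strictly speaking requires translating to the reduced cohomology of the one-point compactification and verifying acyclicity of the cones there --- exactly the delicacy you flag. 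The only ingredient you use beyond what Lemma \ref{int fib lem} already provides is the multiplicativity of the K\"unneth isomorphism, i.e.\ the identity $(a\times b)\smile(c\times d)=\pm(a\smile c)\times(b\smile d)$ for the sheaf-theoretic products with compact supports; this is standard (and at the level of the tensor-product resolutions used in Lemma \ref{int fib lem} it is a chain-level identity up to sign), but if this argument were to replace the citation it would deserve a precise reference or a one-line verification. Note also that you prove the stronger statement that \emph{all} cup products in positive degrees vanish, not only the even ones, which is what Lemma \ref{no cup} asserts.
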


The following lemma is immediate from the Whitney sum formula \cite[page 167]{Milnor:1974zf}.

\begin{lemma}\label{chb hom}
Let $X$ be a compact Hausdorff space with trivial even cup products.  Then the Chern class map $c:V(X)\to H^{ev}(X)$ of Definition \ref{ch basic} is a monoid homomorphism.   \qed
\end{lemma}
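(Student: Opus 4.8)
The plan is to verify the homomorphism property $c(V\oplus W)=c(V)+c(W)$ one graded component at a time, using that the monoid structure on $V(X)$ is Whitney sum and the one on $H^{ev}(X)=\bigoplus_{m\geq 0}H^{2m}(X)$ is addition of classes.

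First I would dispose of the degree-zero component. Since $(V\oplus W)_x=V_x\oplus W_x$ for each $x\in X$, the rank function satisfies $\rank(V\oplus W)=\rank(V)+\rank(W)$ pointwise, so the $H^0(X)$-components add; one also notes that $c$ sends the zero bundle to $0$, matching the monoid identities.

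For the components in degree $2n$ with $n\geq 1$, I would apply the Whitney sum formula \cite[page 167]{Milnor:1974zf}, namely $c_n(V\oplus W)=\sum_{i+j=n}c_i(V)\smile c_j(W)$ with the convention $c_0=1\in H^0(X)$. The crucial point is that any term with $1\leq i\leq n-1$ is a cup product of two classes of strictly positive even degree, hence vanishes by the hypothesis that $X$ has trivial even cup products; the surviving terms are $i=0$, contributing $1\smile c_n(W)=c_n(W)$, and $i=n$, contributing $c_n(V)\smile 1=c_n(V)$. Thus $c_n(V\oplus W)=c_n(V)+c_n(W)$, and since the definition of $c$ (Definition \ref{ch basic}) multiplies $c_n$ by the fixed scalar $(-1)^{n-1}$, the degree-$2n$ component is additive as well.

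I do not expect a genuine obstacle here: the only subtlety worth flagging is that the trivial-cup-product hypothesis concerns positive even degrees only, so it leaves untouched precisely the terms $c_0\smile c_n$ and $c_n\smile c_0$ that produce $c_n(W)$ and $c_n(V)$ (cupping with the unit $1\in H^0$ being the identity). Collecting the componentwise identities over all $n\geq 0$ yields $c(V\oplus W)=c(V)+c(W)$, which is exactly the assertion that $c$ is a monoid homomorphism.
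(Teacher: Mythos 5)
Your argument is correct and is exactly the paper's: the lemma is stated there as immediate from the Whitney sum formula, and your componentwise verification (ranks add in degree zero; the cross-terms $c_i(V)\smile c_j(W)$ with $1\leq i\leq n-1$ vanish by the trivial even cup product hypothesis, leaving only the $c_0$-terms) just spells out that one-line justification. No issues.
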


Hence if $X$ has trivial even cup products, then by the universal property of the Grothendieck group, $c$ uniquely determines a homomorphism $c:K^0(X)\to H^{ev}(X)$.  Considering one-point compactifications, it also induces a homomorphism $c:K^0(X)\to H^{ev}_c(X)$ for a locally compact Hausdorff space $X$.  Finally, we  define the Chern class map for odd parity to be the bottom horizontal map in the diagram below
$$
\xymatrix{ K^0(SX) \ar[r]^-c  &  H^{ev}_c(SX) \ar[d] \\
K^1(X) \ar[u]  \ar@{-->}[r] &  H^{od}_c(X) }
$$
where the two vertical maps are suspension isomorphisms (note that the top horizontal map exists by Lemma \ref{no cup}). 

\begin{definition}\label{chb2}
For a locally compact Hausdorff space $X$ with trivial even cup products, the \emph{Chern class map} is the homomorphism $c:K^*(X)\to H_c^{**}(X)$ just defined.
\end{definition}

\begin{example}\label{low ch int}
Let $X$ be a finite (possibly disconnected) CW complex with dimension at most three.  Then the map $c$ of Definition \ref{chb2} is an integral Chern isomorphism for $X$ in the sense of Definition \ref{ch int}.  Indeed, in low degrees the rational even Chern character is given by 
$$
\text{ch}(V)=\text{rank}(V)+c_1(V)+\frac{1}{2}(c_1(V)^2-2c_2(V))+\text{higher order terms}.
$$
The assumptions imply that the higher order terms are zero (they live in $H^k(X)$ for $k>4$), and that $c_1(V)^2=0$ whence the diagram
$$
\xymatrix{ K^0(X)\ar[r]^-{c} \ar[dr]_-{\text{ch}} & H^{ev}(X) \ar[d] \\ & H^{ev}(X;\Q) }
$$
commutes.  The analogous diagram in odd degrees commutes similarly, now using Lemma \ref{no cup}.  One checks that it is an integral isomorphism by induction on the number of cells, a Mayer-Vietoris argument, and the case of spheres as in Example \ref{sphere}.  
\end{example}

\begin{example}\label{low ch int 2}
Let $X$ be a compact Hausdorff space of covering dimension at most three.  Then $X$ can be written as the limit of an inverse system $(Y_i)$ of finite CW complexes of dimension at most three.  The Chern class map $c$ is an integral Chern isomorphism for each $Y_i$ by Example \ref{low ch int}, and therefore it is an integral Chern isomorphism for $X$ by continuity of $K$-theory, continuity of sheaf cohomology, and continuity of the maps $c$ and $\text{ch}$.
\end{example}

\begin{example}
Let $X=S^6$.  Then $X$ has trivial (even) cup products, so the Chern class map $c:K^*(X)\to H_c^{**}(X)$ is well-defined.  However, it is not an integral isomorphism, and is not compatible with the (rational) Chern character.  In fact, the map $[V]\mapsto \text{rank}(V)+ \frac{1}{2}c_3(V)$ defines an integral Chern isomorphism in that case (compare Example \ref{sphere} and the references given there).
\end{example}


\end{document}